\newtheorem{thm}{Theorem}[subsection]
\newtheorem*{thm*}{Theorem}
\newtheorem{lem}[thm]{Lemma}
\newtheorem{fact}[thm]{Fact}
\newtheorem{prop}[thm]{Proposition}
\newtheorem*{prop*}{Proposition}
\newtheorem{conj}[thm]{Conjecture}
\newtheorem{cor}[thm]{Corollary}
\newtheorem*{cor*}{Corollary}
\newtheorem*{JT}{Jung's theorem}
\theoremstyle{definition}
\newtheorem{defn}[thm]{Definition}
\newtheorem*{defn*}{Definition}
\newtheorem{remark}[thm]{Remark}
\newtheorem{question}[thm]{Question}
\newtheorem{example}[thm]{Example}
\newtheorem{examples}[thm]{Examples}
\newtheorem*{question*}{Question}
\newtheorem*{Pquestion*}{Popa's question}
\newtheorem{conv}[thm]{Convention}
\newtheorem*{conv*}{Convention}
\def\bb{\mathbb}
\def\bb{\mathbb}
\def\u{\mathsf 1}
\newcommand{\cstar}{$\mathrm{C}^*$}
\def \tp{\operatorname{tp}}
\def\dotminussym#1#2{%
  \setbox0=\hbox{$\m@th#1-$}%
  \kern.5\wd0%
  \hbox to 0pt{\hss\hbox{$\m@th#1-$}\hss}%
  \raise.6\ht0\hbox to 0pt{\hss$\m@th#1.$\hss}%
  \kern.5\wd0}
\newcommand{\dotminus}{\mathbin{\mathpalette\dotminussym{}}}
\DeclareMathOperator{\tr}{tr}
\def \R{\mathcal R}
\def \u{\mathcal U}
\newcommand{\mc}{\mathcal}
\newcommand{\mb}{\mathbb}
\newcommand{\HOM}{\mb{H}\text{om}}
\newcommand{\tql}{\textquotedblleft}
\newcommand{\tqr}{\textquotedblright}
\newcommand{\ds}{\displaystyle}
\def\McDuff{(0,0) ellipse (9.3 and 4.1)}
\def\PGamma{ (0,0) ellipse (10 and 5)}
\def\EER{ (1.35,0) ellipse (1.5 and 1.1)}
\def\SuperMcDuff{ (3.7,0) ellipse (4.3 and 2.7)}
\def\StronglyMcDuff{ (5.95,0) ellipse (1.6 and 1.1)}
\def\AC{ (-.4,0) ellipse (4.4 and 3.0)}
\def\IAC{ (-.4,0) ellipse (3.7 and 2.3)}
\def\WeaklyMcDuff{ (-5.5,0) ellipse (3.0 and 1.4)}
\def\EC {(-.9,0) ellipse (2 and 1.1)}
\def\l@subsection{\@tocline{2}{0pt}{2.5pc}{5pc}{}}
\def\l@subsubsection{\@tocline{2}{0pt}{5pc}{7.5pc}{}}
\begin{document}


\title[Factorial relative commutants and the generalized Jung property]{Factorial relative commutants and the generalized Jung property for II$_1$ factors}
\author{Scott Atkinson, Isaac Goldbring and Srivatsav Kunnawalkam Elayavalli}
\thanks{I. Goldbring was partially supported by NSF CAREER grant DMS-1349399.}

\address{Department of Mathematics\\University of California, Riverside, Skye Hall, 900 University Ave., Riverside, CA 92521 }
\email{scott.atkinson@ucr.edu}
\urladdr{https://sites.google.com/view/scottatkinson}

\address{Department of Mathematics\\University of California, Irvine, 340 Rowland Hall (Bldg.\# 400),
Irvine, CA 92697-3875}
\email{isaac@math.uci.edu}
\urladdr{http://www.math.uci.edu/~isaac}

\address{Department of Mathematics\\Vanderbilt University, 1326 Stevenson Center, Station B 407807, Nashville, TN 37240}
\email{srivatsav.kunnawalkam.elayavalli@vanderbilt.edu}
\urladdr{https://sites.google.com/view/srivatsavke}
\begin{abstract}




The findings reported in this paper aim to garner the interest of both model theorists and operator algebraists alike. Using a novel blend of model theoretic and operator algebraic methods, we show that the family of II$_1$ factors elementarily equivalent to the hyperfinite II$_1$ factor $\R$ all admit embeddings into $\R^\u$ with factorial relative commutant. This answers a long standing question of Popa for an uncountable family of II$_1$ factors. We introduce the notion of a generalized Jung factor: a II$_1$ factor $M$ for which any two embeddings of $M$ into its ultrapower $M^\u$ are equivalent by an automorphism of $M^\u$. As an application of the result above, we show that $\R$ is the unique $\R^\u$-embeddable generalized Jung factor. Using the concept of building von Neumann algebras by games and the recent refutation of the Connes embedding problem, we also show that there exists a generalized Jung factor which does not embed into $\R^\u$. Moreover, we find that there are uncountably many non $\R^\u$-embeddable generalized Jung type II$_1$ von Neumann algebras. We study the space of embeddings modulo automorphic equivalence of a II$_1$ factor $N$ into an ultrapower II$_1$ factor $M^\u$ and equip it with a natural topometric structure, yielding cardinality results for this space in certain cases. These investigations are naturally connected to the super McDuff property for II$_1$ factors: the property that the central sequence algebra is a II$_1$ factor. We provide new examples, classification results, and assemble the present landscape of such factors. Finally, we prove a transfer theorem for inducing factorial commutants on embeddings with several applications. 

\end{abstract}

\maketitle

\tableofcontents
\pagebreak
\section*{Introduction}





A fundamental philosophy in mathematics is the idea that one can deduce structural characteristics of a given object by embedding it into a richer space with tractable structure.  
In the present article, we work in the context of embeddings of II$_1$ factor von Neumann algebras into ultrapowers of II$_1$ factors.  In particular, given a II$_1$ factor $N$, it is of significant interest to extract structural properties of $N$ by examining how $N$ embeds into $\R^\u{\,}$ (an ultrapower of the separably acting II$_1$ factor $\R$) and how $N$ embeds into its own ultrapower $N^\u$. (See \S \ref{vnaprelim} for the relevant definitions.)
We will say that a II$_1$ factor is \textbf{embeddable} if it embeds into $\R^\u$. With the recent negative resolution of the Connes Embedding Problem\footnote{This asked if every separable II$_1$ factor can be embedded into $\R^\u$.} in \cite{connessol}, embeddability is a nontrivial assumption.

A good starting point for our context is the following standard fact: any two embeddings of $\R$ into $\R^{\u}$ are unitarily equivalent.\footnote{This fact is well-known in the II$_1$ factor community; for a reference, see Proposition 1.7 of \cite{ultraprodembed} and Theorem 3.1 of \cite{autoultra}.} In \cite{jung} Jung established the striking result that the converse of the previous statement holds:
\begin{JT}[\cite{jung}]\label{jung's theorem}
Any two embeddings of an embeddable II$_1$ factor $N$ into $\R^{\u}$ are unitarily equivalent if and only if $N\cong \R$. 
\end{JT}
This, combined with the seminal result of Connes in \cite{connes}, tells us that the structural property of \textbf{amenability} is precisely captured by the space of embeddings of $N$ into $\R^{\u}$ modulo unitary equivalence.


Naturally, there are many generalizations and variants of Jung's theorem in the literature. The following is an immediate corollary from \cite{ultraprodembed}: If $N$ is an embeddable II$_1$ factor such that any two embeddings of $N$ into $N^\u$ are unitarily equivalent, then $N\cong \R$. While it does follow from the general ultraproduct codomain result from \cite{ultraprodembed} (namely Corollary 3.8), this special case does not require the same technical machinery. In fact, the proof of this special case is much simpler than the proof of Jung's theorem itself, the salient point being the availability of the diagonal embedding of $N$ into $N^\u$. 

To make the connection between the results above and the main results of this paper, we make the following definitions.

\begin{enumerate}
    \item For II$_1$ factors $M$ and $N$, we say $(N,M)$ is a \textbf{Jung pair} if $N$ embeds into $M^\u$ and any two embeddings of $N$ into $M^u$ are unitarily equivalent. We say $M$ has the \textbf{Jung property} if $(M,M)$ is a Jung pair.

    \item For II$_1$ factors $M$ and $N$, we say $(N,M)$ is a \textbf{generalized Jung pair} if $N$ embeds into $M^\u$ and any two embeddings of $N$ into $M^u$ are automorphically equivalent. We say $M$ has the \textbf{generalized Jung property} if $(M,M)$ is a generalized Jung pair.
\end{enumerate}

Thus Jung's theorem states that $(N,\R)$ is a Jung pair if and only if $N\cong \R$, and the variation mentioned above says that an embeddable II$_1$ factor $N$ has the Jung property if and only if $N \cong \R$.  One of the main results of the present article is a strong improvement of the latter characterization as follows:

\begin{thm*}\label{usmain}
 If $N$ is an embeddable II$_1$ factor, then $N$ has the generalized Jung property if and only $N\cong \R$.
 \end{thm*}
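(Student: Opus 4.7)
For the ``if'' direction, any two embeddings of $\R$ into $\R^{\u}$ are unitarily equivalent by the standard fact cited above, hence automorphically equivalent.

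For the ``only if'' direction, suppose $N$ is embeddable and has the generalized Jung property; we aim to show $N\cong \R$. The plan is to combine the paper's main theorem on factorial relative commutants with the rigidity imposed by generalized Jung. Using embeddability, fix $\iota\colon N\hookrightarrow \R^{\u}$. Since every II$_1$ factor contains a copy of $\R$, we obtain an induced embedding $j\colon \R^{\u}\hookrightarrow N^{\u}$. Let $\pi := j\circ \iota\colon N\to N^{\u}$, whose image lies in $j(\R^{\u})$. Applying the generalized Jung hypothesis to $\pi$ and the diagonal embedding $\Delta\colon N\to N^{\u}$, there is $\alpha\in \Aut(N^{\u})$ with $\alpha\circ\Delta=\pi$. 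Setting $\mathcal{M}:=\alpha^{-1}(j(\R^{\u}))$, we get $\Delta(N)\subseteq \mathcal{M}\cong \R^{\u}\equiv \R$. Moreover, since $\alpha$ sends $\Delta(N)'\cap N^{\u}$ isomorphically onto $\pi(N)'\cap N^{\u}$, all relative commutants of embeddings of $N$ into $N^{\u}$ are isomorphic to the central sequence algebra $N_{\u}$.

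Next, combine the elementarity of $\Delta$ with the containment $\Delta(N)\subseteq \mathcal{M}\cong \R^{\u}$ and the $\aleph_1$-saturation of $N^{\u}$, via a Tarski--Vaught style argument exploiting the freedom in the choice of $\iota$ and the automorphism provided by generalized Jung, to conclude $N\equiv \R$. With $N\equiv \R$ in hand, the main theorem produces $\iota^*\colon N\hookrightarrow \R^{\u}$ with factorial relative commutant, and the paper's transfer theorem for inducing factorial commutants yields $\tilde\iota\colon N\to N^{\u}$ with factorial relative commutant. Generalized Jung then forces $N_{\u}\cong \tilde\iota(N)'\cap N^{\u}$ to be a II$_1$ factor, so $N$ is super McDuff. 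For the closing step, one uses the combination $N\equiv \R$, super McDuff, embeddable, and generalized Jung to show that any two embeddings of $N$ into $\R^{\u}$ are unitarily equivalent, i.e.\ that $(N,\R)$ is a Jung pair (using, under CH, the isomorphism $N^{\u}\cong \R^{\u}$ of saturated models of density character $2^{\aleph_0}$ to transfer automorphic equivalence in $N^{\u}$ to unitary equivalence in $\R^{\u}$ via a unitary implementation argument); Jung's theorem then yields $N\cong \R$.

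The main obstacle is the passage from the geometric configuration $\Delta(N)\subseteq \mathcal{M}\cong \R^{\u}$ to the elementary equivalence $N\equiv \R$, since the inclusion $\R^{\u}\hookrightarrow N^{\u}$ is not in general elementary and so one cannot directly upgrade $\Delta(N)\subseteq \mathcal{M}$ to the submodel relation $\Delta(N)\preceq \mathcal{M}$. Resolving this will require leveraging the full transitivity of $\Aut(N^{\u})$ on the space of embeddings of $N$ into $N^{\u}$ to reshape the configuration and realize sufficiently many types of $\R$ inside $\Delta(N)$; this is where the specific strength of generalized Jung (as opposed to the weaker Jung property) is used.
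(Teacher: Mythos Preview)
Your proposal has two genuine gaps, and you yourself flag the first one.

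\textbf{Gap 1: the step $N\equiv \R$.} You arrive at the configuration $\Delta(N)\subseteq \mathcal{M}\cong \R^{\u}\subseteq N^{\u}$ with $\Delta$ elementary, and then invoke an unspecified ``Tarski--Vaught style argument'' to conclude $N\equiv \R$, before conceding in your final paragraph that this is in fact the main obstacle and that the inclusion $\R^{\u}\hookrightarrow N^{\u}$ is not known to be elementary. This is not a gap that closes by ``reshaping the configuration'' or ``realizing types''; as stated it is simply not a proof. The paper's route here is a clean elementary-chain argument: fix embeddings $i\colon \R\hookrightarrow N$ and $j\colon N\hookrightarrow \R^{\u}$, note that $j\circ i$ is elementary (since $\R$ has the generalized Jung property) and $i^{\u}\circ j$ is elementary (since $N$ has it), then pass to ultrapowers to build a chain $\R\hookrightarrow N\hookrightarrow \R^{\u}\hookrightarrow N^{\u}\hookrightarrow\cdots$ in which every composite of two consecutive maps is elementary. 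The direct limit is then an elementary extension of both $\R$ and $N$, so $N\equiv \R$. The key point you are missing is that \emph{both} $\R$ and $N$ have the generalized Jung property, and one should use this symmetrically.

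\textbf{Gap 2: the endgame.} Once you have $N\equiv \R$ and an embedding $N\hookrightarrow \R^{\u}$ with factorial commutant, you take a detour through super McDuffness and then claim that one can ``transfer automorphic equivalence in $N^{\u}$ to unitary equivalence in $\R^{\u}$ via a unitary implementation argument'' to deduce that $(N,\R)$ is a Jung pair. No such implementation argument is given, and there is no reason automorphic equivalence should upgrade to unitary equivalence this way. The paper's conclusion is much simpler: from $N\equiv \R$ one gets that $(N,\R)$ is a \emph{generalized} Jung pair; combined with the existence of one factorial-commutant embedding, \emph{every} embedding $N\hookrightarrow \R^{\u}$ has factorial commutant; and then Brown's characterization (Corollary~5.3 of \cite{topdyn}) immediately gives $N\cong \R$. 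You never need to touch Jung's original theorem or produce any unitaries.
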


While the characterization of embeddable Jung factors is a result on embeddings modulo \emph{inner} automorphisms of the ultrapower codomain, the above theorem addresses equivalence of embeddings modulo \emph{all} automorphisms.\footnote{At this moment, we should also mention the group theoretic analog of these considerations. In \cite{elekszabo}, Elek and Szabo  proved a Jung-type theorem for sofic groups. P$\breve{\text{a}}$unescu also asks in \cite{paunescuauto} about the case when one considers arbitrary automorphisms of the ultrapower.
Also, it should be noted that in \cite{paunescu1} and \cite{paunescu2} P$\breve{\text{a}}$unescu developed the theory of the convex structure of sofic approximations in the spirit of Brown in \cite{topdyn} as described below.} We go further and show that the space of embeddings modulo automorphic equivalence has a natural topometric structure.  As a consequence, we obtain the result that there are uncountably many automorphic equivalence classes of embeddings for certain families of non-amenable II$_1$ factors--see \S \ref{HOM}.

The second main result addresses the non-embeddable case:

\begin{thm*}
There is a non-embeddable II$_1$ factor with the generalized Jung property.
\end{thm*}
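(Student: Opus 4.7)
The plan is to exhibit the required factor as Goldbring's \emph{enforceable} II$_1$ factor $\mathcal{E}$, constructed via a Banach-Mazur-style infinite game whose outcomes are separable tracial von Neumann algebras. Recall that a property $P$ is \emph{enforceable} if the second player in this game has a winning strategy that guarantees $P$ holds of the outcome, and that the family of enforceable properties is closed under countable conjunction. Consequently all enforceable properties hold simultaneously in a single, isomorphism-unique separable structure $\mathcal{E}$.

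First I would collect from the enforceability literature the following standard facts: $\mathcal{E}$ can be arranged to be a II$_1$ factor which is existentially closed and \emph{locally universal}, in the sense that every separable II$_1$ factor embeds into $\mathcal{E}^\u$. Non-embeddability of $\mathcal{E}$ into $\R^\u$ then drops out cleanly from the refutation of CEP: by \cite{connessol}, some separable II$_1$ factor $N_0$ fails to embed into $\R^\u$; by local universality, $N_0\hookrightarrow\mathcal{E}^\u$. If $\mathcal{E}\hookrightarrow\R^\u$ held, then iterating to ultrapowers and using countable saturation to absorb $(\R^\u)^\u$ into $\R^\u$ (after a suitable change of ultrafilter) would yield $N_0\hookrightarrow\R^\u$, a contradiction.

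The substantive step, and the one I expect to be the principal obstacle, is establishing the generalized Jung property for $\mathcal{E}$. The strategy is a back-and-forth argument: given two embeddings $\pi_1,\pi_2:\mathcal{E}\to\mathcal{E}^\u$, one enumerates $\|\cdot\|_2$-dense sequences $(a_n)\subset\mathcal{E}$ and $(b_n)\subset\mathcal{E}^\u$, and, using the $\aleph_1$-saturation and strong $\aleph_1$-homogeneity of $\mathcal{E}^\u$, incrementally builds an automorphism $\alpha\in\Aut(\mathcal{E}^\u)$ with $\alpha\circ\pi_1=\pi_2$. This back-and-forth succeeds provided that for every finite tuple $\bar a\in\mathcal{E}$, the images $\pi_1(\bar a)$ and $\pi_2(\bar a)$ realize the same type in $\mathcal{E}^\u$. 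The obstacle is that an embedding $\mathcal{E}\hookrightarrow\mathcal{E}^\u$ need not be elementary a priori, so type-preservation must itself be extracted from enforceability.

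Overcoming this obstacle is the technical heart of the argument. One strengthens the list of enforceable properties to include a ``generic embedding'' condition: for every existential formula $\varphi(\bar x)$, every tuple $\bar a\in\mathcal{E}$, and every embedding $\iota:\mathcal{E}\hookrightarrow\mathcal{E}^\u$, $\varphi^{\mathcal{E}^\u}(\iota(\bar a))=\varphi^{\mathcal{E}}(\bar a)$. Verifying that this strengthened statement is itself enforceable requires designing, at each stage of the Banach-Mazur game, moves for the second player that simultaneously (i) close off the next existential condition demanded by existential closedness, and (ii) anticipate potential embedding patterns by pre-realizing the relevant witnesses in $\mathcal{E}$ rather than merely in extensions of it. Once this enhanced enforceability is established, the back-and-forth produces the automorphism $\alpha$, and assembling the three ingredients---existence of $\mathcal{E}$, its non-embeddability, and its generalized Jung property---yields the desired non-embeddable generalized Jung factor.
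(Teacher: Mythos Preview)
Your argument has two genuine gaps.

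First, and most fundamentally, you assume that \emph{the} enforceable factor $\mathcal{E}$ exists as a unique object. The conjunction lemma only says that any \emph{countable} family of enforceable properties can be simultaneously enforced; it does not imply that ``being isomorphic to some fixed $\mathcal{E}$'' is enforceable. In fact, the paper explicitly records this as an open question (see the discussion following Theorem~5.2 of \cite{enforceable} as cited in the paper), and the authors even conjecture that no enforceable factor exists. So your construction may not produce a well-defined object at all.

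Second, even if one reads your argument charitably as ``take any outcome of the game in which the second player enforces a suitable list of properties,'' your proposed route to the generalized Jung property does not succeed. The ``generic embedding condition'' you describe---that $\varphi^{\mathcal{E}^\u}(\iota(\bar a))=\varphi^{\mathcal{E}}(\bar a)$ for every \emph{existential} $\varphi$ and every embedding $\iota$---is exactly existential closedness. But the back-and-forth you want to run requires that $\pi_1(\bar a)$ and $\pi_2(\bar a)$ have the same \emph{complete} type in $\mathcal{E}^\u$, i.e., that every embedding $\mathcal{E}\hookrightarrow\mathcal{E}^\u$ be \emph{elementary}, not merely existential. Existential closedness alone is well known not to imply this.

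The paper sidesteps both issues by invoking \emph{finitely generic} factors rather than the (possibly nonexistent) enforceable factor. By definition, a finitely generic factor has the generalized Jung property, and Fact~\ref{fgenf} asserts that being finitely generic is itself an enforceable property---so such factors exist. Since finitely generic factors are e.c.\ (Fact~\ref{fgec}) and hence locally universal (Fact~\ref{ecfacts}), the failure of CEP immediately gives non-embeddability. The entire proof is then two lines; the work of showing that the generalized Jung property is enforceable is packaged into the cited result on finite genericity, which is precisely the step your sketch does not complete.
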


We conjecture that there are continuum many non-isomorphic separable II$_1$ factors with the generalized Jung property and we present some mild evidence to support this.  In particular, we show that there are continuum many non-isomorphic separable type II$_1$ von Neumann algebras with the generalized Jung property--see \S\S \ref{generalcase}.

At this point, we need to bring some set theory into the picture. Farah showed in \cite[Corollary 16.7.2]{farah} that if one assumes the Continuum Hypothesis, then every ultrapower II$_1$ factor $N^\u$ has an automorphism that does not lift to an automorphism on $\ell^\infty(N)$.\footnote{The reference given discusses the case of \cstar-algebras, but the case of tracial von Neumann algebras is identical.}  Consequently, in the presence of the Continuum Hypothesis the equivalence relation of automorphic equivalence for embeddings into an ultrapower is indeed coarser than that of unitary equivalence.  Thus, we adhere to the following convention:

\begin{conv*}
Throughout this paper, we assume the Continuum Hypothesis (CH) holds.\footnote{The standing CH assumption will also allow us to explain some model-theoretic notions in a language that should appeal more to operator algebraists.} 
\end{conv*}

The convex structure $\HOM(N,\R^\u)$ introduced by Brown in \cite{topdyn} (and later generalized to $\HOM(N,M^\mc{U})$ in \cite{brocap} by Brown and Capraro and in \cite{saa} by the first-named author) plays a significant role in the development of the main result.  In particular, we make use of the following undersung\footnote{Brown's wording: \tql Though we won't need it, here's a cute consequence.\tqr} characterization of $\R$ from \cite{topdyn}:  a separable embeddable II$_1$ factor is hyperfinite if and only if every embedding of it into $\R^\mc{U}$ has factorial relative commutant. The reader familiar with these convex spaces is aware of their connection with a long-standing open problem due to Popa:

\begin{Pquestion*}
Does every separable embeddable II$_1$ factor admit an embedding into $\R^\u$ with a factorial relative commutant?
\end{Pquestion*}


In the proof of the main results of this article, we make noteworthy progress on Popa's question.
At the time of the writing of this paper, only a handful of examples of II$_1$ factors are known to satisfy the conclusion of Popa's question, e.g., $\R$ and $L(SL_n(\bb{Z}))$ for $n \geq 3$, odd. We provide continuum many pairwise non-isomorphic II$_1$ factors which satisfy the conclusion of Popa's question:

\begin{thm*}
If $M$ is a II$_1$ factor \textbf{elementarily equivalent} to $\R$, then every \textbf{elementary} embedding $M\hookrightarrow \R^\u$ has factorial commutant.\footnote{The notions appearing in bold are model-theoretic terms that will be defined in \S\ref{mtprelim}.}
\end{thm*}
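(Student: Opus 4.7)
The plan is to reduce an arbitrary elementary embedding $\iota\colon M \hookrightarrow \R^\u$ to the diagonal embedding $\Delta\colon M \hookrightarrow M^\u$ via a CH-driven saturation argument, and then to analyze the central sequence algebra $M' \cap M^\u$ by transferring the super McDuff property from $\R$ via elementary equivalence.

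First I would verify that the super McDuff property---the condition that $N' \cap N^\u$ is a factor---is preserved by elementary equivalence of $N$. The hyperfinite $\R$ is classically known to be super McDuff (due essentially to Dixmier--Lance). The preservation claim rests on a Dixmier-type characterization of factoriality for the central sequence algebra: $N' \cap N^\u$ is a factor iff for every $a \in N^\u$ commuting with $N$, the trace $\tau(a)$ lies in the $\|\cdot\|_2$-closed convex hull of the conjugates of $a$ by unitaries in $N^\u$ commuting with $N$. Using $\aleph_1$-saturation of $N^\u$, one cashes this in for an approximate assertion involving only $N$: for every finite tuple $\bar{x} \subset N$ and every $\varepsilon > 0$, every element approximately commuting with $\bar{x}$ admits an approximate convex averaging witnessed by elements that also approximately commute with $\bar{x}$. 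This exhibits super McDuff-ness as an elementary property, so $M \equiv \R$ forces $M' \cap M^\u$ to be a factor.

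Second, under CH both $M^\u$ and $\R^\u$ are $\aleph_1$-saturated tracial von Neumann algebras of density character $\aleph_1$, and they are elementarily equivalent because $M \equiv \R$ implies $M^\u \equiv \R^\u$. The given elementary embedding $\iota$ and the (always elementary) diagonal embedding $\Delta$ combine to yield a partial elementary map $\iota \circ \Delta^{-1}\colon \Delta(M) \to \R^\u$ between $M^\u$ and $\R^\u$. By the uniqueness of saturated models of a given density, a standard back-and-forth extends this partial map to a $\ast$-isomorphism $\phi\colon M^\u \to \R^\u$ satisfying $\phi \circ \Delta = \iota$.

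Finally, since $\phi$ is a von Neumann algebra isomorphism, it intertwines relative commutants:
\[
\iota(M)' \cap \R^\u \;=\; \phi\bigl(\Delta(M)' \cap M^\u\bigr) \;=\; \phi\bigl(M' \cap M^\u\bigr),
\]
which is a factor by the first step. The main obstacle is the first step: axiomatizing super McDuff-ness as a genuinely elementary property of $N$. The reduction from a condition living in $N^\u$ to a quantifier-uniform approximate statement in the theory of $N$ must be carried out carefully using saturation, and the Dixmier averaging has to be phrased in a sufficiently uniform manner to survive this encoding. Once this is in hand, the remaining steps are essentially formal consequences of CH together with the standard model theory of ultrapowers.
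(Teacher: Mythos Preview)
Your Steps 2 and 3 are fine: under CH, any elementary embedding $\iota\colon M\hookrightarrow \R^\u$ extends to an isomorphism $M^\u\cong \R^\u$ carrying the diagonal to $\iota$, so factoriality of $M'\cap M^\u$ would indeed transfer to $\iota(M)'\cap \R^\u$.  The problem is Step 1.

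You are asserting that super McDuffness is preserved under elementary equivalence, i.e.\ that it is axiomatizable.  This is exactly Question \ref{superconj} in the paper, and it remains open.  Your proposed axiomatization---``for every finite tuple $\bar x\subset N$ and every $\varepsilon>0$, every element approximately commuting with $\bar x$ can be Dixmier-averaged toward its trace by unitaries approximately commuting with $\bar x$''---does not capture factoriality of $N'\cap N^\u$.  The relative commutant $\{\bar x\}'\cap N$ of a finite tuple is typically a large non-factor, so such averaging fails already in $\R$; conversely, elements approximately commuting with a fixed finite $\bar x$ need not come from $N'\cap N^\u$ at all.  The difficulty is that ``$a\in N'\cap N^\u$'' is a \emph{type} over all of $N$, not a formula, so one cannot simply quantify over central sequences.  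This is precisely why Fact \ref{braddisaac}(2) only gives preservation \emph{downward} under $\preceq$, not under $\equiv$.

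The paper's route is genuinely different and avoids this obstacle.  Rather than trying to axiomatize super McDuffness, it invokes Brown's theorem (Theorem \ref{nate}): any separable $j(M)\subset \R^\u$ sits inside some separable $N\subset \R^\u$ with $N'\cap \R^\u$ a factor.  The key step is then Theorem \ref{mainidea}, which uses the existence of \emph{heirs} of types to push factoriality of $N'\cap \R^\u$ down to $j(M)'\cap \R^\u$, exploiting only that $j(M)\preceq \R^\u$.  In fact the implication ``$M\equiv\R\Rightarrow M$ super McDuff'' (Corollary \ref{Rsuperee}) is derived in the paper as a \emph{consequence} of the theorem you are trying to prove, not as an input to it.
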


This consequence also sheds light on so-called \tql super McDuff\tqr \, II$_1$ factors. Recall that a II$_1$ factor is said to be McDuff if the relative commutant  $M'\cap M^\u$ is non abelian. If the commutant $M'\cap M^\u$ is moreover a II$_1$ \emph{factor}, we say that $M$ is \textbf{super McDuff}.  (This notion was first considered by Dixmier and Lance in \cite{DL} but not given a name until the article \cite{BCIexplained} by the second-named author and Hart).  Dixmier and Lance proved that $\R$ is super McDuff.  Before the writing of this paper, there were only a few more known examples of super McDuff factors.  The above theorem yields continuum many pairwise non-isomorphic separable super McDuff factors:

\begin{cor*}
Any separable II$_1$ factor elementarily equivalent to $\R$ is super McDuff.
\end{cor*}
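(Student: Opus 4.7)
The plan is to derive the corollary directly from the preceding theorem by transferring the question about $M^\u$ to a question about $\R^\u$ using the standing Continuum Hypothesis. Fix a separable II$_1$ factor $M$ with $M\equiv \R$ and consider the diagonal embedding $\Delta\colon M\hookrightarrow M^\u$, which is automatically elementary by {\L}o\'s's theorem.

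First I would invoke CH to produce an isomorphism $\varphi\colon M^\u\to \R^\u$. Both ultrapowers are $\aleph_1$-saturated (a standard property of ultrapowers by a free ultrafilter on $\bN$), both have density character $\aleph_1 = 2^{\aleph_0}$ under CH, and $M\equiv \R$ gives $M^\u\equiv \R^\u$. A standard back-and-forth on $\aleph_1$-saturated elementarily equivalent structures of density character $\aleph_1$ then produces the desired $\varphi$, which is automatically elementary. The composition $\varphi\circ\Delta\colon M\hookrightarrow \R^\u$ is therefore an elementary embedding, so the preceding theorem applies and yields that $(\varphi\circ\Delta)(M)'\cap \R^\u$ is a factor. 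Pulling back through the $*$-isomorphism $\varphi^{-1}$, which preserves relative commutants and factoriality, we conclude that $M'\cap M^\u$ is a factor.

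It remains to upgrade \emph{factor} to \emph{II$_1$ factor}, i.e.\ to rule out $\bC$ and $M_n(\bC)$. For this I would use that McDuffness is an axiomatizable property of II$_1$ factors. Since $\R$ is McDuff and $M\equiv \R$, the factor $M$ is McDuff, hence $M\cong M\mathbin{\overline{\otimes}}\R$. A standard argument then produces a diffuse subalgebra of $M'\cap M^\u$ (for instance, a copy of $\R$ arising from central sequences in the $\R$-tensor factor). In particular $M'\cap M^\u$ is infinite-dimensional, and combined with factoriality this forces it to be a II$_1$ factor, so $M$ is super McDuff.

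The real work is of course carried by the theorem; the only genuine ingredient beyond it is the CH-enabled isomorphism $M^\u\cong \R^\u$, which is standard. If there is a subtlety to watch out for, it is the McDuffness step: one needs to be careful that elementary equivalence to $\R$ really does pass to McDuffness, but this is classical in the continuous model theory of tracial von Neumann algebras and can be invoked as a black box.
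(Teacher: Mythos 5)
Your argument is correct. The core is the same as the paper's: produce an elementary embedding $M\hookrightarrow \R^\u$ and apply the preceding theorem (Corollary \ref{eeRpopa}) to get a factorial relative commutant inside $\R^\u$. Where you diverge is in the finishing move. The paper passes from ``$j(M)'\cap\R^\u$ is a factor'' to ``$M$ is super McDuff'' by quoting Fact \ref{braddisaac}(1) (from \cite{BCIexplained}): for \emph{any} separably saturated elementary extension $\mathcal C$ of $M$, factoriality of $M'\cap\mathcal C$ is equivalent to super McDuffness, and that fact already absorbs the upgrade from ``factor'' to ``II$_1$ factor.'' You instead use CH to realize the elementary embedding concretely as $\varphi\circ\Delta$ with $\varphi\colon M^\u\to\R^\u$ an isomorphism, transport the relative commutant back to $M'\cap M^\u$ by hand, and then separately rule out $\bC$ and $M_n(\bC)$ via the axiomatizability of McDuffness (a Farah--Hart--Sherman result not quoted in the paper, but correct) together with the diffuse central subalgebra coming from the $\R$ tensor factor. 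Your route is more self-contained in that it avoids the general saturated-extension fact, but it leans harder on CH: the paper's version only needs an elementary embedding $M\preceq\R^\u$ (separable universality of ultrapowers), not the full isomorphism $M^\u\cong\R^\u$, and so localizes the set-theoretic hypothesis more cleanly. Both arguments are sound; just note that your elementary embedding need not be, and need not be assumed to be, of the special form $\varphi\circ\Delta$ --- any elementary embedding would do once Fact \ref{braddisaac}(1) is available.
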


We close the paper with some results on how the property of having a factorial relative commutant can be transferred from one embedding to another via composition. These results are motivated by the general interest of Popa's question and the direct connection this property has with the notion of generalized Jung pairs.  In particular, we obtain the following significant upgrade of Brown's characterization of $\R$:

\begin{thm*}
Let $N$ be a separable embeddable II$_1$ factor and let $M$ be a II$_1$ factor that is either McDuff or embeddable.  If every embedding of $N$ into $M^\mc{U}$ has factorial relative commutant, then $N \cong \R$.
\end{thm*}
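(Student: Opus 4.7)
My plan is to reduce the statement to Brown's characterization of $\R$ by showing that the given hypothesis forces every embedding of $N$ into $\R^\mc{U}$ to have factorial relative commutant; Brown's theorem then yields $N\cong \R$. The reduction is executed via the transfer theorem for factorial commutants alluded to in the closing paragraph of the introduction.

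The first step is to exhibit an embedding $j:\R^\mc{U}\hookrightarrow M^\mc{U}$. In the embeddable case, every II$_1$ factor contains a copy of $\R$, and this inclusion $\R\subseteq M$ lifts to $\R^\mc{U}\hookrightarrow M^\mc{U}$ using CH and saturation. In the McDuff case, $M\cong M\bar\otimes\R$ provides such an inclusion via the second tensor factor, with cleaner commutant structure. Next, given any embedding $\sigma:N\to \R^\mc{U}$, I form the composition $\tilde\sigma:=j\circ\sigma:N\to M^\mc{U}$. By hypothesis, $\tilde\sigma(N)'\cap M^\mc{U}$ is a factor. The transfer theorem is then invoked to deduce that $\sigma(N)'\cap \R^\mc{U}$ is itself a factor. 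Since $\sigma$ was arbitrary, every embedding of $N$ into $\R^\mc{U}$ has factorial relative commutant, and Brown's theorem gives $N\cong \R$.

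The main obstacle is the transfer step. A subalgebra of a factor need not be a factor, so a non-trivial central projection $z\in \sigma(N)'\cap \R^\mc{U}$ maps under $j$ to a projection in $\tilde\sigma(N)'\cap M^\mc{U}$, but need not remain central there: $j(z)$ commutes with $j(\sigma(N)'\cap \R^\mc{U})$, yet may fail to commute with elements of $\tilde\sigma(N)'\cap M^\mc{U}$ lying outside $j(\R^\mc{U})$. The transfer theorem must therefore exploit the specific structure of the inclusion $j$: in the McDuff case, the tensor decomposition lets one check that $1\otimes z$ commutes with the generators $M\otimes 1$ and $1\otimes(\sigma(N)'\cap \R^\mc{U})$ of the relevant commutant; in the embeddable case, a careful choice of $\R\subseteq M$ together with a saturation argument is needed to guarantee that centrality in the smaller commutant propagates to centrality in the larger one. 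This is the technical heart of the proof and is precisely what the transfer theorem developed at the end of the paper delivers.
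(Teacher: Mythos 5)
Your reduction works for the embeddable case and there it coincides with the paper's own argument (Corollary \ref{factorthru} and its corollaries): compose an arbitrary $\sigma:N\hookrightarrow\R^\u$ with $j:\R^\u\hookrightarrow M^\u$ induced by embeddings $\R\hookrightarrow M$, and transfer factoriality of the relative commutant downward. But the transfer step (Theorem \ref{transfer}(1)) requires $j$ to be \emph{existential}. When $M$ is embeddable this is available because $\R$ is an e.c.\ \emph{embeddable} factor, so each coordinate embedding $\R\hookrightarrow M$ is existential and existentiality passes to the induced map of ultraproducts (Lemma \ref{ultrapres}). Since CEP fails, however, $\R$ is not e.c.\ outright (it is not even locally universal), so for a McDuff $M$ that is not embeddable there is no reason the inclusion $1\otimes\R\subseteq M\otimes\R\cong M$, or the induced map $\R^\u\hookrightarrow M^\u$, should be existential. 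The hypothesis of the transfer theorem is simply unavailable in the McDuff case, and this is a genuine gap.

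The fallback you sketch for that case --- that $1\otimes z$ is central in $\tilde\sigma(N)'\cap M^\u$ because it commutes with the ``generators'' $M^\u\otimes 1$ and $1\otimes(\sigma(N)'\cap\R^\u)$ --- does not hold: the relative commutant $(1\otimes\sigma(N))'\cap(M\otimes\R)^\u$ is in general strictly larger than the algebra generated by those two pieces. An identity of that shape is exactly a w-spectral gap statement (Definition \ref{wspgap}), and it fails here because $\sigma(N)$ sits inside the copy of $\R^\u$ and $\R$ has property Gamma; elements of the relative commutant need not split across the tensor factors, and nothing forces them to commute with $1\otimes z$. The paper closes the McDuff case by an entirely different mechanism (Theorem \ref{eMfe}): since $M$ is McDuff, $\HOM(N,M^\u)$ carries a convex-like structure whose extreme points are precisely the classes with factorial relative commutant (Theorem \ref{exptchar}); the hypothesis makes every point extreme, which forces $\HOM(N,M^\u)$ to be a singleton, so $(N,M)$ is a Jung pair and $N\cong\R$ by \cite[Corollary 3.8]{ultraprodembed}. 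You need an argument of this kind, not the transfer theorem, to handle a non-embeddable McDuff $M$.
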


The paper is organized as follows. In \S\ref{vnaprelim} and \S\ref{mtprelim}, we provide (respectively) the relevant operator algebraic and model theoretic preliminaries. \S\ref{jungprop} discusses some observations and obtains the main results on the generalized Jung property (Theorems \ref{nonembedGJ} and \ref{yay}). We introduce the topometric structure on the space of automorphic equivalence classes of embeddings in \S\ref{HOM}, obtaining partial cardinality results. In \S\ref{genpairs}, we include a brief discussion on generalized Jung pairs of II$_1$ factors, presenting some open questions. In \S\ref{relres}, we collect past results from across the literature together with new results on super McDuff factors.  \S \ref{Tran} presents the transfer theorem for factorial relative commutants and exhibits numerous applications.

\subsection*{Acknowledgments} This work was initiated when the three authors met at the Banff International Research Station for the workshop \tql Classification Problems in von Neumann algebras\tqr\ during October 2019. We thank BIRS and the organizers for hosting a wonderful conference.  We would also like to thank  Bradd Hart and David Sherman for helpful conversations about this project and Adrian Ioana for allowing us to include his proof of Proposition \ref{adrian}. 


\section{Tracial von Neumann algebras and their ultraproducts}\label{vnaprelim}

\subsection{Basic definitions and examples} Given a subset $S \subseteq B(\mc{H})$ of bounded operators on a Hilbert space $\mc{H}$, we define the \textbf{commutant} of $S$, denoted $S'$, by \[S' :=\left\{a \in B(\mc{H})\  | \ sa = as \text{ for every } s \in S\right\}.\]  A \textbf{von Neumann algebra} is a unital, self-adjoint\footnote{$x \in M \Leftrightarrow x^* \in M$} subalgebra $M$ of $B(\mc{H})$ for some Hilbert space $\mc{H}$ with the property that $M'' := (M')'$ (the \textbf{bicommutant} of $M$) is equal to $M$.

Recall that a \textbf{tracial von Neumann algebra} is given by a pair $(M,\tau)$, where $M$ is a von Neumann algebra and $\tau$ is a faithful normal tracial state on $M$.   Given two tracial von Neumann algebras $(N,\tau)$ and $(M,\sigma)$, an \textbf{embedding} of $(N,\tau)$ into $(M,\sigma)$ is an injective unital $*$-homomorphism $\pi: (N,\tau) \hookrightarrow (M, \sigma)$ such that $\sigma \circ \pi = \tau$.  When context is clear, we drop the traces and just write $\pi: N\hookrightarrow M$.  Given an embedding $\pi: N \hookrightarrow M$, we will often consider its \textbf{relative commutant} $\pi(N)'\cap M$. Two embeddings $\pi_1,\pi_2: N \hookrightarrow M$ are \textbf{unitarily equivalent} if there exists a unitary $u \in M$ such that, for every $x \in N, \pi_1(x) = u^*\pi_2(x) u$. Two embeddings $\pi_1,\pi_2: N \hookrightarrow M$ are \textbf{automorphically equivalent} if there exists an automorphism $\alpha \in \text{Aut}(M)$ such that $\pi_1 = \alpha \circ \pi_2$.

Due to the general result that any von Neumann algebra can be realized as a direct integral of factors over its center, the study of tracial von Neumann algebras is often reduced to the study of so-called \tql II$_1$ factors.\tqr  A \textbf{II$_1$ factor} is an infinite-dimensional tracial von Neumann algebra $(M,\tau)$ that is also a \textbf{factor}: the center of $M$, denoted $\mc{Z}(M)$, is trivial, that is, $\mc{Z}(M) = \mb{C}$.  The property of $M$ being a factor is commonly expressed by the equality $M' \cap M = \mb{C}$.  

The following fact provides two useful characterizations of II$_1$ factors:

\begin{fact}\label{sametrace}
Let $(M,\tau)$ be an infinite-dimensional tracial von Neumann algebra.  The following are equivalent:

\begin{enumerate}
    \item $M$ is a II$_1$ factor;
    \item $M$ has a unique faithful normal tracial state;
    \item For any pair of projections $p,q \in M$ with $\tau(p) = \tau(q)$, there is a unitary $u \in M$ such that $p = u^*qu$.
\end{enumerate}  
\end{fact}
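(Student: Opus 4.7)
The plan is to establish the four implications (1)$\Rightarrow$(3), (1)$\Rightarrow$(2), (2)$\Rightarrow$(1), and (3)$\Rightarrow$(1); the forward directions rest on the Murray--von Neumann calculus in a factor, while the converses are contrapositive constructions powered by a nontrivial central projection.

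For (1)$\Rightarrow$(3), I would invoke the comparability theorem for projections in a II$_1$ factor: given $\tau(p)=\tau(q)$, one obtains a partial isometry $v\in M$ with $v^*v=p$ and $vv^*=q$; since also $\tau(1-p)=\tau(1-q)$, a second partial isometry $w$ with $w^*w=1-p$ and $ww^*=1-q$ yields the desired unitary $u:=v+w$ with $u^*qu=p$. For (1)$\Rightarrow$(2), given two faithful normal tracial states $\tau_1,\tau_2$ on the II$_1$ factor $M$, applying the preceding step to $\tau_1$ shows that projections of equal $\tau_1$-trace are unitarily equivalent and hence have equal $\tau_2$-trace. Using that $\tau_1$ maps the projections onto $[0,1]$, this produces a well-defined function $f\colon[0,1]\to[0,1]$ with $\tau_2(p)=f(\tau_1(p))$; additivity of $f$ on orthogonal subsums together with monotonicity and normality will force $f(t)=t$, so $\tau_1$ and $\tau_2$ agree on projections and therefore on all of $M$ by spectral theory.

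For the converses I would argue contrapositively, starting from a nontrivial central projection $z\in M$ with $t:=\tau(z)\in(0,1)$. To refute (2), the family
\[\tau_\alpha(x):=\tfrac{\alpha}{t}\tau(xz)+\tfrac{1-\alpha}{1-t}\tau(x(1-z)),\quad \alpha\in(0,1),\]
furnishes a one-parameter family of distinct faithful normal tracial states: centrality of $z$ gives the trace identity, positivity of the coefficients gives faithfulness, and $\tau_\alpha(z)=\alpha$ separates them. To refute (3), I would observe that $u^*zu=z$ for every unitary $u\in M$ by centrality, so the only projection unitarily equivalent to $z$ is $z$ itself; it then suffices to exhibit a projection $q\ne z$ with $\tau(q)=\tau(z)$, which I would construct in the form $q=(z-p_1)+p_2$ for projections $p_1\le z$ and $p_2\le 1-z$ satisfying $\tau(p_1)=\tau(p_2)>0$.

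The main obstacle I expect is this last construction. Producing matching nonzero projections in the corners $M_z=zMz$ and $M_{1-z}=(1-z)M(1-z)$ requires a case analysis depending on where the infinite-dimensionality of $M$ resides, together with standard facts about the range of the trace on projections in a finite von Neumann algebra. Once this step is secured, the remaining implications are essentially formal given the Murray--von Neumann calculus and the surjectivity of the trace onto $[0,1]$ for projections of a II$_1$ factor.
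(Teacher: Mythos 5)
The paper states this result as a quoted background fact and supplies no proof, so there is nothing internal to compare against; I assess your argument on its own terms. Your implications $(1)\Rightarrow(3)$, $(1)\Rightarrow(2)$ and $(2)\Rightarrow(1)$ are correct and complete as sketched: the patching $u=v+w$ of the two partial isometries, the well-definedness, additivity and monotonicity of $f$ forcing $f=\mathrm{id}$, and the family $\tau_\alpha$ are all standard. (You also implicitly use, correctly, that an infinite-dimensional factor carrying a faithful normal tracial state is automatically of type II$_1$, so that ``not a II$_1$ factor'' reduces to ``not a factor.'')

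The gap sits exactly at the step you flag as the main obstacle, and it cannot be repaired: the implication $(3)\Rightarrow(1)$ is false as literally stated. Take a II$_1$ factor $N$ and set $M=\mathbb{C}\oplus N$ with $\tau(a\oplus x)=\tfrac{9}{10}a+\tfrac{1}{10}\tau_N(x)$. Every projection of $M$ has the form $\epsilon\oplus f$ with $\epsilon\in\{0,1\}$, so its trace lies in $[0,\tfrac{1}{10}]\cup[\tfrac{9}{10},1]$; equality of traces then forces equality of the $\epsilon$'s and hence of $\tau_N(f)$'s, and any two such projections are conjugate by a unitary of the form $1\oplus u$. Thus $(3)$ holds while $M$ is an infinite-dimensional non-factor. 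Concretely, for $z=1\oplus 0$ your desired pair $p_1\le z$, $p_2\le 1-z$ with $\tau(p_1)=\tau(p_2)>0$ does not exist (the only nonzero projection under $z$ is $z$ itself, of trace $\tfrac{9}{10}$, while every projection under $1-z$ has trace at most $\tfrac{1}{10}$), and no alternative witnessing pair can exist since $(3)$ is simply true in this algebra. The step, and the equivalence, are rescued if one additionally assumes $M$ is diffuse (then the trace maps the projections under any projection $p$ onto all of $[0,\tau(p)]$ and your $q=(z-p_1)+p_2$ works), or if condition $(3)$ is read as quantified over all faithful normal tracial states (choose the one giving $z$ and $1-z$ equal trace). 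As written, with $\tau$ fixed and only infinite-dimensionality assumed, no proof of $(3)\Rightarrow(1)$ is possible, and you should either add a diffuseness hypothesis or reformulate $(3)$.
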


Given a II$_1$ factor $M$\footnote{or, more generally, a tracial von Neumann algebra $(M,\tau)$} with faithful normal tracial state $\tau$, a fundamental tool in the analysis of $M$ is the \textbf{Hilbert-Schmidt norm} on $M$, denoted $||\cdot||_{2,\tau}$, defined by \[||x||_{2,\tau} = \sqrt{\tau(x^*x)}, \quad x\in M.\]  When context is clear, we will suppress the $\tau$ in the subscript and simply write $||\cdot ||_2$.  The Hilbert-Schmidt norm induces a pre-Hilbert space structure on a II$_1$ factor with inner product given by $\left\langle x | y \right\rangle := \tau(y^*x)$. We denote the completion of $M$ under the Hilbert-Schmidt norm by $L^2(M,\tau)$ (or simply $L^2(M)$). A II$_1$ factor is \textbf{separably acting} if it can be faithfully represented on a separable Hilbert space.  Evidently, a II$_1$ factor $M$ is separably acting if and only if $L^2(M)$ is separable. It is a common abuse of terminology--one which we willingly commit in this article--to call a separably acting II$_1$ factor \textbf{separable}.

Given two elements $x, y$ in a von Neumann algebra, we will often have reason to consider their \textbf{commutator}, denoted $[x,y]$, given by \[[x,y] = xy - yx.\]

\begin{example}\label{Rexample}
  The most well-known example of a II$_1$ factor is the \tql separably acting hyperfinite II$_1$ factor,\tqr denoted by $\R$.  Murray and von Neumann showed in \cite{mvn4} that $\R$ is the unique separable hyperfinite II$_1$ factor.  To sketch a construction, consider the infinite tensor product $\ds \bigotimes_\mb{N} \mb{M}_2$.  Using the unique tracial state, form a GNS representation\footnote{Gelfand-Naimark-Segal representation--see \cite{davidson}.} of $\ds \bigotimes_\mb{N} \mb{M}_2$ and take the bicommutant.  By the uniqueness of $\R$, one could also construct $\R$ by considering a tensor product $\ds \bigotimes_\mb{N} \mb{M}_{k(n)}$ where $\left\{k(n)\right\}$ is any sequence of natural numbers with $k(n) \geq 2$ and taking the bicommutant.  In addition to this construction, there are several other ways to realize $\R$.
\end{example}

Next, we turn to address the term \tql hyperfinite\tqr appearing in the previous example.  Consider the following two definitions:

\begin{defn}\label{amendef}
 \
\begin{enumerate}
\item  A von Neumann algebra $M$ is \textbf{hyperfinite} if it can be expressed as the $\sigma$-weak closure of an increasing union of finite-dimensional subalgebras.

\item A von Neumann algebra $M$ is \textbf{injective} if for any inclusion $X \subseteq Y$ of operator systems and ucp\footnote{For every $n \in \mb{N}$, any amplification $\varphi^{(n)}: \mb{M}_n(X) \rightarrow \mb{M}_n(M)$ given by $\varphi^{(n)}((x_{ij})) = (\varphi(x_{ij}))$ is positive, that is, positive elements are sent to positive elements.  See \cite{paulsen}.} map $\varphi: X \rightarrow M$, there exists a ucp map $\tilde{\varphi}: Y \rightarrow M$ such that $\tilde{\varphi}|_X = \varphi$.

\end{enumerate}
\end{defn}

 It is well-known that the above are equivalent.  This equivalence is due to the groundbreaking result from \cite{connes}, but we should also mention \cite{choeff,wass,newproof} when discussing this result.  There are more conditions well-known to be equivalent to hyperfiniteness, such as amenability and semidiscreteness, but since they make no appearance in this paper, we refrain from defining them.
 
 Another class of II$_1$ factors relevant to this article is the class of \tql McDuff\tqr\  II$_1$ factors.  Such factors were first studied in McDuff's revolutionary article \cite{mcduff}.  A II$_1$ factor $M$ is \textbf{McDuff} if $M \cong M \otimes \R$.\footnote{This is actually a theorem appearing in \cite{mcduff}; the original definition is that $M$ is McDuff if it has a pair of central sequences that do not commute with each other.} From the construction in Example \ref{Rexample}, it can be deduced that $\R$ is McDuff.
 
\subsection{Ultraproducts of tracial von Neumann algebras}\label{ultraprelim}

In this subsection, we will discuss the ultraproduct construction for tracial von Neumann algebras. Let $\mc{U}$ denote a nonprincipal ultrafilter\footnote{See Appendix A of \cite{brownozawa} for an operator algebraist-friendly introduction to ultrafilters} on $\mb{N}$.  For each $k \in \mb{N}$, let $(M_k,\tau_k)$ be a tracial von Neumann algebra, and let $||\cdot ||_{2,k}$ denote the induced Hilbert-Schmidt norm on $M_k$. Consider the sequence space \[\prod_{k \in \mb{N}}^\infty M_k:= \left\{(x_k)_{k \in \mb{N}}: x_k \in M_k \text{ and } \sup_k ||x_k|| < \infty\right\}.\]  We define the \textbf{tracial ultraproduct} of the $(M_k,\tau_k)$'s (with respect to $\u$), denoted $\ds \prod_{k\rightarrow \mc{U}}(M_k,\tau_k)$ (or simply $\ds\prod_{k\rightarrow \mc{U}}M_k$ when the context is clear), to be given by \[\prod_{k\rightarrow \mc{U}}(M_k,\tau_k) := \left(\prod_{k \in \mb{N}}^\infty M_k \right)\Big/ \mc{I}_\mc{U},\] where \[\ds \mc{I}_\mc{U} := \left\{ (x_k) \in \prod_{k \in \mb{N}}^\infty M_k : \lim_{k \rightarrow \mc{U}} ||x_k||_{2,k} = 0\right\}.\] Given a sequence $\ds(x_k) \in \prod_{k \in\mb{N}}^\infty M_k$, let $(x_k)_\mc{U}$ denote the coset of $(x_k)$ in $\ds\prod_{k\rightarrow \mc{U}}(M_k,\tau_k)$.  The ultraproduct $\ds \prod_{k\rightarrow \u} M_k$ has a natural tracial state $\tau_\u$ given by $\tau_\u((x_k)_\u) = \lim_{k\rightarrow \u} \tau_k(x_k)$.  This tracial state induces the Hilbert-Schmidt norm $||\cdot||_{2,\tau_\u}$ on $\ds \prod_{k\rightarrow \u} M_k$.

We now take this opportunity to record some facts about (tracial)\footnote{We will exclusively consider tracial ultraproducts in this article, so in the sequel we will drop the \tql tracial\tqr modifier.} ultrapowers.

\begin{fact}
If $M_k$ is a II$_1$ factor for every $k \in \mb{N}$, then $\ds \prod_{k\rightarrow \u} M_k$ is also a II$_1$ factor.
\end{fact}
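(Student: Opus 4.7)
The plan is to apply Fact~\ref{sametrace}: the ultraproduct $\ds\prod_{k\to\u}M_k$ already carries the faithful normal tracial state $\tau_\u$, so it suffices to verify that it is infinite-dimensional and satisfies the projection-conjugation condition (3).

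Infinite-dimensionality is the easier half. Since each $M_k$ is a II$_1$ factor, for every $t\in[0,1]$ I can choose a projection $p_k^{(t)}\in M_k$ with $\tau_k(p_k^{(t)})=t$ (the trace is surjective onto $[0,1]$ on projections in any II$_1$ factor, a standard consequence of Fact~\ref{sametrace} applied to $M_k$). The classes $(p_k^{(t)})_\u$ then form an uncountable family of projections in the ultraproduct with pairwise distinct traces, ruling out finite-dimensionality.

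For the factor property, the plan is as follows. Given projections $p,q\in\ds\prod_{k\to\u}M_k$ with $\tau_\u(p)=\tau_\u(q)=:t$, I would first lift them to bounded sequences of \emph{projections} $(p_k),(q_k)$ by applying the Borel function $\chi_{[1/2,\infty)}$ to arbitrary self-adjoint lifts—a standard maneuver, since elements $\|\cdot\|_{2,k}$-close to projections map to nearby genuine projections under this functional calculus. Then $\tau_k(p_k),\tau_k(q_k)\to t$ along $\u$, but the traces need not agree exactly. To fix this, on the $\u$-large set where $\tau_k(p_k)\ge\tau_k(q_k)$ I would replace $p_k$ by a subprojection $p_k'\le p_k$ with $\tau_k(p_k')=\tau_k(q_k)$—such a subprojection exists inside the II$_1$ factor $M_k$—and symmetrically on the complementary set. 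Since $\|p_k-p_k'\|_{2,k}^2=\tau_k(p_k)-\tau_k(q_k)\to 0$ along $\u$, the new lift still satisfies $(p_k')_\u=p$. Finally, Fact~\ref{sametrace}(3) applied inside each $M_k$ produces unitaries $u_k\in M_k$ with $u_kp_k'u_k^*=q_k$, and the class $(u_k)_\u$ is a unitary in the ultraproduct conjugating $p$ to $q$.

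The only delicate point is the projection-lifting-and-trace-matching step; once this is handled, the remainder is a routine descent of the characterization in Fact~\ref{sametrace}(3) from each fibre $M_k$ to the ultraproduct. No structure beyond the ultraproduct construction and the II$_1$ factor characterization already introduced in the excerpt is required.
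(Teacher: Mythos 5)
Your argument is correct, and since the paper records this statement as a Fact without proof there is nothing to compare it against; what you have written is the standard verification. Two small remarks: the projection-lifting-with-trace-matching step you flag as delicate is exactly Fact \ref{stable}(2), which the paper records immediately afterward, so you could simply cite it rather than re-derive it via functional calculus; and your appeal to Fact \ref{sametrace} tacitly uses that $\prod_{k\rightarrow\u}M_k$ is already a tracial von Neumann algebra with faithful normal trace $\tau_\u$, which is part of the ultraproduct construction the paper (and you) take as given.
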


\begin{fact}\label{stable}
For each $k \in \mb{N}$, let $M_k$ be a II$_1$ factor.
\begin{enumerate}
    \item If $\ds u \in \prod_{k\rightarrow \u} M_k$ is a unitary, then there exist unitaries $u_k \in M_k$ for every $k \in \mb{N}$ such that $u = (u_k)_\u$.
    \item If $\ds p \in \prod_{k\rightarrow \u} M_k$ is a projection, then there exis projections $p_k \in M_k$ for every $k \in \mb{N}$ such that $p = (p_k)_\u$.  Furthermore, each $p_k$ can be chosen such that $\tau_k(p_k) = \tau_\u(p)$.
\end{enumerate}

\end{fact}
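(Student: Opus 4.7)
The plan is to start, in each part, with a bounded lift $(x_k)$ of the given element and then to replace each $x_k$ coordinate-wise by an element $u_k$ or $p_k$ of the desired algebraic type, chosen so that the discrepancy vanishes in $\|\cdot\|_{2,k}$ along $\u$. The essential ingredients will be polar decomposition in each II$_1$ factor $M_k$, Borel functional calculus, and the fact (a consequence of Fact~\ref{sametrace}(3)) that in a II$_1$ factor any two projections with equal trace are Murray--von Neumann equivalent, so in particular any partial isometry can be extended to a unitary as soon as the orthogonal complements of its initial and final projections have equal trace.

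For part (1), I would rescale so that $\|x_k\|\le 1$ (this does not change the ultrapower class since $u$ is a contraction) and polar-decompose $x_k = v_k|x_k|$ inside $M_k$. Because $u^*u = 1$ in the ultrapower, $\|x_k^*x_k - 1\|_{2,k}\to 0$ along $\u$, and the operator inequality $(1-a)^2 \le 1-a^2$ valid for $0\le a\le 1$ upgrades this to $\||x_k| - 1\|_{2,k}\to 0$. Writing $p_k := v_k^*v_k$ (the support of $|x_k|$) and $q_k := v_kv_k^*$, the bound $1-p_k \le 1-|x_k|$ (a positive contraction is dominated by its support projection) gives $\tau_k(1-p_k)\to 0$, and symmetrically $\tau_k(1-q_k)\to 0$. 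Fact~\ref{sametrace}(3) then supplies a partial isometry $w_k \in M_k$ with initial projection $1-p_k$ and final projection $1-q_k$, and by construction $v_k$ and $w_k$ have mutually orthogonal initial and final projections, so $u_k := v_k + w_k$ is a unitary in $M_k$. The triangle inequality
\[
\|u_k - x_k\|_{2,k} \le \|v_k(1-|x_k|)\|_{2,k} + \|w_k\|_{2,k} \le \|1-|x_k|\|_{2,k} + \tau_k(1-p_k)^{1/2}
\]
then tends to $0$ along $\u$, giving $u = (u_k)_\u$.

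For part (2), I would first symmetrize the lift to $(y_k)$ with $y_k = y_k^*$ and $C := \sup_k\|y_k\| < \infty$, so that $\|y_k^2 - y_k\|_{2,k}\to 0$. Defining $p_k := \chi_{[1/2,\infty)}(y_k)$ by Borel functional calculus produces an honest projection in each $M_k$, and a pointwise bound $(\chi_{[1/2,\infty)}(t)-t)^2 \le K(C)(t^2-t)^2$ on $[-C,C]$ (obtained by comparing orders of vanishing at $t=0,1$) yields $\|p_k - y_k\|_{2,k}^2 \le K(C)\|y_k^2-y_k\|_{2,k}^2\to 0$, hence $(p_k)_\u = p$. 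For the trace-matching addendum, I would set $t := \tau_\u(p)$ and, for each $k$, either remove from $p_k$ a subprojection of trace $\tau_k(p_k) - t$ (when this is non-negative) or adjoin to $p_k$ a subprojection of $1 - p_k$ of trace $t - \tau_k(p_k)$ (otherwise); such subprojections exist in any II$_1$ factor via Fact~\ref{sametrace}. The resulting $p_k'$ then satisfies $\tau_k(p_k') = t$ exactly and $\|p_k - p_k'\|_{2,k}^2 = |\tau_k(p_k) - t|\to 0$, so $(p_k')_\u = p$. The only mildly technical steps will be the two 2-norm comparisons--lifting $\|x_k^*x_k - 1\|_2\to 0$ to $\||x_k|-1\|_2\to 0$ in (1), and bounding $\|p_k - y_k\|_2$ by $\|y_k^2 - y_k\|_2$ in (2)--but both are routine spectral estimates on uniformly bounded positive or self-adjoint elements.
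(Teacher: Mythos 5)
Your proof is correct, and it is the standard argument; the paper states this Fact without proof, treating it as well known. The only point worth making explicit is the opening rescaling in part (1): to justify that a contraction in the ultraproduct admits a representative of operator norm at most $1$, you can replace $x_k$ by $v_k\min(|x_k|,1)$ and note that $\|(|x_k|-1)_+\|_{2,k}\le \||x_k|^2-1\|_{2,k}\to 0$ (or observe that the very estimates you use later, e.g.\ $(1-t)^2\le(1-t^2)^2$ for $t\ge 0$, make the rescaling unnecessary). Everything else --- the polar decomposition, the spectral inequalities, the Murray--von Neumann patching of the partial isometry to a unitary, and the trace adjustment of the $p_k$ --- checks out.
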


If $(M_k,\tau_k) = (M,\tau)$ for every $k \in \mb{N}$, we write $\ell^\infty(M)$ for $\ds\prod_{k \in \mb{N}}^\infty M_k$ and $M^\u$ for $\ds \prod_{k\rightarrow \u} M_k$, and we call $M^\u$ the \textbf{ultrapower} of $M$ (with respect to $\u$).  This article will mostly address ultrapowers.  It is important to note that there is always a canonical embedding of $M$ into its ultrapower $M^\u$ given by the \textbf{diagonal} (or \textbf{constant sequence}) \textbf{embedding} $x \mapsto (x)_\mc{U}$ (the coset of the constant sequence with $x$ in every entry).  We sometimes abuse notation and write $M\subset M^\u$ by identifying $M$ with its image under the diagonal embedding.

Whether or not the isomorphism type of the ultrapower depends on the choice of ultrafilter is sensitive to set theory.  More specifically, given a separable II$_1$ factor, all of its ultrapowers with respect to nonprincipal ultrafilters on $\mathbb N$ are isomorphic if and only if CH holds.  (See \cite{mtoa1}.)  That being said, since we are always working under the assumption that CH holds, in this paper, we make the following convention:

\begin{conv}
Throughout this paper, $\u$ denotes a fixed nonprincipal ultrafilter on $\mathbb N.$
\end{conv}

A benefit of considering ultrapowers of II$_1$ factors is that the ultrapower setting provides a formal way to concisely express many approximation properties.  For example, a II$_1$ factor has \textbf{Property Gamma} if $M'\cap M^\u \neq \mb{C}$ and a II$_1$ factor is McDuff if and only if $M'\cap M^\u$ is nonabelian.

\subsection{Survey of  $\HOM(N,M^\mc{U})$}\label{convex}

As mentioned in the introduction, the space \linebreak $\HOM(N,M^\u)$ plays a significant role in the proof of our main results.  This space was first studied by Brown in \cite{topdyn} in the case that $M=\R$. Let $N, P$ be II$_1$ factors.  Let $\HOM(N,P)$ denote the space of all embeddings $\pi: N \hookrightarrow P$ modulo unitary equivalence.  Given an embedding $\pi: N\hookrightarrow P$, denote its unitary equivalence class by $[\pi]$. We can endow this space with a topology best described as \tql point-$||\cdot||_2$ convergence along representatives:'' $[\pi_n]\rightarrow [\pi]$ in $\HOM(N,P)$ if there exist representatives $\pi_n' \in [\pi_n]$ such that, for every $x \in N$, $||\pi_n'(x) - \pi(x)||_2 \rightarrow 0$.  In \cite{topdyn}, Brown considered the space $\HOM(N,\R^\u)$ where $N$ is a separably acting embeddable II$_1$ factor.  One of the main results of \cite{topdyn} was that $\HOM(N,\R^\u)$ satisfies the axioms for a convex-like structure.\footnote{With no ambient linear space containing $\HOM(N,\R^\u)$, Brown defined axioms that every convex space should satisfy.  It was subsequently shown in \cite{capfri} that a space satisfying these axioms can be realized as a convex subset of a Banach space.}  In \cite{brocap} and \cite{saa}, this convex structure was extended to the more general setting of $\HOM(N,M^\u)$, where $M$ is a McDuff II$_1$ factor.  

We now define convex combinations in $\HOM(N,M^\u)$ for $M$ a McDuff II$_1$ factor.  Let $\sigma: M\otimes \R \rightarrow M$ be an isomorphism such that the map $x\rightarrow \sigma(x\otimes 1_\R)$ is weakly approximately unitarily equivalent to $\text{id}_M$.\footnote{This means that there is a sequence of unitaries $\left\{u_n\right\}$ in $M$ such that, for every $x \in N, ||u_n^*\sigma(x\otimes 1_\R)u_n - x||_2 \rightarrow 0$.} 

\begin{defn}[\cite{topdyn,saa}]

Given $[\pi],[\rho] \in \HOM(N,M^\u)$ and $t \in [0,1]$ we put \[t[\pi] + (1-t)[\rho] := [\sigma(\pi\otimes p) + \sigma(\rho \otimes p^\perp)],\] where $p$ is a projection in $\R^\u$ with trace $t$, $p^\perp = 1_{\R^\u} - p$, and $\sigma(\pi \otimes p)$ is the map given by $x\mapsto \sigma(\pi(x)\otimes p)$ (likewise for $\sigma(\rho\otimes p^\perp)$).
\end{defn}

This operation is well-defined and satisfies the axioms for a convex-like structure.  In \cite{topdyn}, Brown established a characterization of extreme points in the convex structure $\HOM(N,\R^\u)$ which was later extended to $\HOM(N,M^\u)$, where $M$ is a McDuff II$_1$ factor, in \cite{saa} and can be stated as follows:

\begin{thm}[\cite{topdyn,saa}]\label{exptchar}
Let $M$ be a McDuff II$_1$ factor. An equivalence class $[\pi] \in \HOM(N,M^\u)$ is extreme if and only if the relative commutant $\pi(N)'\cap M^\u$ is a factor.
\end{thm}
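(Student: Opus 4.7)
The plan is to prove the two directions separately, exploiting the concrete formula $t[\pi]+(1-t)[\rho]=[\sigma(\pi\otimes p)+\sigma(\rho\otimes p^\perp)]$ defining the convex structure. The guiding observation is that the projection $e:=\sigma(1_M\otimes p)\in M^\u$ always lies in the relative commutant of the embedding $\sigma(\pi\otimes p)+\sigma(\rho\otimes p^\perp)$, so convex decompositions of $[\pi]$ produce projections in $\pi(N)'\cap M^\u$ with special structure, and conversely projections in the commutant yield convex decompositions once they are realized via the McDuff structure.

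For $(\Leftarrow)$, I assume $\pi(N)'\cap M^\u$ is a factor and that $[\pi]=t[\pi_1]+(1-t)[\pi_2]$ with $t\in(0,1)$. After passing to a unitarily equivalent representative, I may assume $\pi(x)=\sigma(\pi_1(x)\otimes p)+\sigma(\pi_2(x)\otimes p^\perp)$. Since $\sigma$ is a $*$-homomorphism, $e=\sigma(1\otimes p)$ commutes with $\pi(N)$, so $e\in\pi(N)'\cap M^\u$, and the compression $x\mapsto e\pi(x)e=\sigma(\pi_1(x)\otimes p)$, viewed in $eM^\u e$ and identified with $M^\u$ via the McDuff isomorphism (using $p\R p\cong \R$), recovers $\pi_1$ up to unitary equivalence. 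By Fact \ref{sametrace}(3) applied inside the factor $\pi(N)'\cap M^\u$, any two projections of trace $t$ in this commutant are unitarily equivalent through a unitary in $\pi(N)'\cap M^\u$; such unitaries fix $\pi$ and reshuffle the compression, allowing me to argue that the compression embedding is unitarily equivalent to $\pi$ itself, so $[\pi_1]=[\pi]$, and symmetrically $[\pi_2]=[\pi]$.

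For $(\Rightarrow)$, I argue the contrapositive: if $\pi(N)'\cap M^\u$ is not a factor, pick a non-trivial \emph{central} projection $q\in\pi(N)'\cap M^\u$ with trace $t\in(0,1)$. Fix a projection $p\in \R^\u$ with $\tau(p)=t$ and set $e_0=\sigma(1\otimes p)\in M^\u$. Since $M^\u$ is a II$_1$ factor, Fact \ref{sametrace}(3) produces a unitary $u\in M^\u$ with $uqu^*=e_0$; replacing $\pi$ by $u\pi u^*$ I may assume $q=e_0$. Because $q$ commutes with $\pi(N)$, I have $\pi(x)=q\pi(x)q+q^\perp\pi(x)q^\perp$, and after identifying the corners $qM^\u q$ and $q^\perp M^\u q^\perp$ with $M^\u$ via the McDuff isomorphism, I obtain embeddings $\pi_1,\pi_2:N\to M^\u$ satisfying $\pi(x)=\sigma(\pi_1(x)\otimes p)+\sigma(\pi_2(x)\otimes p^\perp)$, giving $[\pi]=t[\pi_1]+(1-t)[\pi_2]$. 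The centrality of $q$ in $\pi(N)'\cap M^\u$ (not merely its commuting with $\pi(N)$) is what forces this decomposition to be non-trivial, since otherwise a unitary implementing $\pi_1\sim\pi$ could be used to move $q$ by an element of the commutant to a projection incompatible with centrality.

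The main obstacle is the careful identification of corners $qM^\u q\cong M^\u$ in a manner compatible with the definition of the convex combination via $\sigma$. The McDuff hypothesis is indispensable here: the isomorphism $\sigma:M\otimes\R\to M$ together with $p\R p\cong\R$ gives an isomorphism $\sigma(1\otimes p)M\sigma(1\otimes p)\cong M$, which when lifted to the ultrapower level identifies $qM^\u q$ with $M^\u$ in exactly the way needed to convert a compression into a factor in the convex-combination formula. A secondary technical subtlety in $(\Rightarrow)$ is rigorously deducing non-triviality of the produced decomposition; this is where centrality (rather than bare commutation) of $q$ is essential, and mirrors the analogous step in Brown's original argument in \cite{topdyn} adapted to McDuff codomains as in \cite{saa}.
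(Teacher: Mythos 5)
A preliminary remark: the paper offers no proof of Theorem \ref{exptchar} at all --- it is imported wholesale from \cite{topdyn} and \cite{saa} --- so your proposal can only be measured against those sources, whose strategy (passing back and forth between convex decompositions of $[\pi]$ and projections such as $\sigma(1_M\otimes p)$ in $\pi(N)'\cap M^\u$) you have correctly identified. However, in both directions you stop exactly one step short of the actual content. In $(\Leftarrow)$, factoriality of $\pi(N)'\cap M^\u$ only gives you that all of its trace-$t$ projections are conjugate by unitaries of the relative commutant, and that such unitaries fix $\pi$ while moving the compressing projection. To conclude $[e\pi e]=[\pi]$ you must in addition exhibit \emph{one} trace-$t$ projection $f\in\pi(N)'\cap M^\u$ for which $[f\pi f]=[\pi]$ is already known; only then does conjugating $e$ onto $f$ finish the argument. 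The existence of such an $f$ is precisely the idempotent law $t[\pi]+(1-t)[\pi]=[\pi]$, i.e., the unitary equivalence of $\sigma(\pi\otimes 1_{\R^\u})$ with $\pi$ inside $M^\u$, and this is where the defining requirement on $\sigma$ (that $x\mapsto\sigma(x\otimes 1_\R)$ be weakly approximately unitarily equivalent to $\operatorname{id}_M$) together with a reindexing/saturation argument gets consumed. As written, ``the compression embedding is unitarily equivalent to $\pi$ itself'' is asserted rather than derived, and it is the heart of this direction. (A minor point: Fact \ref{sametrace}(3) is stated for infinite-dimensional II$_1$ factors, whereas $\pi(N)'\cap M^\u$ could a priori be a finite type I factor; the statement you need still holds there, but you should say so.)

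In $(\Rightarrow)$, the decomposition $[\pi]=t[q\pi q]+(1-t)[q^\perp\pi q^\perp]$ is fine, but its non-triviality is the entire point, and ``a projection incompatible with centrality'' is a gesture, not an argument. The actual mechanism is: if $[\pi_1]=[\pi_2]$, take a unitary $u\in M^\u$ with $u\pi_1(x)u^*=\pi_2(x)$, pick (say for $t\le 1/2$) a partial isometry $v\in\R^\u$ with $v^*v=p$ and $vv^*\le p^\perp$, and verify that $w:=\sigma(u\otimes v)$ commutes with $\pi(N)=\sigma(\pi_1(N)\otimes p)+\sigma(\pi_2(N)\otimes p^\perp)$, so that $w$ is a partial isometry in $\pi(N)'\cap M^\u$ with $w^*w=q$ and $ww^*\le q^\perp$. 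Centrality of $q$ in the relative commutant then gives $w=wq=qw$, hence $ww^*\le q$, hence $ww^*=0$ and so $q=w^*w=0$, a contradiction. This computation, together with its compatibility with the corner identifications you rightly flag as delicate, is the substance of the direction and needs to be carried out rather than alluded to.
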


This says that embeddings of $N$ into $M^\u$ with factorial relative commutant are the irreducible objects in this context. 

The following terminology will prove useful throughout this paper:

\begin{defn}
Suppose that $N$ embeds into $M^\u$.  We say that $(N,M)$ is a:
\begin{enumerate}
    \item \textbf{factorial commutant pair} if there is an embedding $\pi:N\hookrightarrow M^\u$ such that $\pi(N)'\cap M^\u$ is a factor;
    \item \textbf{strong factorial commutant pair} if every embedding $\pi:N\hookrightarrow M^\u$ is such that $\pi(N)'\cap M^\u$ is a factor.
\end{enumerate}
\end{defn}

Theorem \ref{exptchar} yields the following characterization of $\R$:

\begin{cor}(\cite[Corollary 5.3]{topdyn})\label{Relcom}
For any separable embeddable II$_1$ factor $N$, we have that $(N,\R)$ is a strong factorial commutant pair if and only if $N\cong \R$.
\end{cor}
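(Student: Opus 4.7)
The plan is to combine Theorem \ref{exptchar} with Jung's theorem via the following soft observation about convex-like structures: if every point of a convex-like structure is extreme, then the structure must be a singleton. Once this is in hand, the two directions of the corollary fall out of results already in the excerpt.

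For the easier direction ($\Leftarrow$), suppose $N\cong \R$. The introduction records the standard fact that any two embeddings of $\R$ into $\R^\u$ are unitarily equivalent. Hence, for any embedding $\pi:\R\hookrightarrow \R^\u$, the relative commutant $\pi(\R)'\cap \R^\u$ is unitarily conjugate to the commutant $\R'\cap \R^\u$ of the diagonal embedding. The latter is a II$_1$ factor by Dixmier--Lance's theorem that $\R$ is super McDuff, so $(\R,\R)$ is a strong factorial commutant pair.

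For the harder direction ($\Rightarrow$), assume $(N,\R)$ is a strong factorial commutant pair. By Theorem \ref{exptchar} every class $[\pi]\in \HOM(N,\R^\u)$ is extreme in the convex-like structure. I claim this forces $|\HOM(N,\R^\u)|=1$. If there were distinct classes $[\pi]\neq [\rho]$, form the midpoint $[\mu]:=\tfrac{1}{2}[\pi]+\tfrac{1}{2}[\rho]$, which lies in $\HOM(N,\R^\u)$ by the convex structure recalled in \S\ref{convex}. By the standing hypothesis $[\mu]$ is extreme; but the defining property of extreme points applied to the nontrivial convex combination $[\mu]=\tfrac{1}{2}[\pi]+\tfrac{1}{2}[\rho]$ forces $[\mu]=[\pi]=[\rho]$, contradicting $[\pi]\neq[\rho]$. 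Since $N$ is embeddable, there is at least one embedding, so exactly one equivalence class, meaning any two embeddings of $N$ into $\R^\u$ are unitarily equivalent. Thus $(N,\R)$ is a Jung pair, and Jung's theorem concludes $N\cong \R$.

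The heavy lifting is entirely outsourced: Brown's characterization of extreme points (Theorem \ref{exptchar}) supplies the bridge between operator-algebraic factoriality of relative commutants and the convex-geometric notion of extremeness, while Jung's theorem supplies the final identification with $\R$. The only genuinely new step is the trivial convex observation above; no technical obstacle remains once both external ingredients are invoked. One small sanity check to keep in mind while writing it out is that the midpoint construction is available because $\R$ is McDuff, so that Definition--style convex combinations in $\HOM(N,\R^\u)$ are indeed defined.
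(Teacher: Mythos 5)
Your proof is correct and follows essentially the same route the paper takes: the statement is derived from Brown's extreme-point characterization (Theorem \ref{exptchar}) via the observation that a convex-like structure in which every point is extreme is a singleton, followed by Jung's theorem — exactly the argument the paper spells out when proving its generalization, Theorem \ref{eMfe}. The converse direction via unitary equivalence of embeddings of $\R$ and Dixmier--Lance is also the standard one.
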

This was later strengthened in Theorem 5.8 of \cite{saa}.

Recall Popa's question from the introduction: does every separable embeddable II$_1$ factor admit an embedding into $\R^\u$ with factorial relative commutant, or, in our current terminology, if $N$ is a separable embeddable II$_1$ factor, is $(N,\R)$ always a factorial commutant pair?  The above characterization of extreme points provides a convex-geometric interpretation of Popa's question: for any separable embeddable II$_1$ factor $N$, does $\HOM(N,\R^\u)$ have an extreme point?  This question remains open in general.  Brown made some progress on this problem in \cite{topdyn}. Indeed, the following result due to Brown in regards to this question on existence of extreme points is crucial to the results of this article:

\begin{thm}(\cite[Theorem 6.9]{topdyn})\label{nate}
For any separable $M \subset \R^\u$, there is a separable II$_1$ factor $N \subset \R^\u$ such that $M\subset N$ and $N' \cap \R^\u$ is a factor.\footnote{In fact, one can take $N = M*L(SL_3(\mb{Z}))$.}
\end{thm}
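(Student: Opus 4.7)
The plan is to follow the hint and take $N := M * L(SL_3(\bb{Z}))$, realized as a subalgebra of $\R^\u$. The argument splits into three tasks: (i) realize $N$ inside $\R^\u$ with the prescribed $M$ as one free factor; (ii) verify that $N$ is a II$_1$ factor; and (iii) prove that $N' \cap \R^\u$ is a factor. Parts (i) and (ii) are fairly standard; part (iii) is the heart of the matter.

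For (i), I would use that $SL_3(\bb{Z})$ is residually finite and hence sofic, so $L(SL_3(\bb{Z}))$ embeds into $\R^\u$. The asymptotic freeness of Haar-random unitaries in the matrix algebras approximating $\R$ then lets one conjugate such an embedding so that its image becomes free from $M$; the von Neumann algebra the two generate in $\R^\u$ is then canonically isomorphic to $M * L(SL_3(\bb{Z}))$. As noted in the introduction, one can additionally arrange that the chosen copy of $L(SL_3(\bb{Z}))$ already has factorial relative commutant in $\R^\u$.

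For (ii), Dykema's theorem that the free product of two diffuse tracial von Neumann algebras is a II$_1$ factor applies, since both $M$ (as a subalgebra of the II$_1$ factor $\R^\u$) and $L(SL_3(\bb{Z}))$ are diffuse.

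For (iii), the Kazhdan property of $SL_3(\bb{Z})$ is essential. Given a central projection $p$ in $N' \cap \R^\u$, it commutes with $L(SL_3(\bb{Z}))$, so representatives of $p$ in $\ell^\infty(\R)$ are sequences of projections that are asymptotically invariant under conjugation by a Kazhdan generating set. Property (T) then upgrades this to exact $SL_3(\bb{Z})$-invariance in the ultrapower, placing $p$ in the factor $L(SL_3(\bb{Z}))' \cap \R^\u$ arranged in (i). The freeness of $M$ from $L(SL_3(\bb{Z}))$, combined with the fact that $p$ also commutes with $M$, is then used to force $p \in \{0,1\}$, whence $N' \cap \R^\u$ is a factor. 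The main obstacle is precisely this last step: delicately coordinating the deformation/rigidity coming from property (T) with the freeness hypothesis to eliminate central elements, likely carried out via a Popa-style intertwining argument or a direct ultraproduct Kazhdan-constant estimate applied to the spectral decomposition of a putative nonscalar central element.
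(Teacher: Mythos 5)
A preliminary caveat: the paper does not prove this statement at all --- it is imported verbatim from Brown \cite{topdyn} (Theorem 6.9 there) and used as a black box, with only the footnote revealing the construction $N = M * L(SL_3(\mathbb{Z}))$. So there is no internal proof to compare against, and your proposal has to stand on its own. Your steps (i) and (ii) are essentially fine: one fixes an embedding of $L(SL_3(\mathbb{Z}))$ into $\R^\u$ and conjugates by a unitary of $\R^\u$ to make its image free from $M$; since conjugation carries relative commutants to relative commutants, one can simultaneously insist that this copy has factorial relative commutant. (A small quibble in (ii): $M$ need not be diffuse --- it could be $\mathbb{C}$ or finite-dimensional --- so ``free product of two diffuse algebras'' does not literally apply; but factoriality of $A * L\Gamma$ with $L\Gamma$ a II$_1$ factor is standard, or one can first enlarge $M$.)

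The genuine gap is in (iii), which is the entire content of the theorem, and there are two concrete problems with what you wrote. First, the one place you explicitly invoke property (T) --- upgrading asymptotic invariance to exact invariance so as to place a central projection $p$ of $N'\cap\R^\u$ inside $L(SL_3(\mathbb{Z}))'\cap\R^\u$ --- is vacuous: any $p\in N'\cap\R^\u$ already commutes exactly with $L(SL_3(\mathbb{Z}))\subseteq N$, so it lies in that relative commutant by definition, with no rigidity needed. Property (T) must do its real work elsewhere, namely through local rigidity of embeddings of $L(SL_3(\mathbb{Z}))$, which is what constrains the center of relative commutants of algebras containing it. Second, knowing that $p$ lies in the factor $Q:=L(SL_3(\mathbb{Z}))'\cap\R^\u$ and also commutes with $M$ does not force $p$ to be scalar: one has $N'\cap\R^\u=M'\cap Q$, and the relative commutant of a subalgebra of a factor can perfectly well have nontrivial center --- if that implication came for free, the theorem would be trivial. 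You acknowledge this yourself (``the main obstacle is precisely this last step'') and only gesture at possible techniques (intertwining, Kazhdan-constant estimates) without carrying any of them out. So the proposal correctly identifies the construction and the cast of ingredients, but the key claim --- that freeness together with property (T) kills the center of $N'\cap\R^\u$ --- is asserted rather than proved, and that is precisely where all the difficulty of Brown's theorem lives.
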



The reader interested in seeing more details and results on $\HOM(N,M^\u)$ is directed to \cite{topdyn,brocap,saa,fidisimp}.


\section{Model-theoretic preliminaries}\label{mtprelim}

In this section, we give a brief survey of some of the fundamental notions of continuous model theory as they apply to tracial von Neumann algebras.  One can consult \cite{mtfms}, \cite{MTOA2}, or \cite{munster} for more detailed explanations. 

\subsection{Basic model-theoretic notions}

We treat tracial von Neumann algebras as model-theoretic structures using an appropriate continuous first-order logic.  We start with \textbf{atomic formulae} $\varphi(x)$ (here $x$ is a tuple of variables), which are simply expressions of the form $\tr(p(x))$ for some $*$-polynomial $p(x)$.\footnote{Technically, since our logic is ``real-valued,'' we have two such expressions, one for the real part of the trace and one for the imaginary part.}  We obtain the class of all \textbf{formulae} by closing the atomic formulae under applications of continuous functions $\mathbb R^n\to \mathbb R$ (as $n$ varies over $\mathbb N$) and the ``quantifiers'' $\sup_x$ and $\inf_x$ (where the variables range over operator-norm bounded balls).  

\begin{example}\label{formulaexample}
Consider the formula $\varphi(x)$ that is $\sup_y (\|[x,y]\|_2\dotminus \epsilon)$.  
The function $r \dotminus \epsilon$ is defined to be $\max(r-\epsilon,0)$ (which is clearly continuous).  For simplicity, we have omitted what operator norm ball $y$ is ranging over, but we usually assume our quantifiers range over the unit ball (which is often enough).
\end{example}

Given a formula $\varphi(x)$, a tracial von Neumann algebra $M$, and a tuple $a\in M$, there is the notion of the \textbf{interpretation} $\varphi(a)^M$, which is simply what one gets when plugging $a$ in for the free variables $x$ and evaluating.  For example, with $\varphi(x)$ as in Example \ref{formulaexample}, $\varphi(a)^M=0$ if and only if $\|[a,b]\|_2\leq \epsilon$ for all $b$ in the unit ball of $M$.

Given a formula $\varphi(x,y)$, a tracial von Neumann algebra $M$, and a tuple $b$ from $M$, we also consider the expression $\varphi(x,b)$, where we replace all occurrences of the variables $y$ with the tuple $b$.  We refer to such an expression as a formula with \textbf{parameters} $b$.

A \textbf{sentence} is a formula with no free variables.  For example, we could consider the formula $\varphi(x)$ from Example \ref{formulaexample} and form the sentence $\psi:=\inf_x\varphi(x)$.  Note then that, given a tracial von Neumann algebra $M$, we have that $\psi^M=0$ if and only if, for any $\delta>\epsilon$, there is $a$ in the unit ball of $M$ such that $\|[a,b]\|_2<\delta$ for all $b$ in the unit ball of $M$.

Tracial von Neumann algebras $M$ and $N$ are said to be \textbf{elementary equivalent}, denoted $M\equiv N$, if, for any sentence $\psi$, one has $\psi^M=\psi^N$.  This is the so-called \textbf{syntactic characterization} of elementary equivalence.  One can give an alternative, \textbf{semantic} definition, which is often more appealing to operator algebraists, namely separable tracial von Neumann algebras $M$ and $N$ are elementarily equivalent if $M^\u\cong N^\u$.\footnote{This heavily uses our standing CH assumption.  The \textbf{Keisler-Shelah Theorem} provides a similar characterization that does not depend on set theory nor the fact that $M$ and $N$ are separable; see \cite{hensoniovino}.}  

Elementary equivalence is a much coarser equivalence relation than isomorphism.  In fact, in \cite[Theorem 4.3]{MTOA3}, Farah, Hart, and Sherman proved the following:

\begin{fact}
For any separable II$_1$ factor $M$, there are continuum many non-isomorphic separable II$_1$ factors $N$ such that $M\equiv N$.
\end{fact}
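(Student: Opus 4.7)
The plan is to combine downward Löwenheim--Skolem on the ultrapower $M^\u$ with a counting argument in the metric type space.

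Under CH the ultrapower $M^\u$ is saturated of density character $\aleph_1 = 2^{\aleph_0}$, and any separable $N \prec M^\u$ satisfies $N \equiv M^\u \equiv M$ (the latter by \L{}o\'s's theorem). So the task reduces to producing $2^{\aleph_0}$ separable elementary submodels of $M^\u$ that are pairwise non-isomorphic, all obtained via Löwenheim--Skolem applied to well-chosen elements.

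The strategy is to exhibit a uniformly $d$-separated family $\{p_\alpha : \alpha < 2^{\aleph_0}\}$ of complete types (in finitely or countably many variables) consistent with $\Th(M)$: a constant $\delta > 0$ with $d(p_\alpha, p_\beta) \geq \delta$ whenever $\alpha \neq \beta$, where $d$ is the logic metric on the type space induced by $\|\cdot\|_2$. By saturation each $p_\alpha$ is realized in $M^\u$ by some tuple $\bar a_\alpha$, and by Löwenheim--Skolem each realization sits in a separable $N_\alpha \prec M^\u$ realizing $p_\alpha$ and satisfying $N_\alpha \equiv M$. The counting: for any separable structure $N$, the assignment $\bar a \mapsto \tp^N(\bar a)$ is $\|\cdot\|_2$-contractive onto the type space, so the set of types realized in $N$ is $d$-separable, hence second countable, hence meets any uniformly $d$-separated family in at most countably many points. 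Consequently any single isomorphism class of separable models accommodates only countably many of the $N_\alpha$, forcing at least $2^{\aleph_0}$ isomorphism classes among them (the reverse bound being trivial).

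The main obstacle is producing the uniformly $d$-separated family of $2^{\aleph_0}$ types. Continuously varying one-parameter invariants such as traces of projections or spectral measures of a single unitary will not do: the resulting types vary continuously in $d$, and their full range is already realized inside every separable II$_1$ factor. One would instead encode subsets $S \in 2^{\mathbb N}$ into types of $\omega$-tuples of Haar unitaries by prescribing, on each pair of coordinates $(n,m)$, a combinatorial alternative between approximate commutation ($\|[u_n,u_m]\|_2$ small) and quantitative non-commutation ($\|[u_n,u_m]\|_2 \geq 1$) keyed to the characteristic function of $S$. Distinct $S$ then force a $d$-distance of at least an absolute constant between the corresponding types, and the $2^{\aleph_0}$ many choices of $S$ supply the required cardinality --- with the non-trivial realizability of each pattern inside $M^\u$ (uniformly in the separable II$_1$ factor $M$) being the genuine structural input one must separately justify using, for instance, diagonal sequence constructions combined with free probability inside the ultrapower.
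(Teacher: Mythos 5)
A preliminary remark: the paper does not prove this Fact at all --- it is imported from Farah, Hart, and Sherman \cite[Theorem 4.3]{MTOA3}, whose proof shows that every II$_1$ factor has the order property (so its theory is unstable) and extracts the count of models from the resulting abundance of metrically separated types. Your surrounding framework is correct and is essentially the skeleton of that argument: under CH realize a uniformly $d$-separated family of $2^{\aleph_0}$ parameter-free types in the saturated model $M^\u$, pass to separable elementary submodels by downward L\"owenheim--Skolem, and note that $\bar a\mapsto \tp(\bar a)$ is $1$-Lipschitz, so the types realized in any one separable model form a $d$-separable set meeting a uniformly separated family in only countably many points. Your insistence on types over $\emptyset$ is also the right move, since it makes the realized type set an isomorphism invariant, and you are right that this counting is a genuinely metric phenomenon with no discrete analogue.

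The gap is that the uniformly separated family is the entire content of the theorem, and your sketch does not produce one. With $\|a-b\|_1=\sum_j 2^{-j}\|a_j-b_j\|_2$, two commutation patterns that differ only at a single pair of indices $(n,m)$ are realized (in the natural coordinate-by-coordinate tensor or free constructions) by tuples that agree except at coordinates $n$ and $m$, so the corresponding types are at $d$-distance at most $2^{-n+1}+2^{-m+1}$; since continuum many patterns cannot pairwise differ within a fixed finite block of coordinates, the family $\{p_S\}$ has separation constant $0$. A family of size $2^{\aleph_0}$ that is merely pairwise distinct, or even at pairwise positive distance, is useless for the counting step: $\mathbb R$ itself is a separable metric space of cardinality $2^{\aleph_0}$ with all pairwise distances positive, so such a family could in principle be realized inside a single separable model. (By contrast, the realizability you defer is the easier half: every II$_1$ factor contains $\R$, hence $M^\u\supseteq\R^\u$, which contains free Haar unitaries and anticommuting unitaries realizing any locally finite pattern, uniformly in $M$.) What is missing is a device that concentrates the coding of $S\in 2^{\mathbb N}$ so that distinct codes are told apart by a fixed formula at fixed coordinates with a fixed modulus --- this is precisely the work done by the order property in the Farah--Hart--Sherman proof --- and without it your argument reduces the Fact to its hardest step and stops there.
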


If $M$ and $N$ are tracial von Neumann algebras, then an embedding $j:N\hookrightarrow M$ is said to be \textbf{elementary} if, for any formula $\varphi(x)$ and tuple $a\in N$, one has $\varphi(a)^N=\varphi(j(a))^M$.  This also can be given a semantic reformulation:  $j:N\hookrightarrow M$ is an elementary embedding if and only if it can be extended to an isomorphism $N^\u\cong M^\u$.\footnote{Again, this uses our CH assumption.}  In particular, if there is an elementary embedding $N\hookrightarrow M$, then $N\equiv M$.  

\begin{fact}[Elementary facts about elementary embeddings]\label{elel}

\

\begin{enumerate}
\item Isomorphisms are elementary embeddings.
\item Suppose that $i:M\hookrightarrow N$ and $j:N\hookrightarrow P$ are embeddings.  If $i$ and $j$ are both elementary, then so is $ji$.  If $j$ and $ji$ are both elementary, then so is $i$. 
\item If one has a directed system of tracial von Neumann algebras with each embedding elementary, then the canonical embeddings into the direct limit are also elementary.
\end{enumerate}
\end{fact}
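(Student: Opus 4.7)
The plan is to verify each part by induction on the complexity of formulae, using the syntactic characterization of elementary equivalence.

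For (1), the base case is atomic: any $*$-isomorphism preserves $*$-polynomials and the tracial state, so atomic formulae are preserved. The inductive step for continuous connectives is immediate since $\varphi(a)^M$ is defined by applying a continuous function to interpretations of subformulae. For the quantifiers $\sup_x$ and $\inf_x$ on operator-norm balls, surjectivity of the isomorphism (restricted to each ball) guarantees the suprema and infima range over the same set of values.

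Part (2) is a direct chase of definitions, needing no induction beyond what is already in the definition of elementary embedding. For any formula $\varphi(x)$ and tuple $a \in M$: if $i$ and $j$ are elementary, then $\varphi(a)^M = \varphi(i(a))^N = \varphi(j(i(a)))^P = \varphi((ji)(a))^P$, so $ji$ is elementary. Conversely, if $j$ and $ji$ are elementary, then $\varphi(a)^M = \varphi((ji)(a))^P = \varphi(j(i(a)))^P = \varphi(i(a))^N$, the last equality by elementarity of $j$; hence $i$ is elementary.

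For (3), let $(M_\alpha, i_{\alpha\beta})$ be the directed system and $M_\infty$ its direct limit with canonical maps $i_\alpha: M_\alpha \hookrightarrow M_\infty$. I claim, by induction on $\varphi(x)$, that for every $\alpha$ and every $a \in M_\alpha$, $\varphi(a)^{M_\alpha} = \varphi(i_\alpha(a))^{M_\infty}$, with the crucial feature being that the inductive statement is uniform in $\alpha$. Atomic and connective cases are formal. For the quantifier case $\inf_y \psi(x,y)$: the inequality $\varphi(i_\alpha(a))^{M_\infty} \leq \varphi(a)^{M_\alpha}$ follows by restricting the infimum in $M_\infty$ to $i_\alpha(\mathrm{ball}(M_\alpha))$ and applying the inductive hypothesis to $\psi$. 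For the reverse, given $\varepsilon > 0$ pick $c \in \mathrm{ball}(M_\infty)$ with $\psi(i_\alpha(a), c)^{M_\infty}$ within $\varepsilon$ of the infimum. Since $M_\infty$ is the $\|\cdot\|_2$-completion of $\bigcup_\beta i_\beta(M_\beta)$ and formulae are uniformly continuous in $\|\cdot\|_2$ on operator-norm balls, we may pick $\beta \geq \alpha$ and $b \in \mathrm{ball}(M_\beta)$ with $\psi(i_\alpha(a), i_\beta(b))^{M_\infty}$ within another $\varepsilon$ of $\psi(i_\alpha(a), c)^{M_\infty}$. Rewriting $i_\alpha(a) = i_\beta(i_{\alpha\beta}(a))$ and applying the inductive hypothesis for $\psi$ at stage $\beta$ yields $\psi(i_\alpha(a), i_\beta(b))^{M_\infty} = \psi(i_{\alpha\beta}(a), b)^{M_\beta}$, which is bounded below by $\varphi(i_{\alpha\beta}(a))^{M_\beta} = \varphi(a)^{M_\alpha}$ by elementarity of $i_{\alpha\beta}$. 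Letting $\varepsilon \to 0$ gives the matching inequality.

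The main obstacle is organizing the induction in (3) so that the hypothesis is simultaneously available at every stage $\beta \geq \alpha$; once this uniform-in-$\alpha$ formulation is in place, the density of $\bigcup_\beta i_\beta(M_\beta)$ in $M_\infty$ and the uniform continuity of formulae handle the quantifier step. A more slick semantic alternative for (3) would be to use the CH-based characterization that $i_\alpha$ is elementary iff it extends to an isomorphism $M_\alpha^\u \cong M_\infty^\u$, but the direct syntactic induction above is preferable since it avoids the set-theoretic assumption and generalizes without change to arbitrary continuous signatures.
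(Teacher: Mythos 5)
Your proof is correct. The paper states these as standard facts of continuous model theory and offers no proof of its own, so there is nothing to compare against; your syntactic induction is exactly the standard argument (part (3) being the continuous-logic version of the Tarski--Vaught elementary chain/system theorem). The only step worth making fully explicit in (3) is the approximation of a witness $c$ in the unit ball of $M_\infty$ by an element $i_\beta(b)$ with $b$ in the unit ball of some $M_\beta$: density of $\bigcup_\beta i_\beta(M_\beta)$ in $M_\infty$ does not immediately give density of the union of the unit balls in the unit ball of the limit, and one should invoke the Kaplansky density theorem (or note that in the tracial setting one can cut a nearby element down to the ball with controlled $\|\cdot\|_2$-error) to justify choosing $b$ of operator norm at most one.
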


In case the directed system from item (3) of the previous lemma is linearly ordered, we often refer to the corresponding directed system as an \textbf{elementary chain} of tracial von Neumann algebras.

\noindent If $N$ is a subalgebra of $M$, then $N$ is said to be an \textbf{elementary} subalgebra of $M$, denoted $N\preceq M$, if the inclusion map $N\hookrightarrow M$ is elementary.  Of fundamental importance is the following:

\begin{fact}[Downward L\"owenhim-Skolem]\label{DLS}
If $M$ is a tracial von Neumann algebra and $X\subseteq M$ an arbitrary subset, then there is $N\preceq M$ with $X\subseteq N$.  Moreover, one can take $N$ to have the same density character\footnote{Here, the density character of a subset of a tracial von Neumann algebra is the cardinality of the smallest dense subset of that set.} as $X$.
\end{fact}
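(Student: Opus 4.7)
The plan is to prove this via the standard Tarski--Vaught test adapted to continuous first-order logic, combined with an iterative construction that carefully controls the density character. The form of Tarski--Vaught I would use: an inclusion $N \subseteq M$ of tracial von Neumann algebras is elementary if and only if, for every formula $\varphi(x,y)$ (with $x$ a variable ranging over some operator-norm ball $B_r$), every tuple $b$ from $N$, and every $\varepsilon > 0$, there exists $a \in B_r \cap N$ such that $\varphi(a,b)^M \leq \bigl(\inf_{x \in B_r} \varphi(x,b)\bigr)^M + \varepsilon$. The key fact making continuous Tarski--Vaught work is that formulas are \emph{uniformly continuous} on operator-norm bounded tuples with respect to $\|\cdot\|_2$ --- this follows from their inductive construction out of atomic $*$-polynomial traces, continuous connectives, and bounded-ball quantifiers.

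The construction proceeds by iterated closure. Let $\kappa$ be the density character of $X$, and fix a $\|\cdot\|_2$-dense subset $D_0\subseteq X$ of cardinality $\kappa$. Given $D_n$, form $D_{n+1}$ by: (i) adjoining, for each triple $(\varphi,b,k)$ consisting of a formula $\varphi(x,y)$, a tuple $b$ from $D_n$, and a positive integer $k$, a witness $a_{\varphi,b,k}$ in the appropriate operator-norm ball of $M$ satisfying $\varphi(a_{\varphi,b,k},b)^M \leq \bigl(\inf_x\varphi(x,b)\bigr)^M + 1/k$; and (ii) closing under the $\mathbb{Q}(i)$-$*$-algebra operations. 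Since there are countably many formulas and $|D_n|=\kappa$ tuples from $D_n$, we have $|D_{n+1}|=\kappa$. Set $D_\omega = \bigcup_n D_n$ and let $N$ be the von Neumann subalgebra of $M$ generated by $D_\omega$; note $N$ inherits the trace from $M$ since subalgebras of tracial von Neumann algebras are tracial.

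To see that the density character of $N$ is $\kappa$, invoke Kaplansky density: the $\|\cdot\|_2$-closure of the $*$-algebra generated by $D_\omega$ intersected with each operator-norm ball is dense in that ball of $N$, and this $*$-algebra has cardinality $\kappa$. To verify Tarski--Vaught for $N\subseteq M$, fix a formula $\varphi(x,y)$, a tuple $b$ from $N$, and $\varepsilon > 0$. Use uniform continuity of $\varphi$ on the relevant ball together with Kaplansky density to select a tuple $b' \in D_\omega$ with $\|b-b'\|_2$ small enough that $|\varphi(a,b)^M - \varphi(a,b')^M| < \varepsilon/3$ uniformly over $a$ in the quantification ball, and similarly $|\inf_x\varphi(x,b)^M - \inf_x\varphi(x,b')^M| < \varepsilon/3$. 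Pick $k$ large and invoke the witness $a_{\varphi,b',k} \in D_\omega \subseteq N$ from the construction to conclude.

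The main technical point --- and the step I would be most careful about --- is the passage from witnessing infima at tuples lying in the countable/$\kappa$-sized skeleton $D_\omega$ to witnessing them at arbitrary tuples in the full von Neumann completion $N$. This is where uniform continuity of formulas on bounded sets is essential, and where one must verify that the witnesses, while only a priori chosen for parameters in $D_\omega$, suffice by approximation for all parameters in $N$. Everything else (closing under $\omega$ stages, counting cardinalities, using Kaplansky density) is routine once the right form of Tarski--Vaught is in place.
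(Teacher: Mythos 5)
Your proof is correct and is the standard argument: the paper states this as a Fact without proof, deferring to the continuous model theory references (\cite{mtfms}, \cite{MTOA2}, \cite{munster}), and what you have written is precisely the argument found there --- the continuous Tarski--Vaught test, an $\omega$-stage witness-closure of cardinality $\kappa$, Kaplansky density to identify the generated von Neumann subalgebra as a substructure of the same density character, and uniform continuity of formulae to transfer witnesses from the skeleton $D_\omega$ to all of $N$. The only cosmetic caveat is that the density character of $N$ is really $\max(\operatorname{dens}(X),\aleph_0)$, which is what the statement implicitly means.
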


Given a tracial von Neumann algebra $M$ with subsets $A$ and $B$, a map $j:A\to B$ is said to be \textbf{partial elementary} if $\varphi(a)^M=\varphi(j(a))^M$ for all formulae $\varphi(x)$ and all tuples $a\in A$.  Clearly, a partial elementary map is an isometric embedding.

The following theorem explains one of the main reasons that ultrapowers are of fundamental importance in model theory (see \cite[Theorem 5.4]{mtfms}):

\begin{fact}[\L os' theorem]
For any formula $\varphi(x)$ and any tuple $a=(a_k)_\mc{U}$ from $M^\u$, we have 
$$\varphi(a)^{M^\u}=\lim_{k\to\u} \varphi(a_k)^M.$$ In particular, the diagonal embedding $M\hookrightarrow M^\u$ is elementary.
\end{fact}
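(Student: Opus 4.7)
The plan is to prove \L os' theorem by induction on the complexity of the formula $\varphi(x)$, following the standard model-theoretic recipe adapted to the continuous tracial setting. The claim about the diagonal embedding is then immediate: for any tuple $a \in M$ and any formula $\varphi(x)$, the constant sequence $(a)_\u$ represents the image of $a$ in $M^\u$, so $\varphi(a)^M = \lim_{k\to\u} \varphi(a)^M = \varphi(a)^{M^\u}$ by the main statement.

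For the base case, an atomic formula has the form $\tau(p(x))$ (real or imaginary part) for some $*$-polynomial $p$. Given $a = (a_k)_\u$, I would first argue that the quotient map $\prod^\infty M \to M^\u$ is a $*$-algebra homomorphism, so $p((a_k)_\u) = (p(a_k))_\u$; this is immediate from the construction in \S\ref{ultraprelim}. Then the definition $\tau_\u((x_k)_\u) = \lim_{k\to\u} \tau(x_k)$ yields exactly the required identity $\tau(p(a))^{M^\u} = \lim_{k\to\u} \tau(p(a_k))^M$. The continuous-connective step is equally clean: if $\varphi = f(\varphi_1, \ldots, \varphi_n)$ for $f \colon \mathbb{R}^n \to \mathbb{R}$ continuous, then the ultrafilter limit commutes with $f$, so the inductive hypothesis for each $\varphi_i$ transfers to $\varphi$.

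The main work, and the step I expect to be the actual obstacle, is the quantifier case; it suffices to treat $\sup_y$ since $\inf_y = -\sup_y(-\cdot)$ up to the usual reformulation. Here one must check
\[
\sup_y \varphi(a,y)^{M^\u} = \lim_{k\to\u} \sup_y \varphi(a_k,y)^M,
\]
where $y$ ranges over a fixed operator-norm ball. For the $\leq$ direction I would pick an arbitrary $b = (b_k)_\u$ in that ball of $M^\u$; by the lifting principle (of the same flavor as Fact \ref{stable}, applied to norm-bounded elements rather than projections or unitaries) the representatives $b_k$ can be chosen within the relevant ball of $M$. The inductive hypothesis applied to $\varphi(x,y)$ gives $\varphi(a,b)^{M^\u} = \lim_{k\to\u} \varphi(a_k, b_k)^M \leq \lim_{k\to\u} \sup_y \varphi(a_k,y)^M$, and taking sup over $b$ produces the inequality. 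For the reverse $\geq$ direction, fix $\varepsilon > 0$ and, using the definition of supremum, select $b_k$ in the ball of $M$ with $\varphi(a_k,b_k)^M \geq \sup_y \varphi(a_k,y)^M - \varepsilon$ for each $k$. Setting $b = (b_k)_\u$ and invoking the inductive hypothesis again yields $\sup_y \varphi(a,y)^{M^\u} \geq \varphi(a,b)^{M^\u} \geq \lim_{k\to\u} \sup_y \varphi(a_k,y)^M - \varepsilon$, and letting $\varepsilon \downarrow 0$ closes the loop.

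The delicate points on which I would need to be careful are twofold: first, making sure the quantifier ball in $M^\u$ really is represented by sequences staying in the corresponding ball in $M$ (this is a mild strengthening of Fact \ref{stable} for arbitrary bounded elements, done by simple truncation with $\|\cdot\|$-bounded functional calculus, which preserves $\|\cdot\|_2$-limits); and second, ensuring that the ``approximate witness'' $b_k$ chosen at each coordinate can be selected in a uniformly bounded manner so that $(b_k)_\u$ genuinely defines an element of $M^\u$ in the intended ball. Once these routine lifting issues are handled, the induction closes and the statement about the diagonal embedding follows as noted at the start.
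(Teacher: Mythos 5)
Your argument is correct and is the standard proof of \L os' theorem for metric structures; the paper does not prove this Fact itself but defers to the literature (the reference cited just before the statement), where the argument is exactly this induction on formula complexity with the atomic, continuous-connective, and quantifier cases handled as you do. The one genuinely nontrivial point, which you correctly isolate, is that the quantifier ball of $M^\u$ is exhausted by classes of sequences lying in the corresponding operator-norm ball of $M$; this either is built into the model-theoretic definition of the metric ultraproduct (each sort of the ultraproduct is by definition the ultraproduct of the corresponding sorts) or, if one starts from the tracial von Neumann algebra ultraproduct of \S\ref{ultraprelim}, follows from the functional-calculus truncation you describe, so your proof is complete.
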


The following are immediate consequences of \L os' theorem:

\begin{fact}\label{ultrauniv}

\

\begin{enumerate}
\item Ultrapowers of elementary embeddings are elementary:  if $j:N\hookrightarrow M$ is elementary, then the natural ultrapower map $j^\u:N^\u\hookrightarrow M^\u$ is also elementary.
\item (Separable universality of ultrapowers) If $N\equiv M$ and $N$ is separable, then there is an elementary embedding $N\hookrightarrow M^\u$.
\end{enumerate}
\end{fact}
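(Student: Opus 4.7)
My plan is to derive both items directly from {\L}o\'s' theorem, with part (2) also relying on the Downward L\"owenheim-Skolem fact and our standing CH assumption.

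For part (1), the plan is to unwind definitions and cite {\L}o\'s. Fix a formula $\varphi(x)$ and a tuple $a=(a_k)_\u$ from $N^\u$. The ultrapower extension $j^\u$ acts by $j^\u(a)=(j(a_k))_\u$. Applying {\L}o\'s' theorem to $N^\u$ and to $M^\u$ respectively, and then the hypothesis that $j$ is elementary on each level, I would compute
\[
\varphi(j^\u(a))^{M^\u}=\lim_{k\to\u}\varphi(j(a_k))^M=\lim_{k\to\u}\varphi(a_k)^N=\varphi(a)^{N^\u}.
\]
Since $\varphi$ and $a$ were arbitrary, $j^\u$ is elementary. This part is essentially a one-line computation and presents no obstacle.

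For part (2), I would combine three tools: Downward L\"owenheim-Skolem (Fact \ref{DLS}), the semantic characterization of elementary equivalence under CH (from \S\ref{mtprelim}), and part (1) just proved. The worry is that $M$ itself need not be separable, so I cannot immediately invoke the ``$N\equiv M\Rightarrow N^\u\cong M^\u$'' form of the semantic characterization. The plan is to fix this by first choosing, via Fact \ref{DLS}, a separable elementary substructure $M_0\preceq M$. Since elementary embeddings preserve the values of sentences, $M_0\equiv M\equiv N$; both $M_0$ and $N$ are now separable, so under CH there is an isomorphism $\Phi:N^\u\xrightarrow{\cong} M_0^\u$. By part (1) applied to the inclusion $M_0\hookrightarrow M$, the induced map $M_0^\u\hookrightarrow M^\u$ is elementary. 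Composing
\[
N\ \xhookrightarrow{\text{diag}}\ N^\u\ \xrightarrow{\Phi}\ M_0^\u\ \hookrightarrow\ M^\u,
\]
where the diagonal embedding is elementary by {\L}o\'s, an isomorphism is elementary by Fact \ref{elel}(1), and the last map is elementary by the previous sentence, I obtain an elementary embedding $N\hookrightarrow M^\u$ from Fact \ref{elel}(2).

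The only genuine subtlety is the non-separability of $M$, which is handled by passing to $M_0$ via Downward L\"owenheim-Skolem before invoking the CH-dependent identification of ultrapowers; beyond that, both statements are formal consequences of {\L}o\'s' theorem.
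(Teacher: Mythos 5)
Your proposal is correct. The paper offers no proof of this fact, simply labelling both items ``immediate consequences of \L os' theorem,'' so there is nothing to match line by line; but it is worth comparing your route for part (2) with the standard one. Your part (1) is exactly the expected computation and is fine. For part (2), your detour through a separable $M_0\preceq M$ (to legitimize the CH-based identification $N^\u\cong M_0^\u$) followed by the elementary map $M_0^\u\hookrightarrow M^\u$ from part (1) is valid under the paper's standing CH convention, and the care you take with the possible non-separability of $M$ is appropriate, since the blanket separability convention is only imposed later in the paper. However, the classical proof of separable universality does not use CH at all: one enumerates a countable dense sequence $a=(a_1,a_2,\dots)$ in the unit ball of $N$, observes that the collection of conditions $|\varphi(x_1,\dots,x_n)-\varphi(a_1,\dots,a_n)^N|=0$ is finitely approximately satisfiable in $M^\u$ because $N\equiv M$ (each finite subcollection translates into an $\inf$-sentence with value $0$ in $N$, hence in $M$, hence in $M^\u$ by \L os), and then invokes separable saturation to realize the whole type, yielding a partial elementary map that extends to an elementary embedding of $N$ by continuity. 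That argument is what makes the fact an ``immediate consequence of \L os' theorem'' in a set-theoretically robust way; your argument buys brevity by leaning on the CH-dependent semantic characterization, at the cost of importing CH into a statement that does not need it.
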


We will also need the following facts about elementary embeddings particular to $\R$ and its ultrapower.

\begin{fact}\label{Relementary}

\

\begin{enumerate}
\item Every embedding of $\R$ into $\R^\u$ is elementary.
\item Suppose that $M\equiv \R$.  Then every embedding $\R\hookrightarrow M$ is elementary.
\end{enumerate}
\end{fact}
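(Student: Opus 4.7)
For part (1), the plan is to leverage the standard fact already cited in the introduction that any two embeddings of $\R$ into $\R^\u$ are unitarily equivalent. Let $\pi:\R\hookrightarrow \R^\u$ be an arbitrary embedding and let $\iota:\R\hookrightarrow \R^\u$ denote the diagonal embedding, which is elementary by \L os' theorem. By the cited fact, there is a unitary $u\in \R^\u$ with $\pi=\Ad(u)\circ \iota$. Now $\Ad(u)$ is an automorphism of $\R^\u$, so it is elementary by Fact \ref{elel}(1), and therefore $\pi$ is a composition of elementary maps and hence elementary by Fact \ref{elel}(2).

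For part (2), the first step is to reduce to the case in which $M$ is separable. Given an embedding $\pi:\R\hookrightarrow M$, apply the downward L\"owenheim--Skolem theorem (Fact \ref{DLS}) with $X=\pi(\R)$ to produce a separable $N\preceq M$ with $\pi(\R)\subseteq N$. Then $N\equiv M\equiv \R$, and $\pi$ may be viewed as an embedding $\R\hookrightarrow N$. Once we establish that this restricted embedding is elementary, composing with the elementary inclusion $N\preceq M$ and invoking Fact \ref{elel}(2) gives that the original $\pi:\R\hookrightarrow M$ is elementary.

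For the separable case, use the separable universality of ultrapowers (Fact \ref{ultrauniv}(2)) applied to $N\equiv \R$ with $N$ separable in the role of the separable model: this yields an elementary embedding $j:N\hookrightarrow \R^\u$. Then $j\circ\pi:\R\hookrightarrow \R^\u$ is an embedding, which is elementary by part (1). Since $j$ is elementary and $j\circ\pi$ is elementary, the second clause of Fact \ref{elel}(2) forces $\pi:\R\hookrightarrow N$ to be elementary, completing the proof.

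There is no real obstacle in this argument; it is a straightforward synthesis of the cited facts. The only point requiring a brief check is that, in part (1), the unitary witnessing unitary equivalence actually lives in the codomain $\R^\u$, so that $\Ad(u)$ is a genuine automorphism of $\R^\u$ rather than merely a partial isometric map. This is built into the definition of unitary equivalence given in \S\ref{vnaprelim}, so the argument goes through cleanly.
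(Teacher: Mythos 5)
Your proof is correct and follows essentially the same route as the paper, which derives (1) from \L os' theorem plus unitary equivalence of embeddings of $\R$ into $\R^\u$, and (2) from (1), separable universality of $\R^\u$, and Fact \ref{elel}(2). The only addition is your preliminary Downward L\"owenheim--Skolem reduction to separable $M$, a reasonable bit of extra care that does not change the argument.
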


\begin{proof}
(1) follows from \L os' theorem and the fact that any two embeddings of $\R$ into its ultrapower are unitarily equivalent.  (2) follows from the first item, separable universality of $\R^\u$ (Fact \ref{ultrauniv}(2)), and Fact \ref{elel}(2).
\end{proof}

Another key property of ultrapowers is that they are somewhat \textbf{saturated}:

\begin{fact}[Separable saturation of ultrapowers]
Fix a separable set $A\subseteq M^\u$ and a collection $(\varphi_i(x,a_i))_{i\in I}$ of formulae with parameters from $A$.  Then the following are equivalent:
\begin{enumerate}
\item{(approximate finite satisfiability)} For any finite $I_0\subseteq I$ and any $\epsilon>0$, there is $a\in M^\u$ such that $\varphi(a,a_i)^{M^\u}<\epsilon$ for all $i\in I_0$;
\item {(satisfiability)} There is $a\in M^\u$ such that $\varphi_i(a,a_i)^{M^\u}=0$ for all $i\in I$.
\end{enumerate}
\end{fact}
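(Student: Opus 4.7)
The implication (2)$\Rightarrow$(1) is immediate, so the work lies in (1)$\Rightarrow$(2). My plan has two stages: reduce to the case where the index set $I$ is countable by exploiting separability of $A$ together with uniform continuity of formulas on operator-norm balls, and then realize the countable type by a diagonal argument over the ultrapower index $\mathbb N$.

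For the reduction, I fix a countable $\|\cdot\|_2$-dense subset $A_0\subseteq A$ together with a countable collection $\mathcal F_0$ of ``rational'' formulas, namely those built from $*$-polynomials with $\mathbb Q+i\mathbb Q$ coefficients composed with countably many continuous connectives, chosen so that $\mathcal F_0$ is uniformly dense on bounded balls in the space of all formulas in a given number of variables. Since each $\varphi_i$ is uniformly continuous on the relevant operator-norm ball, for each $n$ I may select a tuple $d^{(i,n)}$ from $A_0$ and $\psi_{i,n}\in\mathcal F_0$ with $\sup_x|\varphi_i(x,a_i)-\psi_{i,n}(x,d^{(i,n)})|\leq 1/n$. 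The original closed conditions $\varphi_i(x,a_i)=0$ are then jointly equivalent to the collection $\{\psi_{i,n}(x,d^{(i,n)})\leq 1/n\}$, which, after removing duplicates, has only countably many distinct members. A routine estimate ($\psi_{i,n}(b,d^{(i,n)})\leq\varphi_i(b,a_i)+1/n$) shows approximate finite satisfiability passes to this new collection, reducing matters to the countable-$I$ case.

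In the reduced setting, enumerate the conditions as $\{\varphi_m(x,a_m):m\in\mathbb N\}$ with representing sequences $a_m=(a_{m,k})_{k\to\u}$. Hypothesis (1) together with \L os' theorem implies that for each $n$ the set
$$J_n=\left\{k\in\mathbb N : \exists c\in M,\ \max_{m\leq n}\varphi_m(c,a_{m,k})^M<\tfrac{1}{n}\right\}$$
lies in $\u$, and after intersecting we may arrange $J_1\supseteq J_2\supseteq\cdots$. I set $n(k)=\sup\{n:k\in J_n\}$ (taking $n(k)=0$ if $k\notin J_1$), pick $b_k\in M$ witnessing $k\in J_{n(k)}$ when $n(k)>0$ and arbitrarily otherwise, and let $b=(b_k)_{k\to\u}$. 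Since $\{k:n(k)\geq n\}\supseteq J_n\in\u$ for every $n$, we have $n(k)\to\infty$ along $\u$, so for each fixed $m$ the estimate $\varphi_m(b_k,a_{m,k})^M<1/n(k)$ holds $\u$-eventually; \L os' theorem then yields $\varphi_m(b,a_m)^{M^\u}=0$ for every $m$. The one genuinely subtle step is the reduction to countably many conditions: one must choose the rational approximations $\psi_{i,n}$ uniformly in $x$ over the relevant ball and verify carefully that approximate finite satisfiability transfers. The diagonalization in the second stage is then a classical ultrafilter argument.
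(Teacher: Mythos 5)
The paper records this statement as a background \emph{Fact} and supplies no proof of its own (it is the standard countable saturation of metric ultraproducts, for which the paper defers to the general continuous-model-theory references it cites in \S 2), so there is no in-paper argument to compare yours against; I will simply assess your proof directly. Your two-stage argument is the standard one and is essentially correct. The reduction to countably many conditions—via a countable $\|\cdot\|_2$-dense subset of $A$ (taken ball-by-ball in operator norm) together with a countable family of rational formulae that is uniformly dense on bounded balls—is exactly how separability of $A$ and of the formula space is exploited, and your check that approximate finite satisfiability transfers to the reduced family is right. The diagonalization is also sound except for one slip you should repair: since $J_{n+1}\subseteq J_n$ automatically and every $J_n$ lies in $\u$, the quantity $n(k)=\sup\{n:k\in J_n\}$ can equal $\infty$ for some (indeed possibly $\u$-many) $k$, and then ``a witness for $k\in J_{n(k)}$'' is undefined: membership in every $J_n$ does not yield a single $c\in M$ working for all $n$ simultaneously, as that would amount to exact satisfiability in $M$ itself. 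The standard fix is to cap the level at the index, e.g.\ set $n(k)=\max\{n\le k:k\in J_n\}$ (and $n(k)=0$ if no such $n$ exists); then $\{k:n(k)\ge n\}\supseteq J_n\cap[n,\infty)\in\u$ because $\u$ is nonprincipal, so $n(k)\to\infty$ along $\u$ and the rest of your \L os\ computation goes through verbatim. With that adjustment the proof is complete.
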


Finally, we will need the following separable homogeneity property of ultrapowers.  The proof is a standard ``back and forth'' argument using CH and separable saturation.

\begin{fact}\label{ultrahomog}
Suppose that $A$ and $B$ are separable subsets of $M^\u$ and $j:A\to B$ is a surjective partial elementary map.  Then there is an automorphism $\alpha$ of $M^\u$ extending $j$.
\end{fact}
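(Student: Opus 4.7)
The plan is the standard transfinite back-and-forth argument, powered by CH and separable saturation of $M^\u$. Under CH, $M^\u$ has density character $\aleph_1$, so I first fix an enumeration $(m_\alpha)_{\alpha<\omega_1}$ of a $\|\cdot\|_2$-dense subset of $M^\u$. I then build, by transfinite recursion on $\alpha<\omega_1$, an increasing chain of separable sets $A_\alpha,B_\alpha\subseteq M^\u$ and surjective partial elementary maps $j_\alpha\colon A_\alpha\to B_\alpha$ with $j_0=j$, arranging that $m_\alpha\in A_{\alpha+1}\cap B_{\alpha+1}$. At a successor stage I perform one forth step (enlarging the domain to contain $m_\alpha$) followed by one back step (enlarging the range to contain $m_\alpha$, using the forth step applied to $j_\alpha^{-1}$). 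At countable limit stages, unions of chains of separable partial elementary maps remain separable and partial elementary, so no work is required there.

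The technical heart of the construction is the one-point extension: given a surjective partial elementary $j_\alpha\colon A_\alpha\to B_\alpha$ with $A_\alpha$ separable and an element $m\in M^\u$, produce $n\in M^\u$ so that $j_\alpha\cup\{(m,n)\}$ is still partial elementary. For this I invoke separable saturation of $M^\u$. Consider the collection of conditions $\Sigma(x):=\{\varphi(x,j_\alpha(\bar a))=\varphi(m,\bar a)^{M^\u}\}$, as $\varphi(x,\bar y)$ ranges over all formulae and $\bar a$ over tuples from $A_\alpha$. Given finitely many such conditions with a common tuple $\bar a$ and $\epsilon>0$, the formula
\[
\psi(\bar y)\;:=\;\inf_x\max_i\bigl(|\varphi_i(x,\bar y)-\varphi_i(m,\bar a)^{M^\u}|\dotminus\epsilon\bigr)
\]
satisfies $\psi(\bar a)^{M^\u}=0$ (take $x=m$), hence by partial elementarity of $j_\alpha$ one has $\psi(j_\alpha(\bar a))^{M^\u}=0$, yielding an approximate witness. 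Since the parameter set $B_\alpha\cup j_\alpha(\bar a)$ is separable, saturation then provides an exact $n\in M^\u$ realising $\Sigma$.

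Assembling everything, after $\omega_1$ steps the union $j_\infty\colon A_\infty\to B_\infty$ is a surjective partial elementary bijection between $\|\cdot\|_2$-dense subsets of $M^\u$ (density is automatic since $A_\infty$ contains every $m_\alpha$, and likewise for $B_\infty$). Because partial elementarity preserves every atomic formula $\tau(p(\bar x))$ for $*$-polynomials $p$, on operator-norm bounded balls the map $j_\infty$ is simultaneously a $*$-algebra homomorphism and $\|\cdot\|_2$-isometric, hence extends uniquely by $\|\cdot\|_2$-continuity on balls to a trace-preserving $*$-homomorphism $\alpha\colon M^\u\to M^\u$. Since the bounded balls of $M^\u$ are $\|\cdot\|_2$-complete, the dense image forces $\alpha$ to be surjective, and injectivity is automatic because $M^\u$ is a factor. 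The step I expect to require the most care is the one-point extension: one must check that the real constant $\varphi_i(m,\bar a)^{M^\u}$ appearing inside $\psi$ is a legitimate ingredient of a formula (it is, because continuous logic permits arbitrary continuous connectives), and that the parameter set stays separable at each stage so that separable saturation genuinely applies.
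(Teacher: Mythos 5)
Your proof is correct and is precisely the argument the paper has in mind: the paper dismisses this fact with the one-line remark that it is ``a standard back and forth argument using CH and separable saturation,'' and your transfinite back-and-forth of length $\omega_1$, with one-point extensions realized via separable saturation and the final extension by $\|\cdot\|_2$-continuity on bounded balls, is exactly that standard argument written out in full.
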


\subsection{Types}\label{types}

\begin{defn}
Given a separable subset $A\subseteq M^\u$ and a tuple $a\in M^\u$, we define the \textbf{type of $a$ in $M^\u$ over $A$}, denoted $\tp^{M^\u}(a/A)$ (or simply $\tp(a/A)$ if the ambient ultrapower is clear from context), to be the function which assigns to every formula $\varphi(x,b)$ with $b$ a tuple of parameters from $A$ the value $\varphi(a,b)^{M^\u}$.  A \textbf{type in $M^\u$ over $A$} is a function of the form $\tp(a/A)$ for some $a\in M^\u$.
\end{defn}

Thus, $\tp(a/A)$ is a description of every first-order fact about $a$ we might want to know using parameters from $A$.

For a separable subset $A$ of $M^\u$, we let $S(A)$ denote the set of 1-types over $A$, that is, the set of types of single elements in $M^\u$ over $A$.  We often use $p$ and $q$ to denote types.  We write $\varphi(x,b)^p$ for the value of the function $p$ on the formula $\varphi(x,b)$.  In other words, if $a\in M^\u$ \textbf{realizes} $p$, meaning that $p=\tp(a/A)$, then $\varphi(x,b)^p=\varphi(a,b)^{M^\u}$.  We also let $p(M^\u)$ denote the set of realizations of $p$ in $M^\u$.

The nontrivial direction of the next fact follows immediately from separable homogeneity of ultrapowers.

\begin{fact}\label{sametype}
If $A$ is a separable subset of $M^\u$ and $a,b\in M^\u$ are two tuples of the same length, then $\tp(a/A)=\tp(b/A)$ if and only if there is an automorphism $\alpha$ of $M^\u$ that fixes $A$ pointwise and such that $\alpha(a)=b$.
\end{fact}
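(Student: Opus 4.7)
The plan is to handle the two directions separately, with the forward direction being essentially definitional and the reverse direction being a direct application of Fact \ref{ultrahomog}.

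For the easy direction, suppose $\alpha \in \Aut(M^\u)$ fixes $A$ pointwise with $\alpha(a) = b$. For any formula $\varphi(x, y)$ and tuple $c$ from $A$, automorphisms preserve the interpretation of every formula, so $\varphi(a, c)^{M^\u} = \varphi(\alpha(a), \alpha(c))^{M^\u} = \varphi(b, c)^{M^\u}$, since $\alpha(a) = b$ and $\alpha$ fixes $c$. This says precisely that $\tp(a/A) = \tp(b/A)$.

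For the nontrivial direction, assume $\tp(a/A) = \tp(b/A)$. The idea is to cook up a surjective partial elementary map from a separable subset containing $a$ to one containing $b$, and then invoke Fact \ref{ultrahomog}. Concretely, define $j \colon A \cup \{a\} \to A \cup \{b\}$ by $j|_A = \id_A$ and $j(a) = b$. To verify that $j$ is partial elementary, take any formula $\varphi(x, y)$ and any tuple $c$ from $A$; then
\[
\varphi(a, c)^{M^\u} = \varphi(x, c)^{\tp(a/A)} = \varphi(x, c)^{\tp(b/A)} = \varphi(b, c)^{M^\u} = \varphi(j(a), j(c))^{M^\u},
\]
where the middle equality is our hypothesis that $\tp(a/A) = \tp(b/A)$. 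Since $A$ is separable, so is $A \cup \{a\}$, and $j$ is surjective onto the separable set $A \cup \{b\}$, so Fact \ref{ultrahomog} produces an automorphism $\alpha$ of $M^\u$ extending $j$. By construction $\alpha$ fixes $A$ pointwise and sends $a$ to $b$.

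I do not anticipate any real obstacle here: the entire content of the nontrivial direction has been packaged into Fact \ref{ultrahomog}, which in turn relies on CH and separable saturation. The only subtlety worth being mindful of is that partial elementary for the map $j$ must be checked against formulas in \emph{all} the free variables (those of $a$ and those to be filled from $A$), but this is exactly the data encoded by the type of $a$ over $A$, so the verification is immediate from the hypothesis.
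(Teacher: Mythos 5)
Your proof is correct and matches the paper's intended argument exactly: the paper dispenses with the easy direction as definitional and states that the nontrivial direction ``follows immediately from separable homogeneity of ultrapowers,'' i.e.\ from Fact \ref{ultrahomog} applied to precisely the partial elementary map $j$ you construct. The only cosmetic point is that since $a$ and $b$ are tuples, the domain of $j$ should be $A$ together with the entries of $a$, but this changes nothing.
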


The previous fact shows that one may alternatively view elements of $S(A)$ as orbits in $M^\u$ under the natural action of $\operatorname{Aut}(M^\u/A)$, the group of automorphisms of $M^\u$ that fix $A$ pointwise.  Here is an example to show how this perspective can be useful:

\begin{example}\label{typecommutant}
  Suppose that $N$ is a separable subalgebra of $M^\u$ and $a\in N'\cap M^\u$.  Setting $p:=\tp(a/N)$, one then has that $p(M^\u)\subseteq N'\cap M^\u$.  If, in addition, $a\in Z(N'\cap M^\u)$, then $p(M^\u)\subseteq Z(N'\cap M^\u)$.
\end{example}

If $A\subseteq B$ are separable subsets of $M^\u$ and $p\in S(A)$ and $q\in S(B)$, then we write $p\subseteq q$ if $q$ extends $p$ as a function, that is, for every formula $\varphi(x,b)$ with parameters from $A$, we have $\varphi(x,b)^p=\varphi(x,b)^q$.  We refer to $q$ as an extension of $p$ to $B$ and $p$ as the restriction of $q$ to $A$.  Note that from the orbit perspective, if $p\subseteq q$, then the $\operatorname{Aut}(M^\u/B)$-orbit corresponding to $q$ is contained in the $\operatorname{Aut}(M^\u/A)$-orbit corresponding to $p$.

Crucial to our proof of the main theorem of this paper is the existence of a special kind of extension of types called heirs.

\begin{defn}
Suppose that $N$ and $A$ are separable subsets of $M^\u$ with $N\preceq M^\u$ and $N\subseteq A\subseteq M^\u$.  If $p\in S(N)$ and $q\in S(A)$ are such that $p\subseteq q$, we say that $q$ is an \textbf{heir} of $p$ if, for every formula $\varphi(x,b)$ with parameters from $A$ and every $\epsilon>0$, there is $c\in N$ such that
$$|\varphi(x,b)^q-\varphi(x,c)^p|<\epsilon.$$
\end{defn}

The notion of an heir might appear technical at first glance so we offer the following heuristic explanation.  The type $p$ as in the definition gathers all first-order information about some element (a realization of the type) using parameters from $N$.  The extension $q$ is now adding to this information by also describing how the realization should interact with parameters from the larger set $A$.  $q$ is then an heir of $p$ if no ``new phenomena'' occur in $q$, that is, if a first-order phenomena occurs in $q$, then it also (approximately) occurs in $p$.  The next example explains exactly how heirs will be used in the next section:

\begin{example}\label{heircommutant}
  Suppose that $P\subseteq Q\subseteq M^\u$ are separable subalgebras of $M^\u$, $a\in P'\cap M^\u$, $p:=\tp(a/P)$, and $q$ is an heir of $p$ to $Q$.  Then $q(M^\u)\subseteq Q'\cap M^\u$.  To see this, let $\varphi(x,y)$ be the formula $\|[x,y]\|_2$.  Since $\varphi(x,b)^p=0$ for every $b\in P$, we must have that $\varphi(x,b)^q=0$ for every $b\in Q$.  Indeed, if this were not the case, that is, if $\varphi(x,b)^q=r>0$ for some $b\in Q$, then there would be $c\in P$ such that $|\varphi(x,b)-\varphi(x,c)|^q<\frac{r}{2}$ by the heir property, whence $\varphi(x,c)^p>\frac{r}{2}$, which is a contradiction.
\end{example}

The following fact is standard in the classical setting; the only mention of it in the continuous setting is \cite{stablegroups}, where it is mentioned to follow from a ``compactness argument.''  For the sake of the reader, we provide this argument.

\begin{fact}\label{heir}
For any separable subsets $N$ and $A$ of $M^\u$ with $N\preceq M^\u$ and $N\subseteq A\subseteq M^\u$, and any $p\in S(N)$, there is $q\in S(A)$ that is an heir of $p$.
\end{fact}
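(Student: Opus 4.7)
The plan is to define a partial type $\Sigma(x)$ over $A$ extending $p$ whose realizations in $M^\u$ automatically yield heirs, and then to realize $\Sigma$ by invoking separable saturation. For each formula $\varphi(x,y)$ with no parameters, set
$$I_\varphi := \overline{\{\varphi(x,c)^p : c\in N^{|y|}\}}\subseteq \mathbb{R},$$
which is a closed, bounded subset of $\mathbb{R}$. The distance function $D_\varphi(r):=d(r,I_\varphi)$ is a bounded continuous function on $\mathbb{R}$, so $D_\varphi(\varphi(x,b))$ is itself a formula. Directly from the definition, an extension $q\in S(A)$ of $p$ is an heir of $p$ if and only if $D_\varphi(\varphi(x,b))^q=0$ for every formula $\varphi$ and every tuple $b\in A^{|y|}$. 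Accordingly, I would set
$$\Sigma(x):=p(x)\cup\{D_\varphi(\varphi(x,b))=0 : \varphi(x,y)\text{ a formula},\ b\in A^{|y|}\},$$
so that any realization of $\Sigma$ in $M^\u$ has type over $A$ that is an heir of $p$.

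Since $A$ is separable, $\Sigma$ involves only countably many parameters, so by separable saturation of $M^\u$ it suffices to show that $\Sigma$ is approximately finitely satisfiable. I would prove this by contradiction: suppose there exist $\delta>0$, a finite $p$-fragment coded as a single formula $\pi(x):=\max_j|\alpha_j(x,c_j)-r_j|$ with $c_j\in N$ and $r_j=\alpha_j(x,c_j)^p$, and finitely many pairs $(\varphi_i,b_i)$ with $b_i\in A^{|y_i|}$, such that no element of $M^\u$ simultaneously drives $\pi$ and each $D_{\varphi_i}(\varphi_i(x,b_i))$ below $\delta$. Writing this as a single inequality, this amounts to
$$\Theta(b_1,\ldots,b_m)^{M^\u}\geq \delta,\qquad \Theta(y_1,\ldots,y_m):=\inf_x\max\bigl(\pi(x),\max_i D_{\varphi_i}(\varphi_i(x,y_i))\bigr),$$
where $\Theta$ is a formula with parameters only from $N$.

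The key step is then to pull through a contradiction via elementarity. From $\Theta(b)^{M^\u}\geq\delta$ one has $\sup_y\Theta(y)^{M^\u}\geq\delta$, and $N\preceq M^\u$ transfers this to $\sup_y\Theta(y)^N\geq\delta$, so there is $y^*\in N$ with $\Theta(y^*)^N\geq\delta/2$. Applying $N\preceq M^\u$ once more with parameter $y^*\in N$ yields $\Theta(y^*)^{M^\u}\geq\delta/2$. Now pick any $a\in M^\u$ realizing $p$---available since $p$ was defined as the type of an element of $M^\u$. Then $\pi(a)^{M^\u}=0$, and for each $i$, $\varphi_i(a,y_i^*)^{M^\u}=\varphi_i(x,y_i^*)^p\in I_{\varphi_i}$ (because $y_i^*\in N$), whence $D_{\varphi_i}(\varphi_i(a,y_i^*))^{M^\u}=0$. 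So $\Theta(y^*)^{M^\u}=0$, contradicting $\Theta(y^*)^{M^\u}\geq\delta/2$.

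The principal obstacle is linguistic: encoding the qualitative heir condition ``$\varphi(x,b)^q$ is approximable by values $\varphi(x,c)^p$ with $c\in N$'' as a bona fide continuous first-order condition. The trick of precomposing $\varphi(x,b)$ with the fixed continuous function $D_\varphi$ attached to the closed set $I_\varphi$ accomplishes precisely this, after which everything reduces to a standard finite-satisfiability/elementarity argument mirroring the classical proof of the existence of heirs.
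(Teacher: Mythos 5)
Your argument is correct and follows essentially the same route as the paper's: express heir-ness as a partial type over $A$, reduce to approximate finite satisfiability via separable saturation, quantify out the $A$-parameters, transfer down to $N$ using $N\preceq M^\u$ to find witnessing parameters inside $N$, and contradict this with a realization of $p$. The only difference is cosmetic: you encode the heir condition by composing each $\varphi(x,b)$ with the distance function $D_\varphi$ to the closed value set $I_\varphi$, whereas the paper uses the equivalent family of conditions ``$\varphi(a,c)\geq \epsilon/2$ whenever $\varphi(x,b)^p\geq\epsilon$ for all $b\in N$''; your encoding is arguably a bit cleaner.
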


\begin{proof}
We seek $a\in M^\u$ satisfying the following two kinds of conditions:
\begin{enumerate}
\item $\psi(a)=\psi(x)^p$ for any formula $\psi(x)$ with parameters from $N$;
\item $\varphi(a,c)^{M^\u}\geq \frac{\epsilon}{2}$ for any formula $\varphi(x,c)$ with parameters from $A$ and any $\epsilon>0$ such that $\varphi(x,b)^p\geq \epsilon$ for all $b\in N$.
\end{enumerate}
Indeed, if $a$ is as above, we claim that $q:=\tp(a/A)$ is an heir of $p$.  By (1), $q$ is an extension of $p$.  To see that $q$ is an heir, fix a formula $\varphi(x,c)$ with parameters from $A$ and set $s:=\varphi(x,c)^q=\varphi(a,c)^{M^\u}$.  Suppose, towards a contradiction, that there is $\epsilon>0$ such that $|\varphi(x,b)^p-s|\geq \epsilon$ for all $b\in N$.  It follows that $|\varphi(x,b)-s|^p\geq \epsilon$ for all $b\in N$, whence, by (2), $|\varphi(a,c)^{M^\u}-s|\geq \frac{\epsilon}{2}$, leading to a contradiction.

Suppose now, towards a contradiction, that no such $a\in M^\u$ exists.  By separable saturation, it follows that there are:
\begin{itemize}
\item a formula $\psi(x)$ with parameters from $N$ such that $\psi(x)^p=0$, 
\item a $\delta>0$, and 
\item formulae $\varphi_1(x,c_1),\ldots,\varphi_m(x,c_n)$ with parameters from $A$ as in (2)
\end{itemize}
such that, for any $a\in M^\u$, if $\psi(a)<\delta$, then $\varphi_i(a,c_i)<\frac{\epsilon}{2}$ for some $i=1,\ldots,m$. 

In other words, 
$$\left(\sup_x\min\left(\delta\dotminus \psi(x),\min_{1\leq i\leq m}\left(\varphi_i(x,c_i)\dotminus \frac{\epsilon}{2}\right)\right)\right)^{M^\u}=0.$$  Consequently, $$\left(\inf_{y_1}\cdots\inf_{y_m}\sup_x\min\left(\delta\dotminus \psi(x),\min_{1\leq i\leq m}\left(\varphi_i(x,y_i)\dotminus \frac{\epsilon}{2}\right)\right)\right)^{M^\u}=0,$$ and thus 
$$\left(\inf_{y_1}\cdots\inf_{y_m}\sup_x\min\left(\delta\dotminus \psi(x),\min_{1\leq i\leq m}\left(\varphi_i(x,y_i)\dotminus \frac{\epsilon}{2}\right)\right)\right)^M=0.$$ Set $\eta:=\min(\delta,\frac{\epsilon}{2})$ and take $d_1,\ldots,d_m\in M$ such that $$\left(\sup_x\min\left(\delta\dotminus \psi(x),\min_{1\leq i\leq m}\left(\varphi_i(x,d_i)\dotminus \frac{\epsilon}{2}\right)\right)\right)^M<\eta,$$ whence $$\left(\sup_x\min\left(\delta\dotminus \psi(x),\min_{1\leq i\leq m}\left(\varphi_i(x,c_i)\dotminus \frac{\epsilon}{2}\right)\right)\right)^{M^\u}<\eta.$$  Take $a\in M^\u$ realizing $p$.  Then $\psi(a)^{M^\u}=\psi(x)^p=0$, whence, since $\eta\leq \delta$, we have $\min_{1\leq i\leq m}(\varphi_i(x,c_i)\dotminus \frac{\epsilon}{2})^{M^\u}<\eta\leq \frac{\epsilon}{2}$.  Choosing $i$ such that $(\varphi_i(a,d_i)\dotminus \frac{\epsilon}{2})^{M^\u}<\eta$, we get that $\varphi_i(x,d_i)^p=\varphi_i(a,d_i)^{M^\u}<\epsilon$, a contradiction.
\end{proof}

\subsection{Existentially closed factors}

The following notion is the model-theoretic generalization of the notion of algebraically closed field.  It has been extensively studied in the operator algebraic context (see \cite{ECfactors} and \cite{KEP}).

\begin{defn}
Suppose that $M$ is a subalgebra of the separable tracial von Neumann algebra $N$.  We say that $M$ is \textbf{existentially closed (e.c.) in $N$} if there is an embedding $j:N\hookrightarrow M^\u$ such that the restriction of $j$ to $M$ is the diagonal embedding $M\hookrightarrow M^\u$.  We say that a separable tracial von Neumann algebra $M$ is \textbf{existentially closed (e.c.)}\footnote{We are giving the semantic definition here and are making use of our standing CH assumption.  The syntactic definition states that an ``existential'' sentence with parameters from $M$ has the same value in $M$ as it does in any extension.  The syntactic definition also does not have any separability requirements.} if $M$ is e.c. in $N$ whenever $N$ is a separable tracial von Neumann algebra containing $M$.
\end{defn}

Items (1)-(3) of the following can be found in \cite{nomodcomp} and \cite{ECfactors}; item (4) follows immediately from the definition.

\begin{fact}\label{ecfacts}

\

\begin{enumerate}
    \item E.c. tracial von Neumann algebras are McDuff II$_1$ factors.
    \item Every separable tracial von Neumann algebra embeds into an e.c. factor.
    \item E.c. factors are \textbf{locally universal}, that is, if $M$ is an e.c. factor, then any separable tracial von Neumann algebra  embeds into $M^\u$. 
    \item If $M_0\subseteq M_1\subseteq M_2\subseteq \cdots$ is a chain of e.c. factors with union $M$, then $M$ is also e.c.
\end{enumerate}
\end{fact}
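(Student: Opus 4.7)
The plan is to handle each of the four items in turn. For item (1), the key preliminary observation is that the diagonal inclusion $M\hookrightarrow M^\u$ carries $Z(M)$ into $Z(M^\u)$: if $a\in Z(M)$ and $(b_n)_\u\in M^\u$, then $[a,(b_n)_\u]=([a,b_n])_\u=0$. To see $M$ is a factor, take $a\in Z(M)$ non-scalar, embed $M$ into some II$_1$ factor $N$ (for instance via the tracial free product $M*L(\bb{F}_2)$), and find $y\in N$ with $\|[a,y]\|_2>\epsilon$ for some $\epsilon>0$. The e.c.\ hypothesis supplies an embedding $j:N\hookrightarrow M^\u$ restricting to the diagonal on $M$, whence $\|[a,j(y)]\|_2>\epsilon$, contradicting $a\in Z(M^\u)$. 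For the McDuff property, embed $M$ into $N:=M\otimes\R$ and apply e.c.\ to any non-commuting pair $(u,v)\in 1\otimes\R\subseteq M'\cap N$; transporting via the resulting $j$ yields non-commuting elements $j(u),j(v)\in M'\cap M^\u$. A McDuff tracial factor is necessarily infinite-dimensional, hence of type II$_1$.

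For item (2), I would follow the standard model-theoretic chain construction. Given a separable tracial von Neumann algebra $M_0$, iteratively build $M_0\subseteq M_1\subseteq\cdots$ in which each $M_{n+1}$ realizes (up to any given $\epsilon$) every existential formula with parameters from $M_n$ that is satisfiable in some extension---this can be accomplished via tracial amalgamation or by embedding into a suitable ultraproduct of witnessing extensions. Iterating through $\omega_1$ stages and extracting a separable subalgebra via downward L\"owenheim--Skolem (Fact \ref{DLS}) yields a separable e.c.\ tracial von Neumann algebra, which by item (1) is automatically a factor. For item (3), given $M$ e.c.\ and $P$ separable, form the tracial free product $M*P$, a II$_1$ factor containing both; by e.c.\ of $M$ there is an embedding $M*P\hookrightarrow M^\u$ extending the diagonal on $M$, and restriction to $P$ gives the desired map $P\hookrightarrow M^\u$.

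For item (4), I would argue via separable saturation of the ultrapower. Given a separable extension $N\supseteq M=\bigcup_n M_n$, the e.c.\ property of each $M_n$ produces an embedding $j_n:N\hookrightarrow M_n^\u$ extending the diagonal on $M_n$, which composes with the natural inclusion $M_n^\u\hookrightarrow M^\u$ to give an embedding $N\hookrightarrow M^\u$ extending the diagonal on $M_n$. Any finite fragment of the type of a generating sequence of $N$ over $M$ has parameters in some $M_n$ and is thus realized by the corresponding composed map; separable saturation of $M^\u$ then assembles a single embedding $j:N\hookrightarrow M^\u$ extending the diagonal on all of $M$. I expect the main obstacle to lie in item (2)---the transfinite bookkeeping required to ensure both separability and realization of all relevant existential formulas---but this is a well-trodden path in continuous model theory.
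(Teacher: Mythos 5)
Your proposal is essentially correct, but there is nothing in the paper to compare it against: the authors prove none of these items, citing \cite{nomodcomp} and \cite{ECfactors} for (1)--(3) and asserting that (4) ``follows immediately from the definition.'' Your arguments for (1), (3), and (4) are the standard ones from those references and they go through. In (1), the observation that the diagonal embedding carries $Z(M)$ into $Z(M^\u)$ is exactly the right pivot, and the factoriality step is consistent with the paper's own footnote invoking \cite[Corollary 0.2]{popa} to embed an arbitrary tracial algebra into a factor. In (3), the free product $M*P$ is the canonical witness. In (4), your saturation argument is precisely how one makes the paper's ``immediate'' claim rigorous: each finite fragment of the desired condition set has parameters (approximately) in some $M_n$, is realized by the composite $N\hookrightarrow M_n^\u\hookrightarrow M^\u$, and separable saturation of $M^\u$ assembles the single embedding extending the diagonal on all of $M$.

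The one soft spot is item (2). The route you sketch --- iterating through $\omega_1$ stages and then cutting back down with downward L\"owenheim--Skolem --- leaves a real step unaddressed: you must show that the separable (elementary) subalgebra extracted by Fact \ref{DLS} is still e.c. This is true, but it is not free; the standard argument amalgamates an arbitrary separable extension of the small algebra with the big e.c.\ algebra over the small one (via the trace-preserving amalgamated free product) and then chases values of existential formulae around the square. The cleaner and more standard construction avoids this entirely: run only $\omega$ stages, at each stage handling a countable \emph{dense} family of existential conditions (countably many formulae against a countable dense set of parameters, each up to error $1/k$), so that every $M_{n+1}$ stays separable and the union of the chain is already separable and e.c.; factoriality then comes for free from item (1). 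Either repair works, but as written the DLS step is a gap in the writeup rather than a routine remark.
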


We will also need to consider a relative version of this notion where we restrict to $P^\u$-embeddable algebras for some tracial von Neumann algebra $P$:

\begin{defn}
If $M$ and $P$ are separable tracial von Neumann algebras such that $M$ embeds into $P^\u$, then $M$ is an \textbf{existentially closed (e.c.) $P^\u$-embeddable} algebra if $M$ is e.c. in $N$ whenever $N$ is a separable $P^\u$-embeddable tracial von Neumann algebra containing $M$.  When $P=\R$, we simply say that $M$ is an existentially closed embeddable algebra.
\end{defn}

Once again, every separable $P^\u$-embeddable algebra embeds into a separable e.c. $P^\u$-embeddable algebra.  The same proof that e.c. tracial von Neumann algebras are locally universal shows that if $M$ is an e.c. $P^\u$-embeddable algebra, then $M$ and $P$ are \textbf{mutually embeddable}, that is, $P$ is also $M^\u$-embeddable.  Finally, if $P$ is a factor, then any e.c. $P^\u$-embeddable algebra is also a factor\footnote{To see this, it suffices to show that any $P^\u$-embeddable algebra embeds into a $P^\u$-embeddable factor; this follows, for example, from \cite[Corollary 0.2]{popa}.}; in particular, e.c. embeddable algebras are factors.


Although we will not need it in this paper, one should observe that being an e.c. (embeddable) factor is not an \emph{axiomatizable} property in that it is not preserved under ultraproducts.  This fact was first observed in \cite{nomodcomp} for arbitrary II$_1$ factors and then in \cite{ECfactors} for embeddable II$_1$ factors.  Since it relates to the work of the first- and third-named authors mentioned above, we offer a different argument for this latter fact.   

\begin{thm}
$\R^\u$ is not an e.c. embeddable factor.
\end{thm}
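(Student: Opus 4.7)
The plan is to proceed by contradiction: assume $\R^\u$ is an existentially closed embeddable factor. The goal is to show that this hypothesis forces every separable embeddable II$_1$ factor $N \subset \R^\u$ to have a factorial relative commutant $N' \cap \R^\u$. Once this is established, one applies the conclusion to an arbitrary embedding $\pi \colon N \hookrightarrow \R^\u$: since the image $\pi(N)$ is a separable subalgebra of $\R^\u$, its relative commutant $\pi(N)' \cap \R^\u$ is a factor, so $(N,\R)$ is a strong factorial commutant pair for every separable embeddable $N$. Corollary~\ref{Relcom} then forces $N \cong \R$, which is absurd since $\R^\u$ contains non-hyperfinite embeddable factors such as $L\mathbb{F}_2$ (Connes-embeddable by its very construction).

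The key step is promoting existential closure to factoriality of relative commutants. Given a separable subfactor $N \subset \R^\u$ and a nontrivial central projection $p \in Z(N' \cap \R^\u)$, the plan is to build an embeddable extension $\tilde M \supseteq \R^\u$ containing a unitary $u \in N' \cap \tilde M$ with $\|u p u^* - p\|_2$ bounded away from $0$. The existential formula ``$\exists u$ unitary with $\|[u,n]\|_2 < \varepsilon$ for a chosen finite generating set of $N$ and $\|upu^* - p\|_2 > 1/2$'' would then have value $0$ in $\tilde M$ but be bounded below in $\R^\u$ (since centrality of $p$ forces $upu^* = p$ for every $u \in N' \cap \R^\u$), contradicting e.c. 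A natural candidate for $\tilde M$ is the amalgamated free product $\R^\u *_N (N \bar\otimes L\mathbb Z)$: a Haar generator of $L\mathbb Z$ is central to $N$ via the tensor factor but is free over $N$ from $p \in N' \cap \R^\u \setminus N$, so does not commute with $p$.

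Embeddability of $\tilde M$ is handled by the Brown--Dykema--Jung theorem on $\R^\u$-embeddability of amalgamated free products over hyperfinite subalgebras when $N$ is amenable. For the non-amenable case, I would first reduce to a block-diagonal situation: using Theorem~\ref{nate} to produce two extreme (factorial-commutant, hence disjoint) embeddings $\pi_1, \pi_2 \colon N \hookrightarrow \R^\u$, and forming $\pi := \pi_1 \oplus \pi_2 \colon N \hookrightarrow M_2(\R^\u) \cong \R^\u$, the matrix unit $e_{11}$ becomes a nontrivial central projection of $\pi(N)' \cap \R^\u$ (since disjointness forces the commutant to be block-diagonal). Moving $e_{11}$ then becomes equivalent to realizing a unitary equivalence between $\pi_1$ and $\pi_2$ in an embeddable extension, a task that can be attacked using the topometric/convex structure of $\HOM(N,\R^\u)$ developed by the first- and third-named authors and the extreme-point analysis of Theorem~\ref{exptchar}.

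The main obstacle is securing the embeddable extension $\tilde M$ in the non-amenable regime, where Brown--Dykema--Jung does not directly apply; this is precisely where the ``different argument'' promised in the parenthetical remark must draw on the paper's machinery for $\HOM(N,\R^\u)$ and on the authors' prior work. Once this is overcome, the rest of the argument is a clean invocation of Brown's characterization of $\R$ via strong factorial commutant pairs.
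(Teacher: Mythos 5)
Your overall strategy is coherent and genuinely different from the paper's, but it hinges on a step that is a real gap, and one you half-acknowledge yourself. The crux of your argument is: given a separable embeddable $N\subseteq \R^\u$ with a nontrivial central projection $p\in Z(N'\cap \R^\u)$, produce an \emph{embeddable} extension $\tilde M\supseteq \R^\u$ containing a unitary commuting with $N$ but moving $p$. (The passage from the approximate-commutation existential formula back to an exact element of $N'\cap\R^\u$ is fine by separable saturation, so that is not the issue.) Your candidate $\tilde M=\R^\u *_N (N\otimes L(\mathbb Z))$ does move $p$, by the freeness computation, but its embeddability is exactly the sticking point: Brown--Dykema--Jung-type results cover amalgamation over hyperfinite subalgebras, and embeddability of amalgamated free products over non-amenable subalgebras is open in general. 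Since the contradiction you are aiming for requires running the argument for a non-hyperfinite $N$ (e.g.\ $L(\mathbb F_2)$, for which Corollary \ref{Relcom} guarantees an embedding with non-factorial commutant), the amenable case buys you nothing. The proposed workaround -- amplifying to $M_2(\R^\u)$ via two disjoint extreme embeddings $\pi_1\oplus\pi_2$ and then ``realizing a unitary equivalence between $\pi_1$ and $\pi_2$ in an embeddable extension'' -- is not a construction; producing such an embeddable extension is precisely as hard as the problem you started with. A further warning sign: your intermediate claim (existential closure forces every separable subfactor of $\R^\u$ to have factorial relative commutant) would essentially settle questions the paper explicitly leaves open (compare Theorem \ref{isaac2}, which needs the w-spectral gap hypothesis, and Question \ref{ecsuperquestion} on whether e.c.\ factors are super McDuff). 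So the gap is not a routine one to fill.

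For contrast, the paper avoids relative commutants entirely. It takes $N\equiv\R$ with $N\not\cong\R$, builds an elementary embedding $j:N\hookrightarrow N^\u$ that factors through $\R^\u$, and notes that $j$ is automorphically conjugate to the diagonal embedding by homogeneity. If $N^\u\,(\cong\R^\u)$ were e.c.\ embeddable, its automorphisms would all be approximately inner, so by saturation $j$ would be \emph{unitarily} conjugate to the diagonal; since $j$ factors through $\R^\u$, the ucp-lifting argument behind Theorem \ref{embedJung} then forces $N\cong\R$, a contradiction. The ingredients there (approximate innerness of automorphisms of e.c.\ factors, plus Connes' theorem via the ucp lift) are all established results, whereas your route requires new embeddability technology.
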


\begin{proof}
Take $N\equiv \R$ such that such that $N\not\cong \R$.  Fix elementary embeddings $\R\hookrightarrow N$ and $N\hookrightarrow \R^\u$; the first exists by Fact \ref{Relementary}(2) and the second exists by Fact \ref{ultrauniv}(2).  Since the composite embedding $j:N\hookrightarrow N^\u$ is elementary, there is an automorphism $\alpha$ of $N^\u$ that passes $j$ to the diagonal embedding; this follows from Fact \ref{ultrahomog}.

If $N^\u$ were an e.c. embeddable factor, then all of its automorphisms would be approximately inner \cite[Proposition 3.3]{ECfactors}.  In particular, by separable saturation, there would be a unitary $u\in N^\u$ that conjugates $j$ to the diagonal embedding.  Since $j$ factors through $\R^\u$, this contradicts \cite[Corollary 2.7]{ultraprodembed}.  Consequently, $N^\u$ is not an e.c. embeddable factor; since $N^\u\cong \R^\u$, neither is $\R^\u$.  
\end{proof}

\subsection{Building tracial von Neumann algebras by games}\label{games}  We now introduce a method for building tracial von Neumann algebras first introduced in \cite{enforceable} \linebreak (based on the discrete case presented in \cite{hodges}).  This method goes under many names, such as \textbf{Henkin constructions}, \textbf{model-theoretic forcing}, or \textbf{building models by games}.

We fix a countably infinite set $C$ of distinct symbols that are to represent generators of a separable tracial vNa that two players (traditionally named $\forall$ and $\exists$) are going to build together (albeit adversarially).
The two players take turns playing finite sets of expressions of the form $\left|\|p(c)\|_2-r\right|<\epsilon$, where $c$ is a tuple of variables from $C$, $p(c)$ is a $*$-polynomial, and each player's move is required to extend (that is, contain) the previous player's move.  These sets are called (open) \emph{conditions}.  The game begins with $\forall$'s move.  Moreover, these conditions are required to be \emph{satisfiable}, meaning that there should be some tracial von Neumann algebra $M$ and some tuple $a$ from $M$ such that $\left|\|p(a)\|_2-r\right|<\epsilon$ for each such expression in the condition.  We play this game for countably many rounds.
At the end of this game, we have enumerated some countable, satisfiable set of expressions. Provided that the players address a ``dense'' set of moments infinitely often, they can ensure that the play is \emph{definitive}, meaning that the final set of expressions yields complete information about all $*$-polynomials over the variables $C$ (that is, for each $*$-polynomial $p(c)$, there should be a unique $r$ such that the play of the game implies that $\|p(c)\|=r$) and that this data describes a countable, dense $*$-subalgebra of a unique tracial von Neumann algebra, which is called the \textbf{compiled structure}.

\begin{defn}
Given a property $P$ of tracial von Neumann algebras, we say that $P$ is an \textbf{enforceable} property is there a strategy for $\exists$ so that, regardless of player $\forall$'s moves, if $\exists$ follows the strategy, then the compiled structure will have property $P$.
\end{defn}

\begin{fact}\label{conjlemma}

\

\begin{enumerate}
    \item (Conjunction lemma \cite[Lemma 2.4]{enforceable})  If $P_n$ is an enforceable property for each $n\in \mathbb N$, then so is the conjunction $\bigwedge_n P_n$.
    \item (\cite[Proposition 2.10]{enforceable} Being e.c. is enforceable.  In particular, one can always enforce the compiled structure to be a locally universal McDuff II$_1$ factor.
\end{enumerate}
\end{fact}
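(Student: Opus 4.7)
The plan is to prove the two items in turn. For item~(1), the conjunction lemma, I would use an interleaving strategy. Suppose $\sigma_n$ is a winning strategy for $\exists$ enforcing $P_n$. Fix a surjection $f\colon \mathbb{N}\to\mathbb{N}$ with each fibre $f^{-1}(n)$ infinite and define a combined strategy $\sigma$ for $\exists$ as follows: at round $k$, treat the current condition as the opponent's most recent move in a play of the $P_{f(k)}$-game, and respond with the extension prescribed by $\sigma_{f(k)}$. Because conditions are nested, this is a legal move of the combined game, and also a legal move of the $P_{f(k)}$-game after we project to the subsequence of rounds with the same $f$-value. At the end of any play consistent with $\sigma$, for each fixed $n$ the subsequence of rounds with $f(k)=n$ constitutes a legal play of the $P_n$-game in which $\exists$ followed $\sigma_n$, and so $P_n$ holds in the compiled structure. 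Since $n$ was arbitrary, $\bigwedge_n P_n$ is enforceable.

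For item~(2), by the conjunction lemma it suffices to enforce a separate property for each $*$-polynomial $p(x_1,\ldots,x_n,y)$ with rational coefficients and each pair of positive rationals $\epsilon$ and $r$, namely: for every $n$-tuple $c$ of Henkin constants from $C$, if $\|p(c,y)\|_2 < r$ is realized by some $d$ in some tracial von Neumann algebra extension of the eventual compiled structure $M$, then some constant $c'\in C$ already satisfies $\|p(c,c')\|_2 < r+\epsilon$ inside $M$ itself. The strategy for $\exists$ at each of its turns picks an as-yet-unaddressed tuple $(p,c,\epsilon,r)$, and, provided the current condition $\Delta$ admits a satisfiable extension by a fresh constant $c'\in C$ together with the clause $\|p(c,c')\|_2 < r+\epsilon$, plays that extension; otherwise it plays a trivial extension of $\Delta$. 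This family of witness properties is precisely the syntactic form of existential closedness, so the compiled structure will be e.c.

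The final clause on locally universal McDuff II$_1$ factors is then immediate from Fact~\ref{ecfacts}(1) and (3): e.c. tracial von Neumann algebras are automatically McDuff II$_1$ factors and locally universal, so these properties transfer to the compiled structure at no extra cost, again via the conjunction lemma. The principal technical obstacle in (2) is showing that $\exists$ can always pass from the abstract hypothesis ``some extension witnesses $\|p(c,y)\|_2 < r$'' to a concrete satisfiable extension of $\Delta$ by the new clause. Concretely, if the current condition is realized in a tracial vNa $M_0$ and the external witness lives in some $N$ sharing parameters with $M_0$, one merges them using an amalgamated tracial free product over the common subalgebra generated by the shared constants. This amalgamation step is what makes the Henkin game work at the level of tracial von Neumann algebras, and it is where the bulk of the technical effort is concentrated.
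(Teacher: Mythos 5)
First, a framing remark: the paper does not prove this Fact at all — it is quoted verbatim from \cite{enforceable} (Lemma 2.4 and Proposition 2.10 there), so you are supplying a proof where the paper supplies only a citation. Your argument for item (1) is correct and is the standard dovetailing proof: partition $\exists$'s rounds into infinitely many infinite classes, run $\sigma_n$ on the rounds in the $n$-th class, and observe that the intervening moves can be absorbed into $\forall$'s moves of the projected $P_n$-game, whose conditions are cofinal in those of the full play.

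Item (2) has two genuine problems. First, your witness properties are indexed by a \emph{single} $*$-polynomial $p$, but existential closedness requires a single element of $M$ to simultaneously (approximately) satisfy \emph{finitely many} constraints at once: to embed an extension $N$ into $M^{\mathcal U}$ over $M$ you must approximately match the joint behaviour of a witness $d\in N$ against a whole finite condition, and the approximate witnesses your properties produce for different polynomials may all be different elements. You need to index the witness properties by finite satisfiable conditions $\Delta(c,c')$ (equivalently, by quantifier-free formulae), not by single expressions $\|p(c,y)\|_2<r$; this is exactly what \cite{enforceable} does. Second, the step you identify as the ``principal technical obstacle'' — merging the external witness with a realization of the current condition via an amalgamated free product — is both unnecessary and logically misplaced. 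The correct argument is a contraposition on satisfiability: if, when $\exists$ addresses the tuple, the condition $\Delta\cup\{\text{clause}\}$ is unsatisfiable, it remains unsatisfiable under all later extensions of $\Delta$, and since any extension $N$ of the compiled $M$ containing a witness $d$ would itself realize $\Delta\cup\{\text{clause}\}$ (the realization of $\Delta$ already sits inside $M\subseteq N$, so no merging of two separate models is ever needed), no such extension exists and the witness property holds vacuously; if it is satisfiable, $\exists$ plays it and the witness is among the constants. As written, your amalgamation step does not even typecheck — the ``extension of the eventual compiled structure'' is not available at the time of the move — and presenting it as where the technical effort lies signals that the mechanism of the Henkin argument has been misidentified. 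The final clause (e.c.\ implies locally universal McDuff II$_1$ factor, by Fact \ref{ecfacts}(1) and (3)) is fine.
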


\begin{defn}
A tracial von Neumann algebra $M$ is said to be \textbf{enforceable} if the property of being isomorphic to $M$ is an enforceable property.
\end{defn}

Clearly, if an enforceable tracial von Neumann algebra exists, then it is unique.

\begin{thm}(\cite[Theorem 5.2]{enforceable})
A positive solution to CEP is equivalent to $\R$ being the enforceable factor.
\end{thm}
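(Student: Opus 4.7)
The plan is to prove both directions of the equivalence.

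For the reverse direction, assume $\R$ is the enforceable factor. By Fact \ref{conjlemma}(2), being e.c.\ is enforceable, and combining this with the conjunction lemma (Fact \ref{conjlemma}(1)) together with uniqueness of the enforceable factor forces $\R$ itself to be e.c. Then by Fact \ref{ecfacts}(3), $\R$ is locally universal, which is exactly CEP.

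For the forward direction, assume CEP. The strategy is to exhibit a countable family of enforceable properties whose conjunction characterizes $\R$ among separable tracial von Neumann algebras. First, by Fact \ref{conjlemma}(2), the property of being e.c.\ is enforceable, and this already yields a McDuff II$_1$ factor by Fact \ref{ecfacts}(1). Second, for each finite tuple $c = (c_1, \ldots, c_n)$ of generators from $C$ and each positive rational $\varepsilon$, I plan to enforce the property $P_{c,\varepsilon}$: there exist fresh generators $\{e_{ij}\}$ forming a system of matrix units for some $\mathbb{M}_k$ together with witnessing scalars showing that each $c_l$ lies within $\varepsilon$ of $\spn\{e_{ij}\}$ in $\|\cdot\|_2$. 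The conjunction of the e.c.\ property with all the $P_{c,\varepsilon}$ forces the compiled structure to be a separable hyperfinite II$_1$ factor, hence $\R$ by Murray--von Neumann uniqueness.

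The hard part will be verifying that each $P_{c,\varepsilon}$ is enforceable under CEP. At the stage when $\exists$ must secure $P_{c,\varepsilon}$, the current satisfiable condition is realized in $\R^\u$ (this is exactly where CEP enters), say by a tuple with $c_l = (c_l^{(m)})_\u$ and $c_l^{(m)} \in \R$. Hyperfiniteness of $\R$ yields, for each $m$, a finite-dimensional $F_m \subseteq \R$ and $a_l^{(m)} \in F_m$ with $\|c_l^{(m)} - a_l^{(m)}\|_2 < \varepsilon/2$ for all $l$. The ultraproduct $(F_m)_\u \subseteq \R^\u$ is itself a hyperfinite tracial von Neumann algebra containing the tuple $(a_l^{(m)})_\u$; inside $(F_m)_\u$ one can therefore find a matrix subalgebra $\mathbb{M}_k$ together with elements approximating the tuple to within $\varepsilon/2$. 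This produces the required matrix units and coefficients inside $\R^\u$, witnessing that the extension of the current condition by the data of $P_{c,\varepsilon}$ is satisfiable, so $\exists$ may legally play it. Applying the conjunction lemma (Fact \ref{conjlemma}(1)) to the e.c.\ property together with all the $P_{c,\varepsilon}$ then yields the enforceability of $\R$.
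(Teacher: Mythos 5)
Your reverse direction is fine: if ``$\cong\R$'' and ``e.c.'' are both enforceable, their conjunction is enforceable and hence realized, so $\R$ is e.c.\ and therefore locally universal, which is CEP. Note also that the paper does not prove this theorem itself (it cites \cite{enforceable}), and since it separately records that $\R$ is the enforceable \emph{embeddable} factor (Theorem 5.1 of \cite{enforceable}), the forward direction has a one-line proof you bypass: under CEP every separable tracial von Neumann algebra is $\R^\u$-embeddable, so the unrestricted game coincides with the embeddable game and Theorem 5.1 applies. Your forward direction is essentially an attempt to reprove Theorem 5.1 from scratch, and its high-level plan (enforce, for each finite tuple of generators and each rational $\varepsilon$, approximate containment in a matrix subalgebra; conclude semidiscreteness, hence hyperfiniteness, hence $\cong\R$) is the right one.

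The execution, however, has a genuine gap at the crucial satisfiability check. You realize the current condition in $\R^\u$, approximate the representing sequences coordinatewise by finite-dimensional $F_m\subseteq\R$, and then assert that $(F_m)_\u$ is hyperfinite, so that the tuple $(a_l^{(m)})_\u$ can be $\varepsilon/2$-approximated by a single $\mathbb M_k$. This is false in general: an ultraproduct of finite-dimensional algebras of unbounded dimension (e.g.\ $\prod_\u\mathbb M_{2^m}$) is a non-injective von Neumann algebra, since it contains $L(\mathbb F_2)$ as the image of a trace-preserving conditional expectation. Worse, the statement your argument would prove --- that \emph{every} finite tuple in $\R^\u$ is $\|\cdot\|_2$-approximately contained in a finite-dimensional subalgebra of $\R^\u$ --- cannot hold: the resulting finite-rank ucp maps $x\mapsto E_F(x)$ would converge to the identity point-$\|\cdot\|_2$, making $\R^\u$ semidiscrete and hence injective, a contradiction. (Concretely, the Kazhdan generators of $L(SL_3(\mathbb Z))\subseteq\R^\u$ admit no uniformly good finite-dimensional approximant.) The fix is to push the condition down to $\R$ rather than work in $\R^\u$: a condition is a \emph{finite} set of strict inequalities $\bigl|\|p(c)\|_2-r\bigr|<\epsilon$, so if it is realized by $(c^{(m)})_\u$ in $\R^\u$, then by \L os' theorem it is already realized by $c^{(m)}\in\R$ for $\u$-many $m$. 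Hyperfiniteness of $\R$ itself then supplies honest matrix units and approximants witnessing that the extended condition is satisfiable, and $\exists$ may legally play it. With that replacement your argument goes through; as written, the step quoted above does not.
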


Since a negative solution to CEP has recently been announced, it follows that $\R$ is not the enforceable factor.  That leaves the following open question:

\begin{question}
Is there an enforceable factor?
\end{question}

Since the enforceable factor, should it exist, is a ``canonical'' II$_1$ factor not isomorphic to $\R$, that leads these authors to guess that the above question has a negative answer.

Given any tracial von Neumann algebra $P$, there is a relative version of the above game where one restricts one's attention only to $P^\u$-embeddable algebras.  When playing this game, it is still the case that being an e.c. $P^\u$-embeddable algebra is enforceable.

Although we do not know if there is an enforceable factor, we do know that there is an enforceable embeddable factor:

\begin{thm}(\cite[Theorem 5.1]{enforceable})
$\R$ is the enforceable embeddable factor.
\end{thm}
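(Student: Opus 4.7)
The plan is to describe an explicit strategy for player $\exists$ in the game restricted to $\R^\u$-embeddable algebras that forces the compiled structure $M$ to be isomorphic to $\R$. Enumerate the generator set as $C=\{c_1,c_2,\dots\}$, reserve an infinite sublist $G\subseteq C$ partitioned into finite batches $G=\bigsqcup_n H_n$, and intend the $H_n$ to name successive matrix-unit systems in a chain of matrix subalgebras $F_1\subseteq F_2\subseteq\cdots$, with each inclusion a fixed unital embedding of matrix algebras. The exact isomorphism type (i.e.\ dimension) of $F_n$ is not fixed in advance; it will depend on the play so far.

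At round $n$, $\exists$ extends the current condition by playing, in addition to the standard bookkeeping moves needed to make the play definitive: (a) finitely many conditions forcing the generators named in $H_1\cup\dots\cup H_n$ to satisfy, to within $\|\cdot\|_2$-error $2^{-n}$, the matrix-unit relations of a specific finite-dimensional matrix algebra $F_n$ containing $F_{n-1}$ via the previously fixed embedding; and (b) for each $i\leq n$, a condition $\|c_i-b_i\|_2<2^{-n}$ where $b_i$ is a specific $*$-polynomial in the generators of $H_1\cup\dots\cup H_n$. As the rounds progress the matrix-unit errors in (a) tend to zero, so in the compiled $M$ the named generators satisfy the matrix relations \emph{exactly}; the conditions in (b) force every $c_i$ to lie in the $\|\cdot\|_2$-closure of the $*$-subalgebra generated by $G$. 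A tracial von Neumann algebra generated by an ascending chain of matrix subalgebras with unital inclusions and compatible traces is $\R$ (as the tracial GNS completion of a UHF $C^*$-algebra), and this copy of $\R$ contains all the generators of $M$, forcing $M\cong\R$.

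The technical heart of the argument, and its main obstacle, is to verify that $\exists$'s intended move at round $n$ is satisfiable by some $\R^\u$-embeddable algebra. Given a realization of the current condition in some $\R^\u$-embeddable $N$, embed $N\hookrightarrow\R^\u$; inside $\R^\u$ one then has a matrix subalgebra $F_{n-1}\subseteq\R^\u$ and elements $c_1,\dots,c_k\in\R^\u$. We must find a matrix subalgebra $F_n\supseteq F_{n-1}$ of $\R^\u$ such that each $c_i$ lies within $L^2$-distance $2^{-n}$ of some element of $F_n$. Since injectivity of finite tracial von Neumann algebras passes to tracial ultraproducts, $\R^\u$ is an injective II$_1$ factor; moreover the tensor decomposition $\R^\u\cong F_{n-1}\mathbin{\bar\otimes}(F_{n-1}'\cap\R^\u)$ holds, and the commutant $F_{n-1}'\cap\R^\u$ is itself a (nonseparable) injective, hence approximately finite dimensional, II$_1$ factor---in fact isomorphic to $\R^\u$ via the standard componentwise argument. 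Expanding each $c_i=\sum_{j,\ell}e^{F_{n-1}}_{j\ell}\otimes c_i^{j,\ell}$ with $c_i^{j,\ell}\in F_{n-1}'\cap\R^\u$ and then jointly $L^2$-approximating the finitely many $c_i^{j,\ell}$ by elements of a sufficiently large matrix subalgebra $F^c$ of the commutant, we set $F_n:=F_{n-1}\cdot F^c$; this produces the required matrix extension inside $\R^\u$. Reading off the matrix-unit expansions of the approximants then supplies the explicit polynomials $b_i$ of part (b), completing the description of the strategy.
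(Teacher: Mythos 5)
Your overall plan---force the compiled structure to be exhausted by an increasing chain of matrix subalgebras, then conclude it is hyperfinite and hence $\R$---is the right one, and it is essentially the strategy behind the cited proof. But the step you yourself identify as the technical heart is where the argument breaks. You claim that $\R^\u$ is injective (``injectivity passes to tracial ultraproducts'') and hence that, given finitely many elements $c_1,\dots,c_k\in\R^\u$ and a matrix subalgebra $F_{n-1}$, one can find a finite-dimensional $F_n\supseteq F_{n-1}$ with each $c_i$ within $2^{-n}$ of $F_n$ in $\|\cdot\|_2$. Both claims are false. Injectivity does not pass to tracial ultraproducts: $L(\mathbb F_2)$ embeds into $\R^\u$ with a trace-preserving conditional expectation, and injectivity passes to the range of such an expectation, so $\R^\u$ is not injective. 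Worse for your argument, if every finite subset of $\R^\u$ were $\|\cdot\|_2$-approximable by finite-dimensional subalgebras of $\R^\u$, the conditional expectations onto those subalgebras would converge point-$\sigma$-weakly to the identity on bounded sets, exhibiting $\R^\u$ as semidiscrete and hence injective --- a contradiction. So there exist finite tuples in $\R^\u$ (and likewise in $F_{n-1}'\cap\R^\u\cong\R^\u$) that are uniformly bounded away from \emph{every} finite-dimensional subalgebra, and player $\forall$ can steer the realization toward such a tuple. Your satisfiability check therefore fails exactly where the hypothesis of embeddability is supposed to be doing the work.

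The repair is to exploit the freedom you are not using: a condition is a \emph{finite} set of \emph{strict} inequalities about $*$-moments, so $\exists$ is never obliged to approximate the particular realization handed to her --- she only needs \emph{some} $\R^\u$-embeddable realization of the extended condition. Since the current condition is realized by a tuple in $\R^\u$, it has matricial microstates: a tuple in some $M_d(\mathbb C)\subseteq\R$ whose finitely many relevant moments are close enough that all the (open) inequalities in the current condition still hold, including the approximate matrix-unit relations for $F_{n-1}$ played at earlier rounds. Player $\exists$ then takes $F_n$ to be (a suitable matrix algebra containing a perturbed copy of $F_{n-1}$ inside) this $M_d(\mathbb C)$, and the new approximate-containment conditions are satisfied exactly because the whole realization now lives in a single matrix algebra. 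This is where $\R^\u$-embeddability is genuinely used, and it explains why the theorem is \emph{equivalent} to CEP in the unrestricted game: without embeddability there is no matricial re-realization, and indeed $\R$ is not the enforceable factor there. The remainder of your write-up (expressibility of the conditions, the density bookkeeping in part (b), and the identification of the compiled structure as the tracial GNS completion of a UHF algebra) is fine.
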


\section{II$_1$ factors with the generalized Jung property}\label{jungprop}

\begin{conv*}
In the rest of this paper, unless stated otherwise, $M$, $N$, and $P$ denote separable II$_1$ factors.
\end{conv*}

\subsection{Definitions and first observations}\label{dfo}

\begin{defn}
We say that the pair $(N,M)$ is a \textbf{Jung pair} if $N$ embeds into $M^\u$ and any two embeddings of $N$ into $M^\u$ are unitarily equivalent.
We say that $M$ has the \textbf{Jung property} if $(M,M)$ is a Jung pair.
\end{defn}

As mentioned in the introduction, the starting point for this line of research is the following theorem of Jung (for which the property is named):

\begin{thm}\label{jung}(\cite{jung})
Assuming $N$ is embeddable, $(N, \R)$ is a Jung pair if and only if $N\cong \R$.
\end{thm}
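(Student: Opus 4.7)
The reverse direction is immediate from the classical fact, already cited in the excerpt, that any two embeddings of $\R$ into $\R^\u$ are unitarily equivalent.

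For the forward direction, my plan is to exploit the convex-like structure on $\HOM(N,\R^\u)$ developed by Brown. The hypothesis that $(N,\R)$ is a Jung pair says that all embeddings of $N$ into $\R^\u$ are unitarily equivalent, so $\HOM(N,\R^\u) = \{[\pi]\}$ is a singleton. In any convex-like space, a singleton's unique element is trivially extreme: the only available convex combination is $t[\pi] + (1-t)[\pi] = [\pi]$. Since $\R$ is McDuff, Theorem \ref{exptchar} then yields that $\pi(N)' \cap \R^\u$ is a II$_1$ factor.

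Next, I would upgrade this to a \emph{strong} factorial commutant pair. Factoriality holds not just for $\pi$ but for every embedding of $N$ into $\R^\u$: any other $\rho$ is of the form $\rho = \mathrm{Ad}(u) \circ \pi$ for some unitary $u \in \R^\u$, and hence $\rho(N)' \cap \R^\u = u(\pi(N)' \cap \R^\u)u^*$ is also a factor. Thus $(N,\R)$ is a strong factorial commutant pair, and Corollary \ref{Relcom} then concludes $N \cong \R$.

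The main ``work'' in this argument is really hidden inside the tools being cited: the extreme-point characterization (Theorem \ref{exptchar}, from \cite{topdyn,saa}) and Brown's characterization of $\R$ (Corollary \ref{Relcom}, which itself rests on Theorem \ref{nate}). Given those tools, the deduction is clean; the conceptual observation is that the Jung hypothesis forces the convex-like space to degenerate to a single extreme point, after which the extreme-point machinery does the rest. Historically, Jung proved this theorem in \cite{jung} before the convex structure was developed, via a more hands-on microstate-based argument using hyperfiniteness characterizations; that route would be considerably harder without the later machinery, so the synthetic approach above is the one I would take in this setting.
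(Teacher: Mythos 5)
Your reverse direction is fine, and the intermediate steps of the forward direction (singleton $\Rightarrow$ extreme $\Rightarrow$ factorial relative commutant $\Rightarrow$ strong factorial commutant pair, via unitary conjugation) are all correct. The fatal problem is the last step: the appeal to Corollary \ref{Relcom} makes the argument circular. Brown's Corollary 5.3 of \cite{topdyn} is not an independent input; its proof of the nontrivial direction runs in exactly the opposite direction around the same loop: if $(N,\R)$ is a strong factorial commutant pair, then by the extreme-point characterization every point of $\HOM(N,\R^\u)$ is extreme, a convex-like space in which every point is extreme is a singleton, hence $(N,\R)$ is a Jung pair, and therefore $N\cong\R$ \emph{by Jung's theorem}. (This is verbatim how the present paper proves the generalization, Theorem \ref{eMfe}, except that there the final appeal is to \cite[Corollary 3.8]{ultraprodembed}, itself a descendant of Jung's theorem.) So your chain is: Jung pair $\Rightarrow$ singleton $\Rightarrow$ strong factorial commutant pair $\Rightarrow$ [Jung's theorem] $\Rightarrow$ $N\cong\R$. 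Nothing in the convex-structure machinery supplies the rigidity; Jung's theorem is the engine that powers Corollary \ref{Relcom}, not a consequence of it.

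The statement is imported from \cite{jung} precisely because its proof requires genuinely different input. The mechanism one actually needs is visible in the paper's own proof of the variant Theorem \ref{embedJung}: the Jung hypothesis is used to conjugate an arbitrary embedding to a reference one by unitaries, which produces a ucp lift $N\to\ell^\infty(\R)$ of an embedding $N\hookrightarrow\R^\u$; injectivity of $\ell^\infty(\R)$ then forces $N$ to be injective, and Connes' theorem gives $N\cong\R$. Jung's original argument for Theorem \ref{jung} is a microstates/tubularity refinement of this idea. Any correct proof must pass through something of this kind (a hyperfiniteness criterion such as injectivity or tubularity plus Connes); it cannot be extracted from Theorem \ref{exptchar} and Corollary \ref{Relcom}, since the latter already presupposes the result.
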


In \cite{ultraprodembed}, the first- and third-named authors generalized Jung's theorem in several ways. One of them was the following observation:

\begin{thm}(\cite[Corollary 2.7]{ultraprodembed})\label{embedJung}\label{scottsrimain}
If $N$ is embeddable, then $N$ is a Jung factor if and only if $N\cong \R$.
\end{thm}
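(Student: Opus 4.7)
The proof splits into two directions, the easy one being that $\R$ itself has the Jung property, which follows immediately from the classical fact (already recalled in this excerpt) that any two embeddings of $\R$ into $\R^\u$ are unitarily equivalent.

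For the nontrivial direction, suppose $N$ is embeddable with the Jung property. The plan is to confront the diagonal embedding $\iota\colon N\hookrightarrow N^\u$ with a second, carefully designed embedding. First I would fix an embedding $\theta\colon N\hookrightarrow\R^\u$ (which exists by embeddability) together with any embedding $\sigma\colon\R\hookrightarrow N$ (since the hyperfinite II$_1$ factor sits inside every II$_1$ factor), and form the composite $\pi:=\sigma^\u\circ\theta\colon N\hookrightarrow N^\u$ via the induced ultrapower embedding $\sigma^\u\colon\R^\u\hookrightarrow N^\u$. The image of $\pi$ lies inside the copy $\sigma^\u(\R^\u)\cong\R^\u$ of $\R^\u$ in $N^\u$, which is the key structural feature I want to exploit.

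Applying the Jung property to $\iota$ and $\pi$ then yields a unitary $u\in N^\u$ with $\pi(x)=uxu^*$ for every $x\in N$. Using Fact \ref{stable}, I would lift $u=(u_k)_\u$ for unitaries $u_k\in N$, and for each $x\in N$ choose a representative $\theta(x)=(r_k(x))_\u$ with $r_k(x)\in\R$, so that $\pi(x)=(\sigma(r_k(x)))_\u$ and the identity $\pi(x)=uxu^*$ in $N^\u$ translates (after conjugating by $u_k^*$ on the left and $u_k$ on the right, each of which is an $L^2$-isometry) to
\[
\lim_{k\to\u}\|x-u_k^*\sigma(r_k(x))u_k\|_2=0 \quad\text{for every }x\in N.
\]
Given a finite $F\subseteq N$ and $\epsilon>0$, the finite intersection property of $\u$ yields a single index $k$ (hence a single unitary $u_k$) for which $\|x-u_k^*\sigma(r_k(x))u_k\|_2<\epsilon$ holds simultaneously for every $x\in F$; so every element of $F$ lies within $\epsilon$ of an element of the subalgebra $u_k^*\sigma(\R)u_k\cong\R$. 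Since $\R$ is hyperfinite, a further $\epsilon$-approximation inside this subalgebra by a finite-dimensional subalgebra exhibits $N$ as approximately finite-dimensional in the standard local sense, hence hyperfinite, and Murray--von Neumann uniqueness gives $N\cong\R$. The main subtlety is the uniform-in-$F$ choice of the single index $k$ in the approximation step, which is precisely where the Jung property (as opposed to merely the existence of one embedding factoring through $\R^\u$) is essential.
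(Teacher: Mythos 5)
Your proof is correct, but it follows a genuinely different route from the paper's. The paper also starts from the unitary $u=(u_k)_\u$ conjugating an embedding through $\R^\u$ onto the diagonal, but it then composes with the trace-preserving conditional expectation $E:N\to\R$ to produce a ucp lift $\phi(x)=(E(u_kxu_k^*))_k$ of $\sigma:N\hookrightarrow\R^\u$ to $\ell^\infty(\R)$; a factorization-through-an-injective-algebra lemma then shows $N$ is injective, and Connes' theorem gives $N\cong\R$. You instead stay entirely at the level of $\|\cdot\|_2$-approximation: the finite intersection property of $\u$ gives, for each finite $F\subseteq N$ and $\epsilon>0$, a single unitary conjugate $u_k^*\sigma(\R)u_k$ of a fixed copy of $\R$ that $\epsilon$-approximates $F$, and a further approximation by a matrix subalgebra of that copy shows $N$ is approximately finite-dimensional in the local Murray--von Neumann sense, so their uniqueness theorem finishes. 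Each approach has its merits: yours bypasses Connes' injectivity theorem entirely, invoking only the classical equivalence of local and global approximate finiteness (a nontrivial but much older result), and so is more self-contained; the paper's ucp-lift formulation is what permits the substantial generalization to Theorem \ref{ucpconj}, where the conjugating maps are merely ucp rather than honest unitary conjugations onto a subalgebra, a setting in which your local-approximation argument would not apply. One small point worth making explicit in your write-up: the representatives $r_k(x)$ should be chosen with $\|r_k(x)\|\leq\|x\|$ so that the finitely many elements $u_k^*\sigma(r_k(x))u_k$ lie in a fixed operator-norm ball of $u_k^*\sigma(\R)u_k$, which is what licenses the final $\epsilon$-approximation by a matrix subalgebra.
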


This result can be viewed as a special case of the more general results in \cite{ultraprodembed}. We document a short, but important separate proof for this result below, essentially following the proof of \cite[Corollary 2.7]{ultraprodembed}. The proof relies on the following:

\begin{lem}
Suppose that $M$, $N$, and $P$\footnote{In this lemma, we drop the assumption that $M$, $N$, and $P$ are II$_1$ factors and assume they are simply tracial von Neumann algebras, not even necessarily separable.} are such that there is an embedding $\sigma:N\hookrightarrow P$ and ucp maps $\phi:N\to M$ and $\psi:M\to P$ satisfying $\sigma=\psi\circ \phi$.  Furthermore assume that $M$ is injective.  Then $N$ is injective.
\end{lem}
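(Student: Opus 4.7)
The plan is to combine the hypothesized factorization with the standard existence of a trace-preserving conditional expectation onto the image of $\sigma$. Concretely, suppose $X \subseteq Y$ is an inclusion of operator systems and $\varphi: X \to N$ is a ucp map; I want to produce a ucp extension $\tilde{\varphi}: Y \to N$.

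First, I would form the composite ucp map $\phi \circ \varphi: X \to M$. Since $M$ is injective, this map extends to a ucp map $\tilde{\eta}: Y \to M$. Post-composing with $\psi$ gives a ucp map $\psi \circ \tilde{\eta}: Y \to P$, which on $X$ equals $\psi \circ \phi \circ \varphi = \sigma \circ \varphi$, and therefore takes values in the subalgebra $\sigma(N) \subseteq P$ on $X$.

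The remaining step is to project the range of $\psi \circ \tilde{\eta}$ back into $\sigma(N)$ without disturbing its restriction to $X$. Because embeddings of tracial von Neumann algebras are by definition trace-preserving, $\sigma: N \hookrightarrow P$ yields a subalgebra $\sigma(N) \subseteq P$ on which $\tau_P$ restricts correctly, so the (normal) trace-preserving conditional expectation $E_{\sigma(N)}: P \to \sigma(N)$ exists and is ucp. Define $\tilde{\varphi} := \sigma^{-1} \circ E_{\sigma(N)} \circ \psi \circ \tilde{\eta}: Y \to N$. For $x \in X$ we have $(\psi \circ \tilde{\eta})(x) = \sigma(\varphi(x)) \in \sigma(N)$, on which $E_{\sigma(N)}$ acts as the identity, so $\tilde{\varphi}|_X = \varphi$ as required.

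The argument is essentially a diagram chase, and the only real content beyond the hypothesis is the availability of the conditional expectation $E_{\sigma(N)}$; this is not an obstacle but rather the essential feature of the tracial setting that makes the statement go through. I do not expect any technical difficulty; the main thing to flag in the writeup is simply to record the trace-preserving convention on embeddings and cite (or briefly recall) that trace-preserving subalgebra inclusions admit ucp conditional expectations.
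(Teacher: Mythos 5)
Your proof is correct and follows essentially the same route as the paper's: extend $\phi\circ\varphi$ using injectivity of $M$, push forward by $\psi$, and compress back onto $\sigma(N)$ via the trace-preserving conditional expectation. The only cosmetic difference is that the paper first reduces to showing $\sigma(N)$ is injective and works there, whereas you conjugate by $\sigma^{-1}$ at the end; the ingredients and the diagram chase are identical.
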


\begin{proof}
It suffices to show that $\sigma(N)$ is injective.  Towards that end, fix operator systems $X\subseteq Y$ and a ucp map $f:X\to \sigma(N)$.  Let $\theta:\sigma(N)\to M$ be the ucp map $\theta(\sigma(x))=\phi(x)$.  Then $\theta\circ f:X\to M$ is a ucp map, whence, by the injectivity of $M$, there is a ucp extension $f':Y\to M$.  Letting $E:P\to \sigma(N)$ be the canonical conditional expectation, we see that $g:=E\circ \psi\circ f':B\to \sigma(N)$ is a ucp map extending $f$.
\end{proof}

Now suppose that $N$ is a separable II$_1$ factor for which there is an embedding $\sigma:N\hookrightarrow \R^\u$ with a ucp lift, that is, with a ucp map $\phi:N\to \ell^\infty(\R)$ such that $\sigma=Q\circ \phi$, where $Q:\ell^\infty(\R)\to \R^\u$ is the canonical quotient map.  Then since $\ell^\infty(\R)$ is injective, we are in the situation of the previous lemma, whence we can conclude that $N$ is injective.  By Connes' landmark theorem from \cite{connes}, $N\cong \R$.

Theorem \ref{embedJung} follows immediately from the previous paragraph.  Indeed, fix an embedding $\sigma:N\hookrightarrow \R^\u$ and view it as an embedding of $N\hookrightarrow N^\u$ (where, for notational simplicity, we are assuming that $\R\subseteq N$ is a concrete subfactor of $N$).  Since $N$ is a Jung factor, this embedding is unitarily equivalent to the diagonal embedding $N\hookrightarrow N^\u$, whence there are unitaries $u_k\in N$ such that $\sigma(x)=(u_kxu_k^*)_\u$ for all $x\in N$.  Setting $E:N\to \R$ to be the canonical conditional expectation and defining $\phi:N\to \ell^\infty(\R)$ by $\phi(x)=(E(u_kxu_k^*))_{k\in \mathbb N}$, we have that $\phi$ is a ucp lift of $\sigma$, whence $N\cong \R$.  Note that the same proof shows that if the above embedding $N\hookrightarrow N^\u$ is such that there is a sequence of ucp maps $\phi_k:N\to N$ for which $\sigma(x)=(E(\phi_k(x)))_\u$ for all $x\in N$, then $N\cong \R$.

The following result from \cite{ultraprodembed} is an even more serious generalization of Jung's theorem:

\begin{thm}\label{ucpconj}(\cite[Theorem 3.7]{ultraprodembed})
Suppose that $N$ is a separable embeddable II$_1$ factor for which, given any two embeddings $\pi_1,\pi_2:N\hookrightarrow \R^\u$, there is a sequence of ucp maps $\phi_k:\R\to \R$ for which $\pi_1(x)=(\phi_k(\pi_2(x)_k))_\u$.  Then $N\cong \R$.
\end{thm}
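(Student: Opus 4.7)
The plan is to establish injectivity of $N$ and then invoke Connes' theorem to conclude $N \cong \R$, using the preceding factoring lemma as the central tool. By that lemma, it suffices to exhibit an embedding $\sigma : N \hookrightarrow \R^\u$ admitting a ucp lift $\Phi : N \to \ell^\infty(\R)$ along the canonical quotient $Q : \ell^\infty(\R) \twoheadrightarrow \R^\u$; since $\ell^\infty(\R)$ is injective (being an $\ell^\infty$-product of copies of the injective factor $\R$), the conclusion then follows from the lemma.

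The approach directly parallels the proof of Theorem \ref{embedJung} given above. In that proof, the Jung hypothesis applied to an embedding $\sigma : N \hookrightarrow \R^\u \subseteq N^\u$ and the diagonal embedding $N \hookrightarrow N^\u$ produced representatives $\sigma(x) = (u_k x u_k^*)_\u$, and postcomposition with the componentwise conditional expectation $E : N \to \R$ yielded the manifestly ucp lift $\phi(x) = (E(u_k x u_k^*))_k$. In the present situation the hypothesis is weaker (ucp equivalence of embeddings into $\R^\u$ rather than unitary equivalence in $N^\u$), and the target algebra of the ucp maps is $\R$ rather than $N$, so an additional averaging step is required.

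Concretely I would fix any embedding $\pi_2 : N \hookrightarrow \R^\u$ (available by embeddability), choose a second embedding $\pi_1 : N \hookrightarrow \R^\u$, and apply the hypothesis to the pair $(\pi_1,\pi_2)$ to extract ucp maps $\phi_k : \R \to \R$ with $\pi_1(x) = (\phi_k(\pi_2(x)_k))_\u$. The goal is then to combine the $\phi_k$'s with a carefully selected representative lift of $\pi_2$ into $\ell^\infty(\R)$ to produce a genuine ucp lift of $\pi_1$. The componentwise action $(a_k) \mapsto (\phi_k(a_k))$ is automatically a ucp map $\ell^\infty(\R) \to \ell^\infty(\R)$, so the difficulty is concentrated in the selection of representatives of $\pi_2$ upstream of this ucp post-composition.

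The main obstacle is the apparent circularity: existence of an embedding admitting a ucp lift into $\ell^\infty(\R)$ is equivalent to $N$ being injective, so one cannot simply postulate such $\pi_1$ at the outset. I expect the resolution to involve either a symmetric application of the hypothesis (using ucp maps produced both for $(\pi_1,\pi_2)$ and for $(\pi_2,\pi_1)$ to build an amenable trace or hypertrace on $N$ directly) or an iterative construction in which the $\phi_k$'s---possibly combined with conditional expectations onto finite-dimensional subfactors of $\R$---average out the non-ucp defect of an arbitrary set-theoretic lift of $\pi_2$, yielding a true ucp lift in the limit. Once such a lift is produced, the factoring lemma delivers injectivity of $N$ and Connes' theorem closes the argument with $N \cong \R$.
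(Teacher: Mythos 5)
Your proposal is not a complete proof: it correctly identifies the natural strategy (produce an embedding $N\hookrightarrow\R^\u$ with a ucp lift to $\ell^\infty(\R)$, apply the factoring lemma, and finish with Connes' theorem), but the central step is left as an expectation rather than carried out, and that step is precisely where all the difficulty of the theorem lives. The map $x\mapsto(\phi_k(\pi_2(x)_k))_k$ is indeed a lift of $\pi_1$ to $\ell^\infty(\R)$, and post-composition with the coordinatewise $\phi_k$'s is ucp, but the composite is ucp only if the chosen representative map $x\mapsto(\pi_2(x)_k)_k$ for $\pi_2$ is itself ucp --- and, as you yourself observe, the existence of an embedding admitting a coordinatewise ucp system of representatives is \emph{equivalent}, via the same factoring lemma, to the injectivity of $N$ that you are trying to prove. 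So the circularity you flag is real, and nothing in the proposal breaks it. Neither of your two suggested escapes is carried out, and neither is obviously viable as stated: applying the hypothesis symmetrically to $(\pi_1,\pi_2)$ and $(\pi_2,\pi_1)$ only yields ucp maps $\R\to\R$ approximately fixing the coordinates of a representative of $\pi_2(N)$, which does not by itself produce a hypertrace for $N$; and no mechanism is identified by which the $\phi_k$'s would ``average out'' the non-ucp defect of an arbitrary set-theoretic lift.

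This is consistent with how the present paper treats the statement: Theorem \ref{ucpconj} is imported from \cite{ultraprodembed} without proof, and the surrounding discussion explicitly separates the special case (Theorem \ref{embedJung}) --- which \emph{does} follow from the short factoring-lemma argument, because there the unitary conjugation supplies honest ucp coordinate maps $x\mapsto E(u_kxu_k^*)$ --- from the general ucp-conjugation statement, which is described as ``an even more serious generalization'' requiring the technical machinery of \cite{ultraprodembed}. That machinery does not proceed by exhibiting a ucp lift of a single embedding; rather, it works at the level of microstates, using embeddability to manufacture particular pairs of embeddings whose ucp conjugation forces approximately multiplicative, approximately trace-preserving ucp maps from $N$ into matrix algebras, after which amenability follows from the local ($2$-norm ucp approximation) characterization. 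To complete your argument you would need to supply that construction or an equivalent one; the factoring lemma alone cannot close the gap.
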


Using conditional expectations, one has the following:

\begin{cor}(\cite[Corollary 3.8]{ultraprodembed})
If $N$ is a separable embeddable II$_1$ factor, then for any II$_1$ factor $M$, one has that $(N,M)$ is a Jung pair if and only if $N\cong \R$. 
\end{cor}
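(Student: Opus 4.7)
The plan is to deduce this from Theorem \ref{ucpconj} via a conditional expectation, essentially mirroring the conditional-expectation argument that immediately follows the statement of that theorem in the text. The backward direction is routine: every II$_1$ factor $M$ contains a copy of $\R$ built by iteratively halving projections, any two embeddings of $\R$ into a II$_1$ factor are approximately unitarily equivalent by a standard matrix-unit chase (using Fact \ref{sametrace}(3)), and countable saturation of $M^\u$ upgrades approximate unitary equivalence to genuine unitary equivalence. Hence $(\R,M)$ is a Jung pair for every $M$.

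For the forward direction, fix a hyperfinite subfactor $\R \subseteq M$ and let $E : M \to \R$ denote the unique trace-preserving conditional expectation. Since $E$ is $\|\cdot\|_2$-contractive, the coordinatewise assignment $(x_k)_\u \mapsto (E(x_k))_\u$ descends to a well-defined ucp map $\tilde E : M^\u \to \R^\u$, and $\tilde E$ acts as the identity on the image of $\R^\u$ inside $M^\u$. Now suppose $(N,M)$ is a Jung pair with $N$ embeddable and separable, and let $\pi_1, \pi_2 : N \hookrightarrow \R^\u$ be two embeddings, which I view as embeddings into $M^\u$ via the inclusion $\R^\u \subseteq M^\u$. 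By the Jung pair hypothesis there exists a unitary $u \in M^\u$ with $\pi_1(x) = u\pi_2(x) u^*$ for all $x \in N$. Lift $u$ to unitaries $u_k \in M$ via Fact \ref{stable}, and define ucp maps $\phi_k : \R \to \R$ by $\phi_k(y) := E(u_k y u_k^*)$. Writing $\pi_2(x) = (y_k)_\u$ with $y_k \in \R$ and applying $\tilde E$ to the equation $\pi_1(x) = (u_k y_k u_k^*)_\u$ gives $\pi_1(x) = (\phi_k(y_k))_\u$, since $\tilde E$ fixes $\pi_1(x) \in \R^\u$. This is precisely the setup of Theorem \ref{ucpconj}, and so $N \cong \R$.

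The only point that requires any care is verifying that the coordinatewise conditional expectation is well-defined on $M^\u$ and restricts to the identity on the chosen copy of $\R^\u$; both facts follow immediately from $E$ being trace-preserving and from the compatibility of the inclusions $\R \subseteq M$ and $\R^\u \subseteq M^\u$. The main conceptual point is that the Jung pair hypothesis provides unitaries living in the larger algebra $M^\u$ rather than $\R^\u$, and one uses $E$ to convert conjugation by these potentially non-$\R$-valued unitaries into a sequence of ucp maps on $\R$ itself, thereby invoking Theorem \ref{ucpconj}.
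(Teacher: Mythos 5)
Your proof is correct and is exactly the argument the paper has in mind: the paper derives this corollary from Theorem \ref{ucpconj} with the single remark ``Using conditional expectations,'' and your construction of the ucp maps $\phi_k = E(u_k\,\cdot\,u_k^*)$ via a hyperfinite subfactor $\R\subseteq M$ mirrors the conditional-expectation argument the authors spell out just before that theorem for the special case $M=N$. No gaps.
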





In this paper, we will be concerned with an \emph{a priori} more general notion:

\begin{defn}
We say that the pair $(N,M)$ is a \textbf{generalized Jung pair} if $N$ embeds into $M^\u$ and any two embeddings of $N$ into $M^\u$ are automorphically equivalent.
We say that $M$ has the \textbf{generalized Jung property} if $(M,M)$ is a generalized Jung pair.
\end{defn}

It is clear that every Jung pair is a generalized Jung pair and every Jung factor is a generalized Jung factor.  We also note the following obvious fact:

\begin{lem}\label{fcpairsfcpair}
Suppose that $(N,M)$ is a generalized Jung pair.  Then $(N,M)$ is a factorial commutant pair if and only if it is a strong factorial commutant pair.
\end{lem}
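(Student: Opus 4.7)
The plan is to observe that this lemma is essentially immediate from the definitions, once one notes that the factoriality of a relative commutant is preserved under automorphisms of the ambient algebra.

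The nontrivial direction is to show that if $(N,M)$ is a factorial commutant pair, then it is a strong factorial commutant pair. So suppose $\pi_0 : N \hookrightarrow M^\u$ is an embedding witnessing that $(N,M)$ is a factorial commutant pair, i.e., $\pi_0(N)' \cap M^\u$ is a II$_1$ factor. Let $\pi : N \hookrightarrow M^\u$ be an arbitrary embedding; I must show that $\pi(N)' \cap M^\u$ is also a factor. By the generalized Jung property assumption on $(N,M)$, there exists $\alpha \in \operatorname{Aut}(M^\u)$ such that $\pi = \alpha \circ \pi_0$. Then
\[
\pi(N)' \cap M^\u \;=\; \alpha(\pi_0(N))' \cap M^\u \;=\; \alpha\bigl(\pi_0(N)' \cap M^\u\bigr),
\]
where the last equality uses that $\alpha$ is an automorphism of $M^\u$. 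Since automorphisms preserve centers, $\pi(N)' \cap M^\u$ is a factor if and only if $\pi_0(N)' \cap M^\u$ is, which gives the conclusion.

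The reverse direction is even more immediate: a strong factorial commutant pair is by definition a factorial commutant pair, provided there is at least one embedding of $N$ into $M^\u$, which is guaranteed by the hypothesis that $(N,M)$ is a generalized Jung pair (embeddability is part of the definition of generalized Jung pair).

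There is no real obstacle here; the content of the lemma is simply the packaging of the observation that in the presence of a homogeneity property on the space of embeddings (such as automorphic equivalence), any invariant property of a single embedding propagates to all embeddings. The lemma will be useful in later sections as a technical convenience when verifying factorial commutants for generalized Jung pairs.
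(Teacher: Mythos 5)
Your proof is correct and is precisely the argument the paper has in mind: the paper states this lemma without proof, calling it "obvious," and the intended justification is exactly your observation that an automorphism $\alpha$ of $M^\u$ carries $\pi_0(N)'\cap M^\u$ onto $(\alpha\circ\pi_0)(N)'\cap M^\u$ and hence preserves factoriality. (One negligible quibble: the definition only requires the relative commutant to be a factor, not necessarily a II$_1$ factor, but this changes nothing in your argument.)
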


Combining the previous lemma with Corollary \ref{Relcom}, we get:

\begin{cor}\label{popagenJungR}
For any separable embeddable II$_1$ factor $N$, we have that $(N,\R)$ is both a generalized Jung pair and a factorial commutant pair if and only if $N\cong \R$.
\end{cor}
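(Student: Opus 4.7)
The result is an immediate synthesis of the two results stated just beforehand, namely Lemma \ref{fcpairsfcpair} and Corollary \ref{Relcom}, together with the ``starting point'' fact that any two embeddings of $\R$ into $\R^\u$ are unitarily equivalent.

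For the forward direction, suppose $(N,\R)$ is simultaneously a generalized Jung pair and a factorial commutant pair. By Lemma \ref{fcpairsfcpair}, the generalized Jung property promotes a factorial commutant pair to a strong factorial commutant pair, so $(N,\R)$ is a strong factorial commutant pair. Now Corollary \ref{Relcom} (Brown's characterization of $\R$, \cite[Corollary 5.3]{topdyn}) applies verbatim to conclude $N \cong \R$.

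For the reverse direction, assume $N \cong \R$, so we must check that $(\R,\R)$ has both properties. The generalized Jung property is automatic: by the standard fact recalled in the introduction, any two embeddings of $\R$ into $\R^\u$ are unitarily equivalent, in particular automorphically equivalent. For the factorial commutant property, it suffices to exhibit one embedding with factorial commutant, and this is provided either by the classical Dixmier--Lance result that $\R$ is super McDuff (so the diagonal embedding $\R \hookrightarrow \R^\u$ has $\R' \cap \R^\u$ a II$_1$ factor), or, more in the spirit of this section, by applying Corollary \ref{Relcom} in its easy direction with $N = \R$.

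There is essentially no obstacle: the only conceptual content lives in Lemma \ref{fcpairsfcpair}, which lets ``there exists an embedding with factorial commutant'' be upgraded to ``every embedding has factorial commutant'' in the presence of the generalized Jung property (since all embeddings are conjugate by automorphisms of $M^\u$, and the relative commutant is an automorphism invariant of the image). Once that observation is in hand, the corollary is just an assembly of two earlier statements.
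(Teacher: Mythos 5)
Your proposal is correct and matches the paper's argument, which is exactly the stated combination of Lemma \ref{fcpairsfcpair} (upgrading a factorial commutant pair to a strong one under the generalized Jung hypothesis) with Corollary \ref{Relcom}, plus the standard unitary-equivalence fact for $\R\hookrightarrow\R^\u$ in the reverse direction. No gaps.
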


The following lemma is also obvious but useful:

\begin{lem}\label{Jungee}
If $M_1\equiv M_2$, then for any $N$, $(N,M_1)$ is a generalized Jung pair if and only if $(N,M_2)$ is a generalized Jung pair.
\end{lem}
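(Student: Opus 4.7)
The plan is to transport everything through the isomorphism of ultrapowers guaranteed by elementary equivalence. Under the standing CH assumption, the semantic reformulation of elementary equivalence for separable tracial von Neumann algebras (as stated in \S\ref{mtprelim}) gives that $M_1\equiv M_2$ implies $M_1^\u\cong M_2^\u$. So first I would fix some isomorphism $\Phi: M_1^\u \to M_2^\u$.

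Next I would observe that $\Phi$ induces a bijection between embeddings of $N$ into $M_1^\u$ and embeddings of $N$ into $M_2^\u$, given by $\pi \mapsto \Phi\circ \pi$ (with inverse $\rho \mapsto \Phi^{-1}\circ \rho$). In particular, $N$ embeds into $M_1^\u$ if and only if $N$ embeds into $M_2^\u$. Moreover, conjugation by $\Phi$ is a group isomorphism $\Aut(M_1^\u)\to \Aut(M_2^\u)$ sending $\alpha \mapsto \Phi\alpha\Phi^{-1}$. Under this correspondence, two embeddings $\pi_1,\pi_2: N\hookrightarrow M_1^\u$ satisfy $\pi_1 = \alpha\circ \pi_2$ for some $\alpha\in\Aut(M_1^\u)$ if and only if $\Phi\circ \pi_1 = (\Phi\alpha\Phi^{-1})\circ (\Phi\circ \pi_2)$, that is, the corresponding embeddings into $M_2^\u$ are automorphically equivalent via $\Phi\alpha\Phi^{-1}$. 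Since the bijection $\pi\mapsto \Phi\circ\pi$ is surjective onto all embeddings $N\hookrightarrow M_2^\u$, this shows that every pair of embeddings into $M_1^\u$ is automorphically equivalent precisely when every pair of embeddings into $M_2^\u$ is, which is the claim.

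There is essentially no obstacle; the lemma is a direct consequence of how automorphic equivalence transports along isomorphism. The only subtlety worth flagging in the write-up is the (implicit) appeal to CH to promote the syntactic definition of $\equiv$ to the existence of the isomorphism $\Phi$, but this is a standing convention of the paper.
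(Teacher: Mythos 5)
Your proof is correct and is precisely the argument the paper has in mind: the paper states this lemma without proof (calling it ``obvious''), and the intended justification is exactly the transport of embeddings and automorphisms through the isomorphism $M_1^\u\cong M_2^\u$ furnished by the CH-based semantic characterization of elementary equivalence. Your flagging of the implicit CH appeal (and, tacitly, the separability of $M_1$ and $M_2$, which is a standing convention in that section) is appropriate.
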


In the remainder of this section, we will be focused on II$_1$ factors with the generalized Jung property.  At the end of the paper, we will return to the notion of generalized Jung pairs.

The first hint that there is a connection between the generalized Jung property and model theory is the following:

\begin{lem}
If $M$ has the generalized Jung property, then $M$ is e.c. for $M^\u$-embeddable algebras.  In particular:
\begin{enumerate}
    \item If $M$ is locally universal and has the generalized Jung proeprty, then $M$ is e.c.
    \item If $M$ is embeddable and has the generalized Jung property, then $M$ is an e.c. embeddable factor.
\end{enumerate}
\end{lem}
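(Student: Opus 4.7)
The plan is to unwind the definitions directly. Suppose $M$ has the generalized Jung property and let $N$ be a separable $M^\u$-embeddable tracial von Neumann algebra containing $M$. Fix any embedding $\iota : N \hookrightarrow M^\u$ witnessing $M^\u$-embeddability of $N$. Then its restriction $\iota|_M : M \hookrightarrow M^\u$ is a (perhaps non-diagonal) embedding of $M$ into its own ultrapower, while the diagonal embedding $d : M \hookrightarrow M^\u$ is another such. By the generalized Jung property applied to $d$ and $\iota|_M$, there is an automorphism $\alpha \in \operatorname{Aut}(M^\u)$ with $\alpha \circ \iota|_M = d$. The composition $j := \alpha \circ \iota : N \hookrightarrow M^\u$ is then an embedding of $N$ into $M^\u$ whose restriction to $M$ equals the diagonal embedding, which is precisely the semantic condition in the definition of $M$ being e.c.\ in $N$. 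Since $N$ was an arbitrary separable $M^\u$-embeddable extension of $M$, we conclude that $M$ is e.c.\ for $M^\u$-embeddable algebras.

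For the two stated consequences, I would simply observe how $M^\u$-embeddability subsumes the relevant class in each case. For (1), local universality of $M$ means that every separable tracial von Neumann algebra is $M^\u$-embeddable, so the above conclusion degenerates into the statement that $M$ is e.c.\ in the absolute sense. For (2), recall that every II$_1$ factor contains a copy of the hyperfinite factor $\R$ (built in the standard way from an increasing chain of matrix subalgebras using Fact \ref{sametrace}(3) to produce halving projections), so $\R \hookrightarrow M$ and hence, by functoriality of the ultrapower construction, $\R^\u \hookrightarrow M^\u$. Consequently every separable embeddable tracial von Neumann algebra is also $M^\u$-embeddable, so the main conclusion specializes to the statement that $M$ is an e.c.\ embeddable factor (noting that $M$ itself is embeddable and a factor by hypothesis).

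There is no real obstacle here: the content is entirely definitional, and the only mildly non-formal step is the observation $\R \hookrightarrow M$ used in (2), which is standard II$_1$ factor theory. The conceptual point worth highlighting is that the generalized Jung property is exactly the ingredient needed to convert an abstract embedding of $M$ into $M^\u$ into the diagonal one, after composing with a suitable automorphism of the codomain, thereby verifying the semantic form of existential closedness provided in the previous section.
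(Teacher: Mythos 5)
Your proof is correct and follows essentially the same route as the paper: restrict an embedding $N\hookrightarrow M^\u$ to $M$, use the generalized Jung property to find an automorphism of $M^\u$ carrying that restriction to the diagonal embedding, and compose to witness existential closedness. The extra details you supply for the two consequences (local universality subsuming all algebras, and $\R\hookrightarrow M$ inducing $\R^\u\hookrightarrow M^\u$ for the embeddable case) are exactly the observations the paper leaves implicit.
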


\begin{proof}
Suppose that $M\subseteq N$ and $N$ embeds in $M^\u$.  Take an embedding $N\hookrightarrow M^\u$ and let $j:M\hookrightarrow M^\u$ be the composition.  Since $M$ has the generalized Jung property, there is an automorphism $\alpha$ of $M^\u$ that passes $j$ to the diagonal embedding.  It follows that $M$ is e.c. in $N$. 
\end{proof}

The following further indicates the link between the generalized Jung property and model theory:

\begin{thm}
Suppose that $N$ is a II$_1$ factor.  Then the following are equivalent:
\begin{enumerate}
\item $N$ has the generalized Jung property.
\item Every embedding of $N\hookrightarrow N^\u$ is elementary.
\item Whenever $j:N\hookrightarrow M$ is an embedding with $M\equiv N$, then $j$ is elementary.
\end{enumerate}
\end{thm}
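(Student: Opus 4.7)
The plan is to establish the cycle of implications (1)$\Rightarrow$(2)$\Rightarrow$(3)$\Rightarrow$(1), exploiting the stated facts about elementary embeddings, separable universality, and separable homogeneity of ultrapowers. No single step looks like a serious obstacle; the content is really in recognizing that the relevant model-theoretic facts (\L os, Facts \ref{elel}, \ref{ultrauniv}, and \ref{ultrahomog}) combine cleanly to deliver all three implications.

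For (1)$\Rightarrow$(2), I would start with an arbitrary embedding $j:N\hookrightarrow N^\u$ and compare it to the diagonal embedding $\Delta:N\hookrightarrow N^\u$, which is elementary by \L os' theorem. The generalized Jung property gives an automorphism $\alpha\in\Aut(N^\u)$ with $j=\alpha\circ\Delta$. Automorphisms are elementary (Fact \ref{elel}(1)), and the composition of elementary maps is elementary (Fact \ref{elel}(2)), so $j$ is elementary.

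For (2)$\Rightarrow$(3), suppose $j:N\hookrightarrow M$ with $M\equiv N$. Since $M$ is separable (by the standing convention) and $M\equiv N$, separable universality (Fact \ref{ultrauniv}(2)) furnishes an elementary embedding $i:M\hookrightarrow N^\u$. Then $i\circ j:N\hookrightarrow N^\u$ is elementary by (2). Applying Fact \ref{elel}(2) to the pair $i$ and $i\circ j$ (both elementary) yields that $j$ itself is elementary.

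For (3)$\Rightarrow$(1), fix two embeddings $\pi_1,\pi_2:N\hookrightarrow N^\u$. Since $\Delta$ is elementary, $N^\u\equiv N$, so (3) applied to $M=N^\u$ tells us that both $\pi_1$ and $\pi_2$ are elementary. Define $\phi:\pi_1(N)\to\pi_2(N)$ by $\phi(\pi_1(x))=\pi_2(x)$. For any formula $\varphi(y)$ and any tuple $x$ from $N$, elementarity of $\pi_1$ and $\pi_2$ gives
\[
\varphi(\pi_1(x))^{N^\u}=\varphi(x)^N=\varphi(\pi_2(x))^{N^\u},
\]
so $\phi$ is a surjective partial elementary map between separable subsets of $N^\u$. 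Separable homogeneity (Fact \ref{ultrahomog}) extends $\phi$ to an automorphism $\alpha$ of $N^\u$, and by construction $\alpha\circ\pi_1=\pi_2$, witnessing automorphic equivalence. The only step with real content is this last one, where the CH-dependent homogeneity of $N^\u$ is essential; everything else is a direct bookkeeping exercise with the definitions.
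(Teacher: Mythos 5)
Your proof is correct and follows essentially the same route as the paper: (1)$\Rightarrow$(2) via composing the diagonal embedding with the automorphism, (2)$\Rightarrow$(3) via separable universality and the two-out-of-three property of elementary embeddings, and (3)$\Rightarrow$(1) via the partial elementary map $\pi_1(x)\mapsto\pi_2(x)$ extended by separable homogeneity. The only difference is that you spell out a few steps (e.g.\ that automorphisms are elementary and compositions of elementary maps are elementary) that the paper leaves implicit.
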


\begin{proof}
(1) implies (2) follows immediately from the fact that the diagonal embedding is an elementary embedding.

(2) implies (3):  Assume that (2) holds and let $i:N\hookrightarrow M$ be an embedding, where $M\equiv N$.  Let $j:M\hookrightarrow N^\u$ be an elementary embedding, which exists by Fact \ref{ultrauniv}(2).  Then the composition $ji:N\hookrightarrow N^\u$ is elementary by assumption.  It follows that $i$ is also elementary by Fact \ref{elel}(2).

(3) implies (1). Assume that (3) holds and let $\pi_1,\pi_2:N\hookrightarrow N^\u$ be embeddings.  Then $\pi_1$ and $\pi_2$ are elementary embeddings by assumption.  In particular, the map $\pi_1(x)\mapsto \pi_2(x):\pi_1(N)\to \pi_2(N)$ is a partial elementary map between separable subalgebras of $N^\u$.  By Fact \ref{ultrahomog} above, there is an automorphism $\alpha$ of $N^\u$ extending this map. Thus $\alpha \circ \pi_1 = \pi_2$.
\end{proof}

\begin{remark}
The advantage of item (3) in the previous theorem is that it does not mention ultrapowers and does not appear to depend on set theory.\footnote{For the model theorists, one can rephrase (2) as $\operatorname{Th}(M)\cup \operatorname{Diag}(M)$ is complete.}. While the implications (1) implies (2) and (2) implies (3) do not depend on CH, the implication (3) implies (1) does use our standing CH assumption (via Fact \ref{ultrahomog}).  In fact, without assuming CH, there are a priori two definitions of generalized Jung property, one that holds for some nonprincipal ultrafilter on $\mathbb N$ and one that holds for all nonprincipal ultrafilters on $\mathbb N$.  It would be interesting to investigate if these two definitions coincide independent of the ambient set theory.
\end{remark}

We next discuss that the generalized Jung property is enforceable in the sense of \S\S\ref{games} above.  First, we need the following:

\begin{thm}(\cite[Theorem 2.14]{enforceable})
Fix a tracial von Neumann algebra $P$ (not necessarily a factor).  Given any sentence $\sigma$, there is a unique real number $r_{\sigma,P}$ such that, when playing the game restricted to $P^\u$-embeddable algebras, the property ``the compiled algebra $M$ satisfies $\sigma^M=r_{\sigma,P}$'' is an enforceable property.  
\end{thm}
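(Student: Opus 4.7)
The plan is to split into uniqueness and existence, with the conjunction lemma (Fact \ref{conjlemma}(1)) as the principal tool. Uniqueness is immediate: if both ``$\sigma^M = r_1$'' and ``$\sigma^M = r_2$'' were enforceable for reals $r_1 \neq r_2$, then by the conjunction lemma their conjunction would be enforceable, but this conjunction is unsatisfiable. So at most one value of $r_{\sigma,P}$ can work.

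For existence, I would set $u := \inf\{r \in \mathbb{Q} : \text{``$\sigma^M \leq r$'' is enforceable}\}$. The infimum is finite because the interpretation of any sentence is uniformly bounded across all tracial von Neumann algebras (the quantifiers range over fixed operator-norm balls, and formulas are built from bounded atomic pieces by continuous connectives and bounded quantifiers). Picking rationals $r_n \downarrow u$ with each ``$\sigma^M \leq r_n$'' enforceable, the conjunction lemma yields that ``$\sigma^M \leq u$'' itself is enforceable. It then suffices to show ``$\sigma^M \geq u$'' is also enforceable: a further application of the conjunction lemma then yields ``$\sigma^M = u$'' enforceable, and one takes $r_{\sigma,P} := u$.

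The main obstacle is enforcing ``$\sigma^M \geq u$''. For this I would establish a determinacy-style dichotomy: for each rational $r$, either ``$\sigma^M \leq r$'' is enforceable or ``$\sigma^M > r$'' is enforceable. This can be argued by identifying the enforceability game with a Banach--Mazur-type game on a suitable Polish space of plays, so that enforceability of a property corresponds to the associated set of plays being comeager. The sets $\{M : \sigma^M \leq r\}$ pulled back to plays are Borel (indeed closed under a suitable topology), hence have the Baire property; combined with the essential symmetry between the moves of $\forall$ and $\exists$ (both players extend open conditions), this yields the dichotomy. Granted the dichotomy: for any rational $r < u$, ``$\sigma^M \leq r$'' fails to be enforceable by the definition of $u$, so ``$\sigma^M > r$'' is enforceable, and a fortiori so is ``$\sigma^M \geq r$''. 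Applying the conjunction lemma along rationals $r_n \uparrow u$ then gives ``$\sigma^M \geq u$'' enforceable, completing the argument.
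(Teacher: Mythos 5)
Your uniqueness argument and the ``upper half'' of the existence argument (defining $u$ as the infimum, using the Conjunction Lemma to enforce $\sigma^M\leq u$, and again to pass from $\sigma^M\geq r_n$ to $\sigma^M\geq u$) are fine; note that the paper itself states this theorem as a citation to \cite{enforceable} and gives no proof, so the comparison here is with the cited source's argument. The genuine gap is in your dichotomy. Banach--Mazur determinacy for a set of plays $A$ with the Baire property does not say ``either $A$ is comeager or $A^c$ is comeager''; it says ``either $A$ is comeager or $A^c$ is comeager in some nonempty basic open set.'' Translated back, if ``$\sigma^M\leq r$'' is not enforceable, you only obtain that ``$\sigma^M>r$'' is enforceable \emph{over some condition} $q$, i.e., after $\forall$ has steered the play into $q$ --- not that it is enforceable outright. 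The ``symmetry between the moves of $\forall$ and $\exists$'' does not repair this: the relevant asymmetry is between ``comeager'' and ``comeager somewhere,'' your dichotomy fails for general Baire-property sets (take $A$ a basic open set with nonempty open complement), and it fails for general classes of structures --- in a class without joint embedding, $\forall$ can pin down the value of $\sigma$ with his first move, and then no property ``$\sigma^M=r$'' is enforceable at all. So the theorem is genuinely using something about the class that your argument never invokes.

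The missing ingredient is the joint embedding property of the class of ($P^\u$-embeddable) tracial von Neumann algebras: any two satisfiable conditions, after renaming constants so as to be disjoint, are jointly satisfiable (in the restricted game this holds because two subalgebras of $P^\u$ generate a common $P^\u$-embeddable algebra). This yields the homogeneity lemma that a property enforceable over \emph{some} condition is enforceable over \emph{every} condition: faced with an arbitrary first move $p_0$, player $\exists$ grafts in a fresh renamed copy of $q$ and thereafter simulates her strategy over $q$, every subsequent move of $\forall$ being a legal move of the simulated game on the same compiled structure. With that lemma your argument closes: if ``$\sigma^M\geq u$'' were not enforceable, then by the Conjunction Lemma some ``$\sigma^M\geq u-\epsilon$'' is not enforceable; local determinacy gives a condition over which ``$\sigma^M\leq u-\epsilon$'' is enforceable; homogeneity upgrades this to enforceable outright, contradicting the definition of $u$. (A smaller overclaim: the pullback of $\{M:\sigma^M\leq r\}$ to the space of plays is Borel but not in general closed once $\sigma$ contains $\sup$ quantifiers; Borel, hence the Baire property, is all you need.)
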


When $P$ is locally universal, that is, when there is no restriction on the algebras, we write $r_\sigma$ instead of $r_{\sigma,P}$.

In what follows, we say that a tracial von Neumann algebra $N$ (not necessarily a factor) has the generalized Jung property if any two embeddings $N\hookrightarrow N^\u$ are automorphically equivalent.

The following definition is nonstandard but is useful for our purposes (see \cite[Proposition 3.10]{enforceable}).
\begin{defn}
Fix tracial von Neumann algebras $M$ and $P$ (not necessarily factors).  $M$ is called \textbf{finitely generic for $P^\u$-embeddable algebras} if:
\begin{enumerate}
\item $M$ has the generalized Jung property, and
\item $\sigma^M=r_{\sigma,P}$ for all sentences $\sigma$.
\end{enumerate}
When $P$ is locally universal, we simply call $M$ \textbf{finitely generic}.  When $P=\R$, we call $M$ \textbf{finitely generic embeddable}.
\end{defn}

Since $\R$ is the enforceable embeddable factor, it follows that any finitely generic embeddable algebra is elementarily equivalent to $\R$.

\begin{fact}\label{fgenf}(\cite[Proposition 3.9]{enforceable})
When playing the game restricted to $P^\u$-embeddable algebras, being finitely generic for $P^\u$-embeddable algebras is an enforceable property.  In particular, having the generalized Jung property is an enforceable property.
\end{fact}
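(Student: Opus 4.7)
The plan is to decompose ``finitely generic'' into two separately enforceable properties and then conclude the generalized Jung clause as a consequence. Specifically, by the Conjunction Lemma (Fact \ref{conjlemma}(1)), it suffices to enforce (a) $\sigma^M = r_{\sigma,P}$ for each of the countably many sentences $\sigma$ and (b) that the compiled $M$ is e.c. among $P^\u$-embeddable algebras. The enforceability of (a), one $\sigma$ at a time, is precisely the defining property of $r_{\sigma,P}$; the enforceability of (b) is the relative version of Fact \ref{conjlemma}(2), whose proof transfers from the unrestricted case to the $P^\u$-embeddable case with no modification.

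It then remains to verify that any $M$ satisfying (a) and (b) automatically has the generalized Jung property, since (a) is already the ``theory'' clause of ``finitely generic'' and the ``in particular'' corollary will then fall out for free. Using the characterization proved just before this Fact, I reduce to showing that any embedding $j: M \hookrightarrow N$ with $N \equiv M$ is elementary. Step one: because $N \equiv M$ and $M$ is $P^\u$-embeddable, Fact \ref{ultrauniv}(2) furnishes an elementary embedding $N \hookrightarrow M^\u$, so $N$ is itself $P^\u$-embeddable. Step two: by (b), there is an embedding $i: N \hookrightarrow M^\u$ with $i \circ j$ equal to the diagonal embedding $M \hookrightarrow M^\u$, which is elementary. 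Standard monotonicity of existential and universal formulas under embeddings, combined with elementarity of $i \circ j$, forces $j$ to preserve all $\exists_1$ and $\forall_1$ formulas (a squeeze argument: $\varphi^M(a)\geq \varphi^N(j(a))\geq \varphi^{M^\u}(ij(a))=\varphi^M(a)$ for $\varphi$ existential, and dually for universal).

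The main obstacle is upgrading $\exists_1/\forall_1$ preservation of $j$ to full elementarity. My plan is to iterate via heir extensions (Fact \ref{heir}) and separable saturation of $M^\u$: given a tuple $a$ in $M$ and any formula $\varphi(x,a)$, extend $\tp^{M^\u}(d_M(a))$ to an heir $q$ over $j(M) \cup i(N)$ and push the agreement one quantifier higher at each stage, using (a) to keep the theory pinned down and (b) to realize in $M^\u$ the witnesses produced in $N$. Morally, this is the assertion that $T_P := \{\sigma = r_{\sigma,P}\}$ is a model companion of the theory of $P^\u$-embeddable algebras and is therefore model-complete, which is the essential content concealed in ``finitely generic.'' Once $j$ is known to be elementary for every such $N$, the generalized Jung conclusion follows: given any two embeddings $\pi_1,\pi_2:M\hookrightarrow M^\u$, both are elementary, so $\pi_1(x)\mapsto \pi_2(x)$ is a partial elementary map between separable subalgebras of $M^\u$, and Fact \ref{ultrahomog} lifts it to an automorphism of $M^\u$.
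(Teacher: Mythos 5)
The first half of your plan is fine: enforcing clause (a) sentence by sentence (over a countable dense set of sentences) and clause (b), then invoking the Conjunction Lemma (Fact \ref{conjlemma}(1)), is unobjectionable; so are your reduction of the generalized Jung property to ``every embedding $j:M\hookrightarrow N$ with $N\equiv M$ is elementary'' and the squeeze argument showing that such a $j$ preserves existential and universal formulas. The gap is exactly the step you flag as the main obstacle, and it is not a routine verification but the whole content of the statement. What you are claiming is that every e.c.\ $P^\u$-embeddable algebra with the enforceable theory $T_P$ has the generalized Jung property, i.e.\ that $T_P\cup\operatorname{Diag}(M)$ is complete for every such $M$. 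Your stated justification --- that $T_P$ is a model companion of the theory of $P^\u$-embeddable algebras and hence model-complete --- is precisely what is known to \emph{fail}: the theory of (embeddable) tracial von Neumann algebras has no model companion (\cite{nomodcomp}; \S 2 of this paper reproves the embeddable case by showing $\R^\u$ is not an e.c.\ embeddable factor, so the class of e.c.\ embeddable algebras is not even axiomatizable). The heir machinery of Fact \ref{heir} does not repair this: heirs extend a single type over $N\preceq M^\u$ to a larger parameter set without introducing new first-order phenomena, which is what Theorem \ref{mainidea} exploits, but they give no mechanism for promoting an existential embedding between e.c.\ models of the same complete theory to an elementary one; if they did, ``e.c.\ with the enforceable theory'' and ``finitely generic'' would coincide. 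As a sanity check, take $P=\R$, where $T_\R=\operatorname{Th}(\R)$: your claim would say that every e.c.\ embeddable factor elementarily equivalent to $\R$ is generalized Jung, hence isomorphic to $\R$ by Theorem \ref{yay}. Since there are e.c.\ embeddable factors not isomorphic to $\R$ (e.g.\ ones containing $L(\mathbb F_2)$), this would produce non-elementarily-equivalent e.c.\ embeddable factors --- resolving a question this paper explicitly records as open. So the implication (a)$+$(b) $\Rightarrow$ generalized Jung cannot be expected to follow from the quantifier-counting argument you sketch.

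For comparison, the paper gives no proof of this Fact; it is quoted from \cite{enforceable}, where the argument runs in the opposite direction from yours. There, finite genericity is \emph{defined} through the forcing relation (for every sentence $\sigma$ with parameters from $M$ and every $\epsilon>0$, some condition contained in the diagram of $M$ forces the value of $\sigma$ to within $\epsilon$ of $\sigma^M$); that property is shown to be enforceable by direct bookkeeping over conditions and the Conjunction Lemma; and only afterwards is it shown that forcing-generic algebras satisfy the two clauses that the present paper adopts as the definition. In particular, the generalized Jung property is deduced from the mutual genericity of $M$ and the relevant extensions, not from $M$ being e.c.\ with the right theory. If you want a correct self-contained proof, you must work with the forcing relation itself rather than with the pair (a)$+$(b).
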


We will also need the following fact:

\begin{fact}\label{fgec}(\cite[Corollary 3.11]{enforceable})
If $M$ is finitely generic for $P^\u$-embeddable algebras, then $M$ is e.c. for $P^\u$-embeddable algebras.
\end{fact}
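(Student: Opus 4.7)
My plan is to combine the enforceability of finite genericity (Fact \ref{fgenf}) with the generalized Jung property of $M$. Fix a separable $P^\u$-embeddable algebra $N$ containing $M$; the goal is to produce an embedding $j : N \hookrightarrow M^\u$ whose restriction to $M$ is the diagonal embedding, which is exactly what it means for $M$ to be e.c. in $N$.

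The first step is to build a separable, $P^\u$-embeddable, finitely generic algebra $M'$ that contains $N$ as a subalgebra. To accomplish this, play the game for $P^\u$-embeddable algebras while using $\exists$'s moves to enumerate enough expressions specifying the relations among a countable dense subset of $N$, thereby forcing the compiled structure to contain $N$. The property ``the compilation contains $N$'' is then enforceable, and by the conjunction lemma (Fact \ref{conjlemma}(1)) its conjunction with the property of being finitely generic for $P^\u$-embeddable algebras (Fact \ref{fgenf}) is still enforceable; so such an $M'$ exists.

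Because $M$ and $M'$ are both finitely generic for $P^\u$-embeddable algebras, they satisfy $\sigma^M = r_{\sigma,P} = \sigma^{M'}$ for every sentence $\sigma$, so $M \equiv M'$. Separable universality of the ultrapower (Fact \ref{ultrauniv}(2)) then yields an elementary embedding $f : M' \hookrightarrow M^\u$, and in particular $f|_M$ is an embedding of $M$ into its own ultrapower. Since $M$ has the generalized Jung property, $f|_M$ is automorphically equivalent to the diagonal embedding: there exists $\alpha \in \Aut(M^\u)$ with $\alpha \circ f|_M$ equal to the diagonal embedding of $M$. Setting $j := \alpha \circ f|_N$ produces the required embedding $N \hookrightarrow M^\u$ extending the diagonal on $M$.

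The only place requiring genuine care is the first step, namely arranging that the ``relativized'' game (played inside the class of $P^\u$-embeddable algebras, and forced to contain $N$) still enforces finite genericity. This is a routine combination of enforceability arguments in the framework of \cite{enforceable}, but it is the heart of the proof; once $M'$ is in hand, the remaining steps are essentially formal consequences of the definitions and the standard model-theoretic facts recalled earlier in the paper.
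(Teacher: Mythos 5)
The paper does not actually prove this statement---it is quoted verbatim from \cite[Corollary 3.11]{enforceable}---so your argument can only be judged on its own terms, and it has a genuine gap at precisely the step you flag as the heart of the proof. The property ``the compiled structure contains a copy of $N$'' is \emph{not} enforceable in the relativized game, and the finitely generic $M'\supseteq N$ you want to construct does not exist in general. Concretely, take $P=\R$, $M=\R$, and $N=L(\mathbb F_2)$ (which contains a copy of $\R$ and is $\R^\u$-embeddable). Since $\R$ is the enforceable embeddable factor (quoted in \S\ref{games}), the property ``being isomorphic to $\R$'' is enforceable in the game for $\R^\u$-embeddable algebras; if ``contains $L(\mathbb F_2)$'' were also enforceable, the Conjunction Lemma (Fact \ref{conjlemma}(1)) would yield a copy of $L(\mathbb F_2)$ inside $\R$, contradicting the fact that every von Neumann subalgebra of $\R$ is hyperfinite. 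Worse, the target object is vacuous: any algebra finitely generic for $\R^\u$-embeddable algebras is an embeddable generalized Jung factor, hence isomorphic to $\R$ by Theorem \ref{yay}, so \emph{no} finitely generic embeddable algebra contains $L(\mathbb F_2)$. The obstruction to your strategy is concrete: the conditions are only approximate, and $\forall$ can legally play a satisfiable condition asserting that your designated constants sit at a definite positive $\|\cdot\|_2$-distance from satisfying the exact relations of $N$, after which $\exists$ can never tighten them into an honest copy of $N$.

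The good news is that your steps (3)--(6) are sound and you never actually use that $M'$ is finitely generic---only that $M'\equiv M$ and $N\subseteq M'$. By Downward L\"owenheim--Skolem (Fact \ref{DLS}) such an $M'$ exists if and only if $N$ embeds into $M^\u$, so the entire content of the fact is that every separable $P^\u$-embeddable algebra is $M^\u$-embeddable; granting this, the unnumbered lemma in \S\ref{dfo} (generalized Jung implies e.c. for $M^\u$-embeddable algebras) finishes the proof at once. To supply the missing embeddability: being e.c. for $P^\u$-embeddable algebras and being finitely generic for $P^\u$-embeddable algebras are each enforceable in the relativized game (\S\ref{games} and Fact \ref{fgenf}), so by the Conjunction Lemma some $M''$ has both properties. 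Then $\sigma^{M''}=r_{\sigma,P}=\sigma^M$ for every sentence $\sigma$, so $M\equiv M''$ and $M^\u\cong (M'')^\u$ under CH, while e.c.-ness of $M''$ gives that $P$ and $M''$ are mutually embeddable; hence $P\hookrightarrow M^\u$, so $P^\u\hookrightarrow (M^\u)^\u$, and every separable $P^\u$-embeddable algebra embeds into $(M^\u)^\u\cong M^\u$, as required. In short: replace the (false) enforceability of ``contains $N$'' by elementary equivalence to an e.c.\ finitely generic algebra, and the rest of your argument goes through.
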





%

\subsection{The case of embeddable factors}

In this subsection, we show that $\R$ is the unique embeddable factor with the generalized Jung property.

A first step towards this result is the following:

\begin{thm}\label{gJ implies ee R}
Suppose that $N$ is embeddable and has the generalized Jung property.  Then $N\equiv \R$.
\end{thm}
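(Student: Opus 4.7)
The aim is to show that every embedding $i\colon N\hookrightarrow \R^\u$ is elementary; this will give $N\preceq \R^\u$, and combined with $\R\preceq \R^\u$ via the diagonal (Fact \ref{Relementary}(1)), yields $N\equiv \R^\u\equiv \R$.

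From the discussion immediately preceding the theorem, the hypotheses imply $N$ is an existentially closed embeddable factor. Now fix an embedding $i\colon N\hookrightarrow \R^\u$. Applying the e.c.\ property of $N$ to the embeddable extension $\R^\u\supseteq i(N)$, there is an embedding $\rho\colon \R^\u\hookrightarrow N^\u$ with $\rho\circ i=d$, where $d\colon N\hookrightarrow N^\u$ is the diagonal embedding. Since $d$ is elementary, so is $\rho\circ i$; by Fact \ref{elel}(2), it suffices to produce $\rho$ that is itself elementary, for then $i$ is elementary too.

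The plan for constructing an elementary $\rho$ is a transfinite-recursion argument that combines the heir construction (Fact \ref{heir}), separable saturation of $N^\u$, and the generalized Jung property. Well-ordering a dense $\aleph_1$-sized subset of $\R^\u$, at each stage we extend the partial $\rho$ to a new tuple $c\in \R^\u$ by choosing $\rho(c)\in N^\u$ so that $\rho(c)$ realizes, over the parameters accumulated so far, the heir of $\tp^{\R^\u}(c/i(N))$ translated across the identification $i(N)\cong d(N)$. Consistency of this translated type at each stage relies on existential closedness of $N$ (ensuring existential content over $i(N)$ is realizable in $N^\u$ over $d(N)$), while the defining property of heirs propagates this to preservation of higher-quantifier formulas. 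The generalized Jung property is then used to coordinate heir realizations between stages via the homogeneity of $N^\u$ (Fact \ref{ultrahomog}): any partial elementary map already fixed can be absorbed into an automorphism of $N^\u$, giving the flexibility needed to make successive heir choices consistent.

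The main obstacle is the upgrade of $\rho$ from the embedding guaranteed by e.c.\ to an elementary embedding. Heirs are the natural tool for passing from parameter-bounded preservation to full first-order preservation, but their deployment here requires careful bookkeeping across the transfinite stages and an inductive verification that the translated types remain consistent. The generalized Jung hypothesis is precisely what supplies the extra model-theoretic homogeneity needed beyond mere existential closedness to glue the heir choices into a single elementary $\rho$.
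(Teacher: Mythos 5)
There is a genuine gap, and it sits exactly where you locate ``the main obstacle'': the construction of an elementary $\rho\colon \R^\u\hookrightarrow N^\u$ is never actually carried out, and the sketch you give cannot be completed without assuming the conclusion. Note first that producing \emph{any} elementary embedding $\R^\u\hookrightarrow N^\u$ already forces $\R^\u\equiv N^\u$, hence $N\equiv\R$; so your reduction is not a reduction at all but a restatement of the theorem. Worse, the proposed transfinite recursion founders at the very first step: to realize in $N^\u$ the type of a tuple $c\in\R^\u$ (even the type over the empty set), that type must be consistent with $\operatorname{Th}(N^\u)$, which is precisely the assertion $\operatorname{Th}(\R^\u)=\operatorname{Th}(N^\u)$ you are trying to prove. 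Existential closedness only buys one-directional preservation of \emph{existential} formulas, and heirs (Fact \ref{heir}) are a device for extending a type over an elementary submodel $P\preceq M^\u$ to a larger parameter set \emph{inside the same ambient model}; they do not transport types between models of a priori different theories, and they cannot bootstrap an existential embedding up to an elementary one. In this paper heirs are used for Theorem \ref{mainidea} (pushing factoriality of relative commutants down to elementary subalgebras), not for anything resembling your step. Finally, the generalized Jung hypothesis appears in your sketch only in a vague ``coordinating'' role, whereas it must do concrete work somewhere.

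The paper's actual proof is much simpler and avoids all of this. Fix embeddings $i\colon\R\hookrightarrow N$ and $j\colon N\hookrightarrow\R^\u$. The composite $j\circ i\colon\R\hookrightarrow\R^\u$ is elementary because any two embeddings of $\R$ into $\R^\u$ are (unitarily, hence automorphically) equivalent and the diagonal embedding is elementary; the composite $i^\u\circ j\colon N\hookrightarrow N^\u$ is elementary because $N$ has the generalized Jung property --- this is the one place the hypothesis is used, and it is used exactly, not heuristically. Taking ultrapowers of these maps (Fact \ref{ultrauniv}(1)) and interleaving, one gets a chain $\R\hookrightarrow N\hookrightarrow\R^\u\hookrightarrow N^\u\hookrightarrow(\R^\u)^\u\hookrightarrow\cdots$ in which all maps between successive ultrapowers of $\R$ are elementary and likewise for $N$; the union of the chain is then an elementary extension of both $\R$ and $N$ by Fact \ref{elel}(3), giving $N\equiv\R$. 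If you want to salvage your framework, this zig-zag is the mechanism that replaces your transfinite recursion.
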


\begin{proof}
Fix embeddings $i:\R\hookrightarrow N$ and $j:N\hookrightarrow \R^\u$.  Consider the ultrapower maps $i^\u:\R^\u\hookrightarrow N^\u$ and $j^\u:N^\u\hookrightarrow (\R^\u)^\u$.  Notice that $j\circ i$ is elementary since $\R$ has the generalized Jung property and $i^\u \circ j$ is elementary since $N$ has the generalized Jung property.  By Fact \ref{ultrauniv}(1), $j^\u\circ i^\u=(j\circ i)^\u$ is also elementary as is $(i^\u)^\u \circ j^\u=(i^\u\circ j)^\u$. Consequently, we get a chain of iterated ultrapowers 
$$\R\hookrightarrow N\hookrightarrow \R^\u\hookrightarrow N^\u\hookrightarrow (\R^\u)^\u\hookrightarrow (N^\u)^\u\hookrightarrow\cdots$$
such that all maps between successive ultrapowers of $\R$ are elementary as are all maps between successive ultrapowers of $N$.  Setting $M$ to be the union of the chain, by Fact \ref{elel}(3) we see that $M$ is both an elementary extension of $N$ and $\R$, whence $N\equiv\R$.
\end{proof}

Now we prove the following general result, which is a modification (and simplification) of \cite[Proposition 4.12]{BCIexplained}.  The proof uses the material on types and heirs from \S\S\ref{types} above:

\begin{thm}\label{mainidea}
Suppose that $M$, $N$, and $P$ are such that:
\begin{enumerate}
\item $M\subseteq N\subseteq P^\u$,
\item $M\preceq P^\u$, and
\item $N'\cap P^\u$ is a factor.
\end{enumerate}
Then $M'\cap P^\u$ is a factor.
\end{thm}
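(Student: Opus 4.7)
The plan is to show that $Z(M' \cap P^\u) = \mathbb{C}$. I would fix an arbitrary $z \in Z(M' \cap P^\u)$ and aim to conclude that $z$ must be a scalar. The key idea is to combine the heir construction (Fact \ref{heir}) with the automorphism characterization of types (Fact \ref{sametype}) so as to transfer information from $M$ up to $N$, bringing the factoriality of $N' \cap P^\u$ to bear.

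First, I would set $p := \tp(z/M) \in S(M)$, which is defined because $M \preceq P^\u$, and invoke Fact \ref{heir} to produce an heir $q \in S(N)$ of $p$. The proof then rests on two observations. Following the argument of Example \ref{heircommutant} verbatim, the heir property together with the fact that $\|[x,m]\|_2^p = 0$ for every $m \in M$ (which holds because $z \in M' \cap P^\u$) forces every realization of $q$ in $P^\u$ to commute with all of $N$, i.e., to lie in $N' \cap P^\u$. Separately, I would observe that any $z' \in P^\u$ realizing $p$ must itself lie in $Z(M' \cap P^\u)$: by Fact \ref{sametype} there is an automorphism $\alpha$ of $P^\u$ fixing $M$ pointwise with $\alpha(z) = z'$, and any such $\alpha$ restricts to an automorphism of $M' \cap P^\u$ and hence preserves its center.

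Combining these two observations, every realization of $q$ lies in $Z(M' \cap P^\u) \cap (N' \cap P^\u)$; since $M \subseteq N$ gives $N' \cap P^\u \subseteq M' \cap P^\u$, this intersection is contained in $Z(N' \cap P^\u)$, which equals $\mathbb{C}$ by hypothesis. Because $q \in S(N)$ is by definition realized in $P^\u$, we obtain a scalar $c \in \mathbb{C}$ with $\tp(c/M) = p$, and since $c \in M$, the equation $\|z-c\|_2 = \|x-c\|_2^p = \|c-c\|_2 = 0$ forces $z = c \in \mathbb{C}$. The main subtle point is the second observation above: centrality in $M' \cap P^\u$ is not a first-order property expressible using only parameters from $M$, so it is only through the orbit characterization of types that one can guarantee the property is inherited by all realizations of $p$, and hence by the (necessarily scalar) realizations of the heir $q$.
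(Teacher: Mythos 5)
Your proof is correct and follows essentially the same route as the paper: pass from $\tp(z/M)$ to an heir over $N$, use the heir property to force realizations into $N'\cap P^\u$, use the orbit characterization of types to keep them in $Z(M'\cap P^\u)$, and conclude via factoriality of $N'\cap P^\u$ that the realizations (and hence $z$) are scalar. The only cosmetic difference is that you re-derive the paper's Example \ref{typecommutant} from Fact \ref{sametype} rather than citing it directly.
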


\begin{proof}
Fix $a\in Z(M'\cap P^\u)$; we will show that $a\in \mathbb C$.  Let $p=\tp(a/M)$.  By Example \ref{typecommutant}, $p(P^\u)\subseteq Z(M'\cap P^\u)$.  Let $q\in S(N)$ be an heir of $p$, which exists by Fact \ref{heir}.  By Example \ref{heircommutant}, $q(P^\u)\subseteq N'\cap P^\u$.  Since $q(P^\u)\subseteq p(P^\u)\subseteq Z(M'\cap P^\u)$ and $N'\cap P^\u\subseteq M'\cap P^\u$, it follows that $q(P^\u)\subseteq Z(N'\cap P^\u)$.







Now take $b\in q(P^\u)$.  Since $N'\cap P^\u$ is a factor, we have that $b=\lambda\cdot 1$ for some $\lambda\in \mathbb C$.  Consequently, $d(x,\lambda\cdot 1)^p=d(x,\lambda\cdot 1)^q=d(b,\lambda\cdot 1)=0$, whence $a=\lambda\cdot 1$, as desired.
\end{proof}



\begin{cor}\label{eeRpopa}
If $M\equiv \R$, then any elementary embedding of $M$ into $\R^\u$ has factorial commutant, whence $(M,\R)$ is a factorial commutant pair.
\end{cor}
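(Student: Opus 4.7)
The plan is to combine Brown's existence result for separable extensions with factorial commutant (Theorem \ref{nate}) with the newly-proved ``descent'' theorem for factoriality along elementary inclusions (Theorem \ref{mainidea}). In short, every elementary copy of $M$ inside $\R^\u$ sits inside some larger separable $N\subset \R^\u$ whose relative commutant is already a factor, and factoriality then descends from $N$ down to $M$ because $M$ is elementarily embedded.

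More explicitly, fix an elementary embedding $j\colon M\hookrightarrow \R^\u$; I will show that $j(M)'\cap \R^\u$ is a factor. First, since $j$ is elementary, $j(M)\preceq \R^\u$, and $j(M)$ is separable (as $M$ is). By Theorem \ref{nate} applied to $j(M)\subset \R^\u$, there is a separable II$_1$ factor $N$ with
\[
j(M)\ \subseteq\ N\ \subseteq\ \R^\u
\]
such that $N'\cap \R^\u$ is a factor. The triple $(j(M),N,\R)$ now satisfies the three hypotheses of Theorem \ref{mainidea}: the containments in $\R^\u$, the elementarity $j(M)\preceq \R^\u$, and the factoriality of $N'\cap \R^\u$. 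Applying that theorem yields that $j(M)'\cap \R^\u$ is a factor, which is exactly the claim.

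For the ``whence'' clause, I need to produce at least one embedding of $M$ into $\R^\u$ with factorial relative commutant. Since $M\equiv \R$ and $M$ is separable, separable universality of $\R^\u$ (Fact \ref{ultrauniv}(2)) provides an elementary embedding $M\hookrightarrow \R^\u$. The argument above then shows that this embedding has factorial commutant, so $(M,\R)$ is a factorial commutant pair.

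There is essentially no obstacle: Theorem \ref{mainidea} is the crucial new input, and Theorem \ref{nate} supplies the ``factorial sandwich'' needed to feed into it. The only mild subtlety worth highlighting is that elementarity of $j$ is used in two distinct ways: it guarantees $j(M)\preceq \R^\u$ (a hypothesis of Theorem \ref{mainidea}) and, for the existence statement in the ``whence'' clause, it is what lets Fact \ref{ultrauniv}(2) produce an elementary --- and not just arbitrary --- embedding $M\hookrightarrow \R^\u$, which is what allows the conclusion to actually apply.
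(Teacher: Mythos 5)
Your proposal is correct and follows exactly the paper's argument: apply Theorem \ref{nate} to sandwich $j(M)$ inside a separable $N\subseteq\R^\u$ with factorial relative commutant, then invoke Theorem \ref{mainidea} using $j(M)\preceq\R^\u$ to push factoriality down to $j(M)'\cap\R^\u$. The observation that the ``whence'' clause needs Fact \ref{ultrauniv}(2) to supply an elementary embedding is also the intended reading.
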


\begin{proof}
Fix an elementary embedding $j:M\hookrightarrow \R^\u$.  Then $j(M)\preceq \R^\u$.  By Theorem \ref{nate}, there is $N\subseteq \R^\u$ such that $j(M)\subseteq N$ and $N'\cap \R^\u$ is a factor.  By Theorem \ref{mainidea}, we have that $j(M)'\cap \R^\u$ is a factor, as desired.
\end{proof}

\begin{remark}
As mentioned in the introduction, a well-known open question of Popa asks whether or not every embeddable factor admits an embedding into $\R^\u$ with factorial commutant.  The previous corollary now gives continuum many non-isomorphic separable II$_1$ factors which satisfy the conclusion of Popa's question.
\end{remark}



We now arrive at one of the main results of this paper:

\begin{thm}\label{yay}
Suppose that $N$ is an embeddable generalized Jung factor.  Then $N\cong \R$.
\end{thm}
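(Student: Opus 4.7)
The plan is to assemble the pieces already developed in this subsection and feed them into Corollary \ref{popagenJungR}. The strategy is: convert the ``internal'' hypothesis that $N$ has the generalized Jung property (about embeddings $N \hookrightarrow N^\u$) into the ``external'' statement that $(N,\R)$ is simultaneously a generalized Jung pair and a factorial commutant pair, so that Brown's characterization of $\R$ (repackaged in Corollary \ref{popagenJungR}) finishes the job.

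Step one: apply Theorem \ref{gJ implies ee R} to the embeddable generalized Jung factor $N$ to obtain $N \equiv \R$. Step two: since $(N,N)$ is a generalized Jung pair by hypothesis and $N \equiv \R$, Lemma \ref{Jungee} upgrades this to the statement that $(N,\R)$ is a generalized Jung pair. Step three: by separable universality of ultrapowers (Fact \ref{ultrauniv}(2)), there exists an elementary embedding $j\colon N \hookrightarrow \R^\u$; by Corollary \ref{eeRpopa}, $j(N)' \cap \R^\u$ is a factor, so $(N,\R)$ is a factorial commutant pair.

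Step four: both hypotheses of Corollary \ref{popagenJungR} are now in place for the separable embeddable factor $N$, and the corollary immediately yields $N \cong \R$.

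The conceptual heavy lifting has already been carried out earlier in the section: Theorem \ref{gJ implies ee R} uses the clever zig-zag of iterated ultrapowers of $\R$ and $N$ to force elementary equivalence, while Corollary \ref{eeRpopa}, which rests on the heir-type argument in Theorem \ref{mainidea} together with Brown's Theorem \ref{nate}, supplies the factorial commutant. Once those are in hand, the present theorem is essentially a bookkeeping argument, and I would expect the exposition to simply chain the three invocations above. The only mild check worth being explicit about is that Lemma \ref{Jungee} really applies with $M_1 = N$ and $M_2 = \R$ (the statement requires only elementary equivalence of the codomains), which is exactly what Theorem \ref{gJ implies ee R} provides.
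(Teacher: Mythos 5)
Your proposal is correct and follows exactly the paper's own proof: Theorem \ref{gJ implies ee R} gives $N\equiv\R$, Lemma \ref{Jungee} then makes $(N,\R)$ a generalized Jung pair, Fact \ref{ultrauniv}(2) together with Corollary \ref{eeRpopa} makes it a factorial commutant pair, and Corollary \ref{popagenJungR} concludes. There is nothing to add.
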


\begin{proof}
By Fact \ref{ultrauniv}(2), Theorem \ref{gJ implies ee R}, and Corollary \ref{eeRpopa}, we have that $(N,\R)$ is a factorial commutant pair.  By Theorem \ref{gJ implies ee R} and Lemma \ref{Jungee}, we have that $(N,\R)$ is a generalized Jung pair.  Thus, Corollary \ref{popagenJungR} implies $N\cong \R$.
\end{proof}

\begin{cor}
$\R$ is the unique finitely generic embeddable II$_1$ factor.
\end{cor}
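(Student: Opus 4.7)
The plan is to break the corollary into two halves: verifying that $\R$ itself is finitely generic embeddable, and verifying that no other separable embeddable II$_1$ factor can be. The second half is essentially an immediate consequence of Theorem \ref{yay}, while the first half is a routine check against the definition given at the end of \S\ref{dfo}.

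For uniqueness, I would proceed as follows. Suppose $M$ is a finitely generic embeddable II$_1$ factor. By the very definition of being finitely generic for $\R^\u$-embeddable algebras, $M$ has the generalized Jung property. Since $M$ is also embeddable, Theorem \ref{yay} applies directly and yields $M \cong \R$. This is the entire content of the uniqueness direction; all the heavy lifting (reduction to elementary equivalence with $\R$ via Theorem \ref{gJ implies ee R}, factorial commutants via Corollary \ref{eeRpopa} and Theorem \ref{nate}, and the final application of Corollary \ref{popagenJungR}) has been done upstream.

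For existence, I need to check that $\R$ satisfies both clauses of the definition of finitely generic embeddable. The generalized Jung property for $\R$ is immediate from the classical fact recorded in the introduction that any two embeddings $\R \hookrightarrow \R^\u$ are unitarily equivalent (so in particular automorphically equivalent), which is exactly the statement that $\R$ has the Jung property and hence the generalized Jung property. For the condition $\sigma^\R = r_{\sigma,\R}$ for every sentence $\sigma$, I would invoke the theorem cited just before the statement of the corollary, namely \cite[Theorem 5.1]{enforceable}, that $\R$ is the enforceable embeddable factor. By the uniqueness clause of the defining property of $r_{\sigma,\R}$, once the property ``the compiled algebra is isomorphic to $\R$'' is enforceable, the value $\sigma^\R$ itself witnesses the enforceability of ``$\sigma^M = \sigma^\R$'' on the compiled algebra, forcing $r_{\sigma,\R} = \sigma^\R$ for every sentence $\sigma$.

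The proof is short and I do not anticipate any real obstacle: the corollary is designed to repackage Theorem \ref{yay} in the language of \S\S\ref{games}. The only subtlety worth flagging is making sure the reader sees that the definition of finitely generic embeddable bakes in the generalized Jung property, so Theorem \ref{yay} applies verbatim without any additional model-theoretic input beyond what was already used in its proof.
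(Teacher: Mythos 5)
Your proposal is correct and follows essentially the same route the paper intends: the corollary is stated without proof precisely because the uniqueness half is Theorem \ref{yay} applied to the generalized Jung clause built into the definition of finitely generic embeddable, and the existence half comes from $\R$ being the enforceable embeddable factor (which pins down $r_{\sigma,\R}=\sigma^{\R}$) together with the classical unitary-equivalence fact for embeddings $\R\hookrightarrow\R^\u$. Your direct verification of clause (2) via the uniqueness of $r_{\sigma,\R}$ is exactly the observation the paper records just after the definition, so there is nothing to add.
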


\subsection{The general case}\label{generalcase}

In the recent preprint \cite{connessol}, a negative solution to the CEP was announced.  Working under the assumption that the proof there is correct, we immediately have:

\begin{thm}\label{nonembedGJ}
There is a non-embeddable factor with the generalized Jung property.
\end{thm}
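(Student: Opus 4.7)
The plan is to argue by contradiction and pull together the game-theoretic machinery from \S\ref{games}, the characterization of embeddable generalized Jung factors (Theorem \ref{yay}), and the recently announced negative solution to CEP.

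Suppose, for contradiction, that every separable II$_1$ factor with the generalized Jung property is embeddable. By Theorem \ref{yay}, any such factor would then be isomorphic to $\R$. In particular, under this assumption, the isomorphism class of $\R$ and the class of factors with the generalized Jung property coincide.

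Now appeal to Fact \ref{fgenf}: there is a strategy for $\exists$ in the (unrestricted) enforcement game that guarantees the compiled structure $M$ is finitely generic, and in particular has the generalized Jung property. Since finite genericity implies being existentially closed (Fact \ref{fgec}), and every e.c.\ tracial von Neumann algebra is a McDuff II$_1$ factor (Fact \ref{ecfacts}(1)), the compiled $M$ is automatically a separable II$_1$ factor. Under the contradiction hypothesis, we must therefore have $M \cong \R$ for any play in which $\exists$ follows this strategy. This means that being isomorphic to $\R$ is itself enforceable, i.e., $\R$ is the enforceable factor. But by the theorem of \cite{enforceable} cited above, this is equivalent to a positive solution of the CEP, contradicting the negative resolution in \cite{connessol}.

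Thus the assumption fails, and there must exist a separable non-embeddable II$_1$ factor with the generalized Jung property. The argument is essentially a bookkeeping step—no genuine obstacle needs to be overcome, since every nontrivial ingredient (the enforceability of finite genericity, the rigidity Theorem \ref{yay} in the embeddable case, and the failure of CEP) has already been established. The only mild subtlety is making sure that the enforcement strategy really does produce a \emph{factor} rather than a general tracial von Neumann algebra, which is handled automatically by finite genericity through Facts \ref{fgec} and \ref{ecfacts}(1).
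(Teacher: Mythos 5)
Your proof is correct, but it reaches the conclusion by a different route than the paper. The paper's own argument is direct and does not use Theorem \ref{yay} at all: it takes a finitely generic factor $M$ (Fact \ref{fgenf}), notes that $M$ is e.c.\ by Fact \ref{fgec} and hence \emph{locally universal} by Fact \ref{ecfacts}(3), and concludes that $M$ cannot be embeddable, since otherwise every separable tracial von Neumann algebra would embed into $M^\u\subseteq\R^\u$ (up to a change of ultrafilter), giving a positive solution to CEP. You instead argue by contradiction through two heavier results: the classification of embeddable generalized Jung factors (Theorem \ref{yay}, one of the main theorems of the paper, whose proof runs through heirs, Popa's question, and Brown's extreme-point machinery) together with the equivalence between ``$\R$ is the enforceable factor'' and a positive solution to CEP (\cite[Theorem 5.2]{enforceable}). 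Both arguments are sound and both ultimately exhibit the same witness (the finitely generic factor), but the paper's version isolates the minimal ingredient needed for non-embeddability --- local universality --- whereas yours shows, as a pleasant byproduct, that the failure of CEP and Theorem \ref{yay} jointly force the enforceable factor (if the generalized Jung class were confined to $\R$) into existence, which is a nice conceptual cross-check but logically more expensive. One small point worth making explicit in your write-up: your contradiction hypothesis only directly yields that \emph{some} generalized Jung factor is non-embeddable; to pin down that the finitely generic factor itself is the witness (as the paper does), you would rephrase the argument as: if the compiled $M$ were embeddable it would be isomorphic to $\R$ by Theorem \ref{yay}, making $\cong\R$ enforceable --- which your argument already contains implicitly.
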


\begin{proof}
If $M$ is a finitely generic factor (which exists by Fact \ref{fgenf}), then $M$ has the generalized Jung property and is non-embeddable (since it is locally universal by Facts \ref{ecfacts} and \ref{fgec}).
\end{proof}

A natural follow-up question is:  How many non-embeddable factors with the generalized Jung property are there?

\begin{conj}
There are continuum many non-isomorphic separable non-embeddable II$_1$ factors with the generalized Jung property.
\end{conj}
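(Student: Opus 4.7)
The plan is to extend Theorem \ref{nonembedGJ} by varying the class of algebras over which the enforceable game of \S\ref{games} is played. For each separable non-embeddable tracial von Neumann algebra $P$, let $M_P$ denote a finitely generic $P^\u$-embeddable factor, furnished by Fact \ref{fgenf}; then $M_P$ automatically has the generalized Jung property, and by Fact \ref{fgec} it is e.c.\ in the $P^\u$-embeddable class. From the discussion following Fact \ref{ecfacts}, any e.c.\ $P^\u$-embeddable algebra is mutually embeddable with $P$ at the ultrapower level; in particular, if $P$ is non-embeddable then so is $M_P$, since $M_P \hookrightarrow \R^\u$ would yield $P\hookrightarrow M_P^\u\hookrightarrow (\R^\u)^\u\cong\R^\u$ under CH, contradicting non-embeddability of $P$.

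The main step of the plan is to exhibit a continuum-sized family $\{P_\alpha\}_{\alpha<2^{\aleph_0}}$ of separable non-embeddable II$_1$ factors for which the associated finitely generic factors $M_{P_\alpha}$ are pairwise non-isomorphic. A natural starting point is to begin with the non-embeddable finitely generic factor $M_0$ of Theorem \ref{nonembedGJ} and invoke the Farah--Hart--Sherman result recalled in \S\ref{mtprelim} to produce continuum many separable factors elementarily equivalent to $M_0$ but pairwise non-isomorphic. One then hopes that the real numbers $r_{\sigma,P_\alpha}$ determining the theory of $M_{P_\alpha}$ vary with $\alpha$ in a fine enough way to distinguish at least continuum many of the $M_{P_\alpha}$ up to isomorphism.

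The principal obstacle is that $r_{\sigma,P}$ depends on $P$ only through the class of $P^\u$-embeddable algebras: whenever $P_1\hookrightarrow P_2^\u$ and $P_2\hookrightarrow P_1^\u$, the restricted games coincide, forcing $\Th(M_{P_1})=\Th(M_{P_2})$. The strategy via theories therefore reduces, in large part, to producing continuum many separable non-embeddable factors forming an antichain under ultrapower embeddability---a substantial problem essentially orthogonal to the generalized Jung property, and likely requiring finer quantitative invariants of the sort underlying the refutation of CEP in \cite{connessol}. Even granted such an antichain, one would still need to rule out that finitely generic representatives with distinct theories nonetheless give isomorphic factors via some accidental coincidence, or conversely that representatives with the same theory remain non-isomorphic in sufficient abundance. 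I expect the antichain construction, together with a rigidity statement for finitely generic representatives beyond what is automatic for the (unique) enforceable factor, to be the essential obstructions, which is consistent with the statement being left as a conjecture.
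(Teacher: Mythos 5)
The statement you are trying to prove is stated in the paper as a conjecture, and the paper offers no proof of it --- only ``mild evidence.'' So the right benchmark here is not a proof but the paper's surrounding discussion, and against that benchmark your analysis is essentially on target. Your strategy --- relativize the building-by-games machinery to $P^\u$-embeddable algebras, extract a finitely generic (hence generalized Jung, e.c., and mutually embeddable with $P$) factor $M_P$, and then try to separate the $M_{P_\alpha}$ by varying $P$ --- is exactly the route the paper takes for its partial results. The paper records precisely your conclusion as a corollary: there are at least as many non-isomorphic generalized Jung factors as there are pairwise non-mutually-embeddable factors. And the obstruction you isolate is the correct one: whether there exist continuum many pairwise non-mutually-embeddable separable II$_1$ factors is explicitly noted in the paper to be open, which is why the full statement remains a conjecture. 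The paper then salvages two things you do not mention: (i) dropping the factoriality requirement, the Goldbring--Hart family of continuum many pairwise non-mutually-embeddable tracial von Neumann algebras yields continuum many non-isomorphic non-embeddable generalized Jung \emph{algebras}; and (ii) as separate evidence, the Dichotomy theorem shows that if there were fewer than continuum many finitely generic factors then the enforceable factor would exist, which the authors believe to be false.

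Two small corrections to your reasoning. First, your worry that ``finitely generic representatives with distinct theories nonetheless give isomorphic factors'' is vacuous: isomorphic algebras are elementarily equivalent, so distinct theories always force non-isomorphism. Second, the counting does not need to pass through theories at all: if $M_{P_1}\cong M_{P_2}$, then since each $M_{P_i}$ is mutually embeddable with $P_i$, the factors $P_1$ and $P_2$ are mutually embeddable; so a non-mutual-embeddability antichain of $P$'s directly yields pairwise non-isomorphic $M_P$'s. Your observation that starting from continuum many factors elementarily equivalent to $M_0$ (Farah--Hart--Sherman) will not help is also correct, since elementarily equivalent separable factors are mutually embeddable under CH and hence determine the same restricted game.
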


Some mild evidence for this conjecture is provided by the following:

\begin{thm}
If there are fewer than continuum many separable finitely generic factors, then the enforceable II$_1$ factor exists.
\end{thm}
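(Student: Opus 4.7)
The plan is a Baire-category argument combined with a $0$--$1$ law coming from the natural $S_\infty$-symmetry of the forcing game. Under our standing CH assumption, the hypothesis ``fewer than continuum many separable finitely generic factors'' is equivalent to ``at most countably many''; write the (countable) set of such isomorphism classes as $\mathcal{F}=\{[M_i]:i\in I\}$.

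The forcing game of \S\S\ref{games} is a Banach--Mazur style game played on the partial order of open conditions, so a property of compiled structures is enforceable precisely when the corresponding set of plays is comeager in the natural Polish topology on the space $X$ of plays. By Fact \ref{fgenf}, the set $F$ of plays compiling to a finitely generic factor is comeager; writing it as the disjoint union $F=\bigsqcup_i X_{M_i}$ of the (analytic, hence Baire-measurable) sets of plays compiling to structures in each isomorphism class, the Baire category theorem yields some $i_0\in I$ for which $X_{M_{i_0}}$ is non-meager, and hence by the Baire property comeager in some basic open set $U_p$. To conclude that $M_{i_0}$ is the enforceable II$_1$ factor it suffices to upgrade non-meagerness to comeagerness in all of $X$.

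This upgrade is a $0$--$1$ law for isomorphism-invariant sets. Let $S_\infty$ denote the group of permutations of the countable set $C$ of generating constants; it acts continuously on $X$ by relabeling, and since relabeling constants does not affect the isomorphism class of the compiled structure, each $X_{M_i}$ is $S_\infty$-invariant. Hence $X_{M_{i_0}}$ is comeager in $U_{\pi(p)}$ for every $\pi\in S_\infty$. Given an arbitrary condition $q$, a permutation $\pi$ can be chosen so that the constants appearing in $\pi(p)$ are disjoint from those appearing in $q$; the joint condition $\pi(p)\cup q$ on disjoint constants is then satisfiable, for instance inside a reduced free product of II$_1$ factors witnessing $p$ and $q$ separately, so $U_q\cap U_{\pi(p)}$ is nonempty. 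Therefore $V:=\bigcup_{\pi\in S_\infty}U_{\pi(p)}$ is a dense open set in which $X_{M_{i_0}}$ is comeager, forcing $X_{M_{i_0}}$ to be comeager in all of $X$. Hence ``compiled structure $\cong M_{i_0}$'' is enforceable, and $M_{i_0}$ is the desired enforceable II$_1$ factor. The main obstacle is the amalgamation step witnessing satisfiability of $\pi(p)\cup q$ inside a II$_1$ factor (rather than a general tracial von Neumann algebra), which relies on the fact that reduced free products of II$_1$ factors are again II$_1$ factors; the remaining technical points (analyticity of each $X_{M_i}$, continuity of the $S_\infty$-action, and genuine $S_\infty$-invariance of the $X_{M_i}$) follow cleanly from the definitions in \S\S\ref{games}.
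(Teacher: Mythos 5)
Your argument is essentially sound, but it takes a genuinely different route from the paper. The paper's proof is two lines: it invokes the Dichotomy Theorem of \cite[Theorem 6.1]{enforceable} (for any enforceable property, either the enforceable structure exists or there are continuum many pairwise non-isomorphic separable structures with that property), applies it to the enforceable property of being finitely generic (Fact \ref{fgenf}), and takes the contrapositive. That dichotomy is itself proved in \cite{enforceable} by a direct game-theoretic construction of a perfect tree of conditions, whereas you re-derive the needed instance topologically: translate enforceability into comeagerness in a space of plays, use CH to turn ``fewer than continuum'' into ``countably many,'' apply Baire category to find a non-meager isomorphism class, and promote non-meager to comeager via a topological $0$--$1$ law for the relabeling action of $S_\infty$. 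The localization, invariance, and density steps are all correct (note that $\pi(p)$ depends only on $\pi$ restricted to the finitely many constants of $p$, so your union $V$ is in fact a \emph{countable} union of basic open sets, which is what makes the final meagerness computation go through without appeal to the Banach category theorem). One remark: the step you single out as ``the main obstacle'' is a non-issue, since conditions in the game of \S\S\ref{games} need only be satisfiable in a tracial von Neumann algebra, not in a II$_1$ factor, so a tensor product of the two witnesses already amalgamates disjoint conditions.

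The one place where your proof genuinely leans on something not established in the paper or its references is the opening sentence: the equivalence ``$P$ is enforceable if and only if the set of plays compiling to a structure with $P$ is comeager.'' This is the continuous-logic avatar of the Banach--Mazur/Oxtoby theorem and is where all the technical content lives: one must restrict to a countable dense set of conditions (rational parameters) so that the space of complete satisfiable quantifier-free descriptions over $C$ is a second countable compact space with the $U_p$ as a basis, check that satisfiable conditions correspond exactly to nonempty basic open sets, and handle the fact that only definitive plays compile to a single point. None of this is fatal --- it is standard machinery --- but as written it is an assertion rather than a proof, and it carries the weight of the whole argument. A secondary difference worth noting: your countable-union step uses the standing CH assumption essentially, whereas the Dichotomy Theorem cited by the paper yields the conclusion without any set-theoretic hypothesis.
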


\begin{proof}
This follows immediately from the so-called \textbf{Dichotomy theorem} \cite[Theorem 6.1]{enforceable} and the fact that being finitely generic is an enforceable property.
\end{proof}

As stated above, we believe that there does not exist an enforceable factor; consequently, that leads us to believe that there exist continuum many non-isomorphic separable finitely generic II$_1$ factors, and hence continuum many non-isomorphic separable II$_1$ factors with the generalized Jung property.

If we are willing to relax the demand that the generalized Jung algebra be a factor, then we can actually achieve continuum many non-isomorphic separable examples:

\begin{prop}
For every tracial von Neumann algebra $P$ (not necessarily a factor), there is a generalized Jung algebra $M$ such that $M$ and $P$ are mutually embeddable.
\end{prop}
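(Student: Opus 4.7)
The plan is to invoke the game-theoretic machinery from \S\S\ref{games} applied relative to $P$. Specifically, I will play the game restricted to $P^\u$-embeddable algebras and enforce that the compiled structure is finitely generic for $P^\u$-embeddable algebras. By Fact \ref{fgenf}, this is an enforceable property, so such an $M$ exists, and by definition of ``finitely generic for $P^\u$-embeddable algebras,'' the compiled $M$ has the generalized Jung property.

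It remains to verify mutual embeddability. One direction is automatic from the setup: because the game was played in the class of $P^\u$-embeddable algebras, the compiled $M$ is itself $P^\u$-embeddable, i.e.\ there is an embedding $M\hookrightarrow P^\u$. For the reverse direction, I use Fact \ref{fgec}, which tells us that $M$ is e.c.\ for $P^\u$-embeddable algebras. As noted in the discussion immediately after the definition of e.c.\ $P^\u$-embeddable algebras, the standard argument that e.c.\ algebras are locally universal carries over to the relative setting: any $P^\u$-embeddable tracial von Neumann algebra embeds into $M^\u$. Since $P$ embeds diagonally into $P^\u$, it is trivially $P^\u$-embeddable, so $P\hookrightarrow M^\u$. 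This yields mutual embeddability.

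I expect there will be no serious obstacle here; everything is packaged into the enforceability results and the local universality of e.c.\ $P^\u$-embeddable algebras already invoked in the paper. The only subtlety worth double-checking is that the definition of the generalized Jung property used in Fact \ref{fgenf}, namely for general tracial von Neumann algebras (any two embeddings $N\hookrightarrow N^\u$ are automorphically equivalent), is exactly the one needed in the statement, which was explicitly adopted earlier in the subsection via the sentence ``In what follows, we say that a tracial von Neumann algebra $N$ (not necessarily a factor) has the generalized Jung property\ldots''. Thus the proof reduces to the two-line argument: enforce finitely generic for $P^\u$-embeddable, conclude generalized Jung and e.c., then read off mutual embeddability from $P^\u$-embeddability of $M$ and local universality of $M$ in the class of $P^\u$-embeddable algebras.
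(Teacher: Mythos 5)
Your proposal is correct and follows essentially the same route as the paper, which simply relativizes the proof of Theorem \ref{nonembedGJ} to the class of $P^\u$-embeddable algebras: enforce finite genericity for $P^\u$-embeddable algebras via Fact \ref{fgenf}, read off the generalized Jung property from the definition, and obtain mutual embeddability from Fact \ref{fgec} together with the relative local universality remark. Your more detailed unpacking of the mutual embeddability step matches exactly what the paper's one-line proof intends.
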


\begin{proof}
The proof is identical to the proof of Theorem \ref{nonembedGJ} above relativized to the setting of $P^\u$-embeddable algebras.
\end{proof}

\begin{cor}
There are at least as many non-isomorphic generalized Jung algebras (resp. factors) as there are non-mutually embeddable algebras (resp. factors).
\end{cor}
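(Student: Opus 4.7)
The plan is to package the preceding Proposition with the transitivity of mutual embeddability. Concretely, fix a family $\{P_i : i \in I\}$ of pairwise non-mutually-embeddable tracial von Neumann algebras (respectively, II$_1$ factors), with $|I|$ realizing the cardinality one wishes to witness. Apply the Proposition once to each $P_i$ to produce a generalized Jung algebra $M_i$ mutually embeddable with $P_i$. For the parenthetical factor version of the statement, one needs to verify that when $P$ is a factor the construction in the Proposition actually yields a factor; this is inherited from Fact \ref{fgec} together with the observation (already recorded in \S\ref{mtprelim}) that e.c.\ $P^\u$-embeddable algebras are factors whenever $P$ is one.

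Next, I would show that the assignment $i \mapsto M_i$ descends to an injection from mutual embeddability classes of the $P_i$ into isomorphism classes of generalized Jung algebras (resp.\ factors). Suppose, toward a contradiction, that $M_i \cong M_j$ for some $i \neq j$. Mutual embeddability supplies embeddings $P_j \hookrightarrow M_j^\u$, $M_j \hookrightarrow P_j^\u$, $P_i \hookrightarrow M_i^\u$, and $M_i \hookrightarrow P_i^\u$. Transporting $P_j \hookrightarrow M_j^\u$ across (the ultrapower of) the isomorphism $M_j \cong M_i$ and then composing with the functorial ultrapower of $M_i \hookrightarrow P_i^\u$ produces a chain $P_j \hookrightarrow M_i^\u \hookrightarrow (P_i^\u)^\u$. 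By our standing CH assumption, $(P_i^\u)^\u$ is isomorphic to $P_i^\u$ (both are $\aleph_1$-saturated elementary extensions of the separable $P_i$ of density character $\aleph_1$, so the standard uniqueness theorem for saturated models applies), hence $P_j \hookrightarrow P_i^\u$. Symmetrically $P_i \hookrightarrow P_j^\u$, so $P_i$ and $P_j$ are mutually embeddable, contradicting the choice of the family.

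This exhibits the desired injection and hence the cardinality inequality. The only nontrivial point in the argument is the factor-preservation needed for the parenthetical statement; once that is in hand, the remainder is essentially the transitivity of mutual embeddability via composition of embeddings (with CH collapsing the iterated ultrapower).
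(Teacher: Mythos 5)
Your proof is correct and is exactly the intended argument: the paper states this corollary without proof as an immediate consequence of the preceding proposition, and your writeup simply unpacks that — apply the proposition to each member of a pairwise non-mutually-embeddable family, note factoriality is preserved when $P$ is a factor, and get injectivity on isomorphism classes from the transitivity of mutual embeddability (via iterated ultrapowers and CH). No discrepancies worth flagging.
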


It is an open problem whether or not there are continuum many non-mutually embeddable factors.  There are, however, continuum many non-mutually embeddable tracial von Neumann algebras:

\begin{thm}[Goldbring-Hart \cite{computability}]
There is a family $(M_t)_{t\in [0,1]}$ of tracial von Neumann algebras containing $\R$ such that the $M_t$'s are pairwise non-mutually embeddable.
\end{thm}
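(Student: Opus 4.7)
The plan is to exploit the standard model-theoretic characterization of mutual embeddability: two tracial von Neumann algebras $M$ and $N$ are mutually embeddable if and only if they share the same universal theory. Indeed, $M$ embeds into $N^\u$ iff for every universal (``$\sup$''-)sentence $\sigma$, $\sigma^M\leq \sigma^N$; this is immediate from \L os's theorem, since quantifier-free formula values are preserved under substructure. Hence, to prove the theorem it suffices to construct continuum many tracial von Neumann algebras, each containing $\R$, with pairwise distinct universal theories.

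The construction proceeds by encoding computability-theoretic data into universal theories. First, I would fix a family $(G_t)_{t\in[0,1]}$ of finitely generated recursively presented groups whose word problems have pairwise Turing-incomparable degrees. Such a family exists by combining Sacks's theorem (which produces an antichain of continuum many Turing degrees) with the Boone--Higman style realization of recursively enumerable degrees as word problems of finitely generated recursively presented groups. Next, set $M_t := L(G_t)\mathbin{\overline{\otimes}}\R$, the tracial tensor product; each $M_t$ manifestly contains $\R$.

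The decisive claim is that the universal theory of $M_t$ determines the word problem of $G_t$: for each word $w$ in the generators of $G_t$, one has $w =_{G_t} e$ iff the atomic condition $\|p_w(u_1,\ldots,u_k)-1\|_2 = 0$ holds in $M_t$, where $u_1,\ldots,u_k$ are the canonical generating unitaries and $p_w$ is the $*$-monomial assembled from $w$. Because this is an atomic condition (equivalently, a universal condition with a vacuous quantifier block), its value is part of the universal theory of $M_t$. The tensor product with $\R$ does not disturb this because the inclusion $L(G_t)\hookrightarrow M_t$ is trace-preserving, so no new group relations are introduced. Consequently, if $M_s$ and $M_t$ were mutually embeddable for $s\neq t$, their universal theories would coincide, forcing the word problems of $G_s$ and $G_t$ to be identical and hence Turing-equivalent, contradicting our choice of the family.

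The main obstacle is a careful verification of the encoding claim: one must show that the word problem of $G_t$ is genuinely, and uniformly, recoverable from the universal theory of $M_t$, not merely implicitly present. This is the heart of the computability-theoretic argument in \cite{computability}: one reduces the decision problem for $w =_{G_t} e$ directly to the atomic trace data $\tau(p_w)$ of the group von Neumann algebra, and checks that neither amplification nor the tensor product with $\R$ can collapse distinct group elements to the same trace value. Once this is in hand, the family $(M_t)_{t\in[0,1]}$ has the required property.
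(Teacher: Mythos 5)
The paper does not actually prove this statement; it is quoted as a black box from Goldbring--Hart \cite{computability}, so I am comparing your argument against the approach of the cited source. Your opening reduction is the correct starting point: mutual embeddability of tracial von Neumann algebras is indeed equivalent to equality of universal theories, and this is also where the real proof begins. Everything after that, however, breaks down, and for reasons worth internalizing. The most serious problem is structural: if $\R\subseteq M$ and $M$ embeds into $\R^\u$, then for every universal sentence $\sigma$ one has $\sigma^{\R}\leq \sigma^{M}\leq \sigma^{\R}$, so \emph{every} embeddable tracial von Neumann algebra containing $\R$ has exactly the universal theory of $\R$, and any two such algebras are mutually embeddable. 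Hence any family witnessing the theorem must consist (with at most one exception) of non-embeddable algebras, and the proof must invoke the failure of CEP. Your algebras $L(G_t)\mathbin{\overline{\otimes}}\R$ do not do this: for every group anyone can currently write down (in particular for all the groups produced by word-problem constructions, which are residually finite or otherwise sofic/hyperlinear in all known cases), $L(G_t)$ is embeddable, so all your $M_t$ would be pairwise mutually embeddable no matter how wild the word problems are. The word problem of $G_t$ is simply invisible to the universal theory of $M_t$.

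There are two further independent gaps. First, a cardinality error: there are only countably many recursively presented groups and only countably many recursively enumerable Turing degrees, so a family of continuum many finitely generated \emph{recursively presented} groups with pairwise Turing-incomparable word problems cannot exist; Sacks's antichain lives among arbitrary degrees, not r.e.\ ones. Second, your ``decisive claim'' confuses conditions with sentences: $\|p_w(u_1,\dots,u_k)-1\|_2=0$ refers to the \emph{named generators} $u_1,\dots,u_k$, which are not constants of the language, so this is not part of the universal theory of $M_t$ (which only records values of parameter-free $\sup$-sentences). A concrete counterexample: an ICC amenable group $G$ with unsolvable word problem satisfies $L(G)\cong\R$, so $\operatorname{Th}_\forall(L(G)\mathbin{\overline{\otimes}}\R)=\operatorname{Th}_\forall(\R)$ encodes nothing about the word problem. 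The actual Goldbring--Hart argument is entirely different in character: it combines the non-computability of $\operatorname{Th}_\forall(\R)$ (a consequence of $\operatorname{MIP}^*=\operatorname{RE}$, i.e., the refutation of CEP) with a descriptive-set-theoretic dichotomy applied to the set of universal theories of tracial von Neumann algebras containing $\R$, concluding that this set must have cardinality continuum; no explicit family is constructed. If you want to salvage an explicit approach, you would at minimum need a source of continuum many non-embeddable algebras, which is precisely what makes the theorem nontrivial.
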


\begin{cor}
There are continuum many pairwise non-isomorphic non-embeddable generalized Jung algebras that contain $\R$.
\end{cor}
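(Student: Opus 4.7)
The plan is to combine the Goldbring-Hart Theorem with a mild strengthening of the Proposition. For each $t\in[0,1]$ from the Goldbring-Hart family, I would apply the Proposition (relativized to $M_t^\u$-embeddable algebras) to obtain a generalized Jung algebra $N_t$ mutually embeddable with $M_t$. However, the Proposition as stated only gives $\R\hookrightarrow M_t\hookrightarrow N_t^\u$, not $\R\subseteq N_t$. To remedy this, I strengthen the game-theoretic construction from Section \ref{games} by seeding the initial condition of the game with a countable dense $*$-subalgebra of $\R$: I reserve infinitely many constant symbols of $C$ to enumerate such a dense subset and include, as part of $\forall$'s opening condition, the atomic conditions determined by their $*$-polynomial relations in $\R$. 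These conditions are $M_t^\u$-satisfiable since $\R\hookrightarrow M_t$, and by the Conjunction Lemma (Fact \ref{conjlemma}) combined with enforceability of the finitely generic property (Fact \ref{fgenf}), the property ``$M$ contains $\R$ and is finitely generic for $M_t^\u$-embeddable algebras'' is enforceable. The compiled $N_t$ is therefore a generalized Jung algebra, is mutually embeddable with $M_t$, and contains $\R$ as a subalgebra.

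Next, I would argue that continuum many of the $N_t$ are non-embeddable. The key observation is that any two embeddable algebras $A,B$ that both contain $\R$ are automatically mutually embeddable, since $A\hookrightarrow\R^\u\hookrightarrow B^\u$ (the second embedding uses the functorial map induced by $\R\hookrightarrow B$) and symmetrically. Since the $M_t$ are pairwise non-mutually embeddable, at most one of them can be embeddable, so continuum many $M_t$ are non-embeddable. For such $M_t$, the corresponding $N_t$ must also be non-embeddable: if $N_t\hookrightarrow\R^\u$, then $M_t\hookrightarrow N_t^\u\hookrightarrow(\R^\u)^\u$, and by separable saturation of $\R^\u$ under CH, any separable subalgebra of $(\R^\u)^\u$ embeds into $\R^\u$, so $M_t\hookrightarrow\R^\u$, contradicting the non-embeddability of $M_t$.

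Finally, pairwise non-isomorphism of the selected $N_t$ follows from their pairwise non-mutual embeddability: if $N_s\cong N_t$ with $s\neq t$, then $N_s$ and $N_t$ are mutually embeddable, and the chain $M_s$ mutually embeddable with $N_s\cong N_t$ mutually embeddable with $M_t$ yields, via the same CH-based transitivity argument used in the previous paragraph, that $M_s$ and $M_t$ are mutually embeddable, contradicting the Goldbring-Hart theorem. The main obstacle is the first step: verifying that the game-theoretic construction can be simultaneously tuned to contain a pre-specified copy of $\R$ while producing a generalized Jung algebra mutually embeddable with $M_t$. This is handled cleanly by the Conjunction Lemma, since containing a fixed separable subalgebra is manifestly enforceable and does not conflict with the density requirements needed to enforce finite genericity.
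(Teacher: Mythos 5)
Your argument is correct and follows the route the paper intends (the paper states this as an unproved corollary of the Goldbring--Hart theorem together with the preceding proposition on mutually embeddable generalized Jung algebras): relativize the enforceability construction to $M_t^\u$-embeddable algebras, use the Conjunction Lemma to additionally enforce containment of $\R$, and derive non-embeddability and pairwise non-isomorphism from non-mutual embeddability via separable saturation of iterated ultrapowers. One mechanical slip: you cannot ``include, as part of $\forall$'s opening condition'' the relations of a dense subalgebra of $\R$, since $\forall$ is the adversary and moves first; instead, $\exists$ should introduce \emph{fresh} constants at her own turns (only finitely many symbols are mentioned at any finite stage) and progressively impose the $\R$-relations on them, which stays satisfiable because any realization of the current condition inside $M_t^\u$ coexists with the diagonal copy of $\R\subseteq M_t\subseteq M_t^\u$. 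This is precisely the ``containing a fixed separable subalgebra is manifestly enforceable'' claim you make at the end, so with that repair the proof goes through.
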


Thus far, all locally universal generalized Jung factors proven to exist have been finitely generic.  It is natural to wonder if this is always the case:

\begin{question}
Must a locally universal generalized Jung factor be finitely generic?
\end{question}

\begin{prop}
Suppose that $N\subseteq M$ are both generalized Jung and $N$ is locally universal.  Then $N\equiv M$.  In particular, if $N$ is finitely generic, then so is $M$.
\end{prop}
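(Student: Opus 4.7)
The plan is to exhibit a common elementary extension of $N$ and $M$ by building an interleaved chain of iterated ultrapowers whose even- and odd-indexed subchains consist of iterated ultrapowers of $N$ and of $M$ respectively, and are both elementary. Once a tracial von Neumann algebra $U$ with $N\preceq U$ and $M\preceq U$ is produced, $N\equiv M$ is immediate, and the ``in particular'' clause follows since $M$ is already assumed to be generalized Jung and $\sigma^M=\sigma^N=r_\sigma$ for every sentence $\sigma$ whenever $N$ is finitely generic.

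First, using that $N$ is locally universal and $M$ is separable, I would fix an embedding $g\colon M\hookrightarrow N^\u$. The restriction $g|_N$ is an embedding of $N$ into $N^\u$, hence is automorphically equivalent to the diagonal embedding $\iota_N\colon N\hookrightarrow N^\u$ by the generalized Jung property of $N$; after composing with a suitable $\alpha\in\operatorname{Aut}(N^\u)$, I may assume $g|_N=\iota_N$. Writing $i\colon N\hookrightarrow M$ for the given inclusion, this secures the key identity $g\circ i=\iota_N$.

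Next I form the chain $A_0\hookrightarrow A_1\hookrightarrow A_2\hookrightarrow\cdots$ with $A_{2k}=N^{\u^k}$ and $A_{2k+1}=M^{\u^k}$, whose connecting embeddings are $A_{2k}\xrightarrow{i^{\u^k}}A_{2k+1}$ and $A_{2k+1}\xrightarrow{g^{\u^k}}A_{2k+2}$. Although these individual embeddings need not be elementary, functoriality of the ultrapower construction ensures that each two-step composite is an iterated ultrapower of a map already known to be elementary. Indeed, for the even subchain, $g^{\u^k}\circ i^{\u^k}=(g\circ i)^{\u^k}=(\iota_N)^{\u^k}$, which is elementary by Fact \ref{ultrauniv}(1) since $\iota_N$ is elementary by \L os' theorem. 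For the odd subchain, $i^{\u^{k+1}}\circ g^{\u^k}=(i^\u\circ g)^{\u^k}$, and $i^\u\circ g\colon M\to M^\u$ is an embedding of $M$ into its own ultrapower, hence elementary by the generalized Jung property of $M$; its iterated ultrapowers are therefore also elementary.

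Therefore $A_0\preceq A_2\preceq A_4\preceq\cdots$ and $A_1\preceq A_3\preceq A_5\preceq\cdots$ are both elementary chains. Let $U$ denote the direct limit of the full chain; since $A_{2k}\subseteq A_{2k+1}\subseteq A_{2k+2}$ throughout, the union coincides with the union of either subchain, so Fact \ref{elel}(3) applied to each subchain yields $N=A_0\preceq U$ and $M=A_1\preceq U$, whence $N\equiv U\equiv M$. The main subtle point—and the only place real work is required—is precisely the assembly of this chain: neither $g$ nor the ultrapower inclusion $i^\u$ need be elementary on its own, but the identity $g\circ i=\iota_N$ together with the generalized Jung properties of $N$ and $M$ forces the alternating two-step composites to be iterated ultrapowers of elementary embeddings, which is exactly what Fact \ref{elel}(3) requires to deliver the desired simultaneous elementary extension.
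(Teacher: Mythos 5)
Your proof is correct and follows essentially the same route as the paper, which simply notes that the argument is identical to that of Theorem \ref{gJ implies ee R}: fix an embedding $M\hookrightarrow N^\u$ using local universality, observe that the two composites $N\hookrightarrow N^\u$ and $M\hookrightarrow M^\u$ are elementary by the respective generalized Jung properties, and assemble the interleaved chain of iterated ultrapowers whose union is a common elementary extension. (Your preliminary normalization $g|_N=\iota_N$ is harmless but unnecessary, since $g\circ i$ is already elementary merely for being an embedding of the generalized Jung factor $N$ into $N^\u$.)
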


\begin{proof}
The proof of the first-statement is identical to the proof of Theorem \ref{gJ implies ee R}.  The second statement follows immediately from the first statement.
\end{proof}

\begin{cor}\label{enforceexists}
Suppose that the enforceable factor exists.  Then any locally universal generalized Jung factor is finitely generic.
\end{cor}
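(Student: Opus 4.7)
The plan is to first observe that the enforceable factor $E$ must itself be finitely generic. Indeed, the property of being isomorphic to $E$ is enforceable by definition of $E$, and the property of being finitely generic is enforceable by Fact \ref{fgenf}. By the Conjunction Lemma (Fact \ref{conjlemma}(1)) their conjunction is enforceable, and any algebra produced by the corresponding winning strategy must both be isomorphic to $E$ and be finitely generic, so $E$ itself is finitely generic. In particular $E$ is locally universal (being e.c. by Facts \ref{fgec} and \ref{ecfacts}(3)) and has the generalized Jung property.

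Now let $M$ be an arbitrary locally universal generalized Jung factor. Since $M$ already has the generalized Jung property, to conclude that $M$ is finitely generic it remains only to check that $\sigma^M = r_\sigma$ for every sentence $\sigma$, for which it suffices to prove $M \equiv E$ (so that $\sigma^M = \sigma^E = r_\sigma$).

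To prove $M \equiv E$, I would run the elementary-chain construction from the proof of Theorem \ref{gJ implies ee R}, with $(E,M)$ playing the roles of $(\R,N)$. By local universality of $E$ fix an embedding $\iota : M \hookrightarrow E^\u$, and by local universality of $M$ fix an embedding $\kappa : E \hookrightarrow M^\u$. Build the alternating chain
\[
E \hookrightarrow M^\u \hookrightarrow (E^\u)^\u \hookrightarrow ((M^\u)^\u)^\u \hookrightarrow \cdots
\]
whose successive arrows are $\kappa$, $\iota^\u$, and further iterated ultrapowers of $\kappa$ and $\iota$. The two-step composition $\iota^\u \circ \kappa : E \hookrightarrow (E^\u)^\u$ embeds $E$ into an iterated ultrapower of itself, which is elementarily equivalent to $E$, so by characterization (3) of the generalized Jung property for $E$ (from the theorem at the start of \S\ref{dfo}) this composition is elementary. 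Symmetrically, $\kappa^\u \circ \iota : M \hookrightarrow (M^\u)^\u$ is elementary by the generalized Jung property for $M$. Taking further ultrapowers preserves elementarity (Fact \ref{ultrauniv}(1)), so the even-indexed subchain $E \preceq (E^\u)^\u \preceq \cdots$ and the odd-indexed subchain $M^\u \preceq ((M^\u)^\u)^\u \preceq \cdots$ are both elementary and cofinal in the full chain. Their common direct limit $L$ therefore satisfies $E \preceq L$ and $M^\u \preceq L$ by Fact \ref{elel}(3); combined with $M \preceq M^\u$ this yields $M \equiv L \equiv E$.

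The only subtle point is verifying that the two-step compositions are elementary, but once one notes that any iterated ultrapower of $E$ (respectively $M$) is elementarily equivalent to $E$ (respectively $M$), the generalized Jung property yields elementarity directly; everything else is the standard elementary-chain machinery.
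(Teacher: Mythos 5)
Your argument is correct, but it routes around the step the paper actually leans on. The paper's proof is a two-line deduction: it invokes the preceding proposition (if $N\subseteq M$ are both generalized Jung and $N$ is locally universal, then $N\equiv M$) together with the external fact that the enforceable factor $\mathcal E$ embeds into every e.c.\ factor (\cite[Proposition 6.19 and Remark 6.29]{enforceable}), the latter being needed to produce an honest containment $\mathcal E\subseteq M$. You instead rerun the alternating elementary chain from Theorem \ref{gJ implies ee R} directly, with both cross-embeddings landing in ultrapowers ($M\hookrightarrow \mathcal E^\u$ and $\mathcal E\hookrightarrow M^\u$), which you obtain purely from local universality of each factor; the two-step compositions are elementary by characterization (3) of the generalized Jung property, their ultrapowers stay elementary by Fact \ref{ultrauniv}(1), and the cofinal subchains give $\mathcal E\equiv L\equiv M^\u\equiv M$ via Fact \ref{elel}(3). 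This buys you two things: you avoid the citation to the embedding result for the enforceable factor entirely, and your argument in fact establishes the slightly stronger statement that \emph{any} two locally universal generalized Jung factors are elementarily equivalent, with the existence of $\mathcal E$ used only at the end to identify the common theory as the finitely generic one. You also make explicit, via the Conjunction Lemma (Fact \ref{conjlemma}) and Fact \ref{fgenf}, the step that $\mathcal E$ is itself finitely generic, which the paper leaves implicit. The core machinery is the same chain argument in both cases, so the gain is self-containment rather than a new idea, but the substitution of local universality for the containment $\mathcal E\subseteq M$ is a genuine (and clean) difference.
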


\begin{proof}
This follows from the previous proposition using the fact that the enforceable factor embeds into any e.c. factor (see \cite[Proposition 6.19 and Remark 6.29]{enforceable}).
\end{proof}

\begin{cor}
If there are fewer than continuum many generalized Jung factors, then every locally universal generalized Jung factor is finitely generic.
\end{cor}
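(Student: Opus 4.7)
The plan is to chain together two earlier results via a simple counting observation. By definition, every finitely generic factor has the generalized Jung property (this is built into the definition of finitely generic, which includes clause (1) that the algebra has the generalized Jung property). So the collection of separable finitely generic factors is contained (up to isomorphism) in the collection of separable generalized Jung factors.

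Thus if we assume there are fewer than continuum many separable generalized Jung factors, then in particular there are fewer than continuum many separable finitely generic factors. The earlier theorem in this subsection, which was proved as an immediate consequence of the Dichotomy theorem together with the enforceability of being finitely generic, then tells us that the enforceable II$_1$ factor exists. With the enforceable factor available, we invoke Corollary \ref{enforceexists} verbatim: any locally universal generalized Jung factor must be finitely generic.

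There is really no obstacle here; the statement is a straightforward packaging of the two preceding results, and the only content is the elementary remark that ``finitely generic'' is a strengthening of ``generalized Jung,'' so a cardinality bound on the latter class transfers to the former. The proof can therefore be written in two short sentences: reduce the hypothesis via the containment of classes to the hypothesis of the earlier theorem, obtain the enforceable factor, then apply Corollary \ref{enforceexists}.
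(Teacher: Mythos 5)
Your proof is correct and is precisely the intended argument: the paper states this corollary without proof, but it is meant to follow by exactly your chain — finitely generic factors form a subclass of generalized Jung factors (clause (1) of the definition), so the cardinality hypothesis transfers, the preceding theorem yields the enforceable factor, and Corollary \ref{enforceexists} finishes.
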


Thus far, our arguments do not seem to apply to the following:

\begin{question}\label{nonembedJung}
Does there exist a non-embeddable Jung factor?
\end{question}

This question is of interest on its own, but a positive resolution of Question \ref{nonembedJung} would provide an example of a non-embeddable \tql self-tracially stable\tqr\ II$_1$ factor.

\begin{defn}
A II$_1$ factor $N$ is \textbf{self-tracially stable} if, for every embedding $\pi: N \hookrightarrow N^\u$, there is a sequence of embeddings $\pi_k: N\hookrightarrow N$ such that, for every $x \in N, \pi(x) = (\pi_k(x))_\u$.
\end{defn}

In \cite[Theorem 2.4]{ultraprodembed} it was shown that $\R$ is the only embeddable self-tracially stable II$_1$ factor.  
Thus it would be of significant interest to exhibit a non-embeddable self-tracially stable II$_1$ factor. The following proposition shows that if we were to exhibit a non-embeddable Jung factor, then we would automatically have an example of a non-embeddable self-tracially stable II$_1$ factor.

\begin{prop}
A Jung factor is self-tracially stable.
\end{prop}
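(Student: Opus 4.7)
The plan is to unwind the definitions and invoke the unitary lifting fact for ultrapowers. Let $\pi \colon N \hookrightarrow N^\u$ be an arbitrary embedding, and let $\iota \colon N \hookrightarrow N^\u$ denote the diagonal embedding $x \mapsto (x)_\u$. Since $N$ has the Jung property, $\pi$ and $\iota$ are unitarily equivalent, so there is a unitary $u \in N^\u$ with $\pi(x) = u^* \iota(x) u$ for every $x \in N$.

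Next I would appeal to Fact \ref{stable}(1), which allows us to represent $u$ as $u = (u_k)_\u$ for some sequence of unitaries $u_k \in N$. Setting $\pi_k \colon N \to N$ by $\pi_k(x) := u_k^* x u_k$, each $\pi_k$ is an inner automorphism of $N$ and in particular an embedding, and by construction
\[
\pi(x) \;=\; u^*(x)_\u u \;=\; (u_k^* x u_k)_\u \;=\; (\pi_k(x))_\u
\]
for every $x \in N$. This is exactly the defining condition for self-tracial stability.

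There is no real obstacle here: the proof is a routine two-line unwinding, and the only non-trivial input is the standard lifting fact that unitaries in $N^\u$ can be represented by sequences of unitaries in $N$. (If one wanted a proof valid for the potentially weaker notion in which the $\pi_k$ need only be embeddings rather than inner automorphisms, the same argument works verbatim since inner automorphisms are embeddings.)
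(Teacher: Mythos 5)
Your argument is correct and is essentially identical to the paper's own proof: both conjugate the given embedding to the diagonal one by a unitary, lift that unitary to a sequence of unitaries $(u_k)$ in $N$ via Fact \ref{stable}(1), and set $\pi_k = \operatorname{Ad}(u_k^*)$. Nothing further is needed.
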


\begin{proof}
Let $N$ be a Jung factor, and let an embedding $\pi: N \hookrightarrow N^u$ be given. Then $\pi$ is unitarily equivalent to the diagonal embedding, say by a unitary $u = (u_k)_\u \in N^\u$.  We can take $\pi_k: N\hookrightarrow N$ to be given by $\pi_k(x) = u_k^*xu_k$.
\end{proof}

We will have more to say about Question \ref{nonembedJung} in \S \ref{relres}.



\section{The space $\HOM_A(N,M^\u)$}\label{HOM}

The results of \cite{topdyn} show that if $N$ is a non-amenable embeddable factor, then $\HOM(N,\R^\u)$ is non-separable in the topology described in \S\S \ref{convex}.  Moreover, \cite[Theorem 3.23]{ultraprodembed} shows that if $N$ is a non-amenable embeddable factor, then for any ultraproduct of II$_1$ factors $\ds \prod_{k\rightarrow \u} M_k$, the space $\ds\HOM(N,\prod_{k\rightarrow \u}M_k)$ is non-separable.  



Proceeding by analogy, let $\HOM_A(N,M^\u)$ denote the space of all embeddings of $N$ into $M^\u$ modulo automorphic equivalence.  Given an embedding $\pi:N\hookrightarrow M^\u$, we let $[\pi]_A$ denote its class in $\HOM_A(N,M^\u)$.  
In this section, we show that $\HOM_A(N,M^\u)$ can be equipped with a complete metric analogous to the one for its counterpart $\HOM(N,M^\u)$.  


\subsection{The topometric structure on spaces of types}

First, for fixed separable II$_1$ factors $N$ and $M$ such that $N$ embeds into $M^\u$, we show that $\HOM_A(N,M^\u)$ is nothing more than an avatar of a familiar model-theoretic object, namely a space of types.  This will allow us to equip $\HOM_A(N,M^\u)$ with two topologies which interact in a very nice way.  

We first need to say a few words about types of countably infinite tuples.  Indeed, implicit in the definition of types in \S\S \ref{types} was that we were considering types of finite tuples.  However, the definition makes perfect sense for arbitrary tuples of variables as well.  Let $x=(x_1,x_2,\ldots)$ represent a countable sequence of variables and let $a=(a_1,a_2,\ldots,)$ denote a countable tuple from $M^\u$.  Given such a tuple $a$, we let $\tp(a):=\tp^{M^\u}(a/\emptyset)$ be just as in \S\S \ref{types}, now viewed as a function defined on all formulae $\varphi$ (without parameters) whose free variables are among those in $x$.  We let $S_\omega^M$ denote the set of such types.  As above, for $p\in S_\omega^M$, we say that $a\in M^\u$ \textbf{realizes} $p$ if $p=\tp(a)$.

We now describe two natural topologies on $S_\omega^M$.  First, to each formula $\varphi$ as above, let $K_\varphi$ denote a compact interval in $\mathbb R$ such that, for every tracial von Neumann algebra $N$, $\varphi^N$ takes values in $K_\varphi$.  Consequently, we may view $S_\omega^M$ as a subset of the compact spaces $\prod_\varphi K_\varphi$.  The topology on $S_\omega^M$ induced by this identification is referred to as the \textbf{logic topology} on $S_\omega^M$.  An alternative viewpoint on the logic topology is that it is the weakest topology making all maps $p\mapsto \varphi^p:S_\omega^M\to \mathbb R$ continuous (as $\varphi$ varies over all formulae). It is fairly straightforward to see that $S_\omega^M$ is a closed subset of $\prod_\varphi K_\varphi$, whence the logic topology on $S_\omega^M$ is compact.

There is another topology on $S_\omega^M$ induced from a  natural metric given by
$$d_{tp}(p,q):=\inf\{||a-b||_1 \ : \ a,b\in M^\u, p=\tp(a), q=\tp(b)\}$$ where 
$$||a-b||_1:=\sum_{j=1}^\infty 2^{-j}||a_j-b_j||_2$$ (if the reader will forgive the abuse in notation).  

The two topologies, while not necessarily the same, are connected in a way as formalized by the following definition:

\begin{defn}(Ben-Yaacov \cite{topometric})
A \textbf{topometric space} is a triple $(X,\mathcal T,d)$, where $(X,\mathcal T)$ is a topological space, $(X,d)$ is a metric space, and the following two axioms hold:
\begin{enumerate}
    \item The topology induced by the metric $d$ refines the topology $\mathcal T$.
    \item The metric $d:X\times X\to \mathbb R$ is $\mathcal T$-lower semicontinuous.
\end{enumerate}
We refer to $(X,\mathcal T)$ as the \textbf{topological reduct} of the topometric space and $(X,d)$ as the \textbf{metric reduct} of the topometric space.
\end{defn}

\begin{thm}(Ben Yaacov-Usvyatsov \cite[Fact 4.12]{BYU})
$S^M_\omega$, equipped with its logic topology and metric above, is a compact topometric space.\footnote{Technically the reference given is about type spaces of finite tuples, but the exact same proof works in the case of a countably infinite tuple.}
\end{thm}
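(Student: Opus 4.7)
The plan is to verify the three defining requirements of a compact topometric space: compactness in the logic topology, refinement of the logic topology by the metric, and lower semicontinuity of the metric with respect to the logic topology. Compactness is essentially spelled out in the excerpt: $\prod_\varphi K_\varphi$ is compact by Tychonoff, and $S^M_\omega$ sits inside it as a closed subset because any limit function respects the algebraic relations forced by the underlying logic and hence coincides with $\tp(a)$ for some tuple $a$ realized in a sufficiently saturated extension; separable saturation of $M^\u$ ensures such a realization can be found inside $M^\u$ itself.

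For the refinement assertion, suppose $d_{tp}(p_n, p) \to 0$. Using that $d_{tp}$ is an infimum, pick realizations $a^{(n)}, b^{(n)} \in M^\u$ with $\tp(a^{(n)}) = p_n$, $\tp(b^{(n)}) = p$, and $\|a^{(n)} - b^{(n)}\|_1 \to 0$; in particular each coordinate converges in $\|\cdot\|_2$. The standard fact that every formula $\varphi(x_1, \ldots, x_k)$ is uniformly continuous in each argument with respect to $\|\cdot\|_2$ on the operator unit ball (proved by induction on the construction of formulae) then yields $|\varphi(x)^{p_n} - \varphi(x)^p| = |\varphi(a^{(n)})^{M^\u} - \varphi(b^{(n)})^{M^\u}| \to 0$ for every $\varphi$, which is exactly logic-convergence $p_n \to p$.

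The substantive step is lower semicontinuity. The plan is to show $\{(p, q) : d_{tp}(p, q) \leq r\}$ is logic-closed for every $r \geq 0$. Given a net $(p_\alpha, q_\alpha) \to (p, q)$ in the product logic topology with $d_{tp}(p_\alpha, q_\alpha) \leq r$, fix $\epsilon > 0$ and pick realizations $a_\alpha \models p_\alpha$, $b_\alpha \models q_\alpha$ in $M^\u$ with $\|a_\alpha - b_\alpha\|_1 \leq r + \epsilon$. View the pair-types $\pi_\alpha := \tp(a_\alpha, b_\alpha)$ as living in the doubled compact type space $S^M_{2\omega}$ and pass to a subnet so that $\pi_\alpha \to \pi$; since the restriction of a formula to a sub-tuple is again a formula, the two $\omega$-marginals of $\pi$ are exactly $p$ and $q$. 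To transfer the distance bound, approximate $\|\cdot\|_1$ on unit-ball tuples by the genuine formulae $d_N(x, y) := \sum_{j=1}^N 2^{-j} \|x_j - y_j\|_2$, which satisfy $\|x - y\|_1 \leq d_N(x, y) + 2^{-N+1}$. Logic-convergence $\pi_\alpha \to \pi$ together with the bound $d_N(a_\alpha, b_\alpha)^{M^\u} \leq r + \epsilon$ forces $d_N(x, y)^\pi \leq r + \epsilon$ for each $N$; realizing $\pi$ by some $(a, b) \in M^\u$ via saturation then yields $\|a - b\|_1 \leq r + \epsilon$, whence $d_{tp}(p, q) \leq r + \epsilon$, and letting $\epsilon \to 0$ finishes. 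I expect the main obstacle to lie precisely in this final transfer: one must have the pair-type $\pi$ realized inside $M^\u$ itself (not merely in an elementary extension) to conclude about $d_{tp}(p, q)$ as defined in the paper, which is why the separable saturation of $M^\u$ applied to countable tuples is crucial.
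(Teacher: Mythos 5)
Your proof is correct and follows essentially the argument the paper is pointing to: the paper gives no proof of this statement beyond the citation to Ben Yaacov--Usvyatsov, and your three steps (compactness of $S_\omega^M$ as a closed subset of $\prod_\varphi K_\varphi$ via Tychonoff plus separable saturation, refinement via uniform continuity of each formula in its finitely many variables, and lower semicontinuity via pair-types in $S^M_{2\omega}$, the truncated formulae $d_N$, and realization of the limit pair-type inside $M^\u$ itself) are exactly the standard adaptation to countable tuples that the footnote alludes to. The one point you leave implicit is that $d_{tp}$ is a genuine metric at all --- the triangle inequality requires moving one realization of the middle type onto another by an automorphism of $M^\u$ (Fact \ref{sametype}), and definiteness follows from your uniform-continuity step --- but this is minor and consistent with the paper treating ``the metric above'' as already given.
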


It is important to note that when using ``topological'' adjectives when referring to a topometric space, these adjectives are being applied to the topological reduct.  Thus, in the above theorem, the compactness of the topometric space $S_\omega^M$ is referring to the compactness of the logic topology.

\subsection{Induced topometric structure and cardinality} We now turn to discuss how $\HOM_A(N,M^\u)$ inherits a topometric structure from $S_\omega^M$. As discussed in \S\S \ref{types}, we may alternatively view types as orbits in $M^\u$ under the action of $\operatorname{Aut}(M^\u)$.  We will need the following proposition:
\begin{prop}
Given a countably infinite tuple $a \in M^\u$, the orbit of $a$ under the action of $\text{Aut}(M^\u)$ is closed with respect to $||\cdot||_1$ on countable tuples in $M^\u$, and hence the metric $d_{tp}$ on $S_\omega^M$ is simply the distance between orbits.
\end{prop}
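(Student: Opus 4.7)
The plan is to prove closedness of orbits first, then derive the metric identification as an almost immediate consequence. The crucial prerequisite is the orbit-type correspondence for countable tuples: two countable tuples $a,b$ from $M^\u$ satisfy $\tp(a)=\tp(b)$ if and only if they lie in the same $\operatorname{Aut}(M^\u)$-orbit. For finite tuples this is Fact \ref{sametype} with $A=\emptyset$, and the countable case follows by the standard back-and-forth argument using CH and separable saturation of $M^\u$, since any countable tuple generates a separable subalgebra to which the homogeneity machinery directly applies.

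The heart of the argument is continuity of the type map $T\colon ((M^\u)^\omega,\|\cdot\|_1)\to (S^M_\omega,\text{logic})$. Fix a formula $\varphi$; since $\varphi$ involves only finitely many free variables $x_{i_1},\ldots,x_{i_k}$ and is built from $\ast$-polynomials composed with continuous real-valued functions, the evaluation $\varphi^{M^\u}$ is uniformly continuous in each coordinate on operator-norm-bounded sets with respect to $\|\cdot\|_2$. Hence if $a^{(n)}\to a$ in $\|\cdot\|_1$, then $a^{(n)}_{i_j}\to a_{i_j}$ in $\|\cdot\|_2$ for each $j$, so $\varphi^{M^\u}(a^{(n)})\to\varphi^{M^\u}(a)$. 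As the logic topology is the weakest making the maps $p\mapsto\varphi^p$ continuous, this shows $T(a^{(n)})\to T(a)$.

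For closedness of orbits, note that the logic topology on $S^M_\omega$ is Hausdorff (any two distinct types are separated by the real values of some formula), so singletons are closed. By the orbit-type correspondence, the orbit of $a$ equals $T^{-1}(\{\tp(a)\})$, which is closed in $\|\cdot\|_1$ as the preimage of a closed set under a continuous map. The second assertion then follows directly from the definition
$$d_{tp}(p,q):=\inf\{\|a-b\|_1 : a,b\in M^\u,\ p=\tp(a),\ q=\tp(b)\}$$
combined with the orbit-type correspondence: the sets $p(M^\u)$ and $q(M^\u)$ are precisely the (closed) $\operatorname{Aut}(M^\u)$-orbits of any of their elements, so $d_{tp}(p,q)$ is exactly the $\|\cdot\|_1$-distance between these two orbits in $(M^\u)^\omega$.

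The main potential obstacle is the extension of the orbit-type correspondence to countable tuples, since Fact \ref{sametype} is formulated for finite tuples over separable parameter sets; however, this extension is a routine iteration of the finite-tuple case through a back-and-forth construction of length $\omega$, producing a single automorphism in the limit by separable saturation. Everything else reduces to the continuity observation, which is a standard feature of the topometric structure on type spaces.
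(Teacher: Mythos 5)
Your proposal is correct and follows essentially the same route as the paper: the paper's proof likewise rests on the continuity of formula evaluation with respect to the $\|\cdot\|_2$-metric (showing directly that a $\|\cdot\|_1$-limit of tuples of a fixed type realizes that same type), together with the identification of orbits with realization sets of types. Your extra packaging (continuity of the type map plus Hausdorffness of the logic topology) and your explicit remark that Fact \ref{sametype} extends to countable tuples by a back-and-forth of length $\omega$ are both sound and merely make explicit what the paper relegates to surrounding discussion and a footnote.
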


\begin{proof}
If $a_\lambda\to a$ and all $a_\lambda$ have the same type, then so does $a$.  Indeed, given any formula $\varphi(x)$, setting $r$ to be the common values of $\varphi^{M^\u}(a_\lambda)$, then since $\varphi^{M^\u}$ is a continuous function with respect to $||\cdot||_2$, we have that \[\varphi^{M^\u}(a)=\lim_\lambda \varphi^{M^\u}(a_\lambda)=r.\qedhere\]
\end{proof}

Now suppose that $a$ is a countable sequence from the unit ball of $N$ that generates $N$.  Suppose also that $\pi_1,\pi_2:N\hookrightarrow M^\u$ are given.  Then, by Fact \ref{sametype}\footnote{Again, the fact being referred to is about type spaces of finite tuples, but the proof works also for type spaces of countably infinite tuples.}, $[\pi_1]_A=[\pi_2]_A$ if and only if $\tp(\pi_1(a))=\tp(\pi_2(a))$.  It follows that we have a well-defined injection $\Phi_a:\HOM_A(N,M^\u)\hookrightarrow S_\omega^M$.  Moreover, the image of this injection is precisely those types which extend the \textbf{quantifier-free type} of $a$, denoted $\operatorname{qftp}(a)$, which is the restriction of $\tp(a)$ to the quantifier-free formulae.  It is easy to see that the set of such types is closed in the logic topology.  Thus, we have a bijection between $\HOM_A(N,M^\u)$ and a closed subset of $S_\omega^M$, allowing us to equip $\HOM_A(N,M^\u)$ with the structure of a compact topometric space.  Moreover, this topometric structure is canonical:

\begin{prop}
Suppose that $a$ and $b$ are both generators of $N$.  Then the map $$tp(\pi(a))\mapsto tp(\pi(b)):\Phi_a(\HOM_A(N,M^\u))\to \Phi_b(\HOM_A(N,M^\u))$$ is an isomorphism of topometric spaces, that is, the map is a homeomorphism between the topological reducts and a uniform homeomorphism between the metric reducts.
\end{prop}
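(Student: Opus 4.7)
The plan is to exploit that, since both $a$ and $b$ generate $N$, each entry of $b$ is a $\|\cdot\|_2$-limit of $\ast$-polynomials in $a$ (and symmetrically); the map in question is then essentially a reparametrization of generators. First I would verify well-definedness and bijectivity. If $\tp(\pi_1(a)) = \tp(\pi_2(a))$, Fact \ref{sametype} produces $\alpha \in \Aut(M^\u)$ with $\alpha(\pi_1(a)) = \pi_2(a)$; since $\alpha$ is $\|\cdot\|_2$-continuous and $\pi_i(b_j)$ is the $\|\cdot\|_2$-limit of the same sequence of $\ast$-polynomials evaluated at $\pi_i(a)$ (both $\pi_i$ being trace-preserving $\ast$-homomorphisms), I conclude $\alpha(\pi_1(b)) = \pi_2(b)$, hence $\tp(\pi_1(b)) = \tp(\pi_2(b))$. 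Symmetry gives a two-sided inverse and hence bijectivity.

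For the topological reducts, given any formula $\varphi(y_1,\ldots,y_k)$, I would show that $\varphi(\pi(b))^{M^\u}$ is uniformly approximated in $\pi$ by $\psi_n(\pi(a))^{M^\u}$, where $\psi_n(x) := \varphi(q_{n,1}(x), \ldots, q_{n,k}(x))$ is itself a formula in the $x$-variables obtained by plugging $\ast$-polynomial approximants $q_{n,j}$ of $b_j$ into $\varphi$. The uniformity in $\pi$ comes from the fact that embeddings are $\|\cdot\|_2$-isometric, so $\|\pi(b_j) - q_{n,j}(\pi(a))\|_2 = \|b_j - q_{n,j}(a)\|_2 \to 0$ independently of $\pi$; uniform continuity of $\varphi$ on operator-norm-bounded sets then converts this into uniform convergence of formula evaluations. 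Since $p \mapsto \psi_n^p$ is continuous on $S^M_\omega$ in the logic topology by definition, the map $\tp(\pi(a)) \mapsto \varphi^{\tp(\pi(b))}$ is a uniform limit of continuous functions and is therefore continuous. Symmetry (or compactness and Hausdorffness of the image) upgrades this to a homeomorphism.

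For the metric reducts, I would make the approximation above quantitative using the fact that each $\ast$-polynomial $q_j$ depending on $a_1, \ldots, a_{N_j}$ is Lipschitz on the operator-norm unit ball in $\|\cdot\|_2$ with some constant $L_j$. Given $\epsilon > 0$, I would choose $J$ large enough that the $\|\cdot\|_1$-tail contribution $\sum_{j > J} 2^{-j} \cdot 2$ is at most $\epsilon/2$, approximate $b_j$ for $j \leq J$ by a $\ast$-polynomial $q_j$ to within $\epsilon/(4J)$ in $\|\cdot\|_2$, and then for any representatives $c_1, c_2 \in M^\u$ nearly realizing $d_{tp}(\tp(\pi_1(a)), \tp(\pi_2(a)))$ extend $a \mapsto c_i$ to embeddings $\pi_i' : N \hookrightarrow M^\u$ (valid because $c_i$ and $\pi_i(a)$ share a quantifier-free type and $a$ generates $N$). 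The triangle inequality then yields $\|\pi_1'(b_j) - \pi_2'(b_j)\|_2 \leq \epsilon/(2J) + L_j 2^{N_j} \cdot \|c_1 - c_2\|_1$ for $j \leq J$, and summing against $2^{-j}$ produces $d_{tp}(\tp(\pi_1(b)), \tp(\pi_2(b))) \leq \epsilon + K \cdot d_{tp}(\tp(\pi_1(a)), \tp(\pi_2(a)))$ for a constant $K$ depending only on the chosen polynomials. The main subtlety is bookkeeping: the weights $2^{-j}$ in $\|\cdot\|_1$ only control $\|(c_1)_i - (c_2)_i\|_2$ up to a factor of $2^i$, but because each $q_j$ involves only $a_1, \ldots, a_{N_j}$, the blow-up is absorbed into the finite constant $K$; shrinking $d_{tp}(\tp(\pi_1(a)), \tp(\pi_2(a)))$ in response then closes the estimate, and symmetry handles the inverse.
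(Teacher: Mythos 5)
Your proposal is correct and follows essentially the same route as the paper's proof: approximate each entry of $b$ by $*$-polynomials in $a$, use that embeddings are $\|\cdot\|_2$-isometric to make the approximation uniform in $\pi$, and then combine uniform continuity of formulae (for the logic topology) with a three-term triangle inequality on near-optimal realizations (for the metric), finishing by symmetry. You supply more bookkeeping than the paper does --- well-definedness via Fact \ref{sametype}, the choice of representatives in the infimum defining $d_{tp}$, and the $2^{N_j}$ weight correction --- but these are elaborations of the same argument rather than a different one.
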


\begin{proof}
We first show that the map is continuous in the logic topology.  To see this, suppose that $[\pi_\lambda(a)]_A\to [\pi(a)]_A$ in the logic topology and fix $\varphi(x)$ and $\epsilon>0$.  Take $\delta>0$ sufficiently small and let $p(x)$ be a $*$-polynomial such that $d(p(a),b)<\delta$.  Then $$|\varphi(\pi_\lambda(b))-\varphi(p(\pi_\lambda(a)))|,|\varphi(\pi(b))-\varphi(p(\pi(a)))|<\frac{\epsilon}{3}$$ if $\delta$ is sufficiently small.  For $\lambda\geq \lambda_0$, we have $|\varphi(p(\pi_\lambda(a)))-\varphi(p(\pi(a)))|< \frac{\epsilon}{3}$.  Consequently, for $\lambda\geq \lambda_0$, we have $|\varphi(\pi_\lambda(b))-\varphi(\pi(b))|<\epsilon$.  Since $\varphi$ and $\epsilon$ were arbitrary, we have that $[\pi_\lambda(b)]_A\to [\pi(b)]_A$, as desired.

We next show that the map is uniformly continuous.  Towards that end, fix $\epsilon>0$ and let $p(x)$ be a $*$-polynomial such that $d(p(a),b)<\frac{\epsilon}{3}$.  Suppose that $d(\pi_1(a),\pi_2(a))<\delta$.  For $\delta$ sufficiently small, it follows that $d(\pi_1(p(a)),\pi_2(p(a)))<\frac{\epsilon}{3}$, whence
$$d(\pi_1(b),\pi_2(b))\leq d(\pi_1(b),p(\pi_1(a)))+d(p(\pi_1(a)),p(\pi_2(a))+d(p(\pi_2(a)),\pi_2(b))<\epsilon.$$
It follows that the map is uniformly continuous.

By symmetry, we conclude that the map is both a homeomorphism between the topological reducts and a uniform homeomorphism between the metric reducts.
\end{proof}

The previous proposition allows us to equip $\HOM_A(N,M^\u)$ with an intrinsic structure of a compact topometric space.  We note that the metric on a compact topometric space is automatically complete (see \cite[Proposition 1.11]{topometric}), whence the metric on $\HOM_A(N,M^\u)$ inherited from $S_\omega^M$ is complete.\footnote{This is not hard to prove directly.}

In the unitary setting, $\HOM(N,M^\u)$ possesses the following metric\footnote{Brown in fact uses an $\ell^2$ version of the metric we present here; be assured that this choice is not important}: if $a = \left\{a_1,a_2,\dots\right\}\subset N$ is a sequence of contractions generating $N$, define

\begin{align*}
    d([\pi],[\rho]) &:= \inf_{u \in \u(M^\u)}\sum_{j=1}^\infty 2^{-j}||\pi(a_j) - u^*\rho(a_j)u||_2\\
    & = \inf_{u \in \u(M^\u)} ||\pi(a) - u^*\rho(a)u||_1.
\end{align*}

If one just naively adapts the metric from $\HOM(N,M^\u)$ to $\HOM_A(N,M^\u)$, one arrives at the following definition:

\begin{defn}
If $\left\{a_1,a_2,\dots\right\}\subset N$ is a sequence of contractions generating $N$, define

\[d_A([\pi],[\rho]) := \inf_{\alpha \in \text{Aut}(M^\u)}||\pi(a) - \alpha(\rho(a))||_1.\]
\end{defn}

\emph{A priori} the above metric induces the topology of \tql point-$||\cdot||_2$ convergence along representatives\tqr\ for weak approximate automorphic equivalence:

\begin{defn}
Two embeddings $\pi,\rho: N \rightarrow M$ are \textbf{weakly approximately automorphically equivalent} if there exists a sequence\footnote{Use a net if $N$ is not separable.} of automorphisms $\alpha_n \in \text{Aut}(M)$ such that, for every $\ds x \in N, \lim_{n\rightarrow \infty} ||\pi(x) - \alpha_n(\rho(x))||_2 = 0$.
\end{defn}

\begin{example}In general, weak approximate automorphic equivalence is strictly weaker than automorphic equivalence.  For example, let $\sigma: R\otimes R \rightarrow R$ be an isomorphism and consider the embeddings $\pi, \rho: \R\rightarrow \R$ given by $\pi(x) = x$ and $\rho(x) = \sigma(x\otimes 1)$ for every $x \in \R$.  It is well-known that any two embeddings of $\R$ into a II$_1$ factor are weakly approximately unitarily equivalent\footnote{This is weak approximate automorphic equivalence when all the automorphisms involved are inner.}. Thus $\pi$ and $\rho$ are weakly approximately automorphically equivalent, but $\pi$ and $\rho$ are not automorphically equivalent because the relative commutant of $\pi$ is $\mb{C}$ and the relative commutant of $\rho$ is $\sigma(\mb{C}\otimes \R)$.   
\end{example}

In contrast with the above example, we have the following result:

\begin{prop}
Let $N$ be a separable II$_1$ factor and let $M$ be a II$_1$ factor.  Then two embeddings $\pi,\rho: N \hookrightarrow M^\u$ are weakly approximately automorphically equivalent if and only if they are automorphically equivalent.
\end{prop}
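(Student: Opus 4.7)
The reverse direction is immediate, taking the constant sequence $\alpha_n = \alpha$, so the work lies in the forward direction. The plan is to convert weak approximate automorphic equivalence into an equality of types, and then invoke the separable homogeneity of $M^\u$ (Fact \ref{ultrahomog}) to upgrade this to a genuine automorphism.

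Fix a countable sequence $a = (a_1, a_2, \ldots)$ in the unit ball of $N$ that generates $N$, and let $(\alpha_n) \subseteq \Aut(M^\u)$ be a sequence witnessing weak approximate automorphic equivalence, so that $\|\pi(a_i) - \alpha_n(\rho(a_i))\|_2 \to 0$ for every $i$. The first claim is that $\tp(\pi(a)) = \tp(\rho(a))$ as types of a countable tuple. Fix any formula $\varphi(x_1, \ldots, x_k)$. Since $\alpha_n$ is an automorphism of $M^\u$, we have
$$\varphi(\alpha_n(\rho(a_1)), \ldots, \alpha_n(\rho(a_k)))^{M^\u} = \varphi(\rho(a_1), \ldots, \rho(a_k))^{M^\u}$$
for every $n$. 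On the other hand, formulas are $\|\cdot\|_2$-continuous on any operator-norm bounded ball (as they are built from $*$-polynomials, the trace, continuous connectives, and $\sup$/$\inf$ quantifiers over bounded balls), so
$$\varphi(\alpha_n(\rho(a_1)), \ldots, \alpha_n(\rho(a_k)))^{M^\u} \longrightarrow \varphi(\pi(a_1), \ldots, \pi(a_k))^{M^\u}.$$
Combining these gives $\varphi(\pi(a))^{M^\u} = \varphi(\rho(a))^{M^\u}$, and since $\varphi$ was arbitrary we conclude $\tp(\pi(a)) = \tp(\rho(a))$.

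Now the map $\pi(a_i) \mapsto \rho(a_i)$ extends by continuous $*$-polynomial evaluation to a well-defined surjective partial elementary map $j: \pi(N) \to \rho(N)$ between separable subalgebras of $M^\u$. By Fact \ref{ultrahomog}, there is an automorphism $\alpha$ of $M^\u$ extending $j$, and hence $\rho = \alpha \circ \pi$, proving that $\pi$ and $\rho$ are automorphically equivalent.

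The only point requiring real care is the $\|\cdot\|_2$-continuity of an arbitrary formula on operator-norm bounded balls, which must be invoked uniformly across the quantifier structure; this is standard in continuous model theory and is exactly what makes the passage from approximate equivalence at the level of single elements to equality of types go through. Everything else amounts to repackaging the data as a partial elementary map and appealing to the CH-based homogeneity of $M^\u$.
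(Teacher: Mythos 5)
Your proof is correct and is essentially the paper's argument: the paper notes that $\pi(a)$ lies in the closure of $p(M^\u)$ for $p=\tp(\rho(a))$ and invokes the preceding proposition that this orbit is $\|\cdot\|_1$-closed (whose proof is exactly your $\|\cdot\|_2$-continuity-of-formulae computation), then concludes via homogeneity. You have simply unpacked the closedness step and the appeal to Fact \ref{ultrahomog} explicitly, which could be shortened by citing Fact \ref{sametype} directly.
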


\begin{proof}
There is only one nontrivial direction. Assume that $\pi$ and $\rho$ are weakly approximately automorphically equivalent.  Let $a$ denote an infinite tuple of generating contractions for $N$. Let $p=\tp(\rho(a))$.  Our hypothesis implies that $\pi(a)$ is in the closure of $p(M^\u)$, but since $p(M^\u)$ is closed, we have $\pi(a) \in p(M^\u)$, that is, $\pi$ and $\rho$ are automorphically equivalent.
\end{proof}

Note that this is the automorphic equivalence analog of Theorem 3.1 in \cite{autoultra}.

Clearly, for a fixed generator $a$ for $N$,  $d_{tp}\circ \Phi_a$ and $d_A$ agree on $\HOM_A(N,M^\u)$.

The following lemma is obvious:

\begin{lem}
The map $[\pi]\mapsto [\pi]_A:\HOM(N,M^\u)\to \HOM_A(N,M^\u)$ is a contraction.
\end{lem}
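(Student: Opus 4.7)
The plan is essentially a tautology: every inner automorphism of $M^\u$ is an automorphism, so the infimum defining $d_A$ is taken over a strictly larger family than the infimum defining $d$. First I would verify that the map is well-defined. If $[\pi] = [\rho]$ in $\HOM(N,M^\u)$, there is a unitary $u \in M^\u$ with $\pi(x) = u^*\rho(x)u$ for every $x \in N$. The map $\text{Ad}(u^*)\colon y\mapsto u^*yu$ is an inner, hence \emph{bona fide}, automorphism of $M^\u$, and the identity $\pi = \text{Ad}(u^*)\circ \rho$ then witnesses $[\pi]_A = [\rho]_A$, so the assignment $[\pi]\mapsto [\pi]_A$ descends to a well-defined function between the quotient spaces.

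For the contraction estimate I would fix a common sequence $a = (a_1, a_2, \ldots)$ of contractions generating $N$ and express both metrics with respect to this sequence. For every unitary $u\in \u(M^\u)$ we have $u^*\rho(a)u = \text{Ad}(u^*)(\rho(a))$, so the set $\{u^*\rho(a)u : u\in \u(M^\u)\}$ is a subset of $\{\alpha(\rho(a)) : \alpha\in \text{Aut}(M^\u)\}$. Taking the infimum of $\|\pi(a)-\cdot\|_1$ over these two nested sets gives
\[
d_A([\pi]_A,[\rho]_A) = \inf_{\alpha\in \text{Aut}(M^\u)} \|\pi(a) - \alpha(\rho(a))\|_1 \leq \inf_{u\in \u(M^\u)} \|\pi(a) - u^*\rho(a)u\|_1 = d([\pi],[\rho]),
\]
which is the desired contraction inequality.

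I do not anticipate any real obstacle here; the only minor subtlety is that both metrics are defined in terms of a choice of generating sequence, and one must use the \emph{same} sequence on both sides for the literal comparison to go through. Independence of that choice for $d_A$ is provided by the preceding proposition on the topometric structure of $\HOM_A(N,M^\u)$, and for $d$ it is built into the construction of $\HOM(N,M^\u)$.
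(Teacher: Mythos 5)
Your proof is correct and is exactly the argument the paper has in mind: the paper states this lemma without proof (calling it obvious), and the intended justification is precisely your observation that inner automorphisms form a subset of all automorphisms, so the infimum defining $d_A$ is over a larger set and hence no bigger. Your added care about well-definedness and about using the same generating sequence on both sides is sound and does not change the substance.
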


\begin{cor}
If $M$ is McDuff then $\HOM_A(N,M^\u)$ is pathconnected.  Consequently, when $(N,M)$ is not a Jung pair, then $|\HOM_A(N,M^\u)|\geq 2^{\aleph_0}$.
\end{cor}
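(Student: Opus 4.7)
The plan is to exploit the McDuff convex-combination construction from \S\S\ref{convex} to produce continuous paths in $\HOM_A(N,M^\u)$, and then to upgrade pathconnectedness to the cardinality bound using the compact Hausdorff structure on $\HOM_A(N,M^\u)$ inherited from $S^M_\omega$.

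For the pathconnectedness claim, I would fix an implementing isomorphism $\sigma : M\otimes\R\to M$ with $\sigma(\cdot\otimes 1_\R)$ weakly approximately unitarily equivalent to $\mathrm{id}_M$, and two embeddings $\pi,\rho : N\hookrightarrow M^\u$. Using Fact~\ref{sametrace} inside the II$_1$ factor $\R^\u$, I would pick an increasing family $(p_t)_{t\in[0,1]}\subset \R^\u$ of projections with $\tau(p_t)=t$ and $p_s\le p_t$ for $s\le t$, and set
\[
\pi_t(x) := \sigma\bigl(\pi(x)\otimes p_t\bigr) + \sigma\bigl(\rho(x)\otimes p_t^\perp\bigr).
\]
For $s\le t$ one has $\pi_t(x)-\pi_s(x)=\sigma\bigl((\pi(x)-\rho(x))\otimes(p_t-p_s)\bigr)$, giving
\[
\|\pi_t(x)-\pi_s(x)\|_2 \le \|\pi(x)-\rho(x)\|_2\cdot\sqrt{t-s}
\]
by trace-preservation of $\sigma$ and the multiplicativity of $\|\cdot\|_2$ across tensor products. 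Hence $t\mapsto[\pi_t]$ is continuous in $\HOM(N,M^\u)$, and the preceding lemma gives a continuous map $t\mapsto[\pi_t]_A$ in $\HOM_A(N,M^\u)$. To identify the endpoints I would show $\pi_1\sim_u \pi$ and $\pi_0\sim_u \rho$ in $M^\u$: the sentence ``for every finite tuple $y_1,\ldots,y_n$ and every $\epsilon>0$ there is a unitary $u$ with $\max_i\|u^*\sigma(y_i\otimes 1)u - y_i\|_2<\epsilon$'' holds in $M$, because a single sequence of unitaries witnesses the weak approximate unitary equivalence for all elements of a finite set simultaneously. By \L os' theorem the same sentence holds in $M^\u$, and separable saturation applied to a countable generating sequence of $N$ then produces a single unitary $u\in M^\u$ with $u^*\sigma(\pi(x)\otimes 1_{\R^\u})u = \pi(x)$ for every $x\in N$ (and analogously for $\rho$).

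For the cardinality statement, the hypothesis is taken to supply two distinct classes $[\pi_1]_A\ne[\pi_2]_A$ in $\HOM_A(N,M^\u)$, whence pathconnectedness yields a continuous $\gamma : [0,1]\to\HOM_A(N,M^\u)$ with $\gamma(0)\ne\gamma(1)$. Since $\HOM_A(N,M^\u)$ is compact Hausdorff in the logic topology---being a closed subspace of $S^M_\omega$---the image $\gamma([0,1])$ is a non-degenerate continuum. Urysohn's lemma gives a continuous map $f : \gamma([0,1])\to[0,1]$ sending two distinct prescribed points to $0$ and $1$; its image is a compact connected subset of $[0,1]$ containing both endpoints, hence equal to $[0,1]$. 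Therefore $|\HOM_A(N,M^\u)|\ge|\gamma([0,1])|\ge|[0,1]|=2^{\aleph_0}$.

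The main obstacle I anticipate is the endpoint identification in the pathconnectedness step: transferring pointwise weak approximate unitary equivalence inside $M$ to exact unitary equivalence of embeddings of the separable algebra $N$ into the saturated object $M^\u$ requires first strengthening the equivalence to a finite-tuple statement, pushing it through \L os' theorem, and then invoking separable saturation of $M^\u$. The continuity estimate and the Urysohn/continuum-cardinality argument are routine once this step is in hand.
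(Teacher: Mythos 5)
Your pathconnectedness argument is correct and is essentially the route the paper intends: your path $t\mapsto[\pi_t]_A$ is precisely the convex segment $t[\pi]+(1-t)[\rho]$ in the convex-like structure on $\HOM(N,M^\u)$ from \cite{brocap,saa}, pushed down through the contraction $[\pi]\mapsto[\pi]_A$ of the lemma immediately preceding the corollary; your $\sqrt{t-s}$ continuity estimate is right, and your endpoint identification is a direct verification of the identity axiom ($1[\pi]+0[\rho]=[\pi]$) of that convex structure. One presentational caveat: the ``sentence'' you feed to \L os' theorem mentions the isomorphism $\sigma$, which is not a symbol in the language of tracial von Neumann algebras, so \L os does not literally apply as stated. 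The conclusion is nevertheless correct: work with representatives, choose for each $k$ a unitary from the weakly approximating sequence in $M$ that works for the $k$-th coordinates of a finite tuple up to $1/k$, and then upgrade from finite tuples to all of $\pi(N)$ by separable saturation, exactly as you indicate.

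The cardinality step, however, has a genuine gap, and it sits exactly where the content of the second assertion lies. You write that ``the hypothesis is taken to supply two distinct classes $[\pi_1]_A\neq[\pi_2]_A$.'' It does not: the hypothesis that $(N,M)$ is not a Jung pair supplies two embeddings that are not \emph{unitarily} equivalent, i.e., two distinct points of $\HOM(N,M^\u)$, and the quotient to $\HOM_A(N,M^\u)$ could a priori identify them. This happens precisely when $(N,M)$ is a generalized Jung pair without being a Jung pair, a situation the authors cannot exclude: indeed, if your reading were justified, then combining the corollary with Jung's theorem (Theorem \ref{jung}) would answer Question \ref{bigquestion} affirmatively for embeddable $N$, which the paper explicitly presents as its main open question about generalized Jung pairs. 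So either the hypothesis must be read as ``$(N,M)$ is not a \emph{generalized} Jung pair'' (under which your argument closes, and the Urysohn detour can be replaced by the observation that $x\mapsto d_A(x,[\pi_1]_A)$ is continuous with connected image in $[0,\infty)$ containing $0$ and a positive value), or an additional argument is needed to pass from two unitary classes to two automorphic classes. The paper offers no proof of this corollary, so I cannot point you to the authors' justification of that passage; but as written, your proof simply assumes it.
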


In \cite[Theorem 4.7]{topdyn}, it was shown that if $\HOM(N,\R^\u)$ is either compact or separable (with respect to its usual metric topology), then $N\cong \R$.  While we do not know this yet in our context (though we suspect it to be true), we would like to point out two potentially relevant facts:

\begin{enumerate}
    \item A compact topometric spaces is such that its metric reduct is compact if and only if the two topologies coincide.
    \item If $X$ is a second countable, locally compact topometric space, then the metric reduct has density character $\leq \aleph_0$ or $2^{\aleph_0}$.  (See \cite[Proposition 3.20]{topometric}.)
\end{enumerate}

In regards to the second item, we note that $S_\omega^M$ is second-countable as one can restrict to a countable, ``dense'' set of formulae in the definition of the logic topology.

While it is unknown if $\HOM_A(N,M^\u)$ can ever be separable (besides being a point), we note that $S_\omega^M$ itself is never separable:

\begin{prop}
For any II$_1$ factor $M$, $S_\omega^M$ is non-separable.
\end{prop}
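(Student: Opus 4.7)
The plan is to exhibit a family $\{p_S : S \subseteq \mathbb{N}\}$ of types in $S_\omega^M$, of cardinality $2^{\aleph_0}$, that is pairwise $\tfrac{1}{2}$-separated under $d_{tp}$. The construction is a concrete witness of the independence property for the theory of II$_1$ factors, using the commutator formula $\phi(x,y) := \|[x,y]\|_2$, which is uniformly continuous in $x$ on the unit ball with Lipschitz constant $2$.

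Since $\R$ embeds into any II$_1$ factor $M$, I will fix a copy of $\R \cong \bigotimes_{n \in \mathbb{N}} \mathbb{M}_2$ inside $M \subseteq M^\u$, and in the $n$-th tensor factor take $b_n$ to be the Pauli $X$ matrix and $a_n$ to be the Pauli $Z$ matrix, so that $\|[a_n, b_n]\|_2 = 2$ while $[a_n, b_m] = 0$ for $m \neq n$. For each $S \subseteq \mathbb{N}$, define $x_S \in M^\u$ as the class of the sequence of partial products $\left(\prod_{n \leq k,\, n \notin S} a_n\right)_k$, which is a bounded sequence of unitaries. A direct tensor-factor computation, using cyclicity of the trace and the tensor-product structure, then yields $\phi(x_S, b_m) = 0$ for $m \in S$ and $\phi(x_S, b_m) = 2$ for $m \notin S$.

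Set $p_S := \tp(x_S, b_1, b_2, \ldots) \in S_\omega^M$, and take $S \neq T$ with some $m \in S \setminus T$. Any realization of $p_T$ in $M^\u$ is an $\omega$-tuple $(y, c_1, c_2, \ldots)$ whose tail $(c_n)$ has the same type as $(b_n)$. By separable homogeneity of $M^\u$ (Fact \ref{ultrahomog}, where CH is used), there is an automorphism of $M^\u$ sending $(c_n)$ to $(b_n)$; applying it produces a realization of $p_T$ of the form $(x_T', b_1, b_2, \ldots)$ with $\tp(x_T'/(b_n)_n) = \tp(x_T/(b_n)_n)$, and in particular $\phi(x_T', b_m) = 2$. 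The Lipschitz estimate then forces $|\phi(x_S, b_m) - \phi(x_T', b_m)| = 2 \leq 2\|x_S - x_T'\|_2$, i.e. $\|x_S - x_T'\|_2 \geq 1$, and hence $d_{tp}(p_S, p_T) \geq 2^{-1} \cdot 1 = \tfrac{1}{2}$. Since there are $2^{\aleph_0}$ subsets of $\mathbb{N}$, this gives the required uncountable $\tfrac{1}{2}$-separated subset.

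The main obstacle is the homogeneity step: one must be confident that the partial map sending a realization's tail to $(b_n)$ extends to an automorphism of $M^\u$, which is exactly Fact \ref{ultrahomog} applied to the separable subalgebras generated by these countable tuples. The remaining ingredients---the explicit IP witness via Pauli matrices in $\R$, the tensor-factor calculation of $\phi(x_S, b_m)$, and the Lipschitz estimate for $\phi$---are concrete and essentially mechanical, so the argument reduces entirely to a careful application of the model-theoretic machinery already developed earlier in the paper.
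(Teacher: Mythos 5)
Your construction of the types $p_S$ and the computation $\phi(x_S,b_m)\in\{0,2\}$ are fine, but the key quantitative claim $d_{tp}(p_S,p_T)\geq\tfrac12$ is not justified, and in fact it is false. The metric $d_{tp}$ is an infimum over \emph{all} pairs of realizations; by automorphism-invariance you may fix the realization of $p_S$ to be $(x_S,b_1,b_2,\dots)$, but you may not then restrict the realizations of $p_T$ to those whose tail is exactly $(b_n)$ --- moving an arbitrary realization $(y,c_1,c_2,\dots)$ of $p_T$ by the automorphism sending $(c_n)$ to $(b_n)$ changes its distance to the fixed realization of $p_S$. The honest estimate your Lipschitz bound gives is $2\leq 2\|x_S-y\|_2+2\|b_m-c_m\|_2$, and since the coordinate $b_m$ carries weight $2^{-(m+1)}$ in $\|\cdot\|_1$, this only yields $d_{tp}(p_S,p_T)\geq 2^{-(m+1)}$ for $m\in S\triangle T$. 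This decaying bound cannot produce an uncountable uniformly separated family: if every pair in a family $\mathcal F$ had $\min(S\triangle T)\leq m_0$, then $S\mapsto S\cap[1,m_0]$ would be injective on $\mathcal F$. Worse, the separation genuinely degenerates: take $S=\emptyset$, $T=\{m\}$. The tuples $(x_S,(b_j)_{j\neq m})$ and $(x_T,(b_j)_{j\neq m})$ satisfy identical trace relations (both $x_S,x_T$ are trace-zero self-adjoint unitaries anticommuting with each listed $b_j$, and all mixed moments vanish), so they generate isomorphic hyperfinite subalgebras via the obvious map; since any two embeddings of a separable hyperfinite algebra into $M^\u$ are unitarily conjugate, there is a unitary $u\in M^\u$ with $ux_Tu^*=x_S$ and $ub_ju^*=b_j$ for $j\neq m$. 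The realization $(x_S,b_1,\dots,b_{m-1},ub_mu^*,b_{m+1},\dots)$ of $p_T$ then shows $d_{tp}(p_S,p_T)\leq 2^{-(m+1)}\|b_m-ub_mu^*\|_2\leq 2^{-m}$. The root of the problem is that your separation lives entirely in the tail coordinates, which are geometrically discounted by $\|\cdot\|_1$; your argument does prove non-separability of the space of $1$-types \emph{over the parameter set} $\{b_j\}$, but that is a different space from $S_\omega^M$ (types over $\emptyset$).

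For comparison, the paper avoids any direct metric computation: it argues by contradiction that if $S_\omega^M$ were metrically separable, then $\operatorname{Th}(M)$ would admit a separable approximately $\omega$-saturated model $\tilde M$, into which every separable model of $\operatorname{Th}(M)$ --- and hence every separable embeddable factor --- would embed; this contradicts the existence (due to Nicoara--Popa--Sasyk) of continuum many separable embeddable factors of which only countably many embed into any fixed separable factor. If you want to salvage a direct argument, you would need an encoding in which the witnessing formula distinguishes the types in a coordinate of non-decaying weight, which the present choice of tuple does not provide.
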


\begin{proof}
Suppose, towards a contradiction, that $S_\omega^M$ is separable.  By \cite[Proposition 1.16]{dfinite}, $\operatorname{Th}(M)$ would have a separable approximately $\omega$-saturated model $\tilde{M}$ and thus $\tilde{M}$ would embed all separable models of $\operatorname{Th}(M)$.  Recall now that there is a family $(N_\alpha)_{\alpha\in 2^\omega}$ of separable embeddable factors with the property that only countably many of them embed into any given separable factor.  (See \cite{NPS}.)  Since each $N_\alpha$ is embeddable, they also embed into $\tilde{M}^\u$, and thus into some separable model of $\operatorname{Th}(M)$, which in turn embeds into $\tilde{M}$.  In other words, each $N_\alpha$ embeds into $\tilde{M}$, which is a contradiction. 
\end{proof}

\section{Generalized Jung pairs of II$_1$ factors}\label{genpairs} 

In this section, we gather some collected observations and questions regarding generalized Jung pairs. 

\subsection{The ultimate generalization of Jung's theorem?}

We believe that the following is the main open question about generalized Jung pairs:

\begin{question}\label{bigquestion}
If $(N,\R)$ is a generalized Jung pair, is $N\cong \R$?
\end{question}

A positive answer to the previous question would be the ultimate generalization of Jung's original theorem.  By Lemma \ref{Jungee} and Theorem \ref{yay}, to give an affirmative answer to the above question, it would be enough to show that if $(N,\R)$ is a generalized Jung pair, then $N\equiv \R$.

One can view Theorem \ref{ucpconj} above as a partial solution to Question \ref{bigquestion}.  Indeed, that result says that if any two embeddings $N\hookrightarrow \R^\u$ are equivalent by an automorphism that has a ucp lift, then $N\cong \R$.

Note also that if Popa's question from the introduction has a positive solution, then by Corollary \ref{popagenJungR}, we have that $(N,\R)$ is a generalized Jung pair if and only if $N\cong \R$.  


Here is an even more basic question:

\begin{question}
If $(N,\R)$ is a generalized Jung pair, does $N$ have property \linebreak Gamma?
\end{question}

\subsection{Other collected musings on generalized Jung pairs}

We can, however, give plenty of examples of pairs that are not generalized Jung pairs.  Before doing so, we remind the reader of the following definition:

\begin{defn}\label{wspgap}
If $N$ is a subfactor of $M$, we say that $N$ has \textbf{w-spectral gap} in $M$ if $N'\cap M^\u=(N'\cap M)^\u.$
\end{defn}

For example, any property (T) II$_1$ factor has w-spectral gap in any II$_1$ factor extension.

We next point out the following recent theorem of the second-named author:

\begin{thm}\label{isaac2}(\cite[Corollary 2.9]{goldpopa})
If $M$ is an e.c. factor and $N$ is a w-spectral gap subfactor, then $N'\cap M^\u$ is a factor.
\end{thm}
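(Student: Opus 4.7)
The strategy is to reduce the claim to showing that $N' \cap M$ is itself a factor, and then derive a contradiction from the existential closedness of $M$. By the w-spectral gap hypothesis, $N' \cap M^\u = (N' \cap M)^\u$; moreover, for any tracial von Neumann algebra $A$, $A$ is a factor if and only if $A^\u$ is a factor (one direction uses the inclusion $Z(A)^\u \subseteq Z(A^\u)$, and the other is standard and already noted in the paper). It therefore suffices to prove that $N' \cap M$ is a factor.

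Suppose for contradiction that there is a projection $p \in Z(N' \cap M)$ with $p \neq 0, 1$. Note that $p \notin N$, since otherwise $p \in N \cap (N' \cap M) = Z(N) = \mathbb{C}$. I would then build a separable tracial extension $M \subseteq \tilde{M}$ containing a unitary $u \in N' \cap \tilde{M}$ with $upu^* \neq p$. The natural candidate is the amalgamated free product $\tilde{M} := M \ast_N (N \otimes L\mathbb{Z})$ (with respect to its canonical trace-preserving conditional expectations), together with the unitary $u := 1 \otimes u_0$, where $u_0$ is a Haar unitary generator of $L\mathbb{Z}$. Clearly $u$ commutes with $N \otimes 1 = N$, so $u \in N' \cap \tilde{M}$. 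Since $M$ and $N \otimes L\mathbb{Z}$ are free with amalgamation over $N$ inside $\tilde{M}$, and since neither $u$ nor $p$ lies in $N$, a standard amalgamated-freeness moment computation yields $[u,p] \neq 0$, and in particular $upu^* \neq p$.

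To finish, since $M$ is e.c.\ and $\tilde{M}$ is a separable tracial extension of $M$, there is an embedding $\tilde{M} \hookrightarrow M^\u$ fixing $M$ pointwise; let $v \in M^\u$ denote the image of $u$. Then $v$ is a unitary in $N' \cap M^\u$ satisfying $vpv^* \neq p$. By w-spectral gap, $v \in (N' \cap M)^\u$, so we may write $v = (w_k)_\u$ with $w_k \in N' \cap M$ bounded. Since $p$ is central in $N' \cap M$, $w_k p = p w_k$ for every $k$, hence $vpv^* = p(vv^*) = p$, contradicting $vpv^* \neq p$. The main technical point I expect to be somewhat delicate is the amalgamated-freeness verification in the middle paragraph (showing $[u,p] \neq 0$); once that is in hand, the rest is a formal manipulation of the definitions of w-spectral gap and existential closedness.
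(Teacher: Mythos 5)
The paper does not actually prove this theorem --- it is imported verbatim from \cite[Corollary 2.9]{goldpopa} --- so there is no in-paper argument to measure yours against; I will assess it on its own terms. Your proof is correct. The opening reduction is valid: w-spectral gap gives $N'\cap M^\u=(N'\cap M)^\u$ by definition, and a finite factor equipped with a faithful normal tracial state has factorial tracial ultrapower, so it suffices to rule out a nontrivial projection $p\in Z(N'\cap M)$; your observation that such a $p$ cannot lie in $N$ (since $N\cap N'=\mathbb{C}$) is exactly what makes the freeness step nonvacuous. The middle step does work as you expect: writing $p^\circ=p-E_N(p)\neq 0$ and using that $u=1\otimes u_0$ commutes with $N$ and satisfies $E_N(u)=E_N(u^*)=0$, freeness with amalgamation kills the alternating words and yields $\|[u,p]\|_2^2=2\tau(p)-2\tau(E_N(p)^2)=2\|p^\circ\|_2^2>0$. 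One small caution: it is the vanishing of $E_N(u)$ (automatic for a Haar unitary in the second tensor leg), not merely $u\notin N$, that drives this computation, so your phrasing ``since neither $u$ nor $p$ lies in $N$'' slightly understates what is being used. The endgame --- embedding $\tilde{M}$ into $M^\u$ over $M$ by existential closedness, landing $v$ in $(N'\cap M)^\u$ by w-spectral gap, and noting that $p$ commutes with any element represented by a sequence from $N'\cap M$ --- is airtight. This is the same circle of ideas as the cited source (produce, in a separable extension, a unitary commuting with $N$ that moves the would-be central element, then pull it back into the ultrapower), so your argument can stand as a self-contained substitute for the external citation.
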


We also utilize the following characterization of $\R$ (a strengthening of Fact \ref{Relcom} and \cite[Theorem 5.8]{saa} in the context of McDuff factors).

\begin{thm}\label{eMfe}
A separable embeddable II$_1$ factor $N$ is hyperfinite if and only if there exists a McDuff II$_1$ factor $M$ such that $(N,M)$ is a strong factorial commutant pair.

\end{thm}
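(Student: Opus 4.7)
The forward direction is immediate from Corollary \ref{Relcom}: taking $M = \R$ (which is McDuff) we have, assuming $N\cong \R$, that $(N,\R)$ is a strong factorial commutant pair. For the reverse direction, assume $N$ is a separable embeddable II$_1$ factor and that there exists a McDuff II$_1$ factor $M$ such that $(N,M)$ is a strong factorial commutant pair. The plan is to show that $\HOM(N,M^\u)$ collapses to a single point, so that $(N,M)$ is in fact a Jung pair, and then invoke the corollary to \cite[Corollary 3.8]{ultraprodembed} recalled earlier (a separable embeddable $N$ with $(N,M)$ a Jung pair for any II$_1$ factor $M$ must be hyperfinite).

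Since $M$ is McDuff, $\HOM(N,M^\u)$ carries the convex-like structure of \cite{brocap,saa}, and Theorem \ref{exptchar} tells us that the extreme points of this space are precisely those $[\pi]$ for which $\pi(N)'\cap M^\u$ is a factor. The strong factorial commutant hypothesis says that every class in $\HOM(N,M^\u)$ has factorial relative commutant, hence every point of $\HOM(N,M^\u)$ is extreme.

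Now pick any $[\pi],[\rho]\in \HOM(N,M^\u)$ and form the midpoint $[\xi]:=\tfrac{1}{2}[\pi]+\tfrac{1}{2}[\rho]$ using a trace-$\tfrac{1}{2}$ projection in $\R^\u$ as in the definition of convex combination. Since every point of $\HOM(N,M^\u)$ is extreme, $[\xi]$ itself is extreme; but by the very defining property of an extreme point in a convex-like structure, the equality $[\xi]=\tfrac{1}{2}[\pi]+\tfrac{1}{2}[\rho]$ forces $[\pi]=[\rho]=[\xi]$. Thus $\HOM(N,M^\u)$ is a singleton, i.e.\ any two embeddings $N\hookrightarrow M^\u$ are unitarily equivalent, so $(N,M)$ is a Jung pair and the cited corollary gives $N\cong \R$.

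I do not anticipate a serious obstacle: the only step requiring any care is that the axiomatic formulation of ``convex-like structure'' used in \cite{topdyn,brocap,saa} really does yield the standard property that extreme points admit only trivial convex decompositions, which is built into the axioms of those references. Everything else is a direct packaging of Theorem \ref{exptchar} and \cite[Corollary 3.8]{ultraprodembed}.
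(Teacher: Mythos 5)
Your proof is correct and follows essentially the same route as the paper: the reverse direction is exactly the paper's argument (strong factorial commutant plus Theorem \ref{exptchar} makes every point of $\HOM(N,M^\u)$ extreme, convexity then collapses the space to a singleton, and \cite[Corollary 3.8]{ultraprodembed} finishes), and your explicit midpoint computation just spells out what the paper leaves implicit. The only cosmetic difference is in the forward direction, where you take $M=\R$ and cite Corollary \ref{Relcom} while the paper cites \cite[Theorem 5.8]{saa} to get the (stronger, but unneeded) statement for an arbitrary McDuff $M$; since the theorem only asserts existence of some McDuff $M$, your choice is perfectly adequate.
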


\begin{proof}
If $N\cong \R$ then by \cite[Theorem 5.8]{saa}, $(N,M)$ is a strong factorial commutant pair.


On the other hand, if $(N,M)$ is a strong factorial commutant pair, we claim that there is only one embedding of $N$ into $M^\mc{U}$ up to unitary equivalence.  Indeed, by \cite{brocap} and \cite{saa}, $\HOM(N,M^\mc{U})$ is convex, and by Theorem \ref{exptchar}, every point is extreme.  It follows that $\HOM(N,M^\mc{U})$ is a singleton.  Then by \cite[Corollary 3.8]{ultraprodembed}, we have that $N\cong \R$. 
\end{proof}

\begin{cor}
Suppose that $M$ is an e.c. factor and $N$ is an embeddable w-spectral gap subfactor.  Then $(N,M)$ is not a generalized Jung pair.
\end{cor}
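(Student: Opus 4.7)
The plan is to argue by contradiction: suppose that $(N,M)$ is a generalized Jung pair. First, by Theorem \ref{isaac2}, the diagonal embedding $N\hookrightarrow M\hookrightarrow M^\u$ has factorial relative commutant, so $(N,M)$ is a factorial commutant pair. By Lemma \ref{fcpairsfcpair}, the generalized Jung assumption upgrades this to $(N,M)$ being a \emph{strong} factorial commutant pair.

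Next, since e.c.\ factors are McDuff by Fact \ref{ecfacts}(1), $M$ is a McDuff II$_1$ factor. Applying Theorem \ref{eMfe} to the pair $(N,M)$ then forces $N\cong \R$. What remains is to derive a contradiction from the w-spectral gap hypothesis in the case $N\cong \R$. I would establish the stronger claim that $\R$ never has w-spectral gap in any II$_1$ factor $M$ containing it, which will finish the argument.

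The main obstacle, and really the only substantive step, is the proof of this claim; the strategy is to exploit property Gamma of $\R$ together with the trace-preserving conditional expectation $E_\R\colon M\to \R$. Using Fact \ref{stable}, pick a non-scalar projection $p=(p_k)_\u \in \R'\cap \R^\u$ of trace $\tfrac12$ with projection representatives $p_k\in \R$ of trace $\tfrac12$. Via the inclusion $\R^\u\subseteq M^\u$, one has $p\in \R'\cap M^\u$; the w-spectral gap hypothesis would force $p=(q_k)_\u$ for some $q_k\in \R'\cap M$ with $\lim_\u\|p_k-q_k\|_2=0$. The $\R$-bimodularity of $E_\R$ together with $q_k \in \R'$ yields $E_\R(q_k) \in \R\cap \R'=\mathbb{C}$, and moreover $\|p_k-E_\R(q_k)\|_2 = \|E_\R(p_k-q_k)\|_2 \leq \|p_k-q_k\|_2 \to 0$ along $\u$. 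But this contradicts the elementary estimate $\|p_k-c\|_2 \geq \tfrac12$ valid for any scalar $c$ (since $p_k$ is a non-scalar projection of trace $\tfrac12$). Thus the first three steps are direct applications of the cited machinery; the final step, while short, is where the structural content of the argument lies.
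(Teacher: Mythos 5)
Your proof is correct and follows the paper's argument exactly: Theorem \ref{isaac2} gives that $(N,M)$ is a factorial commutant pair, Lemma \ref{fcpairsfcpair} upgrades this to a strong factorial commutant pair under the generalized Jung assumption, and Fact \ref{ecfacts}(1) together with Theorem \ref{eMfe} forces $N\cong \R$. The only difference is that you explicitly establish the final contradiction---that $\R$ (indeed, any property Gamma factor) never has w-spectral gap in an ambient II$_1$ factor---a step the paper leaves implicit; your conditional-expectation argument for this claim is correct.
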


\begin{proof}
Suppose that $N$ is an embeddable w-spectral gap subfactor of the e.c. factor $M$ and yet, towards a contradiction, that $(N,M)$ is a generalized Jung pair.  By Theorem \ref{isaac2}, $(N,M)$ is a factorial commutant pair.  By Lemma \ref{fcpairsfcpair} and the contradiction assumption, we have that $(N,M)$ is a strong factorial commutant pair.  Since $M$ is McDuff (Fact \ref{ecfacts}(1)), we have that $N \cong\R$ by Theorem \ref{eMfe}.
\end{proof}

We end this section with the following deceptively difficult question David Sherman asked us: 

\begin{question}
Are there $N,M_1,M_2$ such that $N$ embeds into both $M_1^\u$ and $M_2^\u$, $(N,M_1)$ is a gen Jung pair, but $(N,M_2)$ is not?
\end{question}

\section{Super McDuff factors}\label{relres}

\subsection{First definitions and results}
Our work in \S \ref{jungprop} has some bearing on the notion of super McDuff factors, first introduced in \cite{DL} but not given a name until \cite{BCIexplained}:

\begin{defn}
A II$_1$ factor $M$ is \textbf{super McDuff} if $M'\cap M^{\u}$ is a II$_1$ factor.
\end{defn}

Recall that a factor $M$ is McDuff if $M'\cap M^\u$ is non abelian.  Super McDuffness requires moreover that the relative commutant is a factor.

\begin{examples}\label{superexamples}
The following II$_1$ factors are super McDuff:
\begin{enumerate}
    \item $\R$
    \item $L(\mathbb F_2\times S_\infty^{\operatorname{fin}})$
    \item $\bigotimes_\mb{N} L(\mathbb F_2)$
    \item $L(\breve{\mathbb F_2})$
\end{enumerate}
\end{examples}

These examples are \cite[Propositions 12, 19, 20]{DL} and \cite[Proposition 7]{ZM}.  Here, given a countable group $\Gamma$, $\breve{\Gamma}$ denotes a particular direct limit/semidirect product construction considered by Zeller-Meier in \cite{ZM}.  Also, $S_\infty^{\operatorname{fin}}$ denotes the group of permutations of $\mathbb N$ with finite support.

Not every McDuff factor is super McDuff:
\begin{example}\label{McSD}
  If $K$ is the group  constructed by Dixmier and Lance such that $L(K)$ has property Gamma but is not McDuff, then $L(K\times S_\infty^{\operatorname{fin}})$ is McDuff but not super McDuff \cite[Proposition 24]{DL}. 
\end{example}

After \cite{DL,ZM}, very little on super McDuff factors appeared in the literature until \cite[Corollary 4.10 and Proposition 4.12]{BCIexplained}, where the following two facts were proven:

\begin{fact}\label{braddisaac}

\

\begin{enumerate}
    \item If $\mathcal C$ is a separably saturated elementary extension of $M$, then $M$ is super McDuff if and only if $M'\cap \mathcal C$ is a factor.
    \item If $N\preceq M$ and $M$ is super McDuff, then $N$ is super McDuff.
\end{enumerate}
\end{fact}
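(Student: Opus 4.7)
The plan is to prove both parts of Fact \ref{braddisaac} using a type-transfer technique, with part (2) reducing to part (1) combined with Theorem \ref{mainidea}. Throughout, separable saturation of ultrapowers and separably saturated elementary extensions, together with separable homogeneity supplied by Fact \ref{ultrahomog} and the standing CH assumption, will let me shuttle tuples between $M^{\u}$ and $\mathcal{C}$ while preserving types over $M$.

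For the direction ``$M$ super McDuff $\Rightarrow M' \cap \mathcal{C}$ is a factor'' in part (1), I would pick $a \in Z(M' \cap \mathcal{C})$ and set $p := \tp^{\mathcal{C}}(a/M)$. Using separable saturation of $M^{\u}$, realize $p$ by some $b \in M^{\u}$; since the formulas $\|[x, m]\|_2 = 0$ for $m \in M$ belong to $p$, automatically $b \in M' \cap M^{\u}$. The heart of the argument is to upgrade $b$ to an element of $Z(M' \cap M^{\u})$: given any $c \in M' \cap M^{\u}$, realize the 2-type $\tp^{M^{\u}}(b, c/M)$ in $\mathcal{C}$ by a pair $(a', c')$. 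Then $a'$ has the same 1-type over $M$ as $a$, so separable homogeneity of $\mathcal{C}$ produces $\alpha \in \operatorname{Aut}(\mathcal{C}/M)$ with $\alpha(a) = a'$; since $\alpha$ fixes $M$, it preserves $Z(M' \cap \mathcal{C})$, so $a' \in Z(M' \cap \mathcal{C})$. A type-condition argument also gives $c' \in M' \cap \mathcal{C}$, whence $[a', c'] = 0$; matching the 2-type back to $M^{\u}$ yields $\|[b, c]\|_2 = 0$. Since $c$ was arbitrary, $b \in Z(M' \cap M^{\u}) = \mathbb{C}$ by hypothesis, and since $a$ and $b$ share the type $p$, $a \in \mathbb{C}$ as well. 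The reverse direction is symmetric: realize $\tp^{M^{\u}}(a/M)$ in $\mathcal{C}$ and invoke Fact \ref{ultrahomog} for homogeneity of $M^{\u}$ in place of homogeneity of $\mathcal{C}$.

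For part (2), I would chain part (1) with Theorem \ref{mainidea}. Since $N \preceq M \preceq M^{\u}$, we have $N \preceq M^{\u}$, and Theorem \ref{mainidea} applied to the chain $N \subseteq M \subseteq M^{\u}$ (with $P = M$) shows that $N' \cap M^{\u}$ is a factor, using only that $M' \cap M^{\u}$ is a factor by the super McDuff hypothesis on $M$. But $M^{\u}$ is a separably saturated elementary extension of $N$, so the reverse direction of part (1) applied to $N$ with $\mathcal{C} := M^{\u}$ delivers that $N$ is super McDuff. Notably, this application only requires homogeneity of the $M^{\u}$-of-$N$, namely $N^{\u}$, which is supplied by Fact \ref{ultrahomog}.

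The most delicate step is justifying separable homogeneity of an arbitrary separably saturated $\mathcal{C}$ in the forward direction of part (1); for general $\mathcal{C}$ this requires additional cardinality hypotheses beyond mere separable saturation, but it is not an issue in the application of part (1) actually needed for part (2), where $\mathcal{C}$ is itself an ultrapower and Fact \ref{ultrahomog} applies directly.
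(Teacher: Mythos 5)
Your overall architecture is sound, and for part (2) it matches how the paper itself deploys these ingredients: the paper does not actually prove Fact \ref{braddisaac} (it is quoted from \cite{BCIexplained}), but it does prove Theorem \ref{mainidea} independently via heirs, and elsewhere (e.g.\ in the proof of Proposition \ref{Brownsuper}) it chains Theorem \ref{mainidea} with Fact \ref{braddisaac}(1) in exactly the way you propose. Your instantiation of Theorem \ref{mainidea} with the chain $N\subseteq M\subseteq M^\u$ is legitimate ($N\preceq M\preceq M^\u$ gives $N\preceq M^\u$ by Fact \ref{elel}(2) and \L os), and the direction of part (1) you then need --- ``$N'\cap M^\u$ a factor implies $N$ super McDuff'' --- only uses homogeneity of $N^\u$, which is Fact \ref{ultrahomog}. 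So part (2) is complete, with no circularity.

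The one genuine soft spot is the one you flag yourself: in the forward direction of part (1) you invoke separable homogeneity of an arbitrary separably saturated $\mathcal C$, and separable saturation alone does not produce automorphisms extending partial elementary maps once the density character of $\mathcal C$ exceeds $\aleph_1$; as written, that direction is only proved for homogeneous $\mathcal C$. The detour through $2$-types and automorphisms is, however, unnecessary. Membership of $a$ in $Z(M'\cap\mathcal C)$ is already detected by $\tp(a/M)$ using saturation only: by the approximate-finite-satisfiability criterion, the existence of $c\in M'\cap\mathcal C$ with $\|[a,c]\|_2\geq\epsilon$ is equivalent to the family of conditions $\left(\inf_y\max\left(\max_{m\in F}\|[y,m]\|_2,\ \epsilon\dotminus\|[a,y]\|_2\right)\right)^{\mathcal C}=0$ for all finite $F\subseteq M$, each of which is read off from $\tp(a/M)$ and is therefore the same in any separably saturated elementary extension of $M$. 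Hence if $b\in M^\u$ realizes $\tp^{\mathcal C}(a/M)$, then $b\in Z(M'\cap M^\u)$ directly, and no automorphism of $\mathcal C$ is needed. This saturation mechanism is precisely what appears in the paper's proof of Theorem \ref{transfer}(1), which the authors describe as a reproduction of the proof of Fact \ref{braddisaac}(1); substituting it for your homogeneity step makes your argument complete and uniform in $\mathcal C$.
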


We now use a recent observation of Ioana and Spaas to produce a large class of super McDuff II$_1$ factors.  Recall that $M$ is McDuff if and only if $M\cong M\otimes \R$.  Thus, if $P$ is non-McDuff, we can form a McDuff factor $P\otimes \R$ (non-isomorphic to $P$).  It is natural to wonder when such a factor is super McDuff.  We can completely answer that question:

\begin{prop}\label{superstrong}
If $P$ is not McDuff and $M=P\otimes \R$, then $M$ is super McDuff if and only if $P$ does not have property Gamma.
\end{prop}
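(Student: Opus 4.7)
The plan is to identify $Z(M' \cap M^\u)$ for $M = P \otimes \R$ explicitly, using that $M$ is super McDuff exactly when this center is trivial. The key structural input---this is the Ioana-Spaas observation alluded to just above the proposition---is the tensor-product identification
$$M' \cap M^\u \;=\; (P' \cap P^\u)\,\bar\otimes\,(\R' \cap \R^\u),$$
where the two commuting subalgebras on the right sit inside $M^\u$ via the canonical embeddings $(z_k)_\u \mapsto (z_k \otimes 1)_\u$ and $(r_k)_\u \mapsto (1 \otimes r_k)_\u$. The non-McDuff hypothesis on $P$ is used crucially here, via the fact that $P' \cap P^\u$ is abelian, to establish the nontrivial ``$\subseteq$'' inclusion.

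Granting this identification, the proposition reduces to a one-line center calculation. Since $\R$ is super McDuff (cf.~Examples \ref{superexamples}), $\R' \cap \R^\u$ is a II$_1$ factor and thus has trivial center; since $P$ is not McDuff, $P' \cap P^\u$ is abelian and hence equals its own center. Therefore
$$Z(M' \cap M^\u) \;=\; Z(P' \cap P^\u)\,\bar\otimes\,Z(\R' \cap \R^\u) \;=\; (P' \cap P^\u)\,\bar\otimes\,\mathbb{C} \;=\; P' \cap P^\u,$$
and this is trivial if and only if $P$ lacks property Gamma, proving both directions at once.

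The main obstacle is verifying the tensor identification. The ``$\supseteq$'' containment follows from a routine trace-multiplicativity check across the two commuting embeddings. For the reverse inclusion, given $y = (y_k)_\u \in M' \cap M^\u$ one expands $y_k = \sum_\alpha a_\alpha^k \otimes e_\alpha$ along an ONB $\{e_\alpha\}$ of $L^2(\R)$; asymptotic commutation of $y_k$ with $P \otimes 1$ forces each slice $(a_\alpha^k)_\u$ into $P' \cap P^\u$, while asymptotic commutation with $1 \otimes \R$ further constrains the $\alpha$-dependence. The delicate step is extracting uniformity in $\alpha$ sufficient to realize $y$ as a genuine element of the algebraic tensor; abelianness of $P' \cap P^\u$ is precisely what makes this uniformity tractable, since it ensures all the relevant slice-sequences mutually commute in $P^\u$. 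For the direction ``non-$\Gamma$ $\Rightarrow$ super McDuff'' one can alternatively bypass the full identification via Popa's w-spectral gap theorem applied to $P \otimes 1 \subseteq M$ (cf.~Definition \ref{wspgap}): non-$\Gamma$-ness of $P$ yields $(P \otimes 1)' \cap M^\u = (1 \otimes \R)^\u$, whence $M' \cap M^\u = \R' \cap \R^\u$ is a factor.
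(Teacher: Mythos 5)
Your closing center computation is fine, and your alternative spectral-gap argument for the direction ``$P$ non-Gamma $\Rightarrow$ $M$ super McDuff'' is correct and self-contained (it is essentially how the paper handles Proposition \ref{itfullsuper}). The gap is in the structural identity everything else rests on: the claim that $M'\cap M^\u=(P'\cap P^\u)\,\overline{\otimes}\,(\R'\cap\R^\u)$ is both stronger than the cited input and very likely false in general. What Ioana and Spaas prove in the result the paper invokes is only the equality of \emph{centers}, $\mc{Z}(M'\cap M^\u)=\mc{Z}(P'\cap P^\u)$; they do not upgrade this to a tensor splitting of the whole relative commutant. Indeed, the thrust of their work is that for suitable non-McDuff $P$ with Gamma the central sequence algebra of $P\otimes\R$ is \emph{exotic}, i.e.\ not isomorphic to its (diffuse abelian) center tensored with a II$_1$ factor --- which is exactly the form your formula would force on it. Your sketch of the inclusion ``$\subseteq$'' collapses at the step you yourself flag as delicate: knowing that each slice $(a_\alpha^k)_\u$ lies in $P'\cap P^\u$ and that these slices mutually commute (which is all abelianness buys you) does not let you reassemble $y$ inside the tensor product of the two relative commutants; the ``fibers'' of $M'\cap M^\u$ over its diffuse center can genuinely vary, and that is precisely where the exotic examples live.

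The good news is that the weaker center identity is all the proposition needs, and your final display already contains the argument: since $P$ is not McDuff, $P'\cap P^\u$ is abelian, hence equals its own center, and this is $\mathbb{C}$ if and only if $P$ fails Gamma; combining with $\mc{Z}(M'\cap M^\u)=\mc{Z}(P'\cap P^\u)$ and the fact that $M=P\otimes\R$ is McDuff (so that triviality of $\mc{Z}(M'\cap M^\u)$ makes $M'\cap M^\u$ a II$_1$ factor) gives both directions at once. So the fix is simply to cite the center identity directly in place of the unproven tensor decomposition.
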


\begin{proof}
This follows immediately from the recent observation \cite[Corollary 2.6]{ioaspa}, where they show that 
\[\mc{Z}(M'\cap M^\mc{U}) = \mc{Z}(P'\cap P^\mc{U}).\qedhere\]
\end{proof}

Notice that Examples \ref{superexamples}(2) and \ref{McSD} are special cases of the previous proposition.

Factors of the form $P\otimes \R$ with $P$ a non-Gamma factor are called \textbf{strongly McDuff} by Popa in \cite{spgap}.  Thus, the previous proposition shows that strongly McDuff factors are super McDuff.  In fact, this observation can be deduced from \cite[Theorem 4.7]{centralseqalg} which predates \cite{ioaspa}.







We next show how the proof that $\ds \bigotimes_\mb{N} L(\mb{F}_2)$ is super McDuff in \cite{DL} applies to all infinite tensor products of non-Gamma II$_1$ factors.\footnote{We note that a proof of this fact, which is Proposition \ref{itfullsuper} below, first appeared within the proof of Corollary G of \cite{marr}.} The following lemma provides a sufficient condition for being super McDuff.

\begin{lem}(\cite[Lemma 11]{DL})\label{DL}
Let $M$ be a finite factor, and let $E\subseteq M$ be a generating subset. For $j \in \mb{N}$, let $A_j\subseteq M$ be a nontrivial von Neumann subalgebra.  Suppose the following conditions hold:
	\begin{enumerate}
		\item For every finite subset $F\subseteq E$, there exists $j_0 \in \mb{N}$ such that $A_j$ commutes with $F$ for $j \geq j_0$;
		\item $\mc{Z}(M'\cap M^\u)\subseteq \bigcap_{j\in \mb{N}} A_j^\mc{U}$.
	\end{enumerate}
Then $M$ is super McDuff. 
\end{lem}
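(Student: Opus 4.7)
The goal will be to show that every $z \in \mathcal{Z}(M' \cap M^\u)$ is a scalar, which is equivalent to $M$ being super McDuff. My strategy is to combine the two hypotheses to produce a structured sequential representative of $z$ and to force scalarity from it.

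First I will use hypothesis (2) to secure a ``good'' sequential representation of $z$. For each $j$, $z$ has some representation $(z_k^{(j)})_\u$ with $z_k^{(j)} \in A_j$. A standard diagonal argument---successively approximating in $\|\cdot\|_2$ along the $\u$-fibers, using separability of operator-norm balls---will produce a single representation $z = (w_k)_\u$ with $w_k \in A_{g(k)}$ for some $g\colon \mathbb{N} \to \mathbb{N}$ satisfying $g(k) \to \infty$.

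The next stage will extract two asymptotic-centrality conditions on $(w_k)$ from hypothesis (1). On the one hand, since $g(k) \to \infty$, for every finite $F \subseteq E$ we have $[w_k, f] = 0$ for all $f \in F$ and $\u$-most $k$; combined with density of the $*$-algebra generated by $E$ in $M$ and the boundedness of $(w_k)$, this gives $\lim_\u \|[w_k, x]\|_2 = 0$ for every $x \in M$. On the other hand, any bounded sequence $(v_k)$ with $v_k \in A_{g(k)}$ similarly yields $(v_k)_\u \in M' \cap M^\u$; then the centrality of $z$ in $M' \cap M^\u$ forces $\lim_\u \|[w_k, v_k]\|_2 = 0$ for every such choice. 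Optimizing over $v_k$ in the unit ball of $A_{g(k)}$ shows the defect $\delta_k := \sup\{\|[w_k, v]\|_2 : v \in (A_{g(k)})_1\}$ tends to $0$ along $\u$; in other words, $w_k$ is asymptotically central in $A_{g(k)}$ itself.

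To conclude, I will apply Dixmier-type averaging inside each finite von Neumann algebra $A_{g(k)}$: the defect bound yields $\|w_k - c_k\|_2 \leq \delta_k$ where $c_k := E_{\mathcal{Z}(A_{g(k)})}(w_k)$ is the $\|\cdot\|_2$-projection onto the center, so $z = (c_k)_\u$ with $c_k \in \mathcal{Z}(A_{g(k)})$. In the intended applications (such as infinite tensor products of non-Gamma factors) each $A_j$ is itself a factor, so $\mathcal{Z}(A_{g(k)}) = \mathbb{C}$ and $c_k$ collapses to $\tau(w_k)\cdot 1 \to \tau(z)\cdot 1$, giving $z \in \mathbb{C}$. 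The principal obstacles I foresee are (a) arranging the diagonal in Step 1 so that $g(k) \to \infty$, a standard but fiddly construction; and (b) handling the general non-factorial case, in which $\mathcal{Z}(A_{g(k)})$ may be nontrivial---here one would iterate the argument on $(c_k)$, exploiting that it is itself asymptotically central in $M$, or invoke that the factoriality of $M$ forbids a nonscalar ultralimit of elements simultaneously lying in each $\mathcal{Z}(A_j)^\u$ and in $M' \cap M^\u$.
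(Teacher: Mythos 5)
First, note that the paper does not actually prove this lemma; it is quoted from Dixmier--Lance, so there is no in-house argument to compare yours against and your proposal has to stand on its own. Through the Dixmier-averaging step it does: the diagonalization producing $w_k\in A_{g(k)}$ with $g(k)\to\infty$ along $\u$, the deduction that every bounded sequence $(v_k)$ with $v_k\in A_{g(k)}$ defines an element of $M'\cap M^\u$, the passage from ``$z$ commutes with every such $(v_k)_\u$'' to ``$\delta_k\to 0$ along $\u$'' (choose near-optimal $v_k$), and the estimate $\|w_k-E_{\mc{Z}(A_{g(k)})}(w_k)\|_2\le\delta_k$ (every element of the $\|\cdot\|_2$-closed convex hull of $\{uw_ku^*\colon u\in\mc{U}(A_{g(k)})\}$ lies within $\delta_k$ of $w_k$ and has the same central trace) are all sound. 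In particular your argument completely proves the lemma when each $A_j$ is a factor, which is the only case the paper uses: in Proposition \ref{itfullsuper} the $A_j$ are tail subalgebras of an infinite tensor product, hence factors.

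The gap is the step you flag in (b), and neither of your proposed repairs closes it. Iterating the argument on $(c_k)$ is vacuous: $c_k$ already lies in the abelian algebra $\mc{Z}(A_{g(k)})$, so averaging over its unitaries returns $c_k$ itself. And factoriality of $M$ by itself does not forbid a non-scalar element of $\prod_\u\mc{Z}(A_{g(k)})$ from lying in $\mc{Z}(M'\cap M^\u)$ --- that is exactly the statement to be proved, and $M'\cap M^\u$ can certainly contain non-scalar elements when $M$ is a factor. By this point your argument has used hypothesis (2) only to extract one representative sequence, and the element $(c_k)_\u$ has no remaining leverage: it is already immune to conjugation by the only unitaries of $M'\cap M^\u$ your hypotheses give you access to, namely those built from the $A_j$'s. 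So as written the general (non-factorial $A_j$) case is open. A smaller point: ``super McDuff'' requires $M'\cap M^\u$ to be a II$_1$ factor, not merely to have trivial center, so one also needs $M'\cap M^\u\neq\mb{C}$; the literal hypotheses (the $A_j$ merely nontrivial) do not quite deliver this, since nontrivial $A_j$ can be uniformly $\|\cdot\|_2$-close to the scalars. In the intended applications $M$ is visibly McDuff, so this is cosmetic, but it is worth recording alongside the main gap.
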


\begin{prop}\label{itfullsuper}
Suppose that, for each $n \in \mb{N}, M_n$ is a non-Gamma II$_1$ factor.  Setting $\ds M:= \bigotimes_\mb{N} M_n$, we have that $M$ is super McDuff.
\end{prop}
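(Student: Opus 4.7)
The plan is to apply the Dixmier–Lance criterion (Lemma \ref{DL}). For the generating set take $E := \bigcup_{n \in \mathbb{N}} M_n$, with each $M_n$ viewed inside $M$ via the canonical tensor embedding, and for the distinguished subalgebras take the tail factors $A_j := \bigotimes_{n > j} M_n$. Condition (1) of Lemma \ref{DL} is then immediate: any finite $F \subseteq E$ lies in $P_{j_0} := \bigotimes_{n \leq j_0} M_n$ for some $j_0$, and $A_j$ commutes with $P_{j_0}$ (and hence with $F$) whenever $j \geq j_0$.

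All the work goes into verifying condition (2). In fact I will prove the stronger statement that $M' \cap M^\mc{U} \subseteq A_j^\mc{U}$ for every $j$. Since $M' \cap M^\mc{U} \subseteq P_j' \cap M^\mc{U}$, it is enough to show that $P_j$ has w-spectral gap in $M$ (in the sense of Definition \ref{wspgap}), so that
\[
P_j' \cap M^\mc{U} \;=\; (P_j' \cap M)^\mc{U} \;=\; A_j^\mc{U}.
\]
To obtain this w-spectral gap, I will prove that $P_j$ is itself a non-Gamma II$_1$ factor, since it is a standard fact (a classical spectral gap argument) that any non-Gamma subfactor of a II$_1$ factor has w-spectral gap in it.

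The non-Gamma property of $P_j$ is proved by induction on $j$, the base case $j=1$ being the hypothesis on $M_1$. The inductive step reduces to the following lemma: if $P$ and $Q$ are non-Gamma II$_1$ factors then so is $P \otimes Q$. This too is a short spectral gap argument. Using that $P$ is non-Gamma and hence has w-spectral gap inside $P \otimes Q$, one has $P' \cap (P \otimes Q)^\mc{U} = 1 \otimes Q^\mc{U}$. Intersecting with $Q' \cap (P \otimes Q)^\mc{U}$ then gives
\[
(P \otimes Q)' \cap (P \otimes Q)^\mc{U} \;\subseteq\; 1 \otimes (Q' \cap Q^\mc{U}) \;=\; \mathbb{C},
\]
where the final equality uses that $Q$ is non-Gamma.

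The main obstacle, such as it is, is the careful invocation of w-spectral gap for a non-Gamma subfactor of a larger factor; this is a well-known consequence of Connes/Popa-style spectral gap rigidity, but is not spelled out in the excerpt and should be cited explicitly. With that tool in hand, both the lemma that non-Gamma is preserved under finite tensor products and the resulting verification of Lemma \ref{DL}(2) are short, and the proposition follows.
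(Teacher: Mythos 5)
Your proposal is correct and follows essentially the same route as the paper: both verify Lemma \ref{DL} with the tail subalgebras $A_j$, and both establish the stronger containment $M'\cap M^\u\subseteq A_j^\u$ by noting that the finite initial tensor product $\bigotimes_{n\le j}M_n$ is non-Gamma and hence has (w-)spectral gap in $M$. The only cosmetic difference is that you prove the closure of non-Gamma under finite tensor products by a short spectral gap argument, whereas the paper cites Connes for that fact and Popa for the spectral gap statement.
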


\begin{proof}
We wish to apply Lemma \ref{DL}.  Let $E\subseteq M$ be the set of all finite products of the respective sets of generators in their respective tensor position with 1's elsewhere.  For each $j \in \mb{N}$, let $A_j = \bigotimes_{n\geq j} M_n$ be the $j$th tail subalgebra of $M$. Clearly $E$ and $\left\{A_1,A_2,\dots\right\}$ satisfy condition (1) in Lemma \ref{DL}.

We next show that condition (2) in Lemma \ref{DL} is satisfied.  Fix $j \in \mb{N}$.  We will show that the stronger containment $M'\cap M^\u \subseteq \bigcap_{j\in\mb{N}} A_j^\u$ holds.  Since $M_n$ is non-Gamma for every $n \in \mb{N}$, we have that $M_1 \otimes \cdots \otimes M_{j-1}$ is non-Gamma by \cite{connes}.  Thus, by \cite[Proposition 3.2]{popagap}, $M_1 \otimes \cdots \otimes M_{j-1}$ has spectral gap in $M$.  Consequently, we have
\begin{align*}
M'\cap M^\u & \subseteq (M_1\otimes \cdots \otimes M_{j-1})'\cap M^\u\\
& = ((M_1\otimes \cdots \otimes M_{j-1})' \cap M)^\u\\
& = A_j^\mc{U}.
\end{align*} 
Thus, by Lemma \ref{DL}, $M$ is super McDuff.
\end{proof}

\begin{cor}\label{itstrongsuper}
Any infinite tensor product of strongly McDuff II$_1$ factors is super McDuff.
\end{cor}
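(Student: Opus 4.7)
The plan is to reduce the infinite tensor product of strongly McDuff factors to a tensor product of the form $N \otimes \R$ where $N$ is super McDuff, so that Proposition \ref{superstrong} (and the Ioana--Spaas identity behind it) immediately applies. Concretely, let $M_n = P_n \otimes \R$ with each $P_n$ non-Gamma and put $M := \bigotimes_{n\in\mathbb N} M_n$. Using associativity and commutativity of the infinite tracial tensor product, I would first identify
\[
M \;\cong\; \bigotimes_n (P_n \otimes \R) \;\cong\; \left(\bigotimes_n P_n\right) \otimes \left(\bigotimes_n \R\right) \;\cong\; N \otimes \R,
\]
where $N := \bigotimes_n P_n$ and the final isomorphism uses that an infinite tensor product of copies of $\R$ is again hyperfinite and hence isomorphic to $\R$ by uniqueness (Example \ref{Rexample}).

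Next I would invoke Proposition \ref{itfullsuper}, whose hypothesis is satisfied because every $P_n$ is non-Gamma: this gives that $N$ is super McDuff, i.e.\ $\mathcal Z(N' \cap N^\u) = \mathbb C$. From here I would split on whether $N$ is McDuff. If $N$ is McDuff, then $M \cong N \otimes \R \cong N$ is super McDuff by transport of structure. If $N$ is not McDuff, then I am exactly in the setting of Proposition \ref{superstrong}, and the Ioana--Spaas identity \cite[Corollary 2.6]{ioaspa} recalled in its proof yields
\[
\mathcal Z(M' \cap M^\u) \;=\; \mathcal Z(N' \cap N^\u) \;=\; \mathbb C.
\]
Since $M = N \otimes \R$ is manifestly McDuff, $M' \cap M^\u$ is non-abelian; combined with trivial center, this forces $M' \cap M^\u$ to be a II$_1$ factor, establishing super McDuffness of $M$.

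The only real subtlety is verifying the first-paragraph rearrangement of the infinite tensor product as a trace-preserving isomorphism, which is routine once the infinite tensor product is defined via the GNS construction of the product tracial state. Note that the case split on whether $N$ is McDuff is a minor cosmetic convenience: the Ioana--Spaas identity $\mathcal Z((P \otimes \R)' \cap (P \otimes \R)^\u) = \mathcal Z(P' \cap P^\u)$ holds without any non-McDuff hypothesis on $P$, so one can instead give a single uniform argument via that identity applied with $P = N$. I do not anticipate any real obstacle beyond bookkeeping with infinite tensor products.
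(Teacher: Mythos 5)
Your argument is correct, and it rests on the same essential input the paper intends: the corollary is stated there without proof as an immediate consequence of Proposition \ref{itfullsuper}, the implicit route being to regroup $\bigotimes_n(P_n\otimes \R)\cong(\bigotimes_n P_n)\otimes(\bigotimes_n \R)\cong\bigotimes_n(P_n\otimes \mb{M}_2)$, an infinite tensor product of non-Gamma factors to which Proposition \ref{itfullsuper} applies directly. Your variant, writing $M\cong N\otimes\R$ with $N=\bigotimes_n P_n$ and applying Proposition \ref{itfullsuper} to $N$, is equally valid, but the endgame can be streamlined: once $N$ is known to be super McDuff, $N'\cap N^\u$ is a II$_1$ factor and in particular nonabelian, so $N$ is automatically McDuff and $M\cong N\otimes\R\cong N$ settles the matter. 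Thus your case split collapses to its first branch, the second branch is vacuous, and the appeal to the Ioana--Spaas identity for McDuff $P$ (which the paper only records, via Proposition \ref{superstrong}, under the non-McDuff hypothesis, so you would be leaning on a version of \cite[Corollary 2.6]{ioaspa} not quoted in the text) is never needed.
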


Now we apply results of Marrakchi from \cite{marr} to deduce some more facts about super McDuff factors.

\begin{thm}\label{marrthmF}(\cite[Theorem F]{marr})
Let $M$ and $N$ be II$_1$ factors such that $(M\otimes N)'\cap (M\otimes N)^\u$ is a factor.\footnote{In other words, $M\otimes N$ is non-Gamma or super McDuff.}  Then $M'\cap M^\u$ and $N'\cap N^\u$ are also factors.
\end{thm}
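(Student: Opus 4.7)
The natural strategy is to reduce the theorem to an injective embedding of centers. By the symmetry of the hypothesis, it suffices to show that $M' \cap M^\u$ is a factor; the argument for $N' \cap N^\u$ is identical. I propose to prove this by establishing that the map $a \mapsto a \otimes 1$ induces an injective $*$-homomorphism
\[
\mathcal{Z}(M' \cap M^\u) \hookrightarrow \mathcal{Z}\bigl((M \otimes N)' \cap (M \otimes N)^\u\bigr).
\]
The hypothesis that the right-hand side is $\mathbb{C}$ then forces $\mathcal{Z}(M' \cap M^\u) = \mathbb{C}$, as desired.

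The routine steps are to verify that the map $a = (a_k)_\u \mapsto (a_k \otimes 1)_\u$ from $M^\u$ into $(M \otimes N)^\u$ is an isometry in $\|\cdot\|_2$ (hence injective), and that whenever $a \in M' \cap M^\u$, the element $a \otimes 1$ commutes with $M \otimes 1$ and with $1 \otimes N$, and therefore lies in $(M \otimes N)' \cap (M \otimes N)^\u$. These two facts together yield a well-defined injection before one restricts to centers.

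The hard part will be the centrality statement: for $a \in \mathcal{Z}(M' \cap M^\u)$ and every $c \in (M \otimes N)' \cap (M \otimes N)^\u$, prove that $[a \otimes 1, c] = 0$. The obstacle is that the central sequence algebra of a tensor product strictly contains the algebraic tensor product of the individual central sequence algebras in general, so $c$ need not admit any tensorial decomposition, and the rigid form of $a \otimes 1$ (trivial on the $N$-leg) cannot be exploited directly against such ``exotic'' central sequences.

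To bridge this gap, I would combine slice maps with an approximation argument. Applying the conditional expectation $\tilde E_N := \id \otimes \tau_N : (M \otimes N)^\u \to M^\u$ to $c$ produces $\tilde E_N(c) \in M' \cap M^\u$, whence $[a, \tilde E_N(c)] = 0$ by centrality, which handles the ``scalar $N$-part'' of $c$. For the remaining component $c - \tilde E_N(c) \otimes 1 \in M \otimes N_0$ with $N_0 := N \ominus \mathbb{C}$, I would expand each representative $c_k$ against an orthonormal basis of $L^2(N_0)$, extract the corresponding slices $(m_{k,j})_\u$ as elements of $M' \cap M^\u$, and use centrality of $a$ together with a uniform $L^2$ tail estimate (controlled by $\|c\|$) to drive the full commutator to zero along $\u$. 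A cleaner alternative, modelled directly on the proof of Theorem~\ref{mainidea}, would be to take a heir extension of $\tp(a/M)$ from $M^\u$ to a type over $M \otimes N$ inside $(M \otimes N)^\u$ and invoke the heir property together with Example~\ref{heircommutant} to propagate centrality through the larger ultrapower. In either implementation, reconciling the asymptotic, $L^2$-approximate nature of a general central sequence in $M \otimes N$ with the rigid tensorial form of $a \otimes 1$ is where the real work lies.
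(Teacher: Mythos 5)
The first thing to say is that the paper does not prove this statement at all: it is quoted as a black box from Marrakchi (\cite[Theorem F]{marr}), so there is no internal proof to compare yours against. Your overall frame is nonetheless the natural one: by symmetry it suffices to treat $M$, and one wants to show that $a\mapsto a\otimes 1$ carries $\mathcal Z(M'\cap M^\u)$ into $\mathcal Z\bigl((M\otimes N)'\cap (M\otimes N)^\u\bigr)$. The routine parts are fine: the map $(a_k)_\u\mapsto (a_k\otimes 1)_\u$ is a trace-preserving (hence injective) embedding $M^\u\hookrightarrow (M\otimes N)^\u$; $a\otimes 1$ lands in $(M\otimes N)'\cap(M\otimes N)^\u$; and the slice $\tilde E_N(c)$ lies in $M'\cap M^\u$, so it commutes with $a$.

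The gap is in the step you yourself flag as the hard one, and neither proposed implementation closes it. (i) In the Fourier-expansion argument, writing $c_k=\sum_j m_{k,j}\otimes n_j$ against an orthonormal basis $(n_j)$ of $L^2(N)$ gives $\|[a_k\otimes 1,c_k]\|_2^2=\sum_j\|[a_k,m_{k,j}]\|_2^2$, and for each \emph{fixed} $j$ the slice $(m_{k,j})_\u$ does lie in $M'\cap M^\u$, so each summand tends to $0$ along $\u$. But you must interchange $\lim_{k\to\u}$ with $\sum_j$, and the claimed ``uniform $L^2$ tail estimate controlled by $\|c\|$'' does not exist: the operator norm only bounds the total mass $\sum_j\|m_{k,j}\|_2^2\leq\|c_k\|^2$, and nothing prevents this mass from migrating to modes $n_j$ with $j\to\infty$ as $k\to\infty$, in which case every fixed-$j$ slice vanishes in the limit while $c$ does not. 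Repairing this needs a genuinely new ingredient: a \emph{uniform} quantitative commutation estimate for $a$ against the whole operator-norm ball of $M^\u$ (of the kind extracted by saturation in the proof of Theorem \ref{transfer}(2), an inequality $\|[y,a]\|_2\leq\alpha(\cdots)$ valid for \emph{all} $y$ simultaneously), transferred by \L os' theorem to $\u$-many indices $k$ and applied to all slices $j$ at once, together with a splitting of the $j$'s according to the size of $\|[m_{k,j},x]\|_2$ for $x$ in a finite subset of $M$; since the modulus $\alpha$ produced by saturation need not be linear, even this requires real care. (ii) The heir alternative does not apply as stated: Theorem \ref{mainidea} and the mechanism of Example \ref{typecommutant} require the small algebra to be an elementary substructure of the ambient ultrapower, and $M\otimes 1\not\preceq (M\otimes N)^\u$ in general; moreover, propagating centrality through the realizations of $\operatorname{tp}(a\otimes 1/M\otimes 1)$ would require knowing in advance that $a\otimes 1$ is central in $(M\otimes 1)'\cap(M\otimes N)^\u$, which is a \emph{stronger} statement than the one being proved, since that relative commutant properly contains $(M\otimes N)'\cap(M\otimes N)^\u$. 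So the proposal correctly isolates where the work lies, but does not supply the idea that does the work.
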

Theorem \ref{marrthmF} implies that if $M\otimes N$ is super McDuff, then either both $M$ and $N$ are super McDuff, or (without loss of generality) $M$ is super McDuff and $N$ is non-Gamma.  In the latter case, one can rewrite the tensor decomposition by replacing $N$ with $N\otimes R$; in this decomposition, both tensor factors are indeed super McDuff. 

\begin{question}
Is the tensor product of two super McDuff factors again super McDuff?
\end{question}

Marrakchi's Theorem \ref{marrthmF} yields the following corollary generalizing the result from \cite{DL} that $\bigotimes_\mb{N} L(K)$ is not super McDuff:

\begin{cor}\label{marrcor}
Let $M$ and $N$ be II$_1$ factors such that $N$ has Gamma but is not super McDuff.  Then $M\otimes N$ is not super McDuff.
\end{cor}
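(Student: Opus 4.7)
The proof plan is a direct contrapositive argument using Marrakchi's Theorem \ref{marrthmF}. Suppose for contradiction that $M\otimes N$ is super McDuff. By definition this means $(M\otimes N)'\cap (M\otimes N)^\u$ is a II$_1$ factor, and in particular a factor. Theorem \ref{marrthmF} then applies and yields that $N'\cap N^\u$ is a factor as well.

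The remainder is to deduce a contradiction from the hypotheses on $N$. The paper's own reading of Theorem \ref{marrthmF} packages the conclusion ``the relative commutant is a factor'' as ``the factor is non-Gamma or super McDuff.'' This reflects the standard fact that a nontrivial relative commutant of a II$_1$ factor inside its ultrapower is automatically diffuse: any nontrivial projection in $N'\cap N^\u$ lifts by Fact \ref{stable} to a sequence of projections in $N$ of the same trace, which can be further refined to show no minimal projections exist. Consequently, once $N'\cap N^\u$ is a factor, it is either $\mathbb{C}$ or a II$_1$ factor. Since $N$ has property Gamma we have $N'\cap N^\u\neq\mathbb{C}$, ruling out the first option, and since $N$ is assumed not super McDuff, $N'\cap N^\u$ is not a II$_1$ factor, ruling out the second. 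This contradiction shows that $M\otimes N$ cannot be super McDuff.

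The only step of real substance is the invocation of Theorem \ref{marrthmF}; the rest is unpacking the definition of super McDuff against the Gamma hypothesis. I do not foresee any obstacle, since the hypothesis ``$N$ has Gamma but is not super McDuff'' is precisely the negation of the dichotomy that Theorem \ref{marrthmF} forces on each tensor factor.
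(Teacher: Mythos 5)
Your proof is correct and follows the same route the paper takes: the corollary is exactly the observation, recorded in the paragraph after Theorem \ref{marrthmF}, that the theorem forces each tensor factor to be non-Gamma or super McDuff, which the hypothesis on $N$ rules out. Your unpacking of why a factorial $N'\cap N^\u$ must be either $\mathbb{C}$ or a II$_1$ factor matches the footnote to Theorem \ref{marrthmF}.
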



\subsection{Connection to the current work}

In \cite{BCIexplained}, the following question was \linebreak raised:

\begin{question}\label{superconj}
If $M\equiv N$ and $N$ is super McDuff, is $M$ also super McDuff?
\end{question}

Given Fact \ref{braddisaac}(2), the previous question is the same as asking if $N\preceq M$ and $N$ is super McDuff, is $M$ also super McDuff?

Using our techniques from \S \ref{jungprop}, we can give a partial positive answer to Question \ref{superconj}.  First, we propose the following definition:

\begin{defn}
$M$ is said to have the \textbf{Brown property} if and only if:  whenever $N$ is a separable subfactor of $M^\u$, there is a separable subfactor $P$ of $M^\u$ such that $N\subseteq P$ and such that $P'\cap M^\u$ is a factor. 
\end{defn}

Theorem \ref{nate} can thus be restated as $\R$ has the Brown property (whence the nomenclature).

The following lemma is obvious but worth stating:

\begin{lem}
Suppose that $M$ has the Brown property and $N\equiv M$.  Then $N$ has the Brown property.
\end{lem}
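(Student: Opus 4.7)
The plan is to exploit the semantic characterization of elementary equivalence available under our standing CH assumption: namely, if $N\equiv M$ (and $N,M$ are separable), then $N^\u\cong M^\u$ (this is recorded implicitly in the discussion following Fact \ref{ultrauniv}). The Brown property is manifestly isomorphism-invariant in the codomain ultrapower, so we expect to simply transport a witness across this isomorphism.

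More concretely, fix an isomorphism $\Phi:N^\u\to M^\u$. Given a separable subfactor $Q\subseteq N^\u$, we would set $Q_0:=\Phi(Q)$, which is a separable subfactor of $M^\u$. Applying the Brown property of $M$ to $Q_0$ yields a separable subfactor $P_0\subseteq M^\u$ with $Q_0\subseteq P_0$ and with $P_0'\cap M^\u$ a factor. Then $P:=\Phi^{-1}(P_0)$ is a separable subfactor of $N^\u$ containing $Q$, and
\[
P'\cap N^\u \;=\; \Phi^{-1}\bigl(\Phi(P)'\cap M^\u\bigr) \;=\; \Phi^{-1}\bigl(P_0'\cap M^\u\bigr),
\]
which is a factor because isomorphisms preserve factoriality. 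This verifies the Brown property for $N$.

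The only delicate point is to make sure the Brown property is genuinely preserved under the isomorphism, i.e.\ that $\Phi$ sends relative commutants in $N^\u$ to relative commutants in $M^\u$; but this is immediate from the fact that $\Phi$ is an algebra isomorphism. There is no real obstacle; the lemma is purely a transfer observation, and the one subtlety to flag is the implicit use of CH via the semantic reformulation of elementary equivalence. If one preferred a CH-free argument, one could instead argue directly: the Brown property can be expressed in terms of types realized over separable subalgebras of $M^\u$, and since $N\equiv M$, the same families of types are realized in $N^\u$; but the isomorphism-of-ultrapowers route above is the cleanest path under our standing conventions.
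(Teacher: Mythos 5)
Your proof is correct and is exactly the argument the paper has in mind: the paper states this lemma without proof as ``obvious,'' the point being precisely that under the standing CH convention $N\equiv M$ gives $N^\u\cong M^\u$, and the Brown property depends only on the isomorphism class of the ultrapower since isomorphisms carry separable subfactors to separable subfactors and preserve relative commutants and factoriality.
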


The connection between the Brown property and being super McDuff is as follows:

\begin{prop}\label{Brownsuper}
$M$ has the Brown property if and only if:  for every $P\equiv M$, $P$ is super McDuff.
\end{prop}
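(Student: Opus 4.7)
The plan is to prove both directions by leveraging the interplay between the Downward L\"owenheim--Skolem theorem, the separable saturation of ultrapowers, and Theorem \ref{mainidea}. The observation stated just before the proposition---that the Brown property is preserved under elementary equivalence---will be essential for the forward direction.

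For the forward direction, fix $P \equiv M$. By the preceding lemma, $P$ also has the Brown property. Applying the Brown property to the separable subfactor $P \subseteq P^\u$ (via the diagonal embedding), we obtain a separable subfactor $Q$ of $P^\u$ containing $P$ such that $Q' \cap P^\u$ is a factor. Now the diagonal embedding $P \hookrightarrow P^\u$ is elementary by \L os' theorem, so the hypotheses of Theorem \ref{mainidea} are satisfied with the roles played by $P \subseteq Q \subseteq P^\u$. Hence $P' \cap P^\u$ is a factor, i.e., $P$ is super McDuff.

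For the reverse direction, let $N$ be an arbitrary separable subfactor of $M^\u$. By the Downward L\"owenheim--Skolem theorem (Fact \ref{DLS}), there exists a separable $P$ with $N \subseteq P \preceq M^\u$. Since $M \preceq M^\u$ (via the diagonal embedding), we have $P \equiv M^\u \equiv M$, so by hypothesis $P$ is super McDuff. The key step is now to note that $M^\u$ is a separably saturated elementary extension of $P$: it is an elementary extension by construction, and it is separably saturated by the separable saturation property of ultrapowers. Thus Fact \ref{braddisaac}(1) applies and yields that $P' \cap M^\u$ is a factor, establishing the Brown property for $M$.

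Neither direction presents a serious technical obstacle once the right machinery is identified; the main conceptual point is recognizing that Theorem \ref{mainidea} is the correct tool for the forward direction (to transfer factoriality of the relative commutant from a larger separable subfactor down to an elementary submodel), while Fact \ref{braddisaac}(1) is the correct tool for the reverse direction (precisely because $M^\u$ supplies a separably saturated elementary extension of any separable elementary submodel found by L\"owenheim--Skolem). The mild subtlety is to track the fact that $P \preceq M^\u$ combined with $M \preceq M^\u$ forces $P \equiv M$, which is what allows the hypothesis on elementary equivalents of $M$ to be invoked.
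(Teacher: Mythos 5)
Your proof is correct and follows essentially the same route as the paper: Theorem \ref{mainidea} drives the forward direction and Downward L\"owenheim--Skolem plus Fact \ref{braddisaac}(1) drive the converse. The only cosmetic difference is that in the forward direction you transfer the Brown property to $P$ and work inside $P^\u$ (concluding $P'\cap P^\u$ is a factor directly), whereas the paper places $P$ elementarily inside $M^\u$, applies the Brown property of $M$ there, and then invokes Fact \ref{braddisaac}(1) to conclude that $P$ is super McDuff.
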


\begin{proof}
First suppose that $M$ has the Brown property and $P\equiv M$.  Without loss of generality, we may assume that $P\subseteq M^\u$.  Since $M$ has the Brown property, there is $N\subseteq M^\u$ such that $P\subseteq N$ and $N'\cap M^\u$ is a factor.  By Theorem \ref{mainidea}, $P'\cap M^\u$ is a factor, whence, by Fact \ref{braddisaac}, $P$ is super McDuff.

Conversely, suppose that all $P$ elementarily equivalent to $M$ are super McDuff and take separable $N\subseteq M^\u$.  By the Downward Lowenheim-Skolem theorem (Fact \ref{DLS}), there is $P\preceq M^\u$ such that $N\subseteq P$.  By assumption and Fact \ref{braddisaac} again, $P'\cap M^\u$ is a factor.  Consequently, $M$ has the Brown property.
\end{proof}

\begin{remark}
If Question \ref{superconj} has a positive answer, then the previous proposition shows that the Brown property is the same as being super McDuff.
\end{remark}

Proposition \ref{Brownsuper} and Theorem \ref{nate} immediately imply:

\begin{cor}\label{Rsuperee}
If $M\equiv \R$, then $M$ is super McDuff.
\end{cor}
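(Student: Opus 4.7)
The plan is essentially to chain together the two results immediately preceding the corollary. Theorem \ref{nate} (Brown's extension result) says exactly that every separable subfactor of $\R^\u$ sits inside a separable subfactor with factorial relative commutant in $\R^\u$; this is the statement that $\R$ has the Brown property in the sense of the definition just introduced. Proposition \ref{Brownsuper} then translates the Brown property into the assertion that every $P \equiv \R$ is super McDuff.

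Concretely, I would argue as follows. First, observe that Theorem \ref{nate} is a verbatim restatement of the defining condition for the Brown property applied to $M = \R$: given any separable subfactor $N \subseteq \R^\u$, Brown produces a separable $P \subseteq \R^\u$ with $N \subseteq P$ and $P' \cap \R^\u$ a factor. Hence $\R$ has the Brown property. Next, apply Proposition \ref{Brownsuper} with the Brown side of the equivalence instantiated at $\R$: it yields that every II$_1$ factor elementarily equivalent to $\R$ is super McDuff. Since $M \equiv \R$ by hypothesis, this gives super McDuffness of $M$.

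There is no real obstacle here, since all the technical work has already been done: the heir-based argument in Theorem \ref{mainidea} was used inside Proposition \ref{Brownsuper} to pass from ``$N'\cap M^\u$ is a factor for some elementary extension $N$'' to ``$P'\cap M^\u$ is a factor,'' and the existence of such an $N$ in the case at hand is exactly Theorem \ref{nate}. The only small point to remember is that one is free to assume $M \subseteq \R^\u$ (by separable universality of $\R^\u$, Fact \ref{ultrauniv}(2)), and that Fact \ref{braddisaac}(1) then converts factoriality of $M' \cap \R^\u$ into the super McDuff property of $M$; both of these are already invoked inside the proof of Proposition \ref{Brownsuper}. So the corollary is a one-line deduction from the two preceding results.
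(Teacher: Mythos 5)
Your proposal is correct and follows exactly the paper's own route: the paper observes that Theorem \ref{nate} is precisely the statement that $\R$ has the Brown property, and then derives the corollary immediately from Proposition \ref{Brownsuper}. Nothing further is needed.
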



We next show that Corollary \ref{Rsuperee} does not follow from Proposition \ref{superstrong}.  First, we need a pair of definitions. The following definition first appeared in \cite{sakai0}.

\begin{defn}
A II$_1$ factor $M$ is called \textbf{asymptotically commutative (AC)} if there exists a sequence of automorphisms $\left\{\alpha_n\right\}$ from $\text{Aut}(M)$ such that \linebreak $\ds \lim_{n\rightarrow \infty}||[\alpha_n(a),b]||_2=0$ for every $a,b \in M$.  This is equivalent to the existence of an automorphism $\alpha \in \text{Aut}(M^\u)$ of the form $\alpha((x_k)_\u) = (\alpha_k(x_k))_\u$ for $\alpha_k \in \text{Aut}(M)$ such that $\alpha(M) \subset M'\cap M^\u$.
\end{defn}

\begin{defn}
$M$ is \textbf{inner asymptotically central (IAC)} if, for any finite sets $F,G\subseteq M$ and any $\epsilon>0$, there is a unitary $u\in M$ such that $\|[uxu^*,y]\|_2<\epsilon$ for all $x\in F$ and $y\in G$. In other words, $M$ is IAC if and only if there is a unitary $u\in M^\u$ such that $uMu^*\subseteq M'\cap M^\u$.
\end{defn}

Clearly, AC factors are McDuff. Note that the property IAC is an instance of AC where all the witnessing automorphisms are inner.  By \cite[Proposition 1]{sakai0}, any infinite tensor product of a finite factor is AC.

\begin{example}\label{RIAC}
  $\R$ is IAC.  (See \cite[Proposition 3]{ZM}.)
\end{example}

The following is immediate from \cite[Proposition 4.8]{BCIexplained}:

\begin{fact}\label{IAClocal}
If $M\equiv N$ and $M$ is IAC, then so is $N$.
\end{fact}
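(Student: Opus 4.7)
The plan is to prove the fact by establishing that being IAC is axiomatizable, i.e., captured by the vanishing of a countable collection of first-order sentences in the language of tracial von Neumann algebras. Once this is done, the conclusion is immediate: since $M \equiv N$ means $\sigma^M = \sigma^N$ for every sentence $\sigma$, IAC for $M$ forces the vanishing of the axiomatizing sentences on $N$ as well, giving IAC for $N$.

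The natural candidates are, for each $n, m \in \mathbb{N}$, the sentences
\[\sigma_{n,m} := \sup_{x_1,\ldots,x_n}\sup_{y_1,\ldots,y_m}\inf_u \max_{1\leq i\leq n,\ 1\leq j\leq m}\lVert[ux_iu^*, y_j]\rVert_2,\]
where the $x_i$ and $y_j$ range over the operator unit ball and $u$ ranges over unitaries in $M$. A direct unpacking of the IAC definition shows that $M$ is IAC if and only if $\sigma_{n,m}^M = 0$ for all $n, m \in \mathbb{N}$: the forward direction is the IAC condition applied with $F = \{x_1,\ldots,x_n\}$ and $G = \{y_1,\ldots,y_m\}$, while the reverse direction follows by, for any finite $F, G$ and $\epsilon > 0$, picking a unitary $u \in M$ achieving the relevant infimum up to $\epsilon$.

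The only nontrivial point is that the quantifier $\inf_u$ restricted to unitaries is not literally a first-order quantifier. To justify this, I would invoke the standard fact that the unitary group of a tracial von Neumann algebra is a \emph{definable set}: for every $\epsilon > 0$ there exists $\delta > 0$ such that any contraction $a$ with $\max(\lVert a^*a - 1\rVert_2, \lVert aa^* - 1\rVert_2) < \delta$ lies within $\lVert\cdot\rVert_2$-distance $\epsilon$ of some unitary. This allows $\inf_{u \text{ unitary}}$ to be replaced by $\inf_a$ over the operator unit ball of a formula that incorporates the unitarity defect as a penalty, producing a genuine first-order sentence with the same value. This definability step is the main technical ingredient but is by now routine in the model-theoretic framework for tracial von Neumann algebras (see, e.g., \cite{MTOA2}); as noted, the full argument is recorded in \cite[Proposition 4.8]{BCIexplained}.
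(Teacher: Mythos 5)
Your proof is correct and is essentially the argument the paper has in mind: the paper simply cites \cite[Proposition 4.8]{BCIexplained}, which establishes exactly the axiomatizability of IAC via the sentences you write down, using the standard definability of the unitary group to legitimize the quantifier $\inf_u$. The reduction to unit-ball parameters (by scaling) and the penalty-term trick for quantifying over a definable set are both handled correctly, so nothing is missing.
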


Consequently, if $M\equiv \R$, then $M$ is IAC.  On the other hand, we have the following recent observation of Adrian Ioana.  We thank him for giving us permission to include a proof of his result here.

\begin{prop}\label{adrian} Strongly McDuff factors are not AC.


\end{prop}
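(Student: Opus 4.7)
The plan is to argue by contradiction. Suppose $M = P \otimes \R$ is strongly McDuff (so $P$ is a non-Gamma II$_1$ factor) and also AC. Unpacking the ultrapower reformulation of AC, one obtains automorphisms $\alpha_k \in \Aut(M)$ such that the map $\alpha: M \to M^\u$, $\alpha(x) := (\alpha_k(x))_\u$, is a trace-preserving embedding with $\alpha(M) \subseteq M' \cap M^\u$. The strategy is to use the non-Gamma hypothesis on $P$ to force $\alpha(P) \subseteq \R^\u$, and then to convert the fact that each $\alpha_k$ is a genuine automorphism of $M$ into a UCP lift of the resulting embedding $\alpha|_P: P \to \R^\u$ through $\ell^\infty(\R)$; by the UCP-lift lemma used in the proof of Theorem \ref{embedJung} and Connes' theorem, this forces $P \cong \R$, contradicting non-Gamma-ness.

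The first main step is to pin down where $\alpha(P)$ lands. Because $P$ is non-Gamma, Popa's spectral gap theorem \cite[Proposition 3.2]{popagap} gives $P' \cap M^\u = (P' \cap M)^\u$. Since $P$ is a factor, $P' \cap M = P' \cap (P \otimes \R) = \mb{C} \otimes \R$, so $P' \cap M^\u = \R^\u$. As $\alpha(M) \subseteq M' \cap M^\u$ and $M' \cap M^\u \subseteq P' \cap M^\u$, restricting $\alpha$ to $P$ yields $\alpha(P) \subseteq \R^\u$.

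The second step leverages the canonical conditional expectation $E_\R : M = P \otimes \R \to \R$. Define $\phi_k := E_\R \circ \alpha_k|_P : P \to \R$; each $\phi_k$ is UCP and trace-preserving (the latter because $\alpha_k$ is an automorphism). For each $x \in P$, the containment $\alpha(x) \in \R^\u$ supplies $r_k \in \R$ with $\|\alpha_k(x) - r_k\|_{2} \to 0$ along $\u$, and applying the $\|\cdot\|_2$-contraction $E_\R$ gives $\|\phi_k(x) - r_k\|_2 \to 0$, so $\|\alpha_k(x) - \phi_k(x)\|_2 \to 0$ along $\u$. Thus $\Phi: P \to \ell^\infty(\R)$, $x \mapsto (\phi_k(x))_k$, is a UCP lift of the embedding $\alpha|_P: P \hookrightarrow \R^\u$ through the injective algebra $\ell^\infty(\R)$. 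The UCP-lift lemma recalled just before the proof of Theorem \ref{embedJung} then forces $P$ to be injective, and Connes' theorem \cite{connes} gives $P \cong \R$. Since $\R$ is McDuff (hence has property Gamma) while $P$ was assumed non-Gamma, this is the desired contradiction.

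I expect the main obstacle to be the structural identification $M' \cap M^\u \subseteq P' \cap M^\u = \R^\u$, which is where the non-Gamma hypothesis enters essentially via Popa's spectral gap theorem; this is what allows one to route $\alpha|_P$ through $\R$ rather than an arbitrary subalgebra of $M^\u$. Once that identification is in hand, the remainder is a routine adaptation of the conditional-expectation/UCP-lift argument from the proof of Theorem \ref{embedJung}.
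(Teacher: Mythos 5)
Your proof is correct, and its first half coincides exactly with the paper's: both use the ultrapower reformulation of AC together with Popa's spectral gap theorem for the non-Gamma factor $P$ to force $\alpha(M)\subseteq M'\cap M^\u\subseteq P'\cap M^\u=(P'\cap M)^\u=\R^\u$, and both then bring in the conditional expectation $E_\R:M\to\R$. The endgames differ. The paper converts $\|\alpha_k(x)-E_\R(\alpha_k(x))\|_2\to 0$ into $\|x-E_{\R_k}(x)\|_2\to 0$ for the hyperfinite subalgebras $\R_k:=\alpha_k^{-1}(\R)\subseteq M$, and then cites an external result (\cite[Corollary 2.5]{exotic}) to conclude that $M$ itself is amenable, which contradicts the non-amenability of $P\otimes\R$. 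You instead package the maps $\phi_k=E_\R\circ\alpha_k|_P$ into a UCP lift $P\to\ell^\infty(\R)$ of the embedding $\alpha|_P:P\hookrightarrow\R^\u$ and invoke the UCP-lift lemma already proved in the paper (before Theorem \ref{embedJung}) together with Connes' theorem to get $P\cong\R$, contradicting non-Gamma-ness. Your route has the merit of staying entirely within the paper's own toolkit and of localizing the contradiction to $P$ rather than $M$; the paper's route is shorter modulo the cited result and yields the (formally stronger) conclusion that $M$ would be amenable. Both arguments are sound.
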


\begin{proof}
Suppose that $N$ is a II$_1$ factor without property Gamma and $M=N\otimes \R$.  Suppose, towards a contradiction, that $M$ is AC.  Then there is an automorphism $\alpha \in \text{Aut}(M^\u)$ such that $\alpha(M)\subseteq M'\cap M^\u$ and $\alpha(x) = (\alpha_k(x))_\u$ where $\alpha_k \in \text{Aut}(M)$.  Since $N$ has spectral gap in $M$, we have that $M'\cap M^\u\subseteq N'\cap M^\u=(N'\cap M)^\u=\R^\u$.  Consequently, $\alpha(M)\subseteq \R^\u$.  Letting $E_\R:M\to \R$ denote the canonical conditional expectation, it follows, after passing to a subsequence of $(\alpha_k)$, that $\|\alpha_k(x)-E_\R(\alpha_k(x))\|_2\to 0$ for all $x\in M$.  Set $\R_k:=\alpha_k^{-1}(R)\subseteq M$ and let $E_{\R_k}:M\to \R_k$ denote the conditional expectation.  It follows that $\|x-E_{\R_k}(x)\|_2\to 0$ for all $x\in M$.  Since each $\R_k$ is amenable, it follows from \cite[Corollary 2.5]{exotic} that $M$ is amenable, which is a contradiction.
\end{proof}

The following is an immediate consequence of Ioana's result and Fact \ref{IAClocal}:

\begin{cor}
If $M\equiv \R$, then $M$ is not strongly McDuff.
\end{cor}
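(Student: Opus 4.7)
The plan is to chain together the three ingredients assembled just above the statement: Example \ref{RIAC} (that $\R$ is IAC), Fact \ref{IAClocal} (that being IAC is preserved under elementary equivalence), and Proposition \ref{adrian} (Ioana's observation that strongly McDuff factors are not AC). The only nontrivial glue needed is the trivial observation that IAC implies AC, since an inner-implementing unitary in $M^{\u}$ can itself be taken to be a sequence of unitaries $(u_k)_{\u}$ with each $u_k\in M$, and conjugation by such $u_k$ defines automorphisms $\alpha_k=\Ad(u_k)\in\Aut(M)$ whose ultraproduct lands in $M'\cap M^{\u}$; in particular inner automorphisms are automorphisms.

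So, suppose $M\equiv \R$. First I would invoke Example \ref{RIAC} to say that $\R$ is IAC, then Fact \ref{IAClocal} to transfer this to $M$, concluding that $M$ is IAC. Next, by the observation above, $M$ is AC. Finally, I would argue by contradiction: if $M$ were strongly McDuff, Proposition \ref{adrian} would give that $M$ is not AC, contradicting what was just established. Hence $M$ is not strongly McDuff.

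There is no substantive obstacle here; the only thing to be careful about is articulating the (essentially formal) implication IAC $\Rightarrow$ AC, since the paper's definitions phrase IAC in terms of a single unitary $u\in M^{\u}$ with $uMu^*\subseteq M'\cap M^{\u}$, whereas AC is phrased in terms of a sequence of automorphisms $\alpha_k\in\Aut(M)$. I would spell this out in one sentence, noting that by Fact \ref{stable}(1) any such $u$ lifts to unitaries $u_k\in M$, and the sequence $\alpha_k:=\Ad(u_k)$ witnesses AC. After that the corollary is immediate.
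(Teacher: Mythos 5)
Your proposal is correct and is exactly the paper's argument: the paper derives this corollary as an immediate consequence of Proposition \ref{adrian} and Fact \ref{IAClocal} (together with Example \ref{RIAC} and the remark that IAC is an instance of AC with inner witnessing automorphisms). Your extra sentence spelling out IAC $\Rightarrow$ AC via Fact \ref{stable}(1) is a fine, if optional, elaboration of a point the paper already notes in passing.
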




We now present two further corollaries of Corollary \ref{Rsuperee}.  First, Corollary \ref{Rsuperee} and the Downward Lowenheim-Skolem immediately implies:

\begin{cor}\label{superextension}
Any embeddable factor is contained in a super McDuff factor.
\end{cor}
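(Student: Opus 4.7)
The plan is to combine Corollary \ref{Rsuperee} (every II$_1$ factor elementarily equivalent to $\R$ is super McDuff) with the Downward L\"owenheim-Skolem theorem (Fact \ref{DLS}). The idea is that any embeddable factor sits inside $\R^\u$, which is elementarily equivalent to $\R$, so we can find an intermediate elementary submodel that serves as the desired super McDuff extension.

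Concretely, given an embeddable II$_1$ factor $M$, I would first fix an embedding $j: M \hookrightarrow \R^\u$. By \L os' theorem, the diagonal embedding $\R \hookrightarrow \R^\u$ is elementary, so $\R^\u \equiv \R$. Next, I would apply Fact \ref{DLS} to the subset $j(M) \subseteq \R^\u$ to obtain an elementary subalgebra $N \preceq \R^\u$ with $j(M) \subseteq N$ (and, if one cares about size, with the same density character as $M$). By Fact \ref{elel}, $N \preceq \R^\u$ combined with $\R^\u \equiv \R$ yields $N \equiv \R$, so Corollary \ref{Rsuperee} gives that $N$ is super McDuff. Identifying $M$ with $j(M)$ produces the desired inclusion.

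The one point that needs justification is that $N$ is itself a factor, not merely a tracial von Neumann algebra. This is where one uses that the theory of II$_1$ factors is axiomatizable in continuous logic, so elementary subalgebras of a II$_1$ factor are again II$_1$ factors; alternatively, factoriality is transferred from $\R$ to $N$ through the chain $N \equiv \R^\u \equiv \R$. This is really the only obstacle, and it is a routine model-theoretic observation rather than a substantive difficulty. If one is content with the non-separable case, one can even skip L\"owenheim-Skolem entirely and take $N = \R^\u$, since $\R^\u \equiv \R$ is already super McDuff; the DLS step is only needed to keep the ambient super McDuff factor as small as $M$ itself.
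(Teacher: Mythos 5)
Your proposal is correct and follows exactly the route the paper intends: Corollary \ref{Rsuperee} combined with Downward L\"owenheim--Skolem (Fact \ref{DLS}) applied to the image of the factor inside $\R^\u$. The extra remark that the resulting elementary subalgebra is again a II$_1$ factor is a routine point the paper leaves implicit, and your justification of it is fine.
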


The original motivation for Question \ref{superconj} was to obtain the following corollary:  

\begin{cor}
$L((\mathbb F_2\times \mathbb Z)^{\breve{}})\not\equiv \R$.
\end{cor}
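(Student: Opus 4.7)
The plan is to derive this as a direct contrapositive of Corollary \ref{Rsuperee}: since every separable II$_1$ factor elementarily equivalent to $\R$ is super McDuff, it suffices to show that $L((\mathbb F_2\times\mathbb Z)^{\breve{}})$ fails to be super McDuff.

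It therefore remains to show non-super-McDuffness. First, I would review Zeller--Meier's construction $\breve{G}$ from \cite{ZM}, recalling that $L(\breve{G})$ is always a McDuff II$_1$ factor, and isolating the precise criterion in \cite{ZM} characterizing when $L(\breve{G})$ is super McDuff. The case $G = \mathbb F_2$ is Example \ref{superexamples}(4), where Zeller--Meier's proof crucially uses that $\mathbb F_2$ is ICC with trivial center. The expectation is that central elements in the base group $G$ obstruct super-McDuffness of $L(\breve{G})$ by giving rise to nonscalar elements of the center of the asymptotic centralizer.

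Second, I would apply this analysis to $G = \mathbb F_2\times\mathbb Z$, whose center is the nontrivial $\mathbb Z$-factor. Concretely, the generator of this central $\mathbb Z$, viewed as a unitary $u\in L(\mathbb F_2\times\mathbb Z)\subseteq L(\breve{G})$, should give rise via a diagonal/tail construction along the piled-up copies of $G$ inside $\breve{G}$ to a nonscalar element of $\mc{Z}(L(\breve{G})'\cap L(\breve{G})^\u)$. This produces a concrete nonscalar element in the center of the relative commutant, so $L(\breve{G})$ is not super McDuff, and Corollary \ref{Rsuperee} closes the argument.

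The main obstacle is the verification in the second step: tracking the action of the permutation-type transformations in $\breve{G}/G$ on $u$ to confirm that the produced central sequence is genuinely central in the relative commutant while also being nonscalar (rather than being ``washed out'' to a scalar by the $S_\infty^{\text{fin}}$-style shuffling built into $\breve{}$). One could instead bypass this computation by invoking Zeller--Meier's characterization directly, or else seek a tensor decomposition $L(\breve{G})\cong N\otimes P$ with $P$ having property Gamma but failing to be super McDuff, in which case Corollary \ref{marrcor} would apply.
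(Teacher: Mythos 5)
Your proposal follows exactly the paper's route: the paper deduces the corollary by citing Zeller--Meier's result from \cite{ZM} that $L((\mathbb F_2\times \mathbb Z)^{\breve{}})$ is not super McDuff and then applying Corollary \ref{Rsuperee} in contrapositive form. The only difference is that you sketch an (admittedly unverified) attempt to re-derive the non-super-McDuffness from the structure of the $\breve{}$-construction, whereas the paper simply quotes \cite{ZM} --- the fallback you yourself identify --- so the argument is complete once that citation is invoked.
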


In \cite{ZM}, it is shown that $L((\mathbb F_2\times \mathbb Z)^{\breve{}})$ is not super McDuff, whence the result follows from Corollary \ref{Rsuperee} above.  Since $L((\mathbb F_2\times \mathbb Z)^{\breve{}})$ is McDuff and IAC, before Corollary \ref{Rsuperee}, we did not know a method of distinguishing this factor from $\R$ in a first-order fashion.  Before the appearance of \cite{noniso}, not many non-elementarily equivalent II$_1$ factors were known, hence the interest in establishing the previous corollary.

\subsection{Are e.c. factors super McDuff?}

Recall that all e.c. (embeddable) factors are McDuff.  This raises:

\begin{question}\label{ecsuperquestion}
Are all e.c. (embeddable) factors super McDuff?
\end{question}

If the answer to the above question is ``no'' for embeddable factors, then there is an e.c. embeddable factor $M$ such that $M\not\equiv \R$.  This would be very interesting as, at present, it is not known if all e.c. (embeddable) factors are elementarily equivalent or not.  Another interesting consequence of a negative answer to the previous question would be that it is not true that being super McDuff is closed under existential substructure (as it is for elementary substructure), for there would be an e.c. embeddable non-super McDuff factor which is contained in (and thus e.c. in) a super McDuff factor by Corollary \ref{superextension} above.  

One might wonder if one could answer Question \ref{ecsuperquestion} by showing that e.c. (embeddable) factors are strongly McDuff and then quote Proposition \ref{superstrong}.  This is unfortunately not the case:

\begin{prop}\label{ecIAC}
E.c. (embeddable) factors are IAC.  Consequently, e.c. (embeddable) factors are never strongly McDuff.
\end{prop}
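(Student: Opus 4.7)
The plan is to produce a concrete separable tracial von Neumann algebra $\tilde N$ containing $M$ in which the IAC condition is witnessed by an explicit unitary, and then to pull that unitary back into $M^\u$ using the defining property of e.c.~(embeddable) factors. Specifically, I want to produce a unitary $u\in M^\u$ with $uMu^*\subseteq M'\cap M^\u$; the entire game is to locate such a unitary in a suitable finite extension of $M$.

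The natural candidate is $\tilde N:=(M\otimes M)\rtimes_\sigma\mathbb{Z}/2\mathbb{Z}$, where $\sigma$ is the flip automorphism $m_1\otimes m_2\mapsto m_2\otimes m_1$. Since $\sigma$ preserves the tensor product trace, the crossed product carries a canonical faithful tracial state, which I would justify by exhibiting the standard trace-preserving embedding $\tilde N\hookrightarrow M_2(M\otimes M)$: send $w\in M\otimes M$ to $\operatorname{diag}(w,\sigma(w))$ and send the generator $v$ of $\mathbb{Z}/2\mathbb{Z}$ to the off-diagonal flip matrix, then verify directly that $v\cdot\operatorname{diag}(w,\sigma(w))\cdot v=\operatorname{diag}(\sigma(w),w)$. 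Since $M_2(M\otimes M)\cong M\otimes M\otimes M_2(\mathbb{C})$ is $\R^\u$-embeddable whenever $M$ is (embeddability is stable under tensoring with $M_2(\mathbb{C})$ and under spatial tensor products of embeddable factors via $\R^\u\otimes\R^\u\hookrightarrow\R^\u$), the algebra $\tilde N$ is both a separable tracial von Neumann algebra and, in the embeddable case, $\R^\u$-embeddable. The crucial feature of the construction is that the implementing unitary $v$ satisfies $v(x\otimes 1)v^*=1\otimes x$ for every $x\in M$, so $vMv^*$ commutes with $M\otimes 1=M$ inside $\tilde N$.

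Next I would invoke the e.c.~hypothesis. The inclusion $M\hookrightarrow\tilde N$ given by $m\mapsto m\otimes 1$ exhibits $\tilde N$ as a separable (and in the embeddable case, $\R^\u$-embeddable) extension of $M$, so the definition of e.c.~yields an embedding $\iota:\tilde N\hookrightarrow M^\u$ whose restriction to $M$ is the diagonal embedding. Setting $u:=\iota(v)$, we obtain a unitary $u\in M^\u$, and for any $x,y\in M$ the computation $uxu^*=\iota(v(x\otimes 1)v^*)=\iota(1\otimes x)$ commutes with $y=\iota(y\otimes 1)$, since $1\otimes x$ already commutes with $y\otimes 1$ in $\tilde N$. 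Hence $uMu^*\subseteq M'\cap M^\u$, which is exactly the IAC condition.

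For the final assertion, I would note that IAC is a special case of AC: writing $u=(u_k)_\u$ with $u_k\in\mathcal{U}(M)$ via Fact \ref{stable}, the inner automorphisms $\alpha_k:=\Ad(u_k)\in\Aut(M)$ assemble into an automorphism $\alpha$ of $M^\u$ of the required ultrafilter-inner form with $\alpha(M)=uMu^*\subseteq M'\cap M^\u$. Combining with Proposition \ref{adrian} immediately produces the ``strongly McDuff'' conclusion. The only technical point I expect to require care is correctly setting up the trace-preserving realization and embeddability of $\tilde N$; the remaining steps are essentially formal manipulations of the e.c.~definition.
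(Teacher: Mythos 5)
Your proof is correct and follows essentially the same route as the paper: the paper also passes to the crossed product of $M\otimes M$ by the flip automorphism (using $\mathbb Z$ rather than $\mathbb Z/2\mathbb Z$, an immaterial difference), views $M$ as $M\otimes 1$ inside it, and applies the e.c.\ definition to pull the implementing unitary back into $M^\u$. Your extra care in verifying traciality and $\R^\u$-embeddability of the crossed product via the $M_2(M\otimes M)$ realization is a welcome explicit justification of a point the paper leaves implicit.
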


\begin{proof}
Suppose that $M$ is an e.c. (embeddable) factor.  Let $\alpha$ be the flip automorphism of $M\otimes M$, that is, $\alpha(x\otimes y)=y\otimes x$ for all $x,y\in M$, and let $N:=(M\otimes M)\rtimes_\alpha \mathbb Z$.  View $M$ as contained in $N$ via the map $x\mapsto x\otimes 1$.  Then fixing an embedding $N\hookrightarrow M^\u$ that restricts to the diagonal embedding $M\hookrightarrow M^\u$, we have a unitary $u\in M^\u$ such that $uMu^*\subseteq M'\cap M^\u$, whence $M$ is IAC.
\end{proof}





While there is an example of an e.c. embeddable super McDuff factor, namely $\R$, the following weakening of Question \ref{ecsuperquestion} in the non-embeddable case eludes us:

\begin{question}\label{existsecsuper}
Is there an e.c. super McDuff factor?
\end{question}

One may even ask whether or not for certain subclasses of the class of e.c. factors one has a positive solution to Question \ref{existsecsuper}.  For example:

\begin{question}\label{existsgenlocsuper}
Is there a generalized Jung, locally universal factor that is super McDuff?
\end{question}

Note that since a generalized Jung, locally universal factor $M$ is McDuff, we have that such an $M$ is super McDuff if and only if $(M,M)$ is a factorial commutant pair if and only if $(M,M)$ is a strong factorial commutant pair.

The previous question has bearing on Question \ref{nonembedJung}:

\begin{prop}
Suppose that there is a generalized Jung, locally universal, super McDuff factor $M$.  Then $M$ is a non-embeddable Jung factor.
\end{prop}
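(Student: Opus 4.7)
The plan is to establish the two claims separately: first that $M$ is non-embeddable, and second that $M$ is in fact Jung, not merely generalized Jung.

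For non-embeddability, I would argue by contradiction. If $M$ were embeddable, then since $M$ is a generalized Jung factor that embeds into $\R^\u$, Theorem \ref{yay} would force $M \cong \R$. But $M$ is assumed locally universal, so every separable tracial von Neumann algebra would embed into $\R^\u$, contradicting the recently announced negative solution to the Connes Embedding Problem. Thus $M$ cannot be embeddable.

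For the Jung property, I would exploit the convex structure on $\HOM(M, M^\u)$. Since $M$ is locally universal and has the generalized Jung property, $M$ is e.c.\ and in particular McDuff (as remarked in the paper just before the statement), so by \cite{brocap} and \cite{saa} the space $\HOM(M, M^\u)$ carries a convex-like structure. Because $M$ is super McDuff, $(M,M)$ is a factorial commutant pair, and since $M$ has the generalized Jung property, Lemma \ref{fcpairsfcpair} upgrades this to the conclusion that $(M,M)$ is a \emph{strong} factorial commutant pair; that is, \emph{every} embedding $\pi : M \hookrightarrow M^\u$ satisfies $\pi(M)' \cap M^\u$ being a factor. By Theorem \ref{exptchar}, this means every point of $\HOM(M, M^\u)$ is extreme.

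The key convex-geometric observation is then that a convex-like space in which every point is extreme must be a singleton: if it contained two distinct classes $[\pi_1]\ne[\pi_2]$, then for any $t\in(0,1)$ the class $t[\pi_1]+(1-t)[\pi_2]$ would be a non-extreme point, a contradiction. Hence $\HOM(M, M^\u)$ is a singleton, so any two embeddings $M \hookrightarrow M^\u$ are unitarily equivalent, i.e., $M$ has the Jung property. Combined with non-embeddability, $M$ is the desired non-embeddable Jung factor. The potentially delicate point is to confirm that the convex-like axioms from \cite{topdyn,brocap,saa} genuinely preclude distinct extreme points from coexisting (i.e., that nontrivial convex combinations of distinct classes really do produce new classes), but this is already implicit in the proof of Theorem \ref{eMfe}, which uses exactly this reasoning in the embeddable setting.
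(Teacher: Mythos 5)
Your proof is correct and follows essentially the same route as the paper: super McDuffness gives that $(M,M)$ is a factorial commutant pair, the generalized Jung property upgrades this to a strong factorial commutant pair, Theorem \ref{exptchar} then makes every point of $\HOM(M,M^\u)$ extreme so that the space is a singleton, and local universality rules out embeddability. The only (harmless) difference is that for non-embeddability the paper simply notes that a locally universal embeddable factor would contradict the failure of CEP, whereas you take a small detour through Theorem \ref{yay} to first conclude $M\cong \R$; both are one-line observations.
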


\begin{proof}
Since $M$ is generalized Jung and super McDuff, $(M,M)$ is a strong factorial commutant pair.  By Theorem \ref{exptchar} above, every point of $\HOM(M,M^\u)$ is extreme, whence it is a singleton.  In other words, $M$ is a Jung factor.  Since $M$ is locally universal, $M$ is non-embeddable. 
\end{proof}

\begin{cor}
If all e.c. factors are super McDuff, then for locally universal factors, the notions ``generalized Jung factor'' and ''Jung factor'' coincide.
\end{cor}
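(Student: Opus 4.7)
The plan is to assemble the machinery already in place in this section. One direction is immediate: since unitary equivalence (via an inner automorphism of $M^\u$) is a special case of automorphic equivalence, every Jung factor is automatically a generalized Jung factor, with no hypothesis required. So the content lies in the converse, where I start with a locally universal generalized Jung factor $M$ and must show it is Jung.

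First I would invoke the earlier lemma in Section \ref{dfo} which states that a locally universal factor with the generalized Jung property is e.c. Applying the standing hypothesis that all e.c. factors are super McDuff, it follows that $M$ is super McDuff, i.e., $(M,M)$ is a factorial commutant pair. Since $M$ is McDuff, the convex structure $\HOM(M,M^\u)$ of Section \ref{convex} is available, and by Theorem \ref{exptchar} the super McDuff condition says that the unitary equivalence class of the diagonal embedding is an extreme point.

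Next I would combine this with the generalized Jung hypothesis itself. By Lemma \ref{fcpairsfcpair}, a factorial commutant pair which is also a generalized Jung pair is automatically a strong factorial commutant pair. Hence every embedding $M \hookrightarrow M^\u$ has factorial relative commutant, so every point of $\HOM(M,M^\u)$ is extreme by Theorem \ref{exptchar}. This is precisely the hypothesis of the proposition immediately preceding the corollary: it shows $\HOM(M,M^\u)$ is a singleton, which is to say $M$ has the Jung property. (Indeed, the preceding proposition already packages this deduction, so the corollary follows by applying it directly to $M$.)

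There is no real obstacle here beyond bookkeeping; the work has been done in the earlier lemmas, Theorem \ref{exptchar}, and the proposition just above the corollary. The only subtle point worth flagging explicitly is the reliance on local universality for two distinct purposes: first to conclude e.c.-ness from the generalized Jung property, and second to conclude that we are in the McDuff regime where Brown's convex structure and Theorem \ref{exptchar} apply. Without local universality one cannot conclude $M$ is e.c., so the hypothesis that all e.c. factors are super McDuff could not be brought to bear.
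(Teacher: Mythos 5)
Your proposal is correct and follows essentially the same route as the paper: reduce to the proposition immediately preceding the corollary by using the lemma that locally universal generalized Jung factors are e.c.\ (hence, under the hypothesis, super McDuff), then upgrade to a strong factorial commutant pair via Lemma \ref{fcpairsfcpair} and conclude that $\HOM(M,M^\u)$ is a singleton by Theorem \ref{exptchar}. The only difference is that you spell out the intermediate steps that the paper leaves implicit.
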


Specializing Question \ref{existsgenlocsuper} even further:

\begin{question}\label{fingensuper}
Is there a finitely generic super McDuff factor?
\end{question}

Recall that we do not know if every generalized Jung locally universal factor is finitely generic but that these notions do coincide if the enforceable factor exists (Corollary \ref{enforceexists}).

By the Conjunction Lemma, a positive answer to the next question would imply a positive answer to the previous question.

\begin{question}
Is being super McDuff an enforceable property?
\end{question}

\begin{prop}
Suppose that the enforceable factor exists.  Then being super McDuff is enforceable if and only if some finitely generic factor is super McDuff. 
\end{prop}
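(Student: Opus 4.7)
The plan is to argue each direction using the Conjunction Lemma (Fact \ref{conjlemma}(1)), combined with the enforceability of being finitely generic (Fact \ref{fgenf}).

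The forward direction is immediate: if being super McDuff is enforceable, then applying the Conjunction Lemma to this property and to ``being finitely generic'' shows that their conjunction is enforceable, so in particular a finitely generic super McDuff factor exists.

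For the reverse direction, let $M_e$ denote the enforceable factor, which exists by hypothesis. First, I would note that $M_e$ is itself finitely generic, by applying the Conjunction Lemma to the two enforceable properties ``being isomorphic to $M_e$'' and ``being finitely generic''. The crux is then to establish that $M_e$ is super McDuff: once this is in hand, the strategy enforcing isomorphism to $M_e$ automatically enforces super McDuff (since super McDuff is an isomorphism invariant), completing the proof. More generally, the same two-step argument yields the useful principle that, once $M_e$ exists, an isomorphism-invariant property $P$ is enforceable if and only if $M_e$ itself has $P$, so that the whole question reduces to checking super McDuffness of $M_e$.

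The main obstacle will be precisely this step of showing $M_e$ is super McDuff given only that \emph{some} finitely generic factor $M$ is super McDuff. Via the proposition preceding Corollary \ref{enforceexists}, applied to the embedding $M_e \hookrightarrow M$ (which exists because $M$ is e.c.\ by Fact \ref{fgec} and $M_e$ embeds into every e.c.\ factor), one obtains $M \equiv M_e$; but since super McDuffness is not preserved under elementary equivalence in general, this alone does not suffice. My plan would be to strengthen the elementary equivalence to an isomorphism $M \cong M_e$ by invoking the separable saturation of finitely generic factors, a standard feature of the Henkin/forcing construction, together with our standing CH assumption, under which any two separable, separably saturated, elementarily equivalent tracial von Neumann algebras are isomorphic. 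This would give $M_e \cong M$, and hence $M_e$ would inherit super McDuffness from $M$.
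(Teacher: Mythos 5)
Your forward direction and your reduction of the converse to the single claim ``the enforceable factor $\mathcal E$ is super McDuff'' are both correct and match the paper's strategy. The gap is in your final step. You propose to upgrade $M\equiv\mathcal E$ to $M\cong\mathcal E$ by asserting that finitely generic factors are separably saturated and then applying CH. This is false: a separable II$_1$ factor is never separably saturated. Indeed, a finitely generic factor is e.c., hence locally universal, and the paper's proof that $S_\omega^M$ is non-separable shows (via the family $(N_\alpha)$ of \cite{NPS}) that no separable model of its theory can even be approximately $\omega$-saturated, since it would have to embed continuum many pairwise ``non-co-embeddable'' factors. Moreover, the conclusion you are aiming for need not hold: the existence of the enforceable factor does not imply that every finitely generic factor is isomorphic to it (all finitely generic factors are elementarily equivalent, but uniqueness up to isomorphism is exactly the kind of statement the Dichotomy Theorem is about, not a consequence of it).

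The correct bridge is one you already have in hand but dismissed. You observe that super McDuffness is not preserved under elementary \emph{equivalence}; but it \emph{is} preserved downward under elementary \emph{embeddings} (Fact~\ref{braddisaac}(2)). Since $\mathcal E$ is finitely generic, it has the generalized Jung property, and since $M\equiv\mathcal E$, the embedding $\mathcal E\hookrightarrow M$ is elementary by item (3) of the theorem characterizing the generalized Jung property. Thus $\mathcal E\preceq M$ with $M$ super McDuff, so $\mathcal E$ is super McDuff, and enforcing ``isomorphic to $\mathcal E$'' enforces super McDuffness. This is exactly the paper's argument; no saturation or isomorphism is needed.
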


\begin{proof}
The forward direction is immediate.  For the converse, let $\mathcal E$ be the enforceable factor and suppose that $M$ is a finitely generic super McDuff factor.  Then $\mathcal E$ embeds in $M$ (again by \cite[Proposition 6.19 and Remark 6.29]{enforceable}) and thus $\mathcal E\preceq M$ since $\mathcal E$ is finitely generic.  By Fact \ref{braddisaac}(2), it follows that $\mathcal E$ is super McDuff, whence being super McDuff is an enforceable property.
\end{proof}

\begin{question}\label{superchain}
Is the union of a chain of super McDuff factors once again super McDuff?
\end{question}

\begin{remark}
If being super McDuff is actually axiomatizable, that is, closed under ultraproducts, but Question \ref{ecsuperquestion} has a negative answer, then Question \ref{superchain} has a negative answer.\footnote{An axiomatizable property closed under unions of chains is axiomatized by $\forall\exists$-sentences, that is, sentences of the form $\sup_x\inf_y \varphi(x,y)$ where $\varphi$ has no quantifiers; see \cite[Proposition 2.4.4.(3)]{munster}.  If this property further satisfies that every factor embeds into a factor with this property, as we established for super McDuffness in Corollary \ref{superextension}, then every e.c. factor has this property; see \cite{ECfactors}.}
\end{remark}

We can give a partial answer to Question \ref{superchain}, but we first need the following technical result:

\begin{prop}\label{technical}
Let $\left\{M_n\right\}$ be an increasing sequence of separable subfactors of $M^\u$, each with factorial relative commutant.  Let $P$ denote the union of the $M_n$'s.  Then $P'\cap M^\u$ is a factor.
\end{prop}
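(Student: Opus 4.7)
The plan is to argue by contradiction using separable saturation of $M^\u$. Suppose $P'\cap M^\u$ is not a factor; then $\mc{Z}(P'\cap M^\u)$ contains a non-scalar self-adjoint element, and passing to one of its spectral projections (which remains in the center) we may fix a projection $e\in \mc{Z}(P'\cap M^\u)$ with $0<\tau(e)<1$. Set $\epsilon:=\sqrt{2\min(\tau(e),1-\tau(e))}>0$, and note that $e\in M_n'\cap M^\u$ for every $n$.

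The key step is to produce, for every $n$, a unitary $u_n\in M_n'\cap M^\u$ with $\|[e,u_n]\|_2\geq\epsilon$, where crucially $\epsilon$ does not depend on $n$. Since $M_n'\cap M^\u$ is a II$_1$ factor containing $e$, Fact \ref{sametrace}(3) allows us to conjugate $e$ inside $M_n'\cap M^\u$ to a projection $f_n$ of the same trace, chosen so that $f_n\perp e$ when $\tau(e)\leq 1/2$ (respectively, with $1-f_n$ orthogonal to $1-e$ when $\tau(e)>1/2$). A direct $\|\cdot\|_2$-computation then yields $\|[e,u_n]\|_2=\sqrt{2\min(\tau(e),1-\tau(e))}=\epsilon$.

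I then invoke separable saturation. Fix a countable $\|\cdot\|_2$-dense subset $P_0\subseteq P$; because $P=\bigcup_n M_n$, each $y\in P_0$ lies in some $M_n$. Consider the partial type over the separable parameter set $P_0\cup\{e\}$ (with the variable ranging over the operator-norm unit ball) asserting that $x$ is a unitary, $\|[x,y]\|_2=0$ for every $y\in P_0$, and $\|[e,x]\|_2\geq\epsilon$. Any finite subcollection of these conditions involves only parameters $y_1,\dots,y_k$ lying in a single $M_N$, and the unitary $u_N$ from the previous step satisfies this subcollection exactly. Separable saturation then furnishes a unitary $u\in M^\u$ commuting with all of $P_0$ (hence with $P$ by density) and satisfying $\|[e,u]\|_2\geq\epsilon>0$. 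Thus $u\in P'\cap M^\u$ fails to commute with $e$, contradicting $e\in\mc{Z}(P'\cap M^\u)$.

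The main obstacle is securing the uniform lower bound $\|[e,u_n]\|_2\geq\epsilon$: factoriality of each $M_n'\cap M^\u$ on its own only yields a non-commuting unitary at each stage, with no a priori control as $n$ varies, whereas saturation requires a single fixed $\epsilon$ to merge these unitaries into a common witness inside $P'\cap M^\u$. The quantitative content of Fact \ref{sametrace}(3), giving a lower bound that depends only on $\tau(e)$, is precisely what supplies this uniformity.
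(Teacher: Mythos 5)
Your proof is correct, but it takes a genuinely different route from the paper's. The paper argues directly: by Fact \ref{sametrace} it suffices to conjugate any two equal-trace projections $p,q\in P'\cap M^\u$ by a unitary of $P'\cap M^\u$; each factor $M_n'\cap M^\u$ supplies an intertwiner $u_{m_n}$, and these are merged into a single intertwiner by an explicit diagonal argument over representatives in $\ell^\infty(M)$ (for each $k$ one picks the best of $u_{m_1}^{(k)},\dots,u_{m_k}^{(k)}$ against a weighted sum of commutator norms). You instead argue by contradiction from a nontrivial central projection $e$, and your key extra ingredient is the uniform quantitative bound $\|[e,u_n]\|_2=\sqrt{2\min(\tau(e),1-\tau(e))}$, obtained by conjugating $e$ to an orthogonal equal-trace projection inside each $M_n'\cap M^\u$; that uniformity is exactly what lets separable saturation glue the $u_n$ into a single unitary of $P'\cap M^\u$ failing to commute with $e$. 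The two gluing mechanisms are at bottom the same phenomenon (the paper's diagonalization is a hands-on instance of countable saturation), but your route requires the quantitative step, which the paper sidesteps by transporting the same pair $p,q$ through every stage, while you sidestep all the bookkeeping with lifts and representatives. One small caveat: the hypothesis only says each $M_n'\cap M^\u$ is a factor, not that it is type II$_1$, so Fact \ref{sametrace}(3) does not apply verbatim; but equal-trace projections in any finite factor are unitarily conjugate and an orthogonal equal-trace copy of $e$ still exists when $\tau(e)\le 1/2$ (and if some $M_n'\cap M^\u=\mb{C}$ then $P'\cap M^\u=\mb{C}$ and there is nothing to prove), so your argument goes through.
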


\begin{proof}
By Fact \ref{sametrace}, it suffices to show that if $p$ and $q$ are projections in $P'\cap M^\u$ of the same trace, then there is a unitary $u \in P' \cap M^\u$ such that $p = u^*qu.$ Let $p,q \in P'\cap M^\u$ be two projections of the same trace.  For each $n \in \mathbb N$ we have $p,q \in M_n'\cap M^\u$.  So for each $n \in \mathbb N$, there exists a unitary $u_n \in M_n'\cap M^\u$ such that $qu_n = u_nq$.  Let $\left\{x_j\right\}$ be a countable generating subset of $P$ such that $x_j \in M_{n_j}$ for some $n_j \in \mathbb{N}$.  Let $p = (p^{(k)})_\u, q = (q^{(k)})_\u, u_n = (u_n^{(k)})_\u,$ and $x_j = (x_j^{(k)})_\u$.  By Fact \ref{stable}, we can and do choose $p^{(k)}$ and $q^{(k)}$ to be projections all of the same trace and $u^{(k)}_{m_n}$ to be unitaries for every $k,n \in \mb{N}$.  Form a(n increasing) sequence $\left\{m_n\right\}$ such that for each $n\in \mathbb{N}, \left\{x_1,\dots, x_n\right\}\subset M_{m_n}$.  Thus 
\[||[x_j,u_{m_n}]||_2  = \lim_{k\rightarrow \u} ||[x_j^{(k)},u_{m_n}^{(k)}]||_2= 0\]for every $1 \leq j \leq n$.  Also, 
\[||qu_{m_n} - u_{m_n}p||_2 = \lim_{k\rightarrow \u} ||q^{(k)}u_{m_n}^{(k)} - u_{m_n}^{(k)}p^{(k)}||_2 = 0.\]  So, for $n\in \mathbb{N}$, 
\[\lim_{k\rightarrow \u} \sum_{j \in \mathbb{N}} 2^{-j} ||[x_j^{(k)},u_{m_n}^{(k)}]||_2 \leq 2^{-n+1}.\]

Now we produce a single unitary $u = (u^{(k)})_\u$ in $P'\cap M^\u$ that intertwines $p$ and $q$.  For $k \in \mathbb{N}$, let $u^{(k)} \in \left\{u_{m_1}^{(k)},\dots,u_{m_k}^{(k)}\right\}$ be such that 
\begin{align*}
\left(\sum_{j\in \mathbb{N}} 2^{-j} ||[x_j^{(k)},u^{(k)}]||_2\right) + ||q^{(k)}u^{(k)} - u^{(k)}p^{(k)}||_2 \\
= \min_{1\leq n\leq k} \left[\left(\sum_{j\in \mathbb{N}} 2^{-j}||[x_j^{(k)},u_{m_n}^{(k)}]||_2\right) + ||q^{(k)}u_{m_n}^{(k)} - u_{m_n}^{(k)}p^{(k)}||_2\right].
\end{align*}

Then, for every $n \in \mathbb{N}$, we have 
\begin{align*}
&\lim_{k\rightarrow \u} \left(\sum_{j\in\mathbb{N}} 2^{-j}||[x_j^{(k)},u^{(k)}]||_2\right) + ||q^{(k)}u^{(k)} - u^{(k)}p^{(k)}||_2\\
&\leq \lim_{k\rightarrow \u} \left(\sum_{j \in \mathbb{N}} 2^{-j}||[x_j^{(k)},u_{m_n}^{(k)}]||_2\right) + ||q^{(k)}u_{m_n}^{(k)} - u_{m_n}^{(k)}p^{(k)}||_2\\
&\leq 2^{-n+1}.
\end{align*}

Hence,
\begin{align*}
&\left(\sum_{j\in\mathbb{N}} 2^{-j}||[x_j,u]||_2\right) + ||qu - up||_2 \\
&=\lim_{k\rightarrow \u} \left(\sum_{j\in\mathbb{N}} 2^{-j}||[x_j^{(k)},u^{(k)}]||_2\right) + ||q^{(k)}u^{(k)} - u^{(k)}p^{(k)}||_2\\
&=0,
\end{align*}
and it follows that $u \in P'\cap M^\u$ and $u^*qu = p$.
\end{proof}

\begin{cor}
Suppose that $M_1\preceq M_2\preceq \cdots$ is an elementary chain of super McDuff II$_1$ factors.  Let $P$ denote the union of the $M_n$'s.  Then $P$ is also super McDuff.
\end{cor}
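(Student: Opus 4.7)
The plan is to reduce this to Proposition \ref{technical} applied inside $P^\u$, where $P = \bigcup_n M_n$. The main work will be showing that each $M_n$ has factorial relative commutant \emph{in $P^\u$}, not merely in its own ultrapower.

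First I would note that by Fact \ref{elel}(3), each canonical inclusion $M_n \hookrightarrow P$ is elementary, i.e., $M_n \preceq P$. Composing with the (elementary) diagonal embedding $P \hookrightarrow P^\u$ and using Fact \ref{elel}(2), we obtain $M_n \preceq P^\u$ for every $n$. In particular, $P^\u$ is an elementary extension of $M_n$, and being an ultrapower with respect to a nonprincipal ultrafilter on $\mathbb N$ under CH, it is separably saturated. By Fact \ref{braddisaac}(1), since $M_n$ is super McDuff, this forces $M_n' \cap P^\u$ to be a factor for every $n$.

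Next I would invoke Proposition \ref{technical} directly, taking the ambient factor there to be $P$ itself (so the ultrapower $M^\u$ of Proposition \ref{technical} is $P^\u$). The hypotheses are satisfied: $\{M_n\}$ is an increasing sequence of separable subfactors of $P^\u$ (each $M_n$ is separable as an element of a countable elementary chain whose union is $P$), and by the previous paragraph each $M_n$ has factorial relative commutant $M_n' \cap P^\u$ in $P^\u$. The conclusion of Proposition \ref{technical} then says that the union's relative commutant $\bigl(\bigcup_n M_n\bigr)' \cap P^\u = P' \cap P^\u$ is a factor, which is exactly the statement that $P$ is super McDuff.

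There is no real obstacle here, since Proposition \ref{technical} does the heavy lifting; the only conceptual point is the transfer of super McDuffness from \emph{intrinsic} factoriality ($M_n' \cap M_n^\u$) to factoriality inside the larger ultrapower ($M_n' \cap P^\u$), which is handled cleanly by Fact \ref{braddisaac}(1) together with elementarity of $M_n \preceq P^\u$.
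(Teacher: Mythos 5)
Your proposal is correct and follows essentially the same route as the paper: elementarity of the chain gives $M_n\preceq P^\u$, Fact \ref{braddisaac}(1) upgrades super McDuffness of $M_n$ to factoriality of $M_n'\cap P^\u$, and Proposition \ref{technical} finishes. The only difference is that you spell out the intermediate steps (via Facts \ref{elel}(2),(3)) more explicitly than the paper does.
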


\begin{proof}
Since each $M_n$ is super McDuff, using Fact \ref{braddisaac}, we see that the relative commutant $M_n'\cap P^\u$ is a factor. Proposition \ref{technical}, it follows that $P'\cap P^\u$ is a factor, that is, $P$ is super McDuff.
\end{proof}

As further partial evidence that the answer to Question \ref{superchain} could be \tql yes,\tqr\ recall that Corollary \ref{itstrongsuper} says that any infinite tensor product of strongly McDuff factors (a subclass of super McDuff factors) is super McDuff.  On the other hand, the fact that $\bigotimes_\mb{N} L(\mb{F}_2)$ is super McDuff (see \cite{DL}) and more generally Proposition \ref{itfullsuper} show non-super McDuffness is not preserved by increasing chains.




For the last results of this section, we need to introduce the class of infinitely generic factors:

\begin{fact}(\cite[Propositions 5.7, 5.10, and 5.14]{ECfactors})
There is a class $\mathcal G$ of II$_1$ factors satisfying the following three properties:
\begin{enumerate}
    \item Every II$_1$ factor is contained in an element of $\mathcal G$.
    \item If $M_1,M_2\in \mathcal G$ and $M_1\subseteq M_2$, then $M_1\preceq M_2$.
    \item $\mathcal G$ is the maximum class with properties (1) and (2).
\end{enumerate}
\end{fact}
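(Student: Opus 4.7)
The class $\mathcal{G}$ to be constructed is the class of \emph{infinitely generic} II$_1$ factors, a notion going back to Robinson's forcing and adapted to the continuous setting in \cite{ECfactors}. My plan is to build $\mathcal{G}$ via an iterated generic extension operation and then verify the three desiderata.

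First I would define a generic extension operation: given a II$_1$ factor $M$ and a sufficiently large cardinal $\kappa$, construct a chain $(M_\alpha)_{\alpha<\kappa}$ with $M_0 = M$, with $M_{\alpha+1}$ an e.c. extension of $M_\alpha$ (using Fact \ref{ecfacts}(2)) chosen via bookkeeping through all existential formulas $\inf_y \varphi(\bar a, y)$ with parameters from $M_\alpha$ in such a way that any such formula whose value is $0$ in some extension already has value $0$ in $M_{\alpha+1}$, and $M_\lambda = \bigcup_{\alpha<\lambda} M_\alpha$ at limit stages. Setting $M^* = \bigcup_{\alpha<\kappa} M_\alpha$ yields an e.c. factor (Fact \ref{ecfacts}(4)) with additional ``generic'' closure properties. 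Call $N$ \emph{infinitely generic} if $N \preceq N^*$ for every generic extension $N^*$ of $N$, and let $\mathcal{G}$ denote the class of infinitely generic factors.

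Property (1) follows by iterating: for any II$_1$ factor $N$, enough iterations of the generic extension operation stabilize to yield a member of $\mathcal{G}$ containing $N$. Property (3), maximality, follows because any class $\mathcal{G}'$ satisfying (1) and (2) must have the feature that whenever $M \in \mathcal{G}'$, embedding $M$ into a generic extension $N$ and then by (1) into some $N' \in \mathcal{G}'$ forces $M \preceq N'$ by (2), which after tracking through the construction is precisely the genericity condition on $M$; hence $\mathcal{G}' \subseteq \mathcal{G}$.

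The main obstacle is property (2): if $M_1 \subseteq M_2$ both lie in $\mathcal{G}$, then $M_1 \preceq M_2$. I would prove this by a continuous-logic Tarski--Vaught test, showing inductively on quantifier complexity that $\varphi(\bar a)^{M_1} = \varphi(\bar a)^{M_2}$ for all formulas $\varphi$ and parameter tuples $\bar a \in M_1$. The existential case follows because both factors are e.c.\ in any extension; the universal case is the subtle step, where one must argue that if $\sup_y \varphi(y, \bar a)^{M_2} > r$ then one can trace back through the generic construction of $M_1$ to find an approximate witness already in $M_1$, using the interplay between genericity (which realizes all satisfiable existential types) and the iterated e.c. property. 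Calibrating this interplay across continuous approximations at each quantifier alternation is the crux of the matter, and is carried out in Propositions 5.7, 5.10, and 5.14 of \cite{ECfactors}.
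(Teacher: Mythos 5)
The first thing to say is that the paper does not prove this statement at all: it is imported wholesale from \cite{ECfactors} (Propositions 5.7, 5.10, and 5.14), so the only meaningful comparison is with the construction of infinitely generic structures carried out there. You have correctly identified the intended class and the right general strategy (Robinson's infinite forcing adapted to continuous logic), but your sketch has a genuine gap at precisely the decisive step, and at that step you defer back to the very propositions being proved.

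Concretely: the ``generic extension operation'' you describe only bookkeeps \emph{existential} conditions, so all it is guaranteed to produce is an e.c.\ factor (this is just Fact \ref{ecfacts}(2) and (4) again), and e.c.-ness does not deliver property (2): the paper records as open whether all e.c.\ factors are even pairwise elementarily equivalent, and if $M_1\subseteq M_2$ are e.c.\ with $M_1\not\equiv M_2$ then certainly $M_1\not\preceq M_2$. The ``additional generic closure properties'' you invoke are therefore not a by-product of your construction --- they are the entire content of the theorem. In the actual argument one first defines, by recursion on \emph{all} formulas, an infinite forcing value $\varphi\mapsto\varphi^M_{\Vdash}$, proves it is monotone under extension, and declares $M$ infinitely generic when forcing value equals truth value for every formula with parameters from $M$; property (2) is then immediate from monotonicity (no Tarski--Vaught induction on quantifier complexity is run), and property (1) comes from a chain construction that bookkeeps forcing conditions of arbitrary quantifier complexity, not just existential ones. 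Your nonstandard definition ($N\preceq N^*$ for every ``generic extension'' $N^*$) leaves unproved exactly the point that matters for (1), namely that the output of the iteration lies in $\mathcal G$; ``enough iterations stabilize'' is where that proof should be. Finally, the maximality argument is incomplete as stated: from $M\in\mathcal G'$, $M\subseteq N\in\mathcal G$, $N\subseteq N'\in\mathcal G'$, and $M\preceq N'$ one cannot conclude anything about $M$ versus $N$; one needs the full alternating chain $M\subseteq N_0\subseteq M_1\subseteq N_1\subseteq\cdots$, closure of $\mathcal G$ under unions of chains of its members, and closure of $\mathcal G$ under elementary substructure (Fact \ref{infgenfacts}(2)) to land $M$ inside $\mathcal G$.
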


Elements of $\mathcal G$ are called \textbf{infinitely generic} II$_1$ factors.

\begin{fact}(\cite[Proposition 5.11, Proposition 5.17, and Lemma 5.20]{ECfactors})\label{infgenfacts}
\begin{enumerate}
    \item Infinitely generic factors are e.c.
       \item If $M\preceq N$ and $N$ is infinitely generic, then so is $N$.
    \item If $M_1$ and $M_2$ are infinitely generic, then $M_1\equiv M_2$.

\end{enumerate}
\end{fact}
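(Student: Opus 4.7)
My plan is to derive all three items purely from the maximality characterization of the class $\mathcal{G}$ (properties (1)--(3) above), rather than redoing the chain-of-extensions construction that produces $\mathcal{G}$ in the first place.

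For (1), I would fix $M\in\mathcal{G}$ and an arbitrary II$_1$ factor extension $M\subseteq N$. Property (1) embeds $N$ in some $M'\in\mathcal{G}$, and property (2) then forces $M\preceq M'$. Combining elementarity with separable universality of ultrapowers (Fact \ref{ultrauniv}(2)) produces an embedding $M'\hookrightarrow M^{\u}$ extending the diagonal $M\hookrightarrow M^{\u}$; restricting it to $N$ gives an embedding $N\hookrightarrow M^{\u}$ that fixes $M$ pointwise, which is precisely the semantic criterion for $M$ being e.c.\ in $N$.

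For (2), the intended conclusion is that $M$ is infinitely generic (the printed ``so is $N$'' appears to be a typo). I would verify that $\mathcal{G}\cup\{M\}$ continues to satisfy properties (1) and (2) and then invoke maximality (3) to deduce $M\in\mathcal{G}$. Property (1) is inherited from $\mathcal{G}$. For the two new cases of (2): if $M_1\in\mathcal{G}$ sits inside $M$, then $M_1\subseteq M\subseteq N$ with $M_1,N\in\mathcal{G}$ gives $M_1\preceq N$; combined with $M\preceq N$, the type of any tuple from $M_1$ is the same whether computed in $M_1$, $M$, or $N$, giving $M_1\preceq M$. If instead $M\subseteq M_2\in\mathcal{G}$, I would form the amalgamated free product $P:=N *_M M_2$ and embed $P\subseteq M_3$ for some $M_3\in\mathcal{G}$ via property (1); then $N\preceq M_3$ and $M_2\preceq M_3$, whence $M\preceq N\preceq M_3$ forces $M\preceq M_3$, and since also $M\subseteq M_2\preceq M_3$, the same tuple-type argument yields $M\preceq M_2$.

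For (3), given $M_1,M_2\in\mathcal{G}$, I would form a common II$_1$ factor extension $P:=M_1*M_2$ (free product over $\mathbb{C}$), embed $P$ into some $Q\in\mathcal{G}$ via (1), and apply (2) to get $M_1\preceq Q\succeq M_2$, hence $M_1\equiv M_2$. The main subtlety throughout is ensuring that the amalgamated free products used land in the ambient class over which $\mathcal{G}$ is defined and that the Tarski--Vaught-style transitivity step (namely that $M\subseteq M_2\preceq M_3$ together with $M\preceq M_3$ forces $M\preceq M_2$) is justified in continuous logic; both of these I expect to handle with standard machinery.
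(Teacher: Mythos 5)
The paper does not actually prove this Fact: it is quoted verbatim from \cite{ECfactors} (Propositions 5.11, 5.17 and Lemma 5.20), where the three items are established directly from the infinite-forcing construction that produces the class $\mathcal G$. Your route is genuinely different: you take the abstract characterization of $\mathcal G$ recorded in the preceding Fact (every factor embeds into a member of $\mathcal G$; nested members are elementary; $\mathcal G$ is maximum with these properties) as a black box and derive all three items from it. This is a legitimate and rather clean alternative --- it buys independence from the forcing machinery, at the cost of leaning on the maximality clause (your $\mathcal G\cup\{M\}$ argument for item (2)) and on amalgamated free products to manufacture common extensions. Your individual steps check out: the Tarski--Vaught-style transitivity ($M\subseteq M_2\preceq M_3$ and $M\preceq M_3$ imply $M\preceq M_2$) is valid in continuous logic by comparing values of formulas at tuples from the smaller algebra, and the tracial amalgamated free product $N*_M M_2$ is a tracial von Neumann algebra containing both factors compatibly over $M$, which can then be pushed into a factor and hence into a member of $\mathcal G$. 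You are also right that the printed ``then so is $N$'' in item (2) is a typo for ``then so is $M$.''

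Two small points to tighten. First, in item (1) you cite separable universality (Fact \ref{ultrauniv}(2)) to get an embedding $M'\hookrightarrow M^{\u}$ \emph{extending the diagonal on $M$}; that Fact only yields some elementary embedding of $M'$. To get one over $M$ you should instead realize the elementary diagram of $M'$ over $M$ in $M^{\u}$, which uses separable saturation of $M^{\u}$ together with $M\preceq M'$ --- standard, but it is the saturation Fact, not the universality one, doing the work. Second, your amalgamation steps are unproblematic for the absolute class of II$_1$ factors (the setting of the stated Fact), but they would not transfer verbatim to the relativized class of infinitely generic \emph{embeddable} factors mentioned immediately afterward, since it is not known in general that amalgamated free products over arbitrary common subalgebras preserve $\R^{\u}$-embeddability; the free product over $\mathbb C$ used in item (3) is fine in both settings.
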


As with finitely generic factors, it makes sense to relativize and consider the class of infinitely generic embeddable factors.  

The following fact is well-known in the discrete case.  Since it has yet to be observed in the continuous case, we include a proof here:

\begin{lem}\label{infgenchain}
The union of a chain of infinitely generic (embeddable) factors is infinitely generic (embeddable).
\end{lem}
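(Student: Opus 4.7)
The plan is to exhibit an infinitely generic factor $N$ with $M\preceq N$; then Fact \ref{infgenfacts}(2) will force $M$ itself to be infinitely generic.

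First I would upgrade the given chain $M_1\subseteq M_2\subseteq\cdots$ to an elementary chain. Since each successive pair lies in the class $\mathcal G$ of infinitely generic factors, property (2) of $\mathcal G$ gives $M_n\preceq M_{n+1}$ for every $n$. Fact \ref{elel}(3) then yields $M_n\preceq M$ for all $n$, where $M=\bigcup_n M_n$.

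Next, invoking property (1) of $\mathcal G$, I would fix some $N\in\mathcal G$ containing $M$. For every $n$, the inclusion $M_n\subseteq N$ sits between two members of $\mathcal G$, so property (2) of $\mathcal G$ again gives $M_n\preceq N$. Given any formula $\varphi(x)$ and any tuple $a$ from $M$, the tuple $a$ lies in some $M_n$, whence
\[\varphi(a)^M=\varphi(a)^{M_n}=\varphi(a)^N\]
(using $M_n\preceq M$ and $M_n\preceq N$ respectively). Hence $M\preceq N$, and Fact \ref{infgenfacts}(2) finishes the argument.

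For the embeddable case, I would run the identical argument inside the relativized class of infinitely generic $\R^\u$-embeddable factors; the only additional point to verify is that $M$ itself is $\R^\u$-embeddable, which holds because the chain is elementary (by the first paragraph above) and embeddability is preserved under elementary equivalence. I do not anticipate any real obstacle here, as the combination of the maximum-class property of $\mathcal G$ with the elementary chain property does essentially all the work.
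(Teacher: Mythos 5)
Your argument is correct and is essentially the paper's own proof: both pass to an infinitely generic $N\supseteq M$ via property (1) of $\mathcal G$, use property (2) to get $M_n\preceq M$ and $M_n\preceq N$, deduce $M\preceq N$ by checking formulas on tuples living in some $M_n$, and conclude with Fact \ref{infgenfacts}(2). Your extra remarks (explicitly invoking Fact \ref{elel}(3) for the chain, and noting why $M$ remains embeddable in the relativized case) are correct fillings-in of steps the paper leaves implicit.
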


\begin{proof}
Let
$$M_0\subseteq M_1\subseteq M_2\subseteq \cdots$$ be a chain of infinitely generic (embeddable) factors with union $M$.  Note that this chain is elementary, whence $M_k\preceq M$ for each $k$. Let $N$ be an infinitely generic factor containing $M$.  By Fact \ref{infgenfacts}(2), it suffices to show that $M\preceq N$.  To see this, it suffices to show, for any formula $\varphi(x)$, any $k$, and any $a\in M_k$, that $\varphi(a)^{M}=\varphi(a)^N$.  Since $M_k\preceq M$, we have that $\varphi(a)^M=\varphi(a)^{M_k}$.  Since $M_k\preceq N$, we have that $\varphi(a)^N=\varphi(a)^{M_k}$.
\end{proof}

A positive answer to Question \ref{superchain} shows that Question \ref{ecsuperquestion} has a positive answer for a large collection of e.c. (embeddable) factors:

\begin{prop}
Suppose that being super McDuff is closed under unions of chains.  Then every infinitely generic (embeddable) factor is super McDuff.
\end{prop}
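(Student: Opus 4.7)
My approach is to show that any infinitely generic (embeddable) factor $M$ embeds elementarily into a super McDuff factor, whereupon Fact \ref{braddisaac}(2) forces $M$ itself to be super McDuff. The elementary extension will be produced as the union of an alternating chain whose even terms are infinitely generic factors (making the chain elementary and its union an elementary extension of $M$) and whose odd terms are super McDuff factors (making the union super McDuff via the standing hypothesis).

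Concretely, starting from $M_0 := M$, I will construct recursively a chain
$$M_0 \subseteq N_0 \subseteq M_1 \subseteq N_1 \subseteq M_2 \subseteq \cdots$$
of separable (embeddable) factors such that each $M_n$ is infinitely generic and each $N_n$ is super McDuff. Given $M_n$, first I produce a separable super McDuff (embeddable) $N_n \supseteq M_n$: in the embeddable case this is precisely Corollary \ref{superextension}; in the general case I take a separable elementary substructure (via Fact \ref{DLS}) of $(M_n * L(\mathbb{F}_2)) \otimes \R$ containing $M_n$, appealing to Proposition \ref{superstrong} together with the standard fact that tracial free products of diffuse II$_1$ factors lack property Gamma. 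Next, I apply property (1) of the class $\mathcal{G}$ followed by Fact \ref{DLS} and Fact \ref{infgenfacts}(2) to produce a separable infinitely generic (embeddable) factor $M_{n+1} \supseteq N_n$.

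Setting $M_\infty := \bigcup_n M_n = \bigcup_n N_n$, property (2) of $\mathcal{G}$ gives $M_n \preceq M_{n+1}$ at each stage, so $\{M_n\}$ is an elementary chain and Fact \ref{elel}(3) yields $M \preceq M_\infty$. Simultaneously $\{N_n\}$ is a chain of super McDuff factors with union $M_\infty$, so the standing hypothesis forces $M_\infty$ to be super McDuff, whence $M$ is super McDuff by Fact \ref{braddisaac}(2). The step I expect to be the main obstacle is the construction of $N_n$ in the non-embeddable case---showing every separable factor embeds in a separable super McDuff factor. The embeddable case is handled transparently by Corollary \ref{superextension}, but the general case hinges on the (standard but nontrivial) non-Gamma property of free products of diffuse II$_1$ factors, without which Proposition \ref{superstrong} cannot be invoked; everything else in the argument is a direct application of the facts already collected in the excerpt.
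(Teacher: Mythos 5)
Your proof is correct and follows essentially the same route as the paper's: an alternating chain $M=M_0\subseteq N_0\subseteq M_1\subseteq N_1\subseteq\cdots$ of infinitely generic and super McDuff factors (the $N_n$ obtained exactly as in the paper's footnote, via $P*L(\mathbb F_2)$ tensored with $\R$), with the hypothesis applied to the $N_n$-chain and Fact \ref{braddisaac}(2) applied at the end. The only cosmetic difference is that you deduce $M\preceq M_\infty$ directly from the elementary chain of the $M_n$'s via Fact \ref{elel}(3), whereas the paper first invokes Lemma \ref{infgenchain} to see the union is itself infinitely generic and then uses property (2) of $\mathcal G$; both steps are immediate.
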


\begin{proof}
Let $M$ be infinitely generic (embeddable).  Consider the chain
$$M=M_0\subseteq N_0\subseteq M_1\subseteq N_1\subseteq \cdots$$ with each $M_i$ infinitely generic (embeddable) and each $N_i$ super McDuff (embeddable).  Note that such a chain exists since every (embeddable) factor embeds into an (embeddable) super McDuff factor.\footnote{Indeed, given (embeddable) $P$, first embed $P$ into an (embeddable) non-Gamma factor, say $P*L(\mathbb F_2)$, then tensor with $\R$.}  Let $Q$ be the union of this chain.  Then $Q$ is super McDuff by assumption and infinitely generic (embeddable) by Lemma \ref{infgenchain}.  However, since $M$ is also infinitely generic (embeddable), we have that $M\preceq Q$, and thus $M$ is super McDuff by Fact \ref{braddisaac}(2). 
\end{proof}

Recently, the second-named author proved the following:
\begin{thm}\label{isaac}(\cite[Theorem 2.18]{goldpopa})
Let $M$ be an infinitely generic factor.  Then for any property (T) factor $N$, $(N,M)$ is a factorial commutant pair.
\end{thm}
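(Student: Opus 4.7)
The plan is to apply Theorem \ref{isaac2} inside a well-chosen enlargement of $M$, then transfer the factoriality of the relative commutant back to $M^\u$ using the machinery of infinitely generic factors together with CH.

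\textbf{Step 1 (Construction of $M_1$).} Form the free product $M * N$, and embed it into a separable infinitely generic factor $M_1$; this is possible since every separable II$_1$ factor embeds into some element of $\mathcal{G}$. After renaming, we may assume $M, N \subseteq M_1$ as subfactors. Because $M$ and $M_1$ are both infinitely generic with $M \subseteq M_1$, the defining maximality property of $\mathcal{G}$ yields $M \preceq M_1$.

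\textbf{Step 2 (Factoriality in $M_1^\u$).} Since $M_1$ is infinitely generic, it is e.c.\ by Fact \ref{infgenfacts}(1). Since $N$ has property (T), $N$ has w-spectral gap in $M_1$. Theorem \ref{isaac2} then yields that $N' \cap M_1^\u$ is a factor.

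\textbf{Step 3 (Transferring back to $M^\u$).} Under CH, a back-and-forth argument using the separable saturation of ultrapowers, together with the elementary inclusion $M \preceq M_1$, produces an isomorphism $\alpha : M_1^\u \to M^\u$ which restricts to the identity on $M$ (identifying $M$ with its diagonal images inside both $M_1^\u$ and $M^\u$). Setting $\pi := \alpha|_N : N \hookrightarrow M^\u$ (viewing $N \subseteq M_1 \subseteq M_1^\u$ via the diagonal), one computes
\[
\pi(N)' \cap M^\u \;=\; \alpha(N)' \cap \alpha(M_1^\u) \;=\; \alpha\bigl(N' \cap M_1^\u\bigr),
\]
which is a factor as the isomorphic image of a factor. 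Hence $(N,M)$ is a factorial commutant pair.

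The main technical obstacle lies in Step 3: the construction of $\alpha$. It is not enough to know abstractly that $M^\u \cong M_1^\u$ under CH (which follows from $M \equiv M_1$); one needs an isomorphism that behaves correctly on $M$. The strategy is to begin with the partial elementary map $\mathrm{id}_M$ between the diagonal images of $M$ inside $M_1^\u$ and $M^\u$ (which is elementary thanks to $M \preceq M_1$), and to run a standard back-and-forth of length $\aleph_1$, at each stage invoking separable saturation of the relevant ultrapower to realize the required 1-types, until both $M_1^\u$ and $M^\u$ are exhausted. Once $\alpha$ is in hand, the conclusion is a one-line computation.
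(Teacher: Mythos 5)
The paper does not actually prove this statement: it is quoted verbatim from \cite[Theorem 2.18]{goldpopa}, so there is no internal proof to compare against. Your argument is correct, and it assembles the result entirely from ingredients already on record in the paper: the existence of a (separable) infinitely generic $M_1\supseteq M*N$, the fact that containment between infinitely generic factors is elementary, Theorem \ref{isaac2} applied to the w-spectral gap subfactor $N\subseteq M_1$ (property (T) gives w-spectral gap in any extension), and the CH-saturation back-and-forth to move the factorial relative commutant from $M_1^\u$ down to $M^\u$. As far as I can tell this is essentially the route taken in the cited reference as well. One small remark on Step 3: you do not actually need the isomorphism $\alpha:M_1^\u\to M^\u$ to restrict to the identity on $M$. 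Since $M\preceq M_1$ gives $M\equiv M_1$, CH already yields \emph{some} isomorphism $M_1^\u\cong M^\u$, and composing the inclusion $N\subseteq M_1\subseteq M_1^\u$ with any such isomorphism carries $N'\cap M_1^\u$ onto the relative commutant of the image of $N$ in $M^\u$; factoriality is preserved by any isomorphism. In other words, being a factorial commutant pair with a given target depends only on the elementary equivalence class of that target (the analogue of Lemma \ref{Jungee}), so the more delicate back-and-forth over $M$ is sound but unnecessary. The only other point worth flagging is that the Fact on $\mathcal G$ as stated does not explicitly say the infinitely generic extension can be taken separable; this is standard (and implicitly used elsewhere in the paper, e.g.\ in Lemma \ref{infgenchain}), but you should cite or justify it if writing this up formally.
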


\begin{prop}
Suppose that P satisfies the following three properties:
\begin{enumerate}
\item $P$ is elementarily equivalent to the infinitely generic factors;
\item $P$ has the generalized Jung property;
\item $P$ is contained in a property (T) factor.
\end{enumerate}
Then P is super McDuff.
\end{prop}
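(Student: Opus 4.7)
The plan is to combine Theorem~\ref{isaac} (a factorial-commutant embedding of any property~(T) factor into the ultrapower of an infinitely generic factor), the characterization of the generalized Jung property (every embedding of $P$ into $P^\u$ is elementary), and the heir-based transfer Theorem~\ref{mainidea}, culminating in Fact~\ref{braddisaac}(1).

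Concretely, hypothesis~(3) supplies a property~(T) factor $N\supseteq P$, which I would take to be separable using Fact~\ref{DLS} and the axiomatizability of property~(T) for II$_1$ factors. By Fact~\ref{infgenfacts}(3) any two infinitely generic factors are elementarily equivalent, so hypothesis~(1) lets me fix an infinitely generic factor $M$ with $M\equiv P$; under our standing CH assumption this yields $M^\u\cong P^\u$. Applying Theorem~\ref{isaac} to the pair $(N,M)$ and transporting along this isomorphism produces an embedding $\sigma:N\hookrightarrow P^\u$ with $\sigma(N)'\cap P^\u$ a factor. The decisive step is to observe that the restriction $\tau:=\sigma|_P:P\hookrightarrow P^\u$ is elementary: by hypothesis~(2), clause~(2) of the characterization of the generalized Jung property established in \S\ref{dfo} applies. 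Hence $\tau(P)\preceq P^\u$, and Theorem~\ref{mainidea} applied to $\tau(P)\subseteq\sigma(N)\subseteq P^\u$ forces $\tau(P)'\cap P^\u$ to be a factor. Since $P^\u$ is a separably saturated elementary extension of $\tau(P)\cong P$, Fact~\ref{braddisaac}(1) delivers that $P$ is super McDuff.

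The delicate point is arranging the elementarity of $\tau$, and this is where all three hypotheses genuinely interact: (3) furnishes the property~(T) ambient algebra required by Theorem~\ref{isaac}, (1) secures an infinitely generic $M$ with $M^\u\cong P^\u$ so the factorial-commutant embedding of $N$ can be transported into $P^\u$, and (2) upgrades the restricted embedding to an elementary one. Without the generalized Jung property, a restriction to $P$ of a factorial-commutant embedding of $N$ need not be elementary, blocking the application of Theorem~\ref{mainidea}. Once elementarity is in hand, the remaining passage from factoriality of $\tau(P)'\cap P^\u$ to super McDuffness of $P$ is routine via the separable saturation of $P^\u$ and Fact~\ref{braddisaac}(1).
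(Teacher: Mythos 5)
Your argument is essentially the paper's own proof: fix an infinitely generic $M\equiv P$, use Theorem~\ref{isaac} to embed the ambient property~(T) factor with factorial relative commutant, note the restriction to $P$ is elementary by the generalized Jung property, and conclude via Theorem~\ref{mainidea} and Fact~\ref{braddisaac}(1); transporting everything into $P^\u$ via $M^\u\cong P^\u$ rather than working directly in $M^\u$ is only a cosmetic difference. One caveat: your aside invoking ``axiomatizability of property~(T)'' to shrink $N$ to a separable factor is both unnecessary (the paper's standing convention already makes all factors separable) and unjustified, since axiomatizability of property~(T) is not established anywhere in the paper.
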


\begin{proof}
Fix an infinitely generic factor $M$; by (1), $P\equiv M$.  By (3), we may take a property (T) factor $N$ such that $P\subseteq N$.  By Theorem \ref{isaac}, there is an embedding $N\hookrightarrow M^\u$ with factorial relative commutant.  The restriction $P\hookrightarrow M^\u$ is elementary by (1) and (2).  Thus, by Theorem \ref{mainidea} above, $P'\cap M^\u$ is a factor.  By Fact \ref{braddisaac}(1), we have that $P$ is super McDuff.
\end{proof}

The following question is open:

\begin{question}\label{fininf}
If $M$ is a finitely generic factor and $N$ is an infinitely generic factor, is $M\equiv N$?
\end{question}

If the answer to the previous question is ``no'', then once again we have non-elementarily equivalent e.c. factors.  Otherwise, if $P$ is a finitely generic factor, then $P$ satisfies (1) and (2) in the previous theorem.  Concerning item (3), the following seems to be open:

\begin{question}\label{containedinT}
Is every separable II$_1$ factor contained in a property (T) factor?
\end{question}

Returning to Question \ref{fingensuper}:

\begin{cor}
Suppose the answer to Questions \ref{fininf} and \ref{containedinT} are both positive.  Then any finitely generic II$_1$ factor is super McDuff.
\end{cor}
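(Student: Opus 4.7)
The plan is to reduce the statement to a direct application of the preceding proposition by verifying its three hypotheses for any finitely generic II$_1$ factor $M$. So I would fix a finitely generic $M$ and show that (1) $M$ is elementarily equivalent to the infinitely generic factors, (2) $M$ has the generalized Jung property, and (3) $M$ is contained in a property (T) factor; the conclusion that $M$ is super McDuff then follows immediately from the proposition.

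For (2) I would simply invoke the definition of finitely generic (together with Fact \ref{fgenf}), which builds the generalized Jung property directly into the definition. For (3), a positive answer to Question \ref{containedinT} gives the required property (T) superfactor of $M$ (since $M$ is separable, having been obtained as a compiled structure in the game of \S\S\ref{games}). For (1), a positive answer to Question \ref{fininf} says that $M$ is elementarily equivalent to \emph{some} infinitely generic factor, and then Fact \ref{infgenfacts}(3) upgrades this to elementary equivalence with \emph{every} infinitely generic factor.

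With all three hypotheses verified, the previous proposition applies and yields that $M$ is super McDuff. There is no real obstacle here: the corollary is essentially a bookkeeping consequence of the proposition, and the only thing to be a little careful about is confirming that a positive answer to Question \ref{fininf} — which is phrased existentially, comparing one finitely generic factor to one infinitely generic factor — indeed gives elementary equivalence of $M$ to all infinitely generic factors, which is immediate from Fact \ref{infgenfacts}(3).
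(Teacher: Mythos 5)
Your proposal is correct and matches the paper's own (implicit) argument exactly: the text preceding the corollary already notes that a positive answer to Question \ref{fininf} gives hypotheses (1) and (2) of the preceding proposition for any finitely generic factor, and a positive answer to Question \ref{containedinT} gives hypothesis (3), so the corollary follows by applying that proposition. Your use of Fact \ref{infgenfacts}(3) to pass from equivalence with one infinitely generic factor to all of them is the right (and standard) bookkeeping step.
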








\subsection{Landscape}

We include the diagram below to provide a snapshot of the current landscape of the numerous properties a McDuff II$_1$ factor can have.



 The abbreviation \tql e.e.\tqr stands for \tql elementary equivalent.\tqr  A region is shaded gray if it is unknown whether or not there exists an algebra with the corresponding properties.  A dashed line connecting an algebra to a region indicates that it is unknown if that algebra has the properties corresponding to that region.

\begin{center}

\resizebox{14.5cm}{9.5cm}{

\begin{tikzpicture}

	
	\begin{scope}
		\clip \WeaklyMcDuff;
		\fill[lightgray] \AC;
	\end{scope}
	
	
	\begin{scope}
		\clip (-9,-3) rectangle (9,3)
			\EER;
		\fill[lightgray] \EC;
	\end{scope}

	\draw \McDuff;
	\draw \PGamma;
	\draw \EER;
	\draw \SuperMcDuff;
	\draw \StronglyMcDuff;
	\draw \AC;

	\draw \WeaklyMcDuff;

	\draw \IAC;
	\draw \EC;

	\fill (-4.45,0) circle[radius=1.5pt];
	\fill (-2.7,0) circle[radius=1.5pt];
	
	\draw[dashed] (-3.6,-1.6) -- (-4.45,0);
	
	\draw[dashed] (-2.7,-1.15) -- (-2.7,0);

	\draw[dashed] (0.3858,-1.25) -- (0.3858,-0.64);
	\fill (0.3858,-0.64) circle[radius=1.5pt];
	

	\node at (0,3.7) {McDuff};
	\node at (0, 4.7) {Gamma};
	\node at (1.85,0.3) [text width = 1.55cm, align=center]  {e.e. to  $\mc{R}$};
	\node at (5.85,.4) [text width = 1.5cm] {strongly McDuff};
	\node at (4.25,2.1) [text width = 1.5cm] {super McDuff};
	\node at (-0.4, 2.7) {AC};
	\node at (-.4, 2.0) {IAC};	
	\node at (-1.4,0) [text width = 1.5cm] {e.c. embeddable};

	\node at (-6.05,0.35) [text width = 2.5cm]  {$P\otimes \R$ with $P$ Gamma, non-McDuff};
	
	\fill (0.45,0) circle[radius=2.5pt]; 
	\node at (0.45,0) [below]{$\R$};
	\fill (2,5.6) circle[radius=2.5pt];
	\node at (2,5.6)[below] {$ L(\mb{F}_2)$};
	\fill (2,4.7) circle[radius=2.5pt];
	\node at (2, 4.7) [below]{$L(K)$ };
	\fill (-6.3,-0.6) circle[radius=2.5pt];
	\node at (-6.3, -0.6) [below]{$ L(K) \otimes \R$};
	\fill (-2.7,-1.15) circle[radius=2.5pt];
	\node at (-2.7, -1.15) [below right]{$ L((\mathbb F_2\times \mathbb Z)^{\breve{}})$ };
	\fill (5.85,-.3) circle[radius=2.5pt];
	\node at (5.85,-.3)[below] {$L(\mb{F}_2)\otimes \R$};
	\fill (3.1,-1.4) circle[radius=2.5pt];
	\node at (3.1,-1.4)[below right] {$\ds \bigotimes_\mb{N} L(\mb{F}_2)$ };
	\fill (0.3858,-1.25) circle[radius=2.5pt];
	\node at (0.3858,-1.25) [below right]{$ L(\breve{\mb{F}_2})$};

	\fill (-3.6,-1.6) circle[radius=2.5pt];
	\node at (-3.6,-1.6) [below left] {$\ds\bigotimes_\mb{N} L(K)$};

	\node at (0,-5.4) {};
	\node at (10.1,0) {};
	\node at (-10.1,0) {};
	\node at (0, 5.9) {};
	\draw (current bounding box.north east) -- (current bounding box.north west) -- (current bounding box.south west) -- (current bounding box.south east) -- cycle;
\end{tikzpicture}
}
\end{center}

We now give references for the positions above.  The position of the class of II$_1$ factors elementary equivalent to $\R$ follows from Corollary \ref{Rsuperee}, Example \ref{RIAC}, and Fact \ref{IAClocal}.  The position of the class of e.c. embeddable II$_1$ factors follows from Proposition \ref{ecIAC}. As previously mentioned it is not known if all e.c. embeddable are elementary equivalent (to $\R$).  It is known, however, that there exists a non-e.c. embeddable II$_1$ factor that is elementary equivalent to $\R$ (see \cite[Theorem 3.6]{ECfactors}).  The position of the class of strongly McDuff II$_1$ factors follows from Proposition \ref{superstrong} and Proposition \ref{adrian}.
It is unknown if there is a strongly McDuff AC factor. The position of the class of II$_1$ factors of the form $P \otimes R$ with $P$ Gamma, non-McDuff (in particular that it is disjoint from the class of super McDuff factors) follows from Proposition \ref{superstrong}.  It is unknown if there is a II$_1$ factor of the form $P \otimes R$ with $P$ Gamma, non-McDuff that is also AC.

The position of $L(\mb{F}_2)$ is due to \cite{mvn4}.  The position of $L(K)$ is due to \cite{DL}.  In \cite{sakai0} it was shown that $L(\mb{F}_2)\otimes \R$ is not AC. In \cite{sakai0} it was shown that $\ds \bigotimes_\mb{N} L(\mb{F}_2)$ is AC; in \cite{DL} it was shown that $\ds \bigotimes_\mb{N} L(\mb{F}_2)$ is super McDuff; in \cite{ZM} it was shown that $\ds \bigotimes_\mb{N} L(\mb{F}_2)$ is not IAC; \cite{popagap} shows that $\ds \bigotimes_\mb{N} L(\mb{F}_2)$ is not strongly McDuff; and \cite{marr} or Proposition \ref{itfullsuper} shows that $\ds \bigotimes_\mb{N} L(\mb{F}_2)$ is not of the form $P \otimes \R$ with $P$ Gamma, non-McDuff.  The position of $L(\breve{\mb{F}_2})$ is due to \cite{ZM}, and it is unknown if $L(\breve{\mb{F}_2})$ is elementarily equivalent to $\R$ or e.c. embeddable.  The position of $L((\mathbb F_2\times \mathbb Z)^{\breve{}})$ is due to \cite{ZM}, and it is unknown if $L((\mathbb F_2\times \mathbb Z)^{\breve{}})$ is e.c. embeddable or of the form $P \otimes \R$ with $P$ Gamma, non-McDuff.  In \cite{sakai0} it was shown that $\ds \bigotimes_\mb{N} L(K)$ is AC;  in \cite{ZM} is was shown that $\ds \bigotimes_\mb{N} L(K)$ is not IAC; in \cite{DL} is was shown that $\ds \bigotimes_\mb{N} L(K)$ is not super McDuff; and it is unknown if $\ds \bigotimes_\mb{N} L(K)$ is of the form $P \otimes \R$ with $P$ Gamma, non-McDuff. 


We close this section with some open questions related to the above diagram.

\begin{question}

\ 
\begin{enumerate}
    
	\item Is there a II$_1$ factor that is AC and of the form $P \otimes \R$ with $P$ Gamma, non-McDuff? In particular, can an infinite tensor product of II$_1$ factors be of the form $P \otimes \R$ with $P$ Gamma, non-McDuff?
	
	\item Does there exist an infinite tensor product of non-Gamma II$_1$ factors that is also IAC?
	
\end{enumerate}
\end{question}


\section{Transferring the property of having factorial commutant}\label{Tran}

\subsection{The transfer theorem}

In order to state the main theorem of this subsection, we need a few model-theoretic definitions.

\begin{defn}
A formula $\varphi(x)$ is called \textbf{existential} (resp. \textbf{$\exists_2$}) if it is of the form $\inf_x \psi(x)$ (resp. $\inf_x\sup_y \psi(x,y)$) where $\psi$ is quantifier-free.
\end{defn}

\begin{defn}
Suppose that $i:N\hookrightarrow M$ is an embedding.  We say that $i$ is \textbf{existential} if, for any existential formula $\varphi(x)$ and any $a\in N$, $\varphi(a)^M=\varphi(a)^N$.
\end{defn}

We will often abuse terminology and refer to a formula as existential (resp. $\exists_2$) if it is equivalent in any tracial von Neumann algebra to an existential (resp. $\exists_2$) formula.

\begin{remark}

\

\begin{enumerate}
    \item $N$ is e.c. (embeddable) if and only if any embedding $i:N\hookrightarrow M$ (with $M$ embeddable) is existential.
    \item The embedding $i:N\hookrightarrow M$ is existential if and only if, whenever $\varphi$ is a nonnegative existential formula and $a\in N$ is such that $\varphi(i(a))^M=0$, then $\varphi(a)^N=0$.
\end{enumerate}
\end{remark}

\begin{defn}
Suppose that $i:N\hookrightarrow M$ is an embedding.  We say that $i$ is \textbf{downward $\exists_2$} if, for any nonnegative $\exists_2$ formula $\varphi(x)$ and any $a\in N$, if $\varphi(a)^M=0$, then $\varphi(a)^N=0$.
\end{defn}

A careful examination of the proof of Fact \ref{braddisaac}(1) establishes the following theorem.  For the reader's convenience, we reproduce the proof here.

\begin{thm}\label{transfer}
Suppose that $i:N\hookrightarrow M$ and $j:M\hookrightarrow P$ are embeddings.
\begin{enumerate}
    \item Suppose that $j$ is existential and $M$ is separably saturated (e.g. when $M$ is an ultraproduct).  If $(j\circ i)(N)'\cap P$ is a factor, then so is $i(N)'\cap M$.
    \item Suppose that $j$ is downwards $\exists_2$ and that $M$ and $P$ are separably saturated.  Then if $i(N)'\cap M$ is a factor, then so is $(j\circ i)(N)'\cap P$.
\end{enumerate}
\end{thm}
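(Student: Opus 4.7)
My plan for both parts proceeds by contrapositive: I encode the failure of factoriality of the relevant relative commutant as a formula of low complexity (existential for (1), $\exists_2$ for (2)) with parameters in $i(N)$, then transfer across $j$, and finally use separable saturation to convert the universal quantifier ``for every $z$ in the relative commutant'' into a statement involving only finitely many commutator parameters. Throughout, fix a countable generating set $(x_k)_{k\in\mathbb{N}}$ for $N$ and set $v_k := i(x_k)\in M$.

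For part (1), I would suppose that $i(N)'\cap M$ is not a factor and pick $a\in Z(i(N)'\cap M)$ with $\|a-\tau(a)\cdot 1\|_2\geq \epsilon_0>0$. For each $\epsilon>0$, centrality of $a$ makes the type $\{\|[z,v_k]\|_2=0:k\in\mathbb{N}\}\cup\{\|[a,z]\|_2\geq \epsilon\}$ unrealisable in $M$, so separable saturation of $M$ supplies a finite $F\subseteq\mathbb{N}$ and $\delta>0$ for which the existential formula
$$\Phi_\epsilon(y,\vec w)\ :=\ \inf_{\|z\|\leq 1}\max\Big(\max_{k\in F}\|[z,w_k]\|_2,\ \epsilon\dotminus\|[y,z]\|_2\Big)$$
satisfies $\Phi_\epsilon(a,\vec v_F)^M\geq\delta$. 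Since $j$ is existential, $\Phi_\epsilon(j(a),j(\vec v_F))^P\geq\delta$ as well. Testing the infimum at any $b\in(ji)(N)'\cap P$ of norm $\leq 1$ kills the commutator summand and forces $\|[j(a),b]\|_2\leq\epsilon-\delta$; letting $\epsilon\to 0$ shows $j(a)\in Z((ji)(N)'\cap P)=\mathbb{C}\cdot 1$ by hypothesis, contradicting $\|j(a)-\tau(j(a))\cdot 1\|_2=\|a-\tau(a)\cdot 1\|_2\geq\epsilon_0$.

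For part (2), I would instead suppose that $(ji)(N)'\cap P$ fails to be a factor and pick $a\in Z((ji)(N)'\cap P)$ of norm $\leq 1$ with $\|a-\tau(a)\cdot 1\|_2\geq\epsilon_0>0$. For each $\epsilon>0$ and $n\in\mathbb{N}$, separable saturation of $P$ supplies a finite $F\supseteq\{1,\ldots,n\}$ and $\delta>0$ such that every $z$ in the unit ball of $P$ with $\max_{k\in F}\|[z,ji(x_k)]\|_2<\delta$ already has $\|[a,z]\|_2<\epsilon$. I would then package this data into the nonnegative $\exists_2$ formula
$$\Psi_{\epsilon,n}(\vec w)\ :=\ \inf_y\,\sup_{\|z\|\leq 1}\max\!\left(\|y\|\dotminus 1,\ \epsilon_0\dotminus\|y-\tau(y)\cdot 1\|_2,\ \max_{k\leq n}\|[y,w_k]\|_2,\ \min\!\big(\|[y,z]\|_2\dotminus\epsilon,\ \delta\dotminus\max_{k\in F}\|[z,w_k]\|_2\big)\right),$$
whose $\exists_2$ form is justified by pulling $\sup_z$ past the three $z$-free summands. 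The element $a$ itself witnesses $\Psi_{\epsilon,n}(j(\vec v_F))^P=0$, so downward $\exists_2$ of $j$ delivers $\Psi_{\epsilon,n}(\vec v_F)^M=0$ for every $(\epsilon,n)$. Any finite collection of such conditions can be bundled into a single joint $\exists_2$ formula (combining the $\sup_z$'s via independent witnesses $z_1,\ldots,z_r$), and the same transfer shows the countable family to be approximately finitely satisfiable in $M$. Separable saturation of $M$ then produces a single $a_\star\in M$ simultaneously realising the family; by construction $a_\star\in i(N)'\cap M$, $\|a_\star-\tau(a_\star)\cdot 1\|_2\geq\epsilon_0$, and $a_\star$ commutes with every element of $i(N)'\cap M$ (evaluate the sup clause at any fixed $\epsilon>0$), contradicting factoriality of $i(N)'\cap M$.

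The main obstacle, shared by both parts, is engineering the quantifier pattern so that ``$a$ is central in the relative commutant''---intrinsically a universal statement over a separable set---becomes usable as either a purely existential condition (part (1)) or a single $\exists_2$ sentence (part (2)) with only finitely many commutator parameters from $i(N)$. Separable saturation of the target algebra is exactly what enables this reduction, and in part (2) the additional saturation of $M$ is what reassembles the family $\{\Psi_{\epsilon,n}^M=0\}$ into a single honest counterexample.
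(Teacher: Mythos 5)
Your proposal is correct in both parts, and it follows the same basic architecture as the paper's proof --- encode the (non-)centrality data as a formula of the advertised quantifier complexity with finitely many parameters from $i(N)$, transfer across $j$, and use separable saturation to pass between the finitary and infinitary versions of ``central in the relative commutant.'' For part (1) your argument is the paper's run in mirror image: the paper fixes $b\in Z(i(N)'\cap M)$, supposes some $c\in (j\circ i)(N)'\cap P$ fails to commute with $j(b)$ by $\epsilon$, transfers the value $0$ of exactly your formula $\Phi_\epsilon$ down to $M$, and then saturates in $M$ to produce a full relative-commutant element violating centrality; you instead saturate in $M$ first to extract $F$ and the uniform $\delta$, and transfer the lower bound $\geq\delta$ up to $P$. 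Both directions of transfer are legitimate since ``existential'' is defined as an equality of values. For part (2) the one genuine difference is the device used to uniformize the universal statement ``$a$ commutes with everything in $(j\circ i)(N)'\cap P$'': the paper invokes \cite[Proposition 7.14]{mtfms} to obtain a single continuous modulus $\alpha$ with $\alpha(0)=0$ satisfying $\|[y,a]\|_2\leq \alpha(\|[(j\circ i)(w),y]\|_2)$ for all $y$, which packages the entire counterexample into one $\exists_2$ formula, transferred once and realized by one application of saturation in $M$; you use the finite-character consequence of saturation in $P$ (the $\epsilon$-$\delta$-$F$ data), yielding a countable family of $\exists_2$ formulas that you transfer in finite bundles and reassemble by saturating in $M$. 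Both are standard; yours avoids citing the modulus result at the cost of the extra bundling and type-realization step, and it handles a countable generating set natively, whereas the paper restricts to singly generated $N$ ``for convenience.'' One cosmetic repair: $\|y\|\dotminus 1$ is not a formula of this logic (the operator norm is not $\|\cdot\|_2$-continuous), so the restriction of $y$ to the unit ball should be imposed by letting $\inf_y$ range over the unit-ball sort, exactly as the paper does implicitly for all its quantifiers.
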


\begin{proof}
For (1), fix a countable sequence $(a_k)$ from the unit ball of $N$ that generates $N$.  Suppose that $(j\circ i)(N)'\cap P$ is a factor.  Fix $b\in Z(i(N)'\cap M)$; we wish to show that $b\in \mathbb C$.  To do this, it suffices to show that $j(b)\in Z((j\circ i)(N)'\cap P)$.  It is clear that $j(b)\in (j\circ i)(N)'\cap P$.  Suppose, towards a contradiction, that there is $c\in (j\circ i)(N)'\cap P$ such that $\|[j(b),c]\|_2=\epsilon>0$.  Then for any $m$, we have that
$$\left(\inf_x\max\left(\max_{1\leq k\leq m}\|[x,(j\circ i)(a_k)]\|_2,\epsilon\dotminus \|[j(b),x]\|_2\right)\right)^{P}=0.$$ Since $j$ is existential and $M$ is separably saturated, there is $d\in M$ such that $[d,i(a_k)]=0$ for $k=1,\ldots,m$ and such that $\|[b,d]\|_2\geq \epsilon$.  By separable saturation again, there is $d\in M$ such that $[d,i(a_k)]=0$ for all $k\in \mathbb N$ and such that $\|[b,d]\|_2\geq \epsilon$, contradicting the fact that $b\in Z(i(N)'\cap M)$. 

We now prove (2).  We argue by contrapositive.  Suppose $a\in Z((j\circ i)(N)'\cap P)\setminus \mathbb C$.  Set $\epsilon:=d(b,\tr(b)\cdot 1)>0$.  For the sake of simplicity, we suppose that $N$ is singly generated, say by $w\in N$.\footnote{This is merely a matter of convenience.  In the general case, we would have to explain the legitimacy of treating a countable weighted sum of formuale as a formula in its own right.}  Since $P$ is separably saturated, by \cite[Proposition 7.14]{mtfms}, there is a continuous, nondecreasing function $\alpha:\mathbb R\to \mathbb R$ with $\alpha(0)=0$ such that
$$\left(\sup_y \left(\|[y,a]\|_2\dotminus \alpha(\|[(j\circ i)(w),y]\|_2\right)\right)^P=0,$$
whence $$\left(\inf_x\max(\|[x,(j\circ i)(w)]\|_2,\sup_y \left(\|[y,x]\|_2\dotminus \alpha(\|[(j\circ i)(w),y]\|_2\right),\epsilon\dotminus d(x,\tr(x)\cdot 1)\right)^P=0.$$ Since the displayed formula is $\exists_2$, we have
$$\left(\inf_x\max(\|[x,i(w)]\|_2,\sup_y (\|[y,x]\|_2\dotminus \alpha(\|[i(w),y]\|_2),\epsilon\dotminus d(x,\tr(x)\cdot 1)\right)^M=0.$$ Since $M$ is separably saturated, there is $c\in M$ such that $$\left(\max(\|[c,i(w)]\|_2,\sup_y (\|[y,c]\|_2\dotminus \alpha(\|[i(w),y]\|_2),\epsilon\dotminus d(c,\tr(c)\cdot 1)\right)^M=0.$$  We claim that $c\in Z(i(N)'\cap M)$.  Indeed, since $\|[c,i(w)]\|_2=0$, we have that $c\in i(N)'\cap M$.  Moreover, if $d\in i(N)'\cap M$, then $\alpha(\|[i(w),d]\|_2)=0$, whence $\|[d,c]\|_2=0$.  Since $\epsilon\dotminus d(c,\tr(c)\cdot 1)=0$, we have that $i(N)'\cap M$ is not a factor.
\end{proof}



\subsection{Applications of the transfer theorem}

Before proceeding with applications of the transfer theorem, we first need to observe:

\begin{lem}\label{ultrapres}
Suppose that $i_k:N_k\hookrightarrow M_k$ is a sequence of maps and let $i:\prod_\u N_k\hookrightarrow \prod_\u M_k$ be the induced map, that is, $i((a_k)_\u)=(i_k(a_k))_\u$ for all $(a_k)_\u\in \prod_\u N_k$.  If each $i_k$ is existential (respectively downwards $\exists_2$), then so is $i$.
\end{lem}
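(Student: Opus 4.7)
The plan is to prove both assertions by a direct appeal to \L os' theorem combined with the hypothesis applied pointwise at each index $k$. Throughout, write $a=(a_k)_\u \in \prod_\u N_k$ for an arbitrary tuple (of the appropriate arity), so that $i(a)=(i_k(a_k))_\u$.

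For the existential case, the argument is essentially immediate. Given any existential formula $\varphi(x)$, \L os' theorem applied in both ultraproducts yields
$$\varphi(a)^{\prod_\u N_k} = \lim_{k\to\u} \varphi(a_k)^{N_k} \quad \text{and} \quad \varphi(i(a))^{\prod_\u M_k} = \lim_{k\to\u} \varphi(i_k(a_k))^{M_k}.$$
Since each $i_k$ is existential, one has $\varphi(a_k)^{N_k} = \varphi(i_k(a_k))^{M_k}$ for every $k$, so the two limits coincide, which is exactly what it means for $i$ to be existential.

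For the downwards $\exists_2$ case I would first record the small observation that downwards $\exists_2$ embeddings $j:N\hookrightarrow M$ are equivalently characterized by the inequality $\varphi(a)^N \leq \varphi(a)^M$ for every nonnegative $\exists_2$ formula $\varphi$ and every $a \in N$. Indeed, because $t\mapsto t\dotminus r$ is continuous and nondecreasing, it commutes with both $\inf$ and $\sup$, so $\varphi\dotminus r$ remains a nonnegative $\exists_2$ formula whenever $\varphi$ is; applying the stated implication ``$\varphi(a)^M=0 \Rightarrow \varphi(a)^N=0$'' to $\varphi\dotminus r$ for arbitrary $r>0$ recovers the inequality. With this in hand, suppose $\varphi$ is nonnegative $\exists_2$ and $\varphi(i(a))^{\prod_\u M_k}=0$. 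Then \L os' theorem gives $\lim_{k\to\u}\varphi(i_k(a_k))^{M_k}=0$, and applying the equivalent characterization at each index yields $0\leq \varphi(a_k)^{N_k}\leq \varphi(i_k(a_k))^{M_k}$. Taking the ultrafilter limit and invoking \L os' theorem once more shows $\varphi(a)^{\prod_\u N_k}=0$, as required.

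There is no genuine obstacle in either step; the only point worth verifying carefully is the closure of the class of nonnegative $\exists_2$ formulas under the connective $\dotminus r$, which is standard in continuous logic and follows at once from the monotonicity of that connective.
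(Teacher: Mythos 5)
Your proof is correct and follows essentially the same route as the paper: both arguments reduce to the pointwise hypothesis via \L os' theorem together with the observation that nonnegative existential (resp.\ $\exists_2$) formulae are closed under the truncation $\varphi\mapsto\varphi\dotminus\epsilon$ (the paper runs this as an inline $\epsilon$-argument, whereas you package it as the inequality $\varphi(a)^N\leq\varphi(a)^M$ first, and you handle the existential case by direct value-equality, but these are only cosmetic differences).
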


\begin{proof}
Set $N:=\prod_\u N_k$ and $M:=\prod_\u M_k$.  Let $\varphi(x)$ be a nonnegative existential (resp. $\exists_2$) formula and $a\in N$ be such that $\varphi(a)^M=0$.  Fix $\epsilon>0$.  Take $I\in \u$ such that $\varphi(i_k(a_k))^{M_k}\leq \epsilon$ for $k\in \u$.  Set $\psi=\varphi\dotminus \epsilon$, which is also existential (resp. $\exists_2$).  For $k\in I$, since $\psi(i_k(a_k))^{M_k}=0$, it follows that $\psi(a_k)^{N_k}=0$, whence $\psi(a)^N=0$, that is, $\varphi(a)^N\leq \epsilon$.  Since $\epsilon>0$ was arbitrary, we have that $\varphi(a)^N=0$ as desired.
\end{proof}



\begin{cor}\label{factorthru}
Suppose that $N$ and $M_k$ are embeddable II$_1$ factors and there are embeddings $i:N\hookrightarrow \R^\u$ and $j:\R^\u\to \prod_\u M_k$ such that $j$ is induced by a sequence of embeddings $j_k:\R\hookrightarrow M_k$.  Further suppose that $(j\circ i)(N)'\cap \prod_\u M_k$ is a factor.  Then $i(N)'\cap \R^\u$ is a factor.  
\end{cor}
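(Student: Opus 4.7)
The plan is to apply Theorem \ref{transfer}(1) with $M = \R^\u$ and $P = \prod_\u M_k$, and the given embeddings $i : N \hookrightarrow \R^\u$ and $j : \R^\u \hookrightarrow \prod_\u M_k$. Both ultrapowers are separably saturated, so the only hypothesis that requires checking is that $j$ is existential; once this is established, Theorem \ref{transfer}(1) immediately concludes that $i(N)' \cap \R^\u$ is a factor from the assumption that $(j \circ i)(N)' \cap \prod_\u M_k$ is a factor.

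To verify that $j$ is existential, I will invoke Lemma \ref{ultrapres}, reducing to showing that each component map $j_k : \R \hookrightarrow M_k$ is existential. Since $M_k$ is embeddable, fix an embedding $e_k : M_k \hookrightarrow \R^\u$. Then the composition $e_k \circ j_k : \R \hookrightarrow \R^\u$ is elementary by Fact \ref{Relementary}(1). For any nonnegative existential formula $\varphi(x)$ and any tuple $a \in \R$, the general ``weak preservation'' of existential formulas under embeddings yields
\[
\varphi(a)^{\R} \;\geq\; \varphi(j_k(a))^{M_k} \;\geq\; \varphi((e_k\circ j_k)(a))^{\R^\u} \;=\; \varphi(a)^\R,
\]
where the last equality uses elementarity of $e_k \circ j_k$. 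Hence $\varphi(j_k(a))^{M_k} = \varphi(a)^\R$, so $j_k$ is existential. (Equivalently, this is the observation that $\R$ is an e.c.\ embeddable factor.)

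Applying Lemma \ref{ultrapres} to the sequence $(j_k)$ gives that $j : \R^\u \hookrightarrow \prod_\u M_k$ is existential, which completes the verification of the hypotheses of Theorem \ref{transfer}(1), and the corollary follows.

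I do not foresee any real obstacle here: the argument is essentially an assembly of the transfer theorem with the ultrapower-preservation lemma, and the only genuine content is that embeddings of $\R$ into any embeddable factor are existential, which is already embedded in the preliminary material via Fact \ref{Relementary}(1). The most delicate point is simply remembering to use the embeddability of each $M_k$ to obtain the elementary composition $e_k \circ j_k$, rather than trying to show directly that $j_k$ is existential from scratch.
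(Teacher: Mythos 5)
Your proposal is correct and follows the same route as the paper: the paper's proof also invokes Lemma \ref{ultrapres} to reduce to each $j_k$ being existential (which it attributes to the embeddability of $M_k$) and then applies Theorem \ref{transfer}(1). The only difference is that you spell out the sandwich argument showing embeddings of $\R$ into embeddable factors are existential, a detail the paper leaves implicit.
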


\begin{proof}
Since each $M_k$ is embeddable, we have that each $j_k$ is existential.  By Lemma \ref{ultrapres}, $j$ is existential.  The result now follows from Theorem \ref{transfer}(1). 
\end{proof}

\begin{cor}
Under the same assumptions as in the previous corollary, if $(N,\R)$ is also a generalized Jung pair, then $N\cong \R$.
\end{cor}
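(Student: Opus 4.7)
The proof is essentially a one-line consequence of what has already been assembled. The plan is to apply Corollary \ref{factorthru} to obtain one of the two hypotheses of Corollary \ref{popagenJungR}, and then combine with the generalized Jung assumption to get the other.

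More precisely, first I would invoke Corollary \ref{factorthru} under the given assumptions to conclude that $i(N)' \cap \R^\u$ is a factor. This witnesses that $(N,\R)$ is a factorial commutant pair. Next, since we are additionally assuming $(N,\R)$ is a generalized Jung pair, both hypotheses of Corollary \ref{popagenJungR} are satisfied, and that corollary immediately yields $N \cong \R$.

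There is essentially no obstacle here: the content has already been isolated in the preceding results. The only thing to double-check is that the notion of ``factorial commutant pair'' used in Corollary \ref{popagenJungR} is the existential one (i.e.\ requiring merely the existence of \emph{some} embedding with factorial commutant), which matches exactly what Corollary \ref{factorthru} produces via the specific embedding $i$. Alternatively, one could note that by Lemma \ref{fcpairsfcpair} the generalized Jung hypothesis upgrades the factorial commutant pair to a strong factorial commutant pair, and then invoke Theorem \ref{eMfe} or Corollary \ref{Relcom} to reach the same conclusion, but the most direct route is through Corollary \ref{popagenJungR}.
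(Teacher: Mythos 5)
Your proof is correct and is exactly the intended argument: the paper gives no explicit proof for this corollary, and the one-line route via Corollary \ref{factorthru} (to witness that $(N,\R)$ is a factorial commutant pair) followed by Corollary \ref{popagenJungR} is precisely how it is meant to follow. Your parenthetical check that ``factorial commutant pair'' is the existential notion is the right thing to verify, and your alternative route through Lemma \ref{fcpairsfcpair} and Corollary \ref{Relcom} is just Corollary \ref{popagenJungR} unwound, so there is no substantive difference.
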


\begin{cor}
Suppose that $N$ and $M_k$ are embeddable II$_1$ factors and every embedding $N\hookrightarrow \prod_\u M_k$ has factorial relative commutant.  Then $N\cong \R$.
\end{cor}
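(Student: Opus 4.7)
The plan is to reduce this statement to Corollary \ref{Relcom} by showing that the hypothesis ``every embedding of $N$ into $\prod_\u M_k$ has factorial commutant'' forces $(N,\R)$ to be a strong factorial commutant pair. Once this is established, Corollary \ref{Relcom} immediately yields $N \cong \R$.

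To carry this out, I would begin by fixing an arbitrary embedding $i: N \hookrightarrow \R^\u$, which exists because $N$ is embeddable. The goal is then to apply Corollary \ref{factorthru}, so I need to exhibit an embedding $j: \R^\u \hookrightarrow \prod_\u M_k$ that is induced by a sequence of embeddings $j_k: \R \hookrightarrow M_k$. For each $k$, such a $j_k$ exists because every II$_1$ factor (embeddable or not) contains a copy of $\R$ as a subfactor, by building up a weakly dense increasing union of dyadic matrix subalgebras and invoking uniqueness of the separable hyperfinite II$_1$ factor. Taking the induced map $j((a_k)_\u) := (j_k(a_k))_\u$ then yields the desired embedding.

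With this setup in place, the composition $j \circ i: N \hookrightarrow \prod_\u M_k$ is an embedding, so by the standing hypothesis, $(j\circ i)(N)' \cap \prod_\u M_k$ is a factor. Corollary \ref{factorthru} then applies directly and gives that $i(N)' \cap \R^\u$ is a factor. Since $i$ was arbitrary, $(N,\R)$ is a strong factorial commutant pair, and Corollary \ref{Relcom} concludes the argument, giving $N \cong \R$.

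There is essentially no technical obstacle here; the entire argument is a clean assembly of Corollary \ref{factorthru} (itself the nontrivial content, resting on the transfer theorem together with Lemma \ref{ultrapres} applied to the existential embeddings $j_k$) and Corollary \ref{Relcom}. The only point requiring even minor justification is the existence of the embeddings $\R \hookrightarrow M_k$, which is a standard fact about II$_1$ factors.
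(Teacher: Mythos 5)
Your proposal is correct and follows exactly the paper's own route: the paper's proof is the one-line observation that the hypothesis together with Corollary \ref{factorthru} makes $(N,\R)$ a strong factorial commutant pair, whence $N\cong\R$ by Corollary \ref{Relcom}. You have merely spelled out the details the paper leaves implicit (fixing an arbitrary $i$, choosing the $j_k:\R\hookrightarrow M_k$, and noting embeddability makes each $j_k$ existential), all of which are as intended.
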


\begin{proof}
The assumptions and Corollary \ref{factorthru} imply that $(N,\R)$ is a strong factorial commutant pair.  Consequently, $N\cong \R$ by Theorem \ref{Relcom}.
\end{proof}



\begin{cor}
Suppose that $M$ is embeddable.  Then $(N,M)$ is a strong factorial commutant pair if and only if $N\cong \R$.
\end{cor}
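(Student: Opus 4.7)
The plan is to derive this corollary from its predecessor combined with the transfer theorem. The forward direction is immediate from the preceding corollary: setting $M_k:=M$ for every $k\in\mathbb{N}$, one has $\prod_{k\to\u}M_k=M^\u$, and the hypothesis that every embedding $N\hookrightarrow M^\u$ has factorial relative commutant (together with the fact that $N$ is embeddable, since $M^\u$ embeds into $(\R^\u)^\u\cong\R^\u$ under CH) is precisely the input that corollary needs to conclude $N\cong\R$.

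For the backward direction, suppose $N\cong\R$; we must show that $(\R,M)$ is a strong factorial commutant pair. By Corollary 3.8 of \cite{ultraprodembed} (recalled in \S\ref{dfo}), $(\R,M)$ is already a Jung pair for any II$_1$ factor $M$, so any two embeddings $\R\hookrightarrow M^\u$ are unitarily equivalent; in particular it suffices to show that one embedding has factorial relative commutant. Fix an embedding $j:M\hookrightarrow\R^\u$, available since $M$ is embeddable, and form the induced map $j^\u:M^\u\hookrightarrow(\R^\u)^\u$ via Lemma \ref{ultrapres}. For any $i:\R\hookrightarrow M^\u$, the composite $j^\u\circ i$ embeds $\R$ into an ultrapower of the McDuff factor $\R^\u$, so $(j^\u\circ i)(\R)'\cap(\R^\u)^\u$ is a factor by Theorem 5.8 of \cite{saa} (the McDuff-target extension of Corollary \ref{Relcom}).

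The final step is to apply Theorem \ref{transfer}(1) to pull this factoriality back down to $M^\u$ through the embedding $j^\u$. Since $M^\u$ is separably saturated as an ultrapower, the only remaining ingredient is the existentiality of $j^\u$; by Lemma \ref{ultrapres} this reduces to choosing the initial embedding $j:M\hookrightarrow\R^\u$ to be existential. I expect this to be the main technical subtlety, as an arbitrary embedding of an embeddable factor into $\R^\u$ need not be existential; the plan is to secure the required existentiality by factoring $j$ through a suitable existentially closed embeddable extension of $M$ sitting inside $\R^\u$. Once such a $j$ is in place, Theorem \ref{transfer}(1) delivers the factoriality of $i(\R)'\cap M^\u$, completing the proof.
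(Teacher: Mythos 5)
Your forward direction is correct and is the intended route: take $M_k:=M$ in the preceding corollary, observing that $N$ is automatically embeddable since $N\hookrightarrow M^\u\hookrightarrow(\R^\u)^\u\cong\R^\u$ under CH.

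The backward direction, however, contains a genuine gap that your proposed fix does not repair. Your transfer argument requires $j^\u:M^\u\to(\R^\u)^\u$ to be existential, and (restricting to the diagonal copy of $M$, on which $j^\u$ agrees with $j$ followed by an elementary embedding) this forces the original embedding $j:M\hookrightarrow\R^\u$ to be existential. For a general embeddable $M$ \emph{no} such $j$ exists. Indeed, for any separable subalgebra of $\R^\u$ there are trace-zero unitaries in its relative commutant (separable saturation plus McDuffness of $\R$), so the existential formulas witnessing property Gamma vanish over $j(M)$ in $\R^\u$; if $j$ were existential they would vanish in $M$, forcing $M$ to have property Gamma. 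Thus $M=L(\mathbb F_2)$ already defeats the strategy. Factoring $j$ through an e.c.\ embeddable extension $E$ of $M$ inside $\R^\u$ does not help: the leg $E\hookrightarrow\R^\u$ is existential, but a composite of embeddings is existential only if its first leg is (values of $\inf$-formulas are non-increasing along embeddings), and $M\hookrightarrow E$ is existential precisely when $M$ is e.c.\ in $E$, which fails for any non-McDuff $M$ (it would place a copy of $\R$ inside $M'\cap M^\u$).

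The correct argument for this direction is much softer and needs neither Theorem \ref{transfer} nor Lemma \ref{ultrapres}: for \emph{any} II$_1$ factor $M$ and any embedding $\pi:\R\hookrightarrow M^\u$, write $\R$ as the weak closure of the increasing union of the $\mathbb{M}_{2^n}$'s. Each $\pi(\mathbb{M}_{2^n})'\cap M^\u$ is a factor, being isomorphic to the corner $pM^\u p$ for $p$ a minimal projection of $\pi(\mathbb{M}_{2^n})$, so Proposition \ref{technical} applied to this chain shows that $\pi(\R)'\cap M^\u$ is a factor. In other words, $(\R,M)$ is a strong factorial commutant pair for every II$_1$ factor $M$, and embeddability of $M$ plays no role in this direction.
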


\begin{cor}\label{McDufforembeddable}
Suppose that $N$ is embeddable, $M$ is either McDuff or embeddable, and $(N,M)$ is both a factorial commutant pair and a generalized Jung pair.  Then $N\cong \R$.
\end{cor}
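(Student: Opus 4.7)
The plan is to split into cases based on whether $M$ is embeddable or McDuff, using the generalized Jung assumption to bridge between the two commutant notions. In both cases, the first step will be to invoke Lemma \ref{fcpairsfcpair} to promote the hypothesis that $(N,M)$ is a factorial commutant pair to the much stronger statement that $(N,M)$ is a strong factorial commutant pair; this is precisely where the generalized Jung property enters. That is, after this step I may assume every embedding $\pi : N \hookrightarrow M^\u$ has factorial relative commutant $\pi(N)' \cap M^\u$.

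For the case when $M$ is embeddable, the conclusion is then an immediate application of the corollary just preceding this one, which states that if $M$ is embeddable then $(N,M)$ is a strong factorial commutant pair if and only if $N \cong \R$. (That corollary is itself an instance of the result that when the $M_k$ are embeddable factors and every embedding $N \hookrightarrow \prod_\u M_k$ has factorial commutant, one must have $N \cong \R$, applied to $M_k = M$ for all $k$.) So no new work is needed in this case beyond citing this consequence of the transfer theorem.

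For the case when $M$ is McDuff (but not assumed embeddable), I would appeal directly to Theorem \ref{eMfe}, which characterizes $\R$ among separable embeddable II$_1$ factors as exactly those $N$ for which there exists a McDuff II$_1$ factor $M$ with $(N,M)$ a strong factorial commutant pair. Since we have already established via Lemma \ref{fcpairsfcpair} that $(N,M)$ is such a pair, and $N$ is embeddable by hypothesis, Theorem \ref{eMfe} gives $N \cong \R$ immediately.

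There is no genuine obstacle in this argument; the content lies in recognizing that the generalized Jung hypothesis converts the existence of a single factorial commutant embedding into the \emph{universal} statement needed to apply the existing characterizations of $\R$. The main conceptual point worth emphasizing in the write-up is that the two cases are handled by essentially different machinery: the embeddable case relies on the transfer theorem of this section (via the corollary that any embedding into an ultraproduct of embeddable factors transfers factorial commutants back down through $\R^\u$), whereas the McDuff case relies on the convex-geometric characterization of extreme points in $\HOM(N,M^\u)$ together with \cite[Corollary 3.8]{ultraprodembed}, both of which are packaged in Theorem \ref{eMfe}.
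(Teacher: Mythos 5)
Your proof is correct and is precisely the derivation the paper intends (the corollary is stated without proof as an immediate consequence of the surrounding results): Lemma \ref{fcpairsfcpair} upgrades the factorial commutant pair to a strong one via the generalized Jung hypothesis, the embeddable case follows from the corollary immediately preceding, and the McDuff case follows from Theorem \ref{eMfe}. No gaps.
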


\begin{cor}
Suppose that $N$ is an embeddable factor that does not have property Gamma.  Suppose also that $M$ is a II$_1$ factor that is either McDuff or embeddable.  Then $(N,N\otimes M)$ is not a generalized Jung pair.
\end{cor}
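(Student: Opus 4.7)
The plan is to deduce this by applying Corollary \ref{McDufforembeddable} to the pair $(N, N \otimes M)$, using the non-Gamma hypothesis to produce a single embedding of $N$ into $(N \otimes M)^\u$ with factorial relative commutant.

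First I would check that $N \otimes M$ satisfies the ambient hypothesis of that corollary. If $M$ is McDuff, then $M \cong M \otimes \R$ gives $N \otimes M \cong (N \otimes M) \otimes \R$, so $N \otimes M$ is McDuff; if $M$ is embeddable, then so is $N \otimes M$, using $\R \cong \R \overline{\otimes} \R$ together with the standard fact that a tensor product of $\R^\u$-embeddable factors is $\R^\u$-embeddable.

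Next, I would exhibit a single embedding $\pi: N \hookrightarrow (N \otimes M)^\u$ with factorial relative commutant, namely $\pi(x) := (x \otimes 1_M)_\u$, the composition of $x \mapsto x \otimes 1_M$ with the diagonal embedding. Since $N$ fails property Gamma, \cite[Proposition 3.2]{popagap} (applied as in the proof of Proposition \ref{itfullsuper}) tells us that $N \otimes 1$ has w-spectral gap in $N \otimes M$; combining this with $N$ being a factor, so that $N' \cap (N \otimes M) = 1 \otimes M$, we obtain
\begin{equation*}
\pi(N)' \cap (N \otimes M)^\u \;=\; \bigl((N \otimes 1)' \cap (N \otimes M)\bigr)^\u \;=\; (1 \otimes M)^\u \;\cong\; M^\u,
\end{equation*}
which is a II$_1$ factor. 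Hence $(N, N \otimes M)$ is a factorial commutant pair.

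Finally, I would conclude by contradiction: if $(N, N \otimes M)$ were a generalized Jung pair, then Corollary \ref{McDufforembeddable} applied to $(N, N \otimes M)$ would force $N \cong \R$, contradicting the assumption that $N$ does not have property Gamma (indeed $\R$ is McDuff, hence has Gamma). I do not anticipate a serious obstacle here; the entire argument is a direct application of the transfer theorem apparatus assembled in this section, with the only genuine input being Popa's spectral gap computation of the relative commutant for non-Gamma subfactors.
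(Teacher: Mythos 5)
Your argument is correct and is essentially the paper's own proof: both use the w-spectral gap of the non-Gamma factor $N$ in $N\otimes M$ (the paper cites \cite[Appendix Corollary]{cartan} for this) to compute the relative commutant of the canonical embedding as $M^\u$, conclude that $(N,N\otimes M)$ is a factorial commutant pair, and then invoke Corollary \ref{McDufforembeddable}. Your explicit verification that $N\otimes M$ is McDuff or embeddable is a hypothesis check the paper leaves implicit, but it does not change the route.
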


\begin{proof}
By \cite[Appendix Corollary]{cartan}, $N$ has w-spectral gap in $N\otimes M$.  In particular, we have
$$N'\cap (N\otimes M)^\u=(N'\cap (N\otimes M))^\u=M^\u,$$ which is a factor.  Consequently, $(N,N\otimes M)$ is a factorial commutant pair.  Thus, by Corollary \ref{McDufforembeddable}, $(N,N\otimes M)$ is not a generalized Jung pair. 
\end{proof}

The previous results were about applying the ``downwards'' part of the transfer theorem.  Our next series of applications involve ``upwards'' transfer.  First, we need the following:

\begin{lem}\label{ecE2}
Suppose that $N\subseteq M$ and both $N$ and $M$ are e.c. (embeddable).  Then the inclusion map is downwards $\exists_2$.
\end{lem}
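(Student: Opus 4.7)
The plan is to leverage the e.c. properties of both $N$ and $M$ symmetrically, in order to produce a common elementary extension of $N$ and $M$ into which the inclusion $N\subseteq M$ extends. If $U$ is such an algebra, then for every formula $\varphi(x)$ and $a\in N$ we get $\varphi(a)^N = \varphi(a)^U = \varphi(a)^M$ (applying $N\preceq U$ to the first equality and $M\preceq U$ to the second), which is much stronger than the required downward $\exists_2$ property.

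To produce $U$, I would build a zigzag chain $N = N_0 \subseteq M_0 = M \subseteq N_1 \subseteq M_1 \subseteq N_2 \subseteq \cdots$ by alternately invoking the e.c. properties of $N$ and $M$. At an odd step, having $M_i$ containing $N$, the e.c. property of $N$ (Definition of e.c. in the embeddable setting, applied to the separable embeddable extension $N\subseteq M_i$) gives an embedding $M_i\hookrightarrow N^\u$ extending the diagonal on $N$; then Downward L\"owenheim--Skolem (Fact \ref{DLS}) produces a separable elementary substructure $N_{i+1}\preceq N^\u$ containing both $N_i$ and the image of $M_i$. Since $N_i$ and $N_{i+1}$ are both elementary substructures of $N^\u$ with $N_i\subseteq N_{i+1}$, we conclude $N_i\preceq N_{i+1}$. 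Even-numbered steps are symmetric, using e.c. of $M$ and an elementary substructure of $M^\u$, giving $M_i\preceq M_{i+1}$.

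Passing to the direct limit $U$, the elementary chain principle (Fact \ref{elel}(3)) yields $N=N_0\preceq U$ and $M=M_0\preceq U$, and compatibility of the zigzag maps ensures that the inclusion $N\subseteq M$ is preserved in $U$. This completes the proof.

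The main technical subtlety, and what I would expect to be the hard part, is verifying that the two elementary chains $(N_i)$ and $(M_i)$ can be arranged to have a common direct limit in a coherent way: the embeddings produced by the e.c. properties at each odd/even step are not canonical, and to obtain a well-defined colimit one must argue that the diagram commutes (up to matching the iterative choices). An alternative, more concrete route that avoids the coherence bookkeeping is to argue directly at the level of ultrapowers: use separable saturation of $M^\u$ to realize an exact witness $b\in M^\u$ with $\sup_z\psi(a,b,z)^{M^\u}=0$, pass to a separable embeddable subalgebra $Q\subseteq M^\u$ containing $N,a,b$ (so $\varphi(a)^Q=0$), then apply e.c. of $N$ to embed $Q\hookrightarrow N^\u$ over the diagonal on $N$. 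The remaining gap is that this only controls the sup over the image of $Q$ inside $N^\u$ rather than all of $N^\u$; closing that gap is exactly where the symmetric use of e.c. of $M$ (in the shape of the zigzag above) becomes essential.
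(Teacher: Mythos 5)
Your primary route cannot work, and the obstruction is not mere bookkeeping. If the zigzag produced a single algebra $U$ with $N\preceq U$ and $M\preceq U$ compatibly with the inclusion $N\subseteq M$, you would conclude that the inclusion of any e.c.\ factor into any larger e.c.\ factor is \emph{elementary}; since every II$_1$ factor is contained in an infinitely generic factor and those are pairwise elementarily equivalent and e.c., this would show all e.c.\ factors are elementarily equivalent --- a problem the paper explicitly records as open. The point where the construction breaks is exactly the one you flag: the embedding $M_i\hookrightarrow N^\u$ furnished by the e.c.\ property of $N$ is only guaranteed to restrict to the diagonal on $N=N_0$, not on the previously built $N_i\supseteq N$ (which is merely an elementary substructure of $N^\u$, not e.c.). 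Hence the copy of $N_i$ inside $N_{i+1}$ supplied by the zigzag is $j_i(N_i)$ rather than $N_i$ itself, the diagram does not commute, and no common colimit of the two elementary chains exists. Inclusions of e.c.\ models preserve precisely the $\exists_2$ level downwards (and $\forall_2$ upwards), not full elementarity, so any argument yielding more must fail.

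Your fallback is the right skeleton but stops exactly where the content lies, and the missing step does not require a zigzag --- it is a single application of the e.c.\ property of $M$ to a \emph{universal} formula. Given $(\inf_x\sup_y\varphi(a,x,y))^M=0$ with $a\in N$, take an approximate witness $b\in M$ itself (not in $M^\u$) with $(\sup_y\varphi(a,b,y))^M<\epsilon$. The expression $\sup_y\varphi(a,b,y)$ is a universal formula with parameters from $M$; writing it as $1-\inf_y(1-\varphi)$ shows that, because $M$ is e.c., its value is unchanged in any (embeddable) extension of $M$, in particular in $N^\u$ after applying the embedding $j:M\hookrightarrow N^\u$ over the diagonal of $N$ provided by the e.c.\ property of $N$. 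Thus $j(b)\in N^\u$ witnesses $(\inf_x\sup_y\varphi(a,x,y))^{N^\u}\leq\epsilon$, and elementarity of the diagonal embedding (\L os) pulls this back to $N$. Your version places the witness in $M^\u$ and then inside some separable $Q\subseteq M^\u$, but $Q$ has no reason to be e.c., so the supremum genuinely can grow when $Q$ is re-embedded into $N^\u$; that gap is closed only by locating the witness in the e.c.\ algebra $M$ from the start.
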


\begin{proof}
Suppose $(\inf_x\sup_y\varphi(a,x,y))^M=0$, where $\varphi$ is quantifier-free and $a\in N$.  Fix $\epsilon>0$ and let $b\in M$ be such that $(\sup_y\varphi(a,b,y))^M<\epsilon$.  Since $N$ is e.c. (embeddable), there is a map $j:M\hookrightarrow N^\u$ that whose restriction to $N$ is the diagonal embedding $N\hookrightarrow N^\u$.  Since $M$ is e.c. (embeddable), we have $(\sup_y \varphi(a,j(b),y))^{N^\u}\leq \epsilon$, whence $(\inf_x\sup_y\varphi(a,x,y))^{N^\u}\leq \epsilon$.  It follows that $(\inf_x\sup_y\varphi(a,x,y))^N\leq \epsilon$.  Since $\epsilon$ was arbitrary, the result follows. 
\end{proof}

\begin{cor}
Suppose that $(N,\R)$ is a factorial commutant pair.  Then for every e.c. embeddable factor $M$, we have that $(N,M)$ is also a factorial commutant pair.
\end{cor}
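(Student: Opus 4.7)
The plan is to apply the ``upwards'' part of the transfer theorem (Theorem \ref{transfer}(2)) along an embedding of $\R^\u$ into $M^\u$ induced by an embedding $\R \hookrightarrow M$. Since any separable II$_1$ factor contains a copy of $\R$, I fix an embedding $j: \R \hookrightarrow M$ and regard $\R$ as a subfactor of $M$.

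The key observation is that the inclusion $\R \subseteq M$ is downwards $\exists_2$. Indeed, $\R$ is e.c.\ embeddable (being the unique embeddable generalized Jung factor, hence e.c.\ embeddable by the argument in \S\S\ref{dfo}), and $M$ is e.c.\ embeddable by hypothesis, so Lemma \ref{ecE2} applies directly. By Lemma \ref{ultrapres}, the induced ultrapower map $j^\u: \R^\u \hookrightarrow M^\u$ is also downwards $\exists_2$.

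Now let $i: N \hookrightarrow \R^\u$ be an embedding with $i(N)' \cap \R^\u$ a factor, witnessing that $(N,\R)$ is a factorial commutant pair. I form the composition $j^\u \circ i: N \hookrightarrow M^\u$ and apply Theorem \ref{transfer}(2) with ``$N$'' $= N$, ``$M$'' $= \R^\u$, and ``$P$'' $= M^\u$. The hypotheses are satisfied: $j^\u$ is downwards $\exists_2$ by the previous paragraph, both $\R^\u$ and $M^\u$ are separably saturated as ultrapowers, and $i(N)' \cap \R^\u$ is a factor by assumption. The conclusion of the theorem is then precisely that $(j^\u \circ i)(N)' \cap M^\u$ is a factor, which exhibits $(N,M)$ as a factorial commutant pair.

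There is no serious obstacle here; the proof is essentially a verification that the hypotheses of Theorem \ref{transfer}(2) are met, with the one nontrivial input being that both $\R$ and $M$ are e.c.\ embeddable so that Lemma \ref{ecE2} supplies the downwards $\exists_2$ embedding needed to transfer factoriality from $\R^\u$ up to $M^\u$.
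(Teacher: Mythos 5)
Your proof is correct and follows essentially the same route as the paper's: both arguments produce a downwards $\exists_2$ embedding $\R^\u\hookrightarrow M^\u$ via Lemmas \ref{ecE2} and \ref{ultrapres} (using that $\R$ and $M$ are e.c.\ embeddable) and then apply the upwards transfer Theorem \ref{transfer}(2). The only cosmetic difference is that the paper allows $j$ to be induced by a sequence of embeddings $\R\hookrightarrow M$ rather than the ultrapower of a single one, which changes nothing.
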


\begin{proof}
Let $i:N\hookrightarrow \R^\u$ be such that $i(N)'\cap \R^\u$ is a factor and let $j:\R^\u\hookrightarrow M^\u$ be induced by a sequence of embeddings $\R\hookrightarrow M$.  By Lemmas \ref{ultrapres} and \ref{ecE2}, $j$ is downwards $\exists_2$.  By Theorem \ref{transfer}(2), we have that $i^*:=j\circ i$ is such that $i^*(N)'\cap M^\u$ is a factor, whence $(N,M)$ is a factorial commutant pair.
\end{proof}

\begin{cor}
Suppose that $(N,\R)$ is a factorial commutant pair.  If there is an e.c. embeddable factor $M$ such that $(N,M)$ is a generalized Jung pair, then $N\cong \R$.
\end{cor}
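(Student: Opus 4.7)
The plan is to combine the previous corollary with Corollary \ref{McDufforembeddable}. First I note that since $(N,\R)$ is a factorial commutant pair, in particular $N$ embeds into $\R^\u$, so $N$ is embeddable. Also, since $M$ is e.c.\ embeddable, $M$ is itself an embeddable II$_1$ factor (and moreover McDuff by Fact \ref{ecfacts}(1)), so $M$ satisfies the hypotheses needed to apply the previous results.

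Next I invoke the preceding corollary (``upwards transfer'' of factorial relative commutants to e.c.\ embeddable codomains): since $(N,\R)$ is a factorial commutant pair and $M$ is e.c.\ embeddable, we immediately obtain that $(N,M)$ is a factorial commutant pair.

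At this point $(N,M)$ is simultaneously a factorial commutant pair and, by hypothesis, a generalized Jung pair, with $N$ embeddable and $M$ embeddable. These are exactly the hypotheses of Corollary \ref{McDufforembeddable}, so I apply it directly to conclude $N\cong \R$.

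There is no real obstacle here: the statement is a clean composition of the two preceding corollaries. The conceptual content has already been absorbed, on the one hand into the ``downwards'' transfer theorem (Theorem \ref{transfer}(1), used via Corollary \ref{factorthru}) that underlies Corollary \ref{McDufforembeddable}, and on the other hand into the ``upwards'' transfer theorem (Theorem \ref{transfer}(2), used via Lemmas \ref{ultrapres} and \ref{ecE2}) that underlies the previous corollary. The present statement simply observes that an e.c.\ embeddable codomain can serve as a bridge: the factorial commutant property is pushed forward from $\R$ to $M$, where the generalized Jung hypothesis then forces strong factoriality and hence $N\cong \R$ via Brown's characterization.
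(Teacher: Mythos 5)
Your proof is correct and is exactly the intended argument: the paper leaves this corollary without an explicit proof precisely because it is the immediate composition of the preceding corollary (upward transfer of the factorial commutant property to e.c.\ embeddable codomains) with Corollary \ref{McDufforembeddable}. All hypotheses are checked correctly, so nothing further is needed.
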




The transfer theorem allows us to say something about Question \ref{fingensuper}:

\begin{thm}
Suppose that $N$ is a finitely generic factor.  Then the following are equivalent:
\begin{enumerate}
    \item $N$ is super McDuff.
    \item For any e.c. factor $M$, $(N,M)$ is a strong factorial commutant pair.
    \item There is an e.c. factor $M$ such that $(N,M)$ is a factorial commutant pair.
\end{enumerate}
\end{thm}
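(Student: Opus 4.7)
I plan to prove the three-way equivalence by establishing the cycle $(2) \Rightarrow (3) \Rightarrow (1) \Rightarrow (2)$, with Theorem \ref{transfer}(1) as the central tool for the two nontrivial directions. Throughout I exploit three features of a finitely generic factor $N$: it is e.c.\ (so every embedding out of $N$ is existential), it is locally universal (so every separable factor embeds into $N^\u$), and it has the generalized Jung property (so any two embeddings $N \hookrightarrow N^\u$ are automorphically equivalent). The direction $(2) \Rightarrow (3)$ is trivial by taking $M = N$, which is e.c.

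For $(3) \Rightarrow (1)$, fix an e.c.\ factor $M$ and an embedding $\pi: N \hookrightarrow M^\u$ with $\pi(N)' \cap M^\u$ a factor. I would first form the ultrapower $\pi^\u: N^\u \hookrightarrow (M^\u)^\u$, which is existential by Lemma \ref{ultrapres} since $\pi$ itself is existential (as $N$ is e.c.). Next, I would produce an isomorphism $\phi: (M^\u)^\u \cong M^\u$ whose restriction to the diagonal copy of $\pi(N)$ inside $(M^\u)^\u$ inverts the elementary diagonal $M^\u \hookrightarrow (M^\u)^\u$. Such $\phi$ exists under CH because the partial map from this diagonal copy back to $\pi(N)$ is partial elementary (being the inverse of a restriction of the elementary diagonal) on a separable subalgebra, and both $(M^\u)^\u$ and $M^\u$ are $\aleph_1$-saturated of density character $\aleph_1$, so a standard back-and-forth extends the partial map to a full isomorphism. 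The composition $\pi' := \phi \circ \pi^\u: N^\u \hookrightarrow M^\u$ is existential and satisfies $\pi' \circ \text{diag}_N = \pi$. Applying Theorem \ref{transfer}(1) with $i = \text{diag}_N$, $j = \pi'$, intermediate algebra $N^\u$, and $P = M^\u$, the hypothesis $(j \circ i)(N)' \cap P = \pi(N)' \cap M^\u$ is a factor, so the conclusion yields $N' \cap N^\u$ a factor---that is, $N$ super McDuff.

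For $(1) \Rightarrow (2)$, let $M$ be any e.c.\ factor and $\pi: N \hookrightarrow M^\u$ any embedding. By local universality of $N$, pick $\rho: M \hookrightarrow N^\u$; this $\rho$ is existential because $M$ is e.c., so by Lemma \ref{ultrapres} its ultrapower $\rho^\u: M^\u \hookrightarrow (N^\u)^\u$ is existential. I would apply Theorem \ref{transfer}(1) with $i = \pi$, $j = \rho^\u$, intermediate algebra $M^\u$, and $P = (N^\u)^\u$. The hypothesis that $(\rho^\u \circ \pi)(N)' \cap (N^\u)^\u$ is a factor is verified by composing $\rho^\u \circ \pi: N \hookrightarrow (N^\u)^\u$ with any CH-isomorphism $(N^\u)^\u \cong N^\u$ to obtain some embedding $\tilde\sigma: N \hookrightarrow N^\u$; by the generalized Jung property, $\tilde\sigma$ is automorphically equivalent to the diagonal, hence $\tilde\sigma(N)' \cap N^\u$ is $\text{Aut}(N^\u)$-conjugate to the factor $N' \cap N^\u$ (super McDuff by (1)). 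Transporting back through the isomorphism shows $(\rho^\u \circ \pi)(N)' \cap (N^\u)^\u$ is a factor, and Theorem \ref{transfer}(1) then gives $\pi(N)' \cap M^\u$ a factor.

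The main technical obstacle is the $(3) \Rightarrow (1)$ step: constructing a specific isomorphism $\phi$ that correctly inverts the elementary diagonal on a prescribed separable subalgebra. By contrast, in $(1) \Rightarrow (2)$ any CH-isomorphism $(N^\u)^\u \cong N^\u$ suffices, because the generalized Jung property absorbs the indeterminacy of the choice. The construction of $\phi$ is resolved by the classical model-theoretic fact that two $\aleph_1$-saturated, elementarily equivalent structures of density character $\aleph_1$ admit an isomorphism extending any partial elementary map between separable subsets---here the relevant partial map is the inverse of the restriction of the diagonal to $\pi(N)$.
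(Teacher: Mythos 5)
Your proof is correct, and it shares the paper's overall architecture -- the cycle $(2)\Rightarrow(3)\Rightarrow(1)\Rightarrow(2)$ with the Transfer Theorem carrying the load -- but your $(3)\Rightarrow(1)$ is a genuinely different argument. The paper goes \emph{upward}: it places $M$ inside a separable $N_1\equiv N$ (local universality plus Downward L\"owenheim--Skolem), uses Lemma \ref{ecE2} to see that the induced map $M^\u\hookrightarrow N_1^\u$ is downwards $\exists_2$, applies Theorem \ref{transfer}(2) to push factoriality of the relative commutant up to $N_1^\u$, and finishes with the generalized Jung property and Fact \ref{braddisaac}(1). You go \emph{downward}: you extend $\pi$ to an existential $\pi'\colon N^\u\hookrightarrow M^\u$ with $\pi'\circ\operatorname{diag}_N=\pi$ (via Lemma \ref{ultrapres} and a CH back-and-forth giving an isomorphism $(M^\u)^\u\cong M^\u$ that inverts the diagonal on the separable set $\pi(N)$ -- the same argument as Fact \ref{ultrahomog}, run between two elementarily equivalent saturated structures), and then apply Theorem \ref{transfer}(1) with $i=\operatorname{diag}_N$. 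This buys something: your $(3)\Rightarrow(1)$ uses only that $N$ is e.c.\ and never invokes the generalized Jung property or the e.c.-ness of $M$, so it actually shows that any e.c.\ factor forming a factorial commutant pair with \emph{some} II$_1$ factor is super McDuff -- a slightly stronger conclusion -- at the cost of the extra saturation construction of $\phi$ and of bypassing the reusable Lemma \ref{ecE2}. Your $(1)\Rightarrow(2)$ is the paper's argument in different clothing: the paper routes through a separable $P\preceq N^\u$ and Fact \ref{braddisaac}(1), whereas you route through $(N^\u)^\u\cong N^\u$ and use the automorphic-equivalence form of the generalized Jung property to conjugate onto the diagonal; both reduce to Theorem \ref{transfer}(1) applied to the ultrapower of an existential embedding $M\hookrightarrow N^\u$.
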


\begin{proof}
We first prove (1) implies (2).  Towards that end, fix an embedding $i:N\hookrightarrow M^\u$.  Since $N$ is locally universal, there is an embedding $j_0:M\hookrightarrow N^\u$.  By Downwards L\"oweneim-Skolem, there is a separable elementary substructure $P$ of $N^\u$ such that $j_0(M)\subseteq P$.  Let $j:M^\u\hookrightarrow P^\u$ be the map induced by $j_0$.   Since $M$ is e.c., $j_0$ is existential, whence so is $j$ by Lemma \ref{transfer}.  Since $N$ is generalized Jung, the composition $j\circ i:N\hookrightarrow P^\u$ is elementary.  By Fact \ref{braddisaac}(1) and the fact that $N$ is super McDuff, $(j\circ i)(N)'\cap P^\u$ is a factor.  Since $j$ is existential,  we have that $i(N)'\cap M^\u$ is a factor, as desired.

(2) implies (3) is trivial.  We now prove the implication (3) implies (1).  Suppose that $M$ is an e.c. factor and $i:N\hookrightarrow M^\u$ is such that $i(N)'\cap M^\u$ is a factor.  By Downwards L\"owenhim-Skolem, there is a separable $N_1\equiv N$ such that $M\subseteq N_1$.  Let $j:M^\u\hookrightarrow N_1^\u$ be induced by the inclusion map.  By Lemmas \ref{ultrapres} and \ref{ecE2}, we have that $j$ is downwards $\exists_2$.  By Theorem \ref{transfer}(2), it follows that $(j\circ i)(N)'\cap N_1^\u$ is a factor.  Since $N$ is finitely generic, $j\circ i$ is elementary.  It follows from Fact \ref{braddisaac}(1) that $N$ is super McDuff.
\end{proof}




\bibliographystyle{plain}
\bibliography{jungbib}{}

\end{document}